\newcommand{\R}{\mathbb{R}}
\newcommand{\C}{\mathbb{C}}
\newcommand{\Itgr}{\mathbb{Z}}
\newcommand{\Z}{\mathbb{Z}}
\newcommand{\N}{\mathbb{N}}
\newcommand{\T}{\mathbb{T}}
\newcommand{\bS}{\mathbb{S}}
\newcommand{\sF}{\mathscr{F}}
\newcommand{\sL}{\mathscr{L}}
\newcommand{\sH}{\mathscr{H}}
\newcommand{\sK}{\mathscr{K}}
\newcommand{\cS}{\mathscr{S}}
\def\XXint#1#2#3{{\setbox0=\hbox{$#1{#2#3}{\int}$}
\vcenter{\hbox{$#2#3$}}\kern-.5\wd0}}
\newcommand{\op}{\operatorname} 
\newcommand{\Sp}{\op{Sp}}
\newcommand{\Tr}{\op{Tr}}
\newcommand{\ran}{\op{ran}}
\newcommand{\rk}{\op{rk}}
\newcommand{\car}{\mathbbm{1}}
\newcommand{\hell}{\hat{\ell}}
\newcommand{\scal}[2]{\ensuremath{\left\langle #1 | #2 \right\rangle}} 
\newcommand{\acou}[2]{\ensuremath{\left\langle #1 , #2 \right\rangle}} 
\newcommand{\bigscal}[2]{\ensuremath{\big\langle #1 | #2 \big\rangle}} 
\newcommand{\bigacou}[2]{\ensuremath{\big\langle #1 , #2 \big\rangle}} 
\newcommand{\nlambda}{\lambda^{-}}
\newcommand{\dom}{\op{dom}}
\def\XXint#1#2#3{{\setbox0=\hbox{$#1{#2#3}{\int}$ }
\vcenter{\hbox{$#2#3$ }}\kern-.6\wd0}}
\numberwithin{equation}{section}
\newtheorem{theorem}{Theorem}[section]
\newtheorem{proposition}[theorem]{Proposition}
\newtheorem{corollary}[theorem]{Corollary}
\newtheorem{lemma}[theorem]{Lemma}
\newtheorem{conjecture}[theorem]{Conjecture}
\theoremstyle{definition}
\theoremstyle{remark}
\newtheorem{example}[theorem]{Example}
\newtheorem{remark}[theorem]{Remark}
\newtheorem*{claim*}{Claim} 
\newcommand{\LLogL}{L\!\log\!L}
\title{Cwikel Estimates and Negative Eigenvalues of Schr\"odinger Operators on Noncommutative Tori}
\date{\today}
\author{Edward McDonald}
 \address{School of Mathematics and Statistics, University of New South Wales, Sydney, Australia}
 \email{edward.mcdonald@unsw.edu.au}
\author{Rapha\"el Ponge}
 \address{School of Mathematics, Sichuan University, Chengdu, China}
 \email{ponge.math@icloud.com}
\keywords{noncommutative geometry; Cwikel estimates; Schr\"odinger operators}
\subjclass[2020]{58B34; 47B10; 81Q10}
\begin{document}
\begin{abstract}
 In this paper, we establish Cwikel-type estimates for noncommutative tori for any dimension~$n\geq 2$. We use them to derive Cwikel-Lieb-Rozenblum inequalities and Lieb-Thirring inequalities for negative eigenvalues of fractional Schr\"{o}dinger operators on noncommutative tori. The latter leads to a Sobolev inequality for noncommutative tori.  On the way we establish a ``borderline version" of the abstract Birman-Schwinger principle for the number of negative eigenvalues of relatively compact form perturbations of a non-negative semi-bounded operator with isolated 0-eigenvalue.
 \end{abstract}
\maketitle

\section{Introduction}       
 The celebrated estimates of Cwikel~\cite{Cw:AM77} are a landmark application of trace-ideal techniques in mathematical physics. They assert that if  $f\in L_p(\R^n)$ and $g$ is in the weak $L_p$-space $L_{p,\infty}(\R^n)$ with $p>2$, then the operator $f(X)g(-i\nabla)$ on $L_2(\R^n)$ is in the weak Schatten class $\sL_{p,\infty}$, and we have
\begin{equation}
 \big\| f(X)g(-i\nabla)\big\|_{\sL_{p,\infty}}\leq c_{np}\big\|f\big\|_{L_p} \big\|g\big\|_{L_{p,\infty}},
 \label{eq:Intro.CwikelAM} 
\end{equation}
where the constant $c_{np}$ depends only on $n$ and $p$. Here $f(X)$ is the multiplication by $f$ in position space and $g(-i\nabla)$ is the multiplication by $g$ in momentum space. We refer to Section~\ref{sec:Cwikel} for background on Schatten classes and weak Schatten classes. 
Cwikel's estimates were extended to $p\in (0,2)$ by Birman-Solomyak~\cite{BS:1977} and to $p=2$ by Birman-Karadzhov-Solomyak~\cite{BKS:1991} (who also dealt with more general Lorentz ideals $\sL_{p,q}$). Solomyak~\cite{So:PLMS95} substantially improved the understanding of the $p=2$ case in even dimension. 

Cwikel's estimates in the form stated above were conjectured by Simon~\cite{Si:TAMS76}. He pointed out that combining them with the Birman-Schwinger principle~\cite{Bi:AMST66, Sc:PNAS61} would give an upper-bound for the number of negative eigenvalues (i.e., the number of bound states) of Schr\"odinger operators $\Delta+V$ with $V\in L_{\frac{n}2}(\R^n)$, $n\geq 3$. Namely, there exists a constant $C_n>0$ depending only on $n$ such that
\begin{equation}\label{eq:Intro.CLR}
N^{-}(\Delta+V) \leq C_n\int |V_{-}(x)|^{\frac{n}2}dx,
\end{equation}
where $N^{-}(\Delta+V)$ is the number of negative eigenvalues of $\Delta+V$ counted with multiplicity and $V_{-}=\frac{1}{2}(|V|-V)$ is the negative part of $V$. 

The main motivation for the inequality~(\ref{eq:Intro.CLR}) is establishing a Weyl's law for Schr\"odinger operators $\hbar^2\Delta+V$ with a non-smooth potential $V$ under the semi-classical limit $h\rightarrow 0$ (see~\cite{BS:JFAA70, BS:AMST80, Si:TAMS76}). As it turns out, the inequality~(\ref{eq:Intro.CLR}) was already established by Rozenblum~\cite{Roz:1972, Roz:1976}. Another independent proof was provided by Lieb~\cite{Li:BAMS76, Li:1980} (see~\cite{Co:RMJM85, Fe:BAMS83, Fr:JST14, HKRV:arXiv18, LY:CMP83} for further alternative proofs). The equality~(\ref{eq:Intro.CLR}) is now known as the Cwikel-Lieb-Rozenblum (CLR) inequality. 

A related theme, initiated by Lieb-Thirring~\cite{LT:PRL75,LT:SMP76} concerns bounds for $\gamma$-moments of negative eigenvalues 
$\nlambda_j(\Delta+V)$, $j\geq 0$. Namely, if $\gamma>0$, then there is a constant 
$L_{\gamma,n}>0$ depending only $n$ and  $\gamma$ such that, for any real-valued potential $V\in L_{n+\gamma}(\R^n)$, we have
\begin{equation}\label{eq:Intro.LT}
    \sum_{j} |\nlambda_j(\Delta+V)|^\gamma \leq L_{\gamma,n} \int |V_-(x)|^{\frac{n}{2}+\gamma}\,dx.
\end{equation}

The above inequality~\eqref{eq:Intro.LT} is  known as the Lieb--Thirring (LT) inequality. It was first established for $\gamma=1$ and $n=3$ by Lieb-Thirring~\cite{LT:PRL75} as a key step toward proving the stability of matter in quantum mechanics. They subsequently extended the inequality in~\cite{LT:SMP76} 
to all $n\geq 2$ and $\gamma>0$ and for $n=1$ and $\gamma>1/2$. The case $n=1$ and $\gamma=1/2$ was settled by Weidl~\cite{We:CMP96}. 

 A fundamental fact about the LT inequality for $\gamma=1$ is its equivalence with Sobolev inequality (see~\cite{LT:PRL75}). More generally, the LT inequalities have widespread applications in mathematical physics (see, e.g.,~\cite{Li:RMP76, LS:CUP10}). They also play an important role in the study of Navier-Stokes equations (see, e.g., \cite{Li:CMP84, Te:Book97}). 

%An interesting consequence of \eqref{eq:Intro.LT}
%due to Lieb and Thirring is a Sobolev inequality for orthonormal families, and this Sobolev inequality is essentially equivalent to \eqref{eq:Intro.LT}, see \cite[Theorem 4]{LT:PRL75}.
%Lieb--Thirring inequalities , and we direct the reader to the recent survey \cite{Fr:Survey20}. Sample applications of \eqref{eq:Intro.LT} and its generalisations are listed by Lieb in \cite{Li:RMP76}.

There are versions of the CLR inequality and the LT inequalities for fractional Schr\"odinger operators $\Delta^{n/2p}+V$ (see, e.g., \cite{Da:CMP83, LS:JAM97, Le:MRL97, Roz:1972, Roz:1976, RS:SPMJ98, So:PLMS95}).  Such operators naturally appear in the framework of fractional quantum mechanics~\cite{La:PL00}; for $p=n$ we get the ultra-relativistic Schr\"odinger operator $|\nabla|+V$ (see~\cite{Da:CMP83}). We refer to the recent survey~\cite{Fr:Survey20} and the book~\cite{LFW:Book} (and the numerous references therein) for a more thorough account on the CLR and LT inequalities and their various applications and generalizations.  

%CLR inequalities have widespread applications in mathematical physics and geometric analysis (see~\cite{Fr:Survey20} for a recent survey). 

Recently, Cwikel estimates have been experiencing a revival of interest. A general abstract operator theoretic framework for Cwikel-type estimates has been emerging (see~\cite{Fr:JST14, HKRV:arXiv18, LeSZ:2020}). In a recent article~\cite{HKRV:arXiv18} Hundertmark \emph{et al}.\ used a refinement of Cwikel's estimates to get some of the best CLR bounds to date. Furthermore, applications of Cwikel-type estimates to Connes's noncommutative geometry program were found~\cite{GGBISV:CMP04, LeSVZ:IEOT18, LeSZ:JGP19, LeSZ:2020, LSZ:JFA20, MSX:CMP19, SZ:CMP18, SZ:arXiv20}. In particular, Cwikel-type estimates for noncommutative Euclidean spaces were established by Levitina-Sukochev-Zanin~\cite{LeSZ:2020}. 

The aim of this paper is to obtain general Cwikel-type estimates and establish CLR inequalities on noncommutative tori (a.k.a.\ quantum tori). Noncommutative tori  are arguably the most well known examples of noncommutative spaces. In particular, they naturally appear in the noncommutative geometry approach to the quantum Hall effect~\cite{BES:JMP94} and to topological insulators~\cite{BCR:RMP16, PS:Springer16}. In addition, noncommutative tori have been considered in the context of string theory (see, e.g., \cite{CDS:JHEP98, SW:JHEP99}). Noncommutative 2-tori naturally arise from actions of $\Z$ on the circle $\bS^1$ by irrational rotations. More generally, a noncommutative $n$-tori $\T^n_\theta$ is generated by unitaries $U_1, \ldots, U_n$ subject to the relations, 
\begin{equation*}
 U_lU_j = e^{2i\pi \theta_{jl}}U_jU_l, \qquad j,l=1,\ldots, n, 
\end{equation*}
where $\theta=(\theta_{jl})$ is a given real anti-symmetric matrix. We refer to Section~\ref{sec:NCtori} for more background on noncommutative tori. 

We establish Cwikel-type estimates on  $\T^n_\theta$ for operators $\lambda(x)g(-i\nabla)$, where $x$ is in $L_p(\T^n_\theta)$ with $p\geq 2$. Here $\nabla=(\partial_1,\ldots,\partial_n)$, where $\partial_1,\ldots,\partial_n$ are the canonical derivations of $\T^n_\theta$, and $\lambda$ is the extension to $L_p$-spaces of the left-regular representation of $L_\infty(\T^n_\theta)$ in $L_2(\T^n_\theta)$ (see Section~\ref{sec:boundedness}). More precisely, we show that if $x\in L_q(\T^n_\theta)$ and $g\in \ell_{p,\infty}(\Z^n)$, we have
\begin{equation}
  \|\lambda(x)g(-i\nabla)\|_{\sL_{p,\infty}} \leq c(p,q)\|x\|_{L_q}\|g\|_{\ell_{p,\infty}}, 
  \label{eq:Intro.CwikelNCT} 
\end{equation}
where $c(p,q)$ is some \emph{explicit} constant independent of $g$ and $x$. We establish those Cwikel-type estimates for $q=p>2$ (Theorem~\ref{Cwikel2+}, part (2)), 
$q=2<p$ (Theorem~\ref{Cwikel2-}, part (2)), and for $p=2<q$ (Theorem~\ref{L_2_infty_cwikel}). In each case, we obtain explicit bounds for the best constants in those inequalities. 

We also have $\sL_p$-estimates for $p<2$. Namely, if $x\in L_q(\T^n_\theta)$ and $g\in \ell_p(\Z^n)$, then 
\begin{equation*}
  \|\lambda(x)g(-i\nabla)\|_{\sL_{p}} \leq \|x\|_{L_2}\|g\|_{\ell_{p}}. 
\end{equation*}
The inequalities hold for $q=p\geq 2$ (Theorem~\ref{Cwikel2+}, part (1)) and for $q=2>p$ (Theorem~\ref{Cwikel2-}). 

Note that $\sL_{p,\infty}$-estimates with $p>2$ and $\sL_p$-estimate with $p\geq 2$ were established in~\cite{MSX:CMP19} with unspecificed constantsas a special case of general results of Levitina-Sukochev-Zanin~\cite{LeSZ:2020}. We further elaborate on the arguments of~\cite{LeSZ:2020} to get explicit constants for these estimates. 

The $\sL_{2,\infty}$-estimate is deduced from the estimates in the $p>2$ case and $p<2$ case by an interpolation argument. The proof of the estimates in the $p<2$ case essentially follows the approach of~\cite{LeSZ:2020} to the Cwikel-type estimates on NC Euclidean spaces. As it turns out, on noncommutative tori some simplifications occur, which allows us to get much sharper results. In fact, as far as Schatten and weak Schatten classes are concerned, the estimates of this papers are as sharp as in the (commutative) Euclidean space setting.  

The general Cwikel estimates~(\ref{eq:Intro.CwikelNCT}) specialize to estimates for operators of the form $\lambda(x)\Delta^{-n/2p}$, where $\Delta=\nabla^*\nabla$ is the (positive) Laplacian on $\T^n_\theta$ (Theorem~\ref{thm.specific-Cwikel}). Set
\begin{equation*}
 \nu_0(n):= \sup_{\lambda \geq 1} \lambda^{-\frac{n}{2}}\#\left\{k\in \Z^n\setminus 0; \ |k|\leq \sqrt{\lambda} \right\}. 
\end{equation*}
If $x\in L_q(\T^n_\theta)$, and either $p\neq 2$ and $q=\max(p,2)$, or if $p=2<q$, then
\begin{equation}
  \big\|\lambda(x) \Delta^{-\frac{n}{2p}}\big\|_{\sL_{p,\infty}} \leq c(p,q)\nu_0(n)^{\frac1{p}} \|x\|_{L_q}.
   \label{eq:Intro.specific-Cwikel}   
\end{equation}
For $p\neq 2$ the constant $c(p,q)$ is the best constant from the corresponding Cwikel-type inequality~(\ref{eq:Intro.CwikelNCT}). For $p=2$ the inequality follows from the case $p>2$ by using H\"older's inequality, and so we obtain a better constant. 

This allows us to get estimates for operators of the form $\Delta^{-n/4p}\lambda(x) \Delta^{-n/4p}$ (Theorem~\ref{thm:Specific-Cwikel.sandwiched}). More precisely, if $x\in L_q(\T^m_\theta)$, and either $p\neq 1$ and $q=\max(p,1)$, or if $p=1<q$, then 
\begin{equation}
  \big\|\Delta^{-\frac{n}{4p}}\lambda(x) \Delta^{-\frac{n}{4p}}\big\|_{\sL_{p,\infty}} \leq 2^{\frac1p}c(2p,2q)\nu_0(n)^{\frac1{p}}\|x\|_{L_q}. 
  \label{eq:Intro.specific-Cwikel-symmetrized} 
\end{equation}
This inequality holds without the extra $2^{1/p}$-factor if $x\geq 0$ (see Remark~\ref{rmk:Specific.positive-case}).

In the Euclidean space setting the CLR inequality~(\ref{eq:Intro.CLR}) can be deduced from the Cwikel estimates~(\ref{eq:Intro.CwikelAM}) by using the Birman-Schwinger principle~\cite{Bi:AMST66, Sc:PNAS61}. In its abstract form due to Birman-Solomyak~\cite{BS:1989} (see also Proposition~\ref{prop:BSP.abstractBSP} and Corollary~\ref{cor:CLR.BKP-qinf}) the Birman-Schwinger principle implies that if $H$ is a (semi-bounded) non-negative selfadjoint operator on Hilbert space and $V$ is a non-positive relatively form-compact perturbation such that $(H+1)^{-1/2}V(H+1)^{-1/2}\in \sL_{p,\infty}$, $p>0$, then 
 \begin{equation}
 N(H+V;\lambda) \leq  \big\|(H+\lambda)^{-\frac{1}2}V(H+\lambda)^{-\frac{1}{2}}\big\|_{\sL_{p,\infty}}^p\qquad \forall \lambda<0. 
 \label{eq:Intro.BSP}
\end{equation}

When $H$ is the Laplacian on $\R^n$, $n\geq 3$, the inequality~(\ref{eq:Intro.BSP}) continues to hold for $\lambda=0$, thereby providing an estimate for $N^{-}(H+V)$. In various examples, including the Laplacians on NC tori, the origin is in the discrete spectrum, and so the resolvent $(H-\lambda)^{-1}$ has a pole singularity at $\lambda=0$. This prevents us from letting $\lambda\rightarrow 0^{-}$ in~(\ref{eq:Intro.BSP}). To remedy we derive a ``borderline'' Birman-Schwinger principle (Theorem~\ref{thm:Borderline-BSP}). In particular, we show that if $0$ is in the discrete spectrum of $H$ and $V$ is a non-positive relatively form-compact perturbation such that $H^{-1/2}VH^{-1/2}\in \sL_{p,\infty}$, $p>0$, then, we have
\begin{equation}
  0\leq N^{-}(H+V)- N^{-}(\Pi_0 V \Pi_0) \leq  \big\|H^{-\frac{1}2}VH^{-\frac{1}{2}}\big\|_{\sL_{p,\infty}}^p. 
  \label{eq:Intro.BorderlineBSP} 
\end{equation}
Here $\Pi_0$ is the orthogonal projection onto the nullspace of $H$. This result seems to be new, at least at this level of generality. Its scope of validity goes beyond the scope of this paper. For instance, it also encompasses (fractional) Schr\"odinger operators on closed manifolds or compact manifolds with boundary under Neumann boundary conditions, as well as Schr\"odinger operators on hyperbolic manifolds with infinite volume. 
%In particular, by combining it with the relevant Cwikel-type estimates enables us to recover the CLR inequalities in those settings. 

Combining the specific Cwikel estimates~(\ref{eq:Intro.specific-Cwikel-symmetrized}) and the borderline Birman-Schwinger principle~(\ref{eq:Intro.BorderlineBSP}) allows us to get CLR-type inequalities for fractional Schr\"odinger operators $\Delta^{n/2p}+\lambda(V)$ on NC tori (Theorem~\ref{thm:CLR.CLR-NCtori}). Namely, if $V=V^*\in L_q(\T^n_\theta)$ and, either $p\neq 1$ and $q=\max(p,1)$, or $p=1<q$, then 
\begin{equation}
 N^{-}\big(\Delta^{\frac{n}{2p}}+\lambda(V)\big)-1 \leq c(2p,2q)^{2p}\nu_0(n) \tau\big[|V_{-}|^q\big]^{\frac{p}{q}},
 \label{eq:Intro.CLRNCT}
\end{equation}
 where $V_{-}=\frac12(|V|-V)$ is the negative part of $V$ and $\tau$ is the standard normalized trace of $L_\infty(\T^n_\theta)$. This inequality is consistent with Lieb's version of the CLR inequality for closed manifolds~\cite{Li:BAMS76, Li:1980}. For $p=n/2$ we get a CLR inequality for Schr\"odinger operators $\Delta+\lambda(V)$ with $V=V^*\in L_{n/2}(\T^n_\theta)$ if $n\geq 3$ or with $V=V^*\in L_q(\T^n_\theta)$, $q>1$, if $n=2$.  The latter condition is consistent with the CLR inequality for Schr\"odinger operators on bounded regions of $\R^2$ by Birman-Solomyak~\cite{BS:JFAA70} (see also~\cite{BS:AMST80}). In particular, unlike in the Euclidean space setting we do get an inequality in dimension~2. 

Under the semiclassical limit $h\rightarrow 0^+$ the CLR inequality~(\ref{eq:Intro.CLRNCT}) implies that 
\begin{equation}
 N^{-}\big(h^{\frac{n}{p}}\Delta^{\frac{n}{2p}}+\lambda(V)\big) \leq   c(2p,2q)^{2p}\nu_0(n)h^{-n}\tau\big[|V_{-}|^q\big]^{\frac{p}{q}} +\op{O}(1). 
 \label{eq:Intro.semiclassical-CLR}  
\end{equation}
This leads us to conjecture that if $V=V^*\in L_q(\T^n_\theta)$ and, either $p\neq 1$ and $q=\max(p,1)$, or $p=1<q$, then we have the semi-classical Weyl's law, 
\begin{equation}
 N^{-}\big(h^{\frac{n}{p}}\Delta^{\frac{n}{2p}}+\lambda(V) \big)= \op{Vol}(\mathbb{B}^n)h^{-n}
            \tau\big[|V_{-}|^{p}\big] +\op{o}\big(h^{-n}\big). 
            \label{eq:Intro.Weyl} 
\end{equation}
In the same way as in the Euclidean space setting, the semiclassical CLR inequality~(\ref{eq:Intro.semiclassical-CLR}) allows us to reduce the proof of~(\ref{eq:Intro.Weyl}) for $L_q$-potentials to that for smooth potentials. We believe a semiclassical Weyl law for smooth potentials can be established by using semiclassical pseudodifferential calculus on NC tori. However, such a pseudodifferential calculus has yet to be set up. As this project falls out of the scope of this paper we leave~(\ref{eq:Intro.Weyl}) as a conjecture. 

As is well known (see, e.g., \cite{Si:AMS15a}), in dimension~$\geq 3$ the simplest way to derive the Lieb-Thirring inequalities~(\ref{eq:Intro.LT}) is to deduce them from the CLR inequality~(\ref{eq:Intro.CLR}) (although this approach does not lead to good bounds for the best LT constants $L_{\gamma,n}$). Lieb-Thirring inequalities on ordinary tori (and spheres) have been established by Ilyin~\cite{Il:JST12} in dimension 2 and by Laptev-Ilyin~\cite{IL:SM16} in dimension~$\leq 19$ (see also~\cite{IL:StPMJ20, ILZ:MN19, ILZ:JFA20}). We establish Lieb-Thirring  inequalities on NC tori by following the CLR-route

As in~\cite{Il:JST12}, we restrict ourselves to the zero mean-value subspace $\{u; \ \tau(u)=0\}$, i.e.,  the orthogonal complement of the 
nullspace $\ker \Delta=\C\cdot 1$. Denoting by $\dot{\Delta}$ and $\dot{\lambda}(V)$ the relevant restrictions of $\Delta$ and $\lambda(V)$, we have CLR inequalities for the operators $\dot{\Delta}^{\frac{n}{2p}}+\dot{\lambda}(V)$ for $V=V^*\in L_q(\T^n_\theta)$ with $p$ and $q$ as above (Theorem~\ref{thm:CLR.CLR-NCtori2}). Namely, 
\begin{equation*}
 N^{-}\big(\dot{\Delta}^{\frac{n}{2p}}+\dot{\lambda}(V)\big) \leq c(2p,2q)^{2p}\nu_0(n) \tau\big[|V_{-}|^q\big]^{\frac{p}{q}}. 
\end{equation*}
This leads to LT inequalities for the operators $\dot{\Delta}^{\frac{n}{2p}}+\dot{\lambda}(V)$ for $p>1$ (Theorem~\ref{thm:LT-inequality}). More precisely, if $\gamma>0$ and 
 $V=V^*\in L_{p+\gamma}(\T^n_\theta)$, then  
\begin{equation*}\label{eq:Intro.LT-NCTori}
    \sum_j \big|\nlambda_j(\Delta^{\frac{n}{2p}}+\dot{\lambda}(V))\big|^\gamma \leq L_{p,\gamma,n}\tau\big[|V_-|^{p+\gamma}\big], 
\end{equation*}
where the best constant $L_{p,\gamma,n}$ is such that
\begin{equation*}
L_{p,\gamma,n} \leq \gamma \frac{\Gamma(p+1)\Gamma(\gamma)}{\Gamma(p+\gamma+1)}c(2p,2p)^{2p}\nu_0(n). 
\end{equation*}

As with the original Lieb-Thiring inequalities~(\ref{eq:Intro.LT}) on $\R^n$, for $p=n/2$ and $\gamma=1$ with $n\geq 3$, the LT inequality~(\ref{eq:Intro.LT-NCTori}) is equivalent to a Sobolev inequality on NC tori (Theorem~\ref{thm:LT.Sobolev}). More precisely, there is a constant $K_n>0$, such that, for any orthonormal family $\{u_0,\ldots, u_N\}$ of zero-mean value elements of the Sobolev space $W_2^1(\T^n_\theta)$, we have 
\begin{equation}\label{eq:Intro.Sobolev}
   \sum_{\ell=0}^N \tau\big[ |\nabla u_\ell|^2\big] \geq K_n \tau\left[ \bigg(\sum_{\ell=1}^N |u_\ell |^2\bigg)^{1+n/2}\right]. 
\end{equation}
Moreover, the best constant $K_n$ is related to the best LT constant $L_n=L_{\frac{n}{2},1,n}$ by
\begin{equation*}
 K_n= \frac{n}{n+2} \bigg( \frac{n+2}{2}L_n\bigg)^{-\frac{n}{2}}. 
\end{equation*}

%The Lieb-Thirring inequality \eqref{eq:Intro.LT} also admits an analogy for NC tori. As with the CLR inequality, an issue arises due to the presence of a pole singularity 
%at $0$ of the resolvent of $\Delta.$ To remedy this, we follow the path of Ilyin \cite{Il:JST12} in formulating Lieb--Thirring inequalities for Schr\"odinger operators
%on compact manifolds. We denote by $\dot{\lambda}(V)$ and $\dot{\Delta}$ the compression of $\lambda(V)$ and $\Delta$ respectively to the orthogonal complement of the kernel of $\Delta,$ that is
%\[
%    \dot{\lambda}(V) = (1-\Pi_0)\lambda(V)(1-\Pi_0),\quad \dot{\Delta} = (1-\Pi_0)\Delta (1-\Pi_0)
%\]
%where $\Pi_0$ is the kernel projection of $\Delta.$ We obtain a bound for the sum of the $\gamma$-th powers of the negative eigenvalues $(\nlambda_j(\dot{\Delta}^{\frac{n}{2p}}+\dot{\lambda}(V)))_{j\geq 0}$,
%\[
%    \sum_j |\nlambda_j(\Delta^{\frac{n}{2p}}+\dot{\lambda}(V))|^\gamma \leq L_{p,\gamma,n}\tau\big[|V_-|^{p+\gamma}\big].
%\]
%Here, $p>1$ and $\gamma>0.$ Following the arguments of Lieb and Thirring, as a consequence we obtain a Sobolev inequality for arbitrary families $(u_k)_{k=1}^N$
%of orthonormal elements of $L_2(\T^n_\theta)$ with $\tau\big[u_k\big]=0$, 
%\begin{equation*}
%   \sum_{\ell=1}^N \tau\big[ |\nabla u_\ell|^2\big] \geq K_n \tau\left[ \bigg(\sum_{\ell=1}^N |u_\ell |^2\bigg)^{1+n/2}\right]. 
%\end{equation*}
%Here, the constant $c_n$ is independent of $N$ and is related to the best constant in the Lieb-Thirring inequality for NC tori stated above.

We refer to~\cite{MP:Part2} for further applications of the Cwikel estimates~(\ref{eq:Intro.specific-Cwikel})--(\ref{eq:Intro.specific-Cwikel-symmetrized}) to curved noncommutative tori, i.e., noncommutative tori equipped with a Riemannian metric. In this setting the role of the flat Laplacian is played by the corresponding Laplace-Beltrami operator. In particular, we get ``curved" analogues of the CLR inequalities~(\ref{eq:Intro.CLRNCT}) and obtain $L_2$-versions of the Connes' integration formulas of~\cite{MSZ:MA19, Po:JMP20}.
  
The paper is organized as follows. In Section~\ref{sec:NCtori}, we review the main background on noncommutative tori. Section~\ref{sec:boundedness} contains some technical preliminaries, including sufficient conditions for the operators appearing in Cwikel-type estimates to be bounded. In 
 Section~\ref{sec:Cwikel}, we present our main Cwikel-type estimates for noncommutative tori and show how to deduce the $p=2$ case from the $p<2$ case. The $p<2$ case is proved in Section~\ref{sec:Cwikel2-}. In Section~\ref{sec:specific-Cwikel} we specialize the main Cwikel estimates into the specific Cwikel estimates~(\ref{eq:Intro.specific-Cwikel})--(\ref{eq:Intro.specific-Cwikel-symmetrized}). 
 In Section~\ref{sec:Birman-Schwinger},  we establish the borderline Birman-Schwinger principle~(\ref{eq:Intro.BorderlineBSP}). In Section~\ref{CLR_section} we establish our CLR inequalities and LT for fractional Schr\"odinger operators on NC tori and use them to derive the Sobolev inequality~(\ref{eq:Intro.Sobolev}). 
In  Appendix~\ref{app:BSP},  for reader's convenience  we reproduce Birman-Solomyak's proof of the abstract Birman-Schwinger principle.

\subsection*{Notation} 
Throughout the paper we make the convention that $c_{abd}$ and $C_{abd}$  are positive constants which depend only on the parameters $a$, $b$, $d$, etc., and may change from line to line. They do not depend on the other variables that are floating around. 

\section{Noncommutative Tori} \label{sec:NCtori}
In this section, we review the main definitions and properties of noncommutative $n$-tori, $n\geq 2$. We refer to~\cite{Co:NCG, HLP:IJM19a, Ri:CM90}, and the references therein, for a more comprehensive account.
 
Throughout this paper, we let $\theta =(\theta_{jk})$ be a real anti-symmetric $n\times n$-matrix, and denote by $\theta_1, \ldots, \theta_n$ its column vectors.  We also let  $L_2(\T^n)$ be the Hilbert space of $L_2$-functions on the ordinary torus $\T^n=\R^n\slash (2\pi \Z)^n$ equipped with the  inner product, 
\begin{equation} \label{eq:NCtori.innerproduct-L2}
 \scal{\xi}{\eta}= (2\pi)^{-n} \int_{\T^n} \xi(x)\overline{\eta(x)}d x, \qquad \xi, \eta \in L_2(\T^n). 
\end{equation}
 For $j=1,\ldots, n$, let $U_j:L_2(\T^n)\rightarrow L_2(\T^n)$ be the unitary operator defined by 
 \begin{equation*}
 \left( U_j\xi\right)(x)= e^{ix_j} \xi\left( x+\pi \theta_j\right), \qquad \xi \in L_2(\T^n). 
\end{equation*}
 We then have the relations, 
 \begin{equation} \label{eq:NCtori.unitaries-relations}
 U_kU_j = e^{2i\pi \theta_{jk}} U_jU_k, \qquad j,k=1, \ldots, n. 
\end{equation}

The \emph{noncommutative torus} is the noncommutative space whose $C^*$-algebra  $C(\T^n_\theta)$ and von Neuman algebra $L_\infty(\T^n_\theta)$ are generated by the unitary operators $U_1, \ldots, U_n$.  For $\theta=0$ we obtain the $C^*$-algebra $C(\T^n)$ of continuous functions on the ordinary $n$-torus $\T^n$ and the von Neuman algebra $L_\infty(\T^n)$ of essentially bounded measurable functions on $\T^n$. Note that~(\ref{eq:NCtori.unitaries-relations}) implies that $C(\T^n_\theta)$ (resp., $L_\infty(\T^n_\theta)$) is the norm closure (resp., weak closure) in $\sL(L_2(\T^n))$ of the linear span of the unitary operators, 
 \begin{equation*}
 U^k:=U_1^{k_1} \cdots U_n^{k_n}, \qquad k=(k_1,\ldots, k_n)\in \Z^n. 
\end{equation*}

\subsection{GNS representation} Let $\tau:\sL(L_2(\T^n))\rightarrow \C$ be the state defined by the constant function $1$, i.e., 
 \begin{equation*}
 \tau (T)= \scal{T1}{1}=(2\pi)^{-n}\int_{\T^n} (T1)(x) d  x, \qquad T\in \sL\left(L_2(\T^n)\right).
\end{equation*}
This induces a continuous tracial state on the von Neuman algebra $L_\infty(\T^n_\theta)$ such that $\tau(1)=1$ and $\tau(U^k)=0$ for $k\neq 0$. 
 The GNS construction then allows us to associate with $\tau$ a $*$-representation of $L_\infty(\T^n_\theta)$ as follows. 
 
 Let $\scal{\cdot}{\cdot}$ be the sesquilinear form on $C(\T^n_\theta)$ defined by
\begin{equation}
 \scal{u}{v} = \tau\left( uv^* \right), \qquad u,v\in C(\T^n_\theta). 
 \label{eq:NCtori.cAtheta-innerproduct}
\end{equation}
Note that the family $\{ U^k; k \in \Z^n\}$ is orthonormal with respect to this sesquilinear form. We let $L_2(\T^n_\theta)$ be the Hilbert space arising from the completion of $C(\T^n_\theta)$ with respect to the pre-inner product~(\ref{eq:NCtori.cAtheta-innerproduct}). The action of $C(\T^n_\theta)$ on itself by left-multiplication uniquely extends to a $*$-representation of $L_\infty(\T^n_\theta)$ in $L_2(\T^n_\theta)$. When $\theta=0$ we recover the Hilbert space $L_2(\T^n)$ with the inner product~(\ref{eq:NCtori.innerproduct-L2}) and the representation of $L_\infty(\T^n)$ by bounded multipliers. In addition,  as $(U^k)_{k \in \Z^n}$ is an orthonormal basis of $L_2(\T^n_\theta)$, every $u\in L_2(\T^n_\theta)$ can be uniquely written as 
\begin{equation} \label{eq:NCtori.Fourier-series-u}
 u =\sum_{k \in \Z^n} u_k U^k, \qquad u_k:=\scal{u}{U^k}, 
\end{equation}
where the series converges in $L_2(\T^n_\theta)$. When $\theta =0$ we recover the Fourier series decomposition in  $L_2(\T^n)$. 

\subsection{The smooth algebra $C^\infty(\T^n_\theta)$} The natural action of $\R^n$ on $\T^n$ by translation gives rise to an action on $\sL(L_2(\T^n))$. This induces a $*$-action $(s,u)\rightarrow \alpha_s(u)$ on $C(\T^n_\theta)$ given by 
\begin{equation*}
% \label{eq:NCtori.action-on-U^k}
\alpha_s(U^k)= e^{is\cdot k} U^k, \qquad  \text{for all $k\in \Z^n$ and $s\in \R^n$}. 
\end{equation*}
This action is strongly continuous, and so we obtain a $C^*$-dynamical system $(C(\T^n_\theta), \R^n, \alpha)$. We are especially interested in the subalgebra $C^\infty(\T^n_\theta)$ of smooth elements of this $C^*$-dynamical system, i.e., $u\in C(\T^n_\theta)$ such that $\alpha_s(u) \in C^\infty(\R^n; C(\T^n_\theta))$. 

The unitaries $U^k$, $k\in \Z^n$, are contained in $C^\infty(\T^n_\theta)$, and so $C^\infty(\T^n_\theta)$ is a dense subalgebra of $C(\T^n_\theta)$. Denote by $\cS(\Z^n)$ the space of rapid-decay sequences with complex entries. In terms of the Fourier series decomposition~(\ref{eq:NCtori.Fourier-series-u}) we have
\begin{equation*}
 C^\infty(\T^n_\theta)=\bigg\{ u=\sum_{k\in \Z^n} u_k U^k; (u_k)_{k\in \Z^n}\in  \cS(\Z^n)\bigg\}. 
\end{equation*}
When $\theta=0$ we recover the algebra $C^\infty(\T^n)$ of smooth functions on the ordinary torus $\T^n$ and the Fourier-series description of this algebra. 

For $j=1,\ldots, n$, let $\partial_j:C^\infty(\T^n_\theta)\rightarrow C^\infty(\T^n_\theta)$ be the  derivation defined by 
\begin{equation*}
 \partial_j(u) = \partial_{s_j} \alpha_s(u)|_{s=0}, \qquad u\in C^\infty(\T^n_\theta), 
\end{equation*}
When $\theta=0$ it agrees with the derivation $\partial_{x_j}$ on $C^\infty(\T^n)$. In general, we have
\begin{equation*}
 \partial_j(U_l) = \left\{ 
 \begin{array}{ll}
 iU_j & \text{if $l=j$},\\
 0 & \text{if $l\neq j$}. 
\end{array}\right.
\end{equation*}

\subsection{$L_p$-Spaces} 
The $L_p$-spaces of $\T^n_\theta$ are special instances of noncommutative $L_p$ spaces associated with a semi-finite faithful normal trace on a von Neumann algebra~\cite{Ku:TAMS58, Se:AM53} (see also~\cite{FK:PJM86, Ne:JFA74}). We refer to~\cite{FK:PJM86, Ku:TAMS58} for the main background on noncommutative $L_p$-spaces needed in this paper. 

By definition $L_\infty(\T^n_\theta)$ is a von Neuman algebra of bounded operators on $L_2(\T^n)$. Thus, a closed densely defined operator on $L_2(\T^n)$ is $L_\infty(\T^n_\theta)$-affiliated when it commutes with the commutant of $L_\infty(\T^n_\theta)$ in $\sL(L_2(\T^n))$. Furthermore, as $\tau$ is a finite faithful positive trace on $L_\infty(\T_\theta^n)$ every such operator is $\tau$-measurable in the sense of~\cite{FK:PJM86, Ne:JFA74}. Therefore, these operators form a $*$-algebra, where the sum and product of such operators are meant as the closures of their usual sum and product in the sense of unbounded operators (see~\cite{Ne:JFA74}). 

The space $L_1(\T^n_\theta)$ consists of all $L_\infty(\T^n_\theta)$-affiliated operators $x$ on $L_2(\T^n)$ such that
\begin{equation*}
 \tau\left(|x|\right):=\int_0^\infty \lambda d(\tau_*E_\lambda)<\infty, 
\end{equation*}
where $E_\lambda=\car_{[0,\lambda]}(|x|)$ is the spectral measure of $|x|$. We obtain a Banach space upon equipping $L_1(\T^n_\theta)$ with the norm, 
\begin{equation*}
 \|x\|_{L_1}:= \tau\big(|x|\big), \qquad x \in L_1(\T^n_\theta). 
\end{equation*}
We have a continuous inclusion with dense range of $L_\infty(\T^n_\theta)$ into $L_1(\T^n_\theta)$. The trace $\tau$ uniquely extends to a continuous linear functional on $L_1(\T^n_\theta)$ such that
\begin{equation*}
    |\tau(x)|\leq \tau(|x|) = \|x\|_1\qquad \forall x \in L_1(\T^n_\theta).
\end{equation*}

For $p>1$, the space $L_p(\T^n_\theta)$ consists of all $L_\infty(\T^n_\theta)$-affiliated operators $x$ on $L_2(\T^n)$ such that $|x|^p\in L_1(\T^n_\theta)$. This is a Banach space with respect to the norm, 
\begin{equation*}
 \|x\|_{L_p}:= \tau(|x|^p)^{1/p} =\bigg(\int_0^\infty \lambda^p d(\tau_*E_\lambda)\bigg)^{\frac1p},\qquad x \in L_p(\T^n_\theta), 
\end{equation*}
where as above $E_\lambda$, $\lambda \geq 0$, is the spectral measure of $|x|$. We also have a continuous inclusion with dense range of $L_\infty(\T^n_\theta)$ into $L_p(\T^n_\theta)$.  In particular, for $p=2$ the above definition is consistent with the previous definition of $L_2(\T^n_\theta)$, since in both cases we get the completion of $L_\infty(\T^n_\theta)$ with respect to the same norm. 

We have the following version of H\"older's inequality. 

\begin{proposition}[\cite{FK:PJM86, Ku:TAMS58}]\label{prop:Holder}  Suppose that $p^{-1}+q^{-1}=r^{-1}\leq 1$. If $x\in L_p(\T^n_\theta)$ and $y\in L_q(\T^n_\theta)$, then $xy\in L_r(\T^n_\theta)$ with norm inequality, 
\begin{equation}
 \|xy\|_{L_r} \leq \|x\|_{L_p} \|y\|_{L_q}. 
\label{eq:Holder-Lp} 
\end{equation}
\end{proposition}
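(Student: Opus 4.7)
The plan is to derive the noncommutative H\"older inequality by reducing the claim to the classical integral H\"older inequality on $(0,\infty)$, via the theory of generalized singular value functions. For a $\tau$-measurable operator $x$ affiliated with $L_\infty(\T^n_\theta)$, I would introduce
$$\mu_t(x) := \inf\big\{\|xe\|_\infty : e \text{ a projection in } L_\infty(\T^n_\theta) \text{ with } \tau(1-e)\leq t\big\}, \qquad t>0,$$
which is a bounded, non-increasing, right-continuous function. Unwinding the distribution function of $|x|$ with respect to $\tau$ yields, for each $p\in[1,\infty)$,
$$\|x\|_{L_p}^p = \int_0^\infty \mu_t(x)^p\,dt,$$
so each $L_p$-norm is controlled by a classical integral of the decreasing rearrangement of $|x|$.

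The key technical input is the submajorization inequality
$$\int_0^s \mu_t(xy)\,dt \leq \int_0^s \mu_t(x)\,\mu_t(y)\,dt \quad \text{for all } s>0,$$
which I would prove by choosing, for each $t>0$, spectral projections of $|x|$ and $|y^*|$ of $\tau$-measure at most $t/2$ that nearly attain $\mu_{t/2}(x)$ and $\mu_{t/2}(y)$, decomposing $xy$ along these projections, and applying the min-max description of $\mu_t$ to each piece. The hypothesis $p^{-1}+q^{-1}=r^{-1}\leq 1$ forces $r\geq 1$, so $\phi(t)=t^r$ is convex, non-decreasing, with $\phi(0)=0$. The standard fact that weak submajorization of non-negative decreasing functions passes through such a $\phi$ then upgrades the above to
$$\int_0^\infty \mu_t(xy)^r\,dt \leq \int_0^\infty \mu_t(x)^r\mu_t(y)^r\,dt.$$
Finally, the classical integral H\"older inequality applied on the right-hand side with conjugate exponents $p/r$ and $q/r$ (which satisfy $r/p+r/q=1$) yields $\|xy\|_{L_r}^r \leq \|x\|_{L_p}^r\|y\|_{L_q}^r$, as claimed.

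The main obstacle is the submajorization step, which is the only genuinely noncommutative ingredient; once it is in hand the rest is classical real analysis on $(0,\infty)$. Its proof requires careful manipulation of spectral projections and a limiting argument to pass from bounded operators to general $\tau$-measurable operators, together with the fact that products of $\tau$-measurable operators are well-defined as closures of the algebraic products. These foundational points are developed systematically in the works of Fack-Kosaki and Kunze cited in the paper; since the normalized trace on $L_\infty(\T^n_\theta)$ is finite ($\tau(1)=1$), no additional complications arise from semifiniteness.
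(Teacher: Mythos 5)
Your overall strategy---introduce the generalized singular value function $\mu_t$, establish the submajorization $\int_0^s \mu_t(xy)\,dt \leq \int_0^s \mu_t(x)\mu_t(y)\,dt$, push through the convex increasing function $\phi(t)=t^r$, and close with classical H\"older on $(0,\infty)$---is exactly the route of Fack--Kosaki, which the paper cites for this proposition without supplying a proof of its own; on that structural level the proposal is sound, and the identity $\|x\|_{L_p}^p=\int_0^\infty\mu_t(x)^p\,dt$, the monotonicity of submajorization under convex increasing $\phi$ with $\phi(0)=0$, and the closing classical H\"older step are all correct.

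The flaw is in the mechanism you sketch for the submajorization lemma. Choosing spectral projections of $\tau$-measure at most $t/2$ that nearly attain $\mu_{t/2}(x)$ and $\mu_{t/2}(y)$, and invoking the min-max characterization, is how one proves the \emph{pointwise} estimate $\mu_{t+s}(xy)\le\mu_t(x)\mu_s(y)$. Upon integration this yields only $\int_0^s\mu_t(xy)\,dt\le 2\int_0^s\mu_t(x)\mu_t(y)\,dt$, degrading the final H\"older constant by a factor $2^{1/r}$; and the pointwise bound genuinely does \emph{not} imply the sharp integral submajorization. For instance, take $f=g=h=e^{-t}$: then $h(t+s)=f(t)g(s)$ holds with equality, yet $\int_0^u h=1-e^{-u}>\tfrac12(1-e^{-2u})=\int_0^u fg$ for every $u>0$. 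The actual proof of the integral inequality is more delicate: in the compact-operator case it goes through the multiplicative log-submajorization $\prod_{j<N}\mu_j(xy)\le\prod_{j<N}\mu_j(x)\mu_j(y)$ (Weyl--Horn) followed by the standard passage from log- to additive submajorization, and the extension to the $\tau$-measurable setting in Fack--Kosaki requires further approximation arguments. This is the one step in your outline that cannot be obtained by the spectral-projection manipulation you describe.
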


This implies that, for every $p\geq 1$, the multiplication of $L_\infty(\T_\theta^n)$ uniquely extends to continuous bilinear maps, 
\begin{equation*}
 L_\infty(\T^n_\theta)\times L_p(\T^n_\theta) \longrightarrow L_p(\T^n_\theta), \qquad L_p(\T^n_\theta)\times L_\infty(\T^n_\theta) \longrightarrow L_p(\T^n_\theta). 
\end{equation*}
In particular, for $p=2$ we recover the GNS representation of $L_\infty(\T_\theta^2)$ associated with $\tau$.

If $x$ is an $L_\infty(\T^n_\theta)$-affiliated operator on $\sL(L_2(\T^n_\theta))$, its polar decomposition~\cite[Theorem VIII.32]{RS1:1980} takes the  form $x=u|x|$, where $u$ is a partial isometry in $L_\infty(\T^n_\theta)$. 
Moreover, the equality $x^*=u|x|u^*$ (see, e.g.,~\cite[Theorem 1.8.3]{BS:Book}) and H\"older's inequality show that
\begin{equation*}
 x\in L_{p}(\T^n_\theta) \Longrightarrow x^*\in L_p(\T^n_\theta)\ \text{and}\  \|x^*\|_{L_p}=\|x\|_{L_p}. 
\end{equation*}

We may also define $L_p$-spaces on $\T^n_\theta$ for $0<p<1$ as above. In this case we obtain a quasi-Banach spaces (see~\cite{Ci:BPASM83, FK:PJM86}). 

\subsection{Sobolev spaces} Given any $s\geq 0$, the Sobolev space $W_2^s(\T^n_\theta)$ is defined by 
\begin{equation}\label{sobolev_space_definition}
 W_2^s(\T^n_\theta):=\Big\{u =\sum_{k\in \Z^n} u_kU^k\in L_2(\T^n_\theta); \sum_{k\in \Z^n} (1+|k|^2)^s|u_k|^2<\infty\Big\}. 
\end{equation}
This is a Hilbert space with respect to the inner product and norm, 
\begin{equation*}
 \scal{u}{v}_s=\sum_{k\in \Z^n} (1+|k|^2)^{s}u_k\overline{v}_k, \qquad \|u\|_{W_2^s}=\bigg(\sum_{k\in \Z^n}  (1+|k|^2)^s|u_k|^2\bigg)^{\frac12}. 
\end{equation*}
Note that $W_2^0(\T^n_\theta)=L_2(\T^n_\theta)$. 

Equivalently, let $\Delta=-(\partial_1^2+\cdots + \partial_n^2)$ be the Laplacian on $\T^n_\theta$. This is a non-negative selfadjoint operator on $L_2^(\T^n_\theta)$ with domain $W_2^2(\T^n_\theta)$. We have 
\begin{equation*}
 \Delta\big(U^k)=|k|^2U^k, \qquad k\in \Z^n. 
\end{equation*}
In particular, $\Delta$ is isospectral to the Laplacian on the ordinary torus $\T^n$. Set $\Lambda=(1+\Delta)^{\frac12}$. Given any $s\geq 0$, we have 
\begin{equation*}
 W_2^s(\T^n_\theta):=\Big\{u\in L_2(\T^n_\theta);\ \Lambda^su\in L_2(\T^n_\theta)\Big\}, \qquad \|u\|_{W_2^s}=\|\Lambda^su\|_{W_2^0}. 
\end{equation*}
Note also (see~\cite{Sp:Padova92, XXY:MAMS18}) that, given an integer $p\geq 0$, we have
\begin{equation*}
 W_2^p(\T^n_\theta)= \Big\{u\in L_2(\T^n_\theta); \ \delta^\alpha u\in L_2(\T^n_\theta) \  \ \forall \alpha \in \N_0^n, \, |\alpha|\leq p\big\}. 
\end{equation*}

We mention the following versions of Sobolev's embedding theorems. 

 \begin{proposition}[see~{\cite[Theorem 6.6]{XXY:MAMS18}}] \label{prop:Sobolev-embeddingLp} 
 Let $p\in [2,\infty)$. For every $s\geq n(1/2-p^{-1})$, we have a continuous embedding $W_2^s(\T^n_\theta) \subset L_p(\T^n_\theta)$. This embedding is compact when the inequality is strict. 
 \end{proposition}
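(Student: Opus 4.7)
The plan is to reduce the embedding to a mapping property of the Bessel-type operator $\Lambda^{-s}=(1+\Delta)^{-s/2}$ and then argue by duality and the noncommutative Hausdorff--Young inequality, with a dyadic decomposition needed at the critical exponent. By the definition~\eqref{sobolev_space_definition}, $\Lambda^s$ is an isometric isomorphism from $W_2^s(\T^n_\theta)$ onto $L_2(\T^n_\theta)$, so the claimed continuous (resp.\ compact) embedding $W_2^s(\T^n_\theta) \hookrightarrow L_p(\T^n_\theta)$ is equivalent to the boundedness (resp.\ compactness) of $\Lambda^{-s} : L_2(\T^n_\theta) \to L_p(\T^n_\theta)$.

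For the continuity under the strict inequality $s > n(1/2 - 1/p)$, I would compute the pairing $\tau((\Lambda^{-s}u)v)$ for $u\in L_2$ and $v\in L_{p'}$ with $p'=p/(p-1)\in[1,2]$. Expanding $\Lambda^{-s}u = \sum_k (1+|k|^2)^{-s/2} u_k U^k$ and $v = \sum_j v_j U^j$, the relation $\tau(U^kU^j)=e^{i\pi\langle k,\theta j\rangle}\delta_{k+j,0}$ (with unimodular phase) yields the crude bound
\[
 \bigl|\tau\bigl((\Lambda^{-s}u)v\bigr)\bigr| \leq \sum_{k\in\Z^n} (1+|k|^2)^{-s/2} |u_k|\, |v_{-k}|.
\]
H\"older's inequality on $\ell_q(\Z^n)$ with exponents $(2,a,p)$, $1/a=1/2-1/p$, together with Parseval's identity $\|(u_k)\|_{\ell_2}=\|u\|_{L_2}$ and the noncommutative Hausdorff--Young inequality $\|(v_k)\|_{\ell_p}\leq \|v\|_{L_{p'}}$ (which follows by complex interpolation between the elementary bound $L_1(\T^n_\theta)\to\ell_\infty(\Z^n)$ given by $|v_k|=|\tau(v(U^k)^*)|\leq \|v\|_{L_1}$ and the Parseval isometry $L_2(\T^n_\theta)\to\ell_2(\Z^n)$), controls this sum by
\[
 \Bigl(\textstyle\sum_{k\in\Z^n}(1+|k|^2)^{-sa/2}\Bigr)^{1/a} \|u\|_{L_2} \|v\|_{L_{p'}}.
\]
The weighted sum is finite precisely when $sa>n$, i.e.\ $s>n(1/2-1/p)$, and supping over the unit ball of $L_{p'}$ delivers the continuous embedding in the strict range.

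The main obstacle is the critical line $s=n(1/2-1/p)$, where the weight $(1+|k|^2)^{-s/2}$ lies only in the weak space $\ell_{a,\infty}(\Z^n)$ and the elementary H\"older bound above fails; moreover, interpolating two strict-inequality estimates preserves strict inequality, so the critical line cannot be reached by Riesz--Thorin alone. To overcome this I would follow the Littlewood--Paley approach of~\cite{XXY:MAMS18}: decompose $\Lambda^{-s}=\sum_\ell \Lambda^{-s}Q_\ell$ along dyadic annuli $Q_\ell$ (projection onto $\{2^{\ell-1}\leq|k|<2^\ell\}$), estimate each block with the sub-critical bound using uniform constants, and reassemble via the noncommutative Littlewood--Paley square-function inequality on $L_p(\T^n_\theta)$.

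Finally, for the compactness assertion in the strict case, let $P_N$ be the orthogonal projection of $L_2(\T^n_\theta)$ onto $\mathrm{span}\{U^k:|k|\leq N\}$. Each $\Lambda^{-s}P_N$ has finite rank, while restricting the H\"older computation to the tail $\{|k|>N\}$ gives
\[
 \bigl\|\Lambda^{-s}(I-P_N)\bigr\|_{L_2\to L_p} \lesssim \Bigl(\textstyle\sum_{|k|>N}(1+|k|^2)^{-sa/2}\Bigr)^{1/a} \longrightarrow 0 \quad \text{as } N\to\infty,
\]
so $\Lambda^{-s}$ is the operator-norm limit of finite-rank operators and therefore compact.
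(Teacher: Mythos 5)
The paper does not actually prove this proposition; it quotes it wholesale from~\cite[Theorem 6.6]{XXY:MAMS18}, so there is no internal argument to compare against. Your proposal, by contrast, supplies an independent and essentially complete argument for the strictly sub-critical range $s>n(1/2-p^{-1})$: the reduction to the mapping property of $\Lambda^{-s}:L_2\to L_p$ is exactly right, the duality pairing $\tau\big((\Lambda^{-s}u)v\big)$ with $v\in L_{p'}$, the three-factor H\"older with exponents $(a,2,p)$, Parseval, and the dual Hausdorff--Young bound $\|(v_k)\|_{\ell_p}\leq\|v\|_{L_{p'}}$ (obtained by interpolating $L_1\to\ell_\infty$ against $L_2\to\ell_2$, which is the mirror image of the paper's Lemma~\ref{HausdorffYoung}) all check out, as does the finite-rank truncation $\Lambda^{-s}P_N$ for compactness. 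The degenerate case $p=2$ ($a=\infty$) still works, and is trivial anyway.

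The gap is at the critical line $s=n(1/2-p^{-1})$, which is precisely the substance of the cited theorem and is in fact invoked by the paper (e.g.\ the endpoint $s=n/2p$ in Lemma~\ref{lem:Boundedness.sandwich1}). There your argument correctly identifies that the weight falls only into weak-$\ell_a$ so plain H\"older fails, but the remedy you sketch---estimate each dyadic block $\Lambda^{-s}Q_\ell$ by the sub-critical bound and ``reassemble via the square-function inequality''---is not yet a proof: uniform block-wise $L_p$ bounds plus triangle inequality over $\ell$ would reintroduce the divergent series, and the actual reassembly requires either the Besov chain $W_2^s=B^s_{2,2}\hookrightarrow B^0_{p,2}\hookrightarrow L_p$ or a Minkowski/duality trick inside the square function, which in the noncommutative setting rests on the Littlewood--Paley theory developed in~\cite{XXY:MAMS18}. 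So in effect you still lean on that reference for the endpoint, just as the paper does---but you have genuinely replaced the citation by a short, self-contained argument everywhere off the critical line, which is a net gain.
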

 
\begin{proposition} \label{prop:Sobolev-embeddingC0}
 For any $s>n/2$, we have a compact embedding $W^s_2(\T^n_\theta) \subset C(\T^n_\theta)$. 
\end{proposition}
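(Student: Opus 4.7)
The plan is to adapt the standard proof of the commutative Sobolev embedding $W_2^s(\T^n)\subset C(\T^n)$ for $s>n/2$, which goes through essentially unchanged because the only facts needed are that each basis element $U^k$ has $C(\T^n_\theta)$-norm equal to $1$ and that elements of $W_2^s(\T^n_\theta)$ admit an absolutely convergent Fourier-type expansion in this basis.

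First I would establish continuity of the embedding. Given $u=\sum_{k\in\Z^n} u_k U^k\in W_2^s(\T^n_\theta)$, I would apply the Cauchy-Schwarz inequality to write
\begin{equation*}
 \sum_{k\in\Z^n}|u_k|=\sum_{k\in\Z^n}(1+|k|^2)^{s/2}|u_k|\cdot(1+|k|^2)^{-s/2}\leq C_s\,\|u\|_{W_2^s},
\end{equation*}
where $C_s:=\bigl(\sum_{k\in\Z^n}(1+|k|^2)^{-s}\bigr)^{1/2}$ is finite precisely because $2s>n$. Since each $U^k$ is unitary, we have $\|U^k\|_{C(\T^n_\theta)}=1$, so the series $\sum u_k U^k$ converges absolutely in $C(\T^n_\theta)$ and
\begin{equation*}
 \|u\|_{C(\T^n_\theta)}\leq\sum_{k\in\Z^n}|u_k|\leq C_s\,\|u\|_{W_2^s}.
\end{equation*}
This exhibits $W_2^s(\T^n_\theta)\subset C(\T^n_\theta)$ as a bounded inclusion.

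To upgrade this to compactness, I would write the inclusion as a norm-limit of finite-rank operators. For each $N\geq 1$ let $P_N:W_2^s(\T^n_\theta)\to C(\T^n_\theta)$ be the truncation $P_Nu=\sum_{|k|\leq N}u_kU^k$; the range of $P_N$ is contained in the finite-dimensional span of $\{U^k;|k|\leq N\}$, so $P_N$ is a finite-rank (hence compact) operator. Applying the same Cauchy-Schwarz argument to the tail yields
\begin{equation*}
 \|u-P_Nu\|_{C(\T^n_\theta)}\leq\sum_{|k|>N}|u_k|\leq\bigg(\sum_{|k|>N}(1+|k|^2)^{-s}\bigg)^{\!1/2}\|u\|_{W_2^s},
\end{equation*}
and the prefactor tends to $0$ as $N\to\infty$ by dominated convergence applied to the summable sequence $(1+|k|^2)^{-s}$. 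Thus $P_N\to I$ in operator norm from $W_2^s(\T^n_\theta)$ to $C(\T^n_\theta)$, and the norm-limit of compact operators is compact.

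There is no real obstacle here; the proof is a direct transcription of the commutative case and only uses two essentially algebraic facts about $\T^n_\theta$, namely that $\{U^k\}_{k\in\Z^n}$ is an orthonormal basis of $L_2(\T^n_\theta)$ and that each $U^k$ is unitary in $C(\T^n_\theta)$. If desired, one could alternatively derive this proposition by embedding into a higher $L_p$-space first via Proposition~\ref{prop:Sobolev-embeddingLp}, but the direct Fourier argument above is shorter and yields both the bound and the finite-rank approximation simultaneously.
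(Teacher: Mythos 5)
Your proof is correct, and for the compactness part you take a route that is genuinely different from the paper's. The continuity argument via Cauchy--Schwarz is essentially the one the paper points to: it is the content of Lemma~\ref{lem:embedding-Ws-hell} specialized to $p=1$ (giving $W_2^s(\T^n_\theta)\subset\hell_1(\T^n_\theta)$) combined with the contractive inclusion $\hell_1(\T^n_\theta)\subset C(\T^n_\theta)$, as recorded in Remark~\ref{rmk:embedding-Ws-hell-C} and cited in the remark following the proposition. For compactness, the paper instead factors the inclusion as $W_2^s(\T^n_\theta)\hookrightarrow W_2^{s'}(\T^n_\theta)\hookrightarrow C(\T^n_\theta)$ with $n/2<s'<s$, using compactness of the Sobolev inclusion $W_2^s\subset W_2^{s'}$ and continuity of the second map. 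You exhibit the inclusion directly as a norm-limit of the finite-rank Fourier truncations $P_N$; this is more self-contained, since the paper's route silently invokes compactness of $W_2^s\subset W_2^{s'}$, a standard fact not separately stated there (the underlying mechanism is the same in both arguments, namely that the tail of $(1+|k|^2)^{-s}$ vanishes). One small caution on your closing aside: factoring through Proposition~\ref{prop:Sobolev-embeddingLp} would not by itself yield the result, since that proposition only embeds $W_2^s(\T^n_\theta)$ into $L_p(\T^n_\theta)$ for finite $p$, and $L_p(\T^n_\theta)$ does not sit inside $C(\T^n_\theta)$; your direct Fourier argument, or the paper's factorization through a lower Sobolev space, remains the right path.
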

\begin{remark}
 The continuity of the inclusion of the $W^s_2(\T^n_\theta) \subset C(\T^n_\theta)$ for $s>n/2$ is established in \cite[p.~67]{HLP:IJM19b} (see also Remark~\ref{rmk:embedding-Ws-hell-C} below). We obtain compactness by factorizing it through any inclusion $W^s_2(\T^n_\theta) \subset W^{s'}_2(\T^n_\theta)$ with $s>s'>n/2$. 
\end{remark}

\section{$L_p$-Action and Boundedness of $\lambda(x)g(-i\nabla)$ and  $(1+\Delta)^{-\frac{p}{4n}}\lambda(x)(1+\Delta)^{-\frac{p}{4n}}$}\label{sec:boundedness}
In what follows, we denote by $\lambda$ the left-regular representation of $L_\infty(\T^n_\theta)$ on $L_2(\T^n_\theta)$. 
In this section, we shall explain how to extend it to $L_p(\T^n_\theta)$, $p\geq 1$. We shall then give sufficient conditions for the boundedness of operators of the forms $\lambda(x)g(-i\nabla)$ and $(1+\Delta)^{-\frac{p}{4n}}\lambda(x)(1+\Delta)^{-\frac{p}{4n}}$.   

\subsection{Left-multipliers $\lambda(x)$} First, we observe that H\"older's inequality (Proposition~\ref{prop:Holder}) yields the following extension result for the left-regular representation $\lambda: L_\infty(\T^n_\theta)\rightarrow \sL(L_2(\T^n))$. 

\begin{proposition}[\cite{Ku:TAMS58}]\label{prop:left-reg-Lp} 
Suppose that $p^{-1}+q^{-1}=r^{-1}\geq 1$.  Then the left-regular representation uniquely extends to a continuous linear map, 
\begin{equation*}
 \lambda: L_p(\T^n_\theta) \longrightarrow \sL\big(L_q(\T^n_\theta),L_r(\T^n_\theta)\big).
% \label{eq:left-reg-Lp} 
\end{equation*}
In particular, if $p\geq 2$ and $p^{-1}+q^{-1}=2$, then we get a continuous linear map, 
\begin{equation*}
 \lambda: L_p(\T^n_\theta) \longrightarrow \sL\big(L_q(\T^n_\theta),L_2(\T^n_\theta)\big).
% \label{eq:left-reg-Lp} 
\end{equation*}
\end{proposition}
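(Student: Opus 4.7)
The proposition is essentially a reformulation of Hölder's inequality (Proposition~\ref{prop:Holder}) in operator-norm language, so the plan is to combine Hölder with a density argument. I will work at the level of the $*$-algebra $\Mv$ of $L_\infty(\T^n_\theta)$-affiliated ($\tau$-measurable) operators, where the product of any two elements makes sense, and where $L_p(\T^n_\theta)\subset \Mv$ for every $p\in (0,\infty]$.

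First, I would observe that if $x,y\in L_\infty(\T^n_\theta)$, then the (bounded) operator $\lambda(x)$ acts on $y\in L_2(\T^n_\theta)$ via the GNS representation as left multiplication $\lambda(x)y=xy$ (with $xy$ the operator product in $\Mv$). Proposition~\ref{prop:Holder} then gives $xy\in L_r(\T^n_\theta)$ and
\begin{equation*}
 \|xy\|_{L_r}\leq \|x\|_{L_p}\|y\|_{L_q}
\end{equation*}
whenever $p^{-1}+q^{-1}=r^{-1}$ lies in the allowed range. Thus the bilinear map $(x,y)\mapsto xy$, defined a priori on $L_\infty(\T^n_\theta)\times L_\infty(\T^n_\theta)$, is continuous with respect to the $L_p$-, $L_q$-, $L_r$-norms with constant~$1$.

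Next, since $L_\infty(\T^n_\theta)$ embeds continuously and densely in every $L_s(\T^n_\theta)$ with $s<\infty$, this bilinear map extends uniquely by continuity to a bounded bilinear map
\begin{equation*}
 L_p(\T^n_\theta)\times L_q(\T^n_\theta)\longrightarrow L_r(\T^n_\theta),\qquad (x,y)\longmapsto \lambda(x)y,
\end{equation*}
still satisfying $\|\lambda(x)y\|_{L_r}\leq \|x\|_{L_p}\|y\|_{L_q}$. For each fixed $x\in L_p(\T^n_\theta)$ the partial map $y\mapsto\lambda(x)y$ is therefore in $\sL(L_q(\T^n_\theta),L_r(\T^n_\theta))$ with operator norm $\leq \|x\|_{L_p}$, and $x\mapsto \lambda(x)$ is a continuous linear map with the asserted target space. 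The case $q=\infty$ (when it is allowed) has to be handled slightly separately since $L_\infty$ is not dense in itself in norm; here I would simply invoke Hölder directly, approximating only in the $L_p$-variable. The compatibility with the original representation $\lambda:L_\infty(\T^n_\theta)\to\sL(L_2(\T^n_\theta))$ is automatic, because on $L_\infty(\T^n_\theta)\times L_\infty(\T^n_\theta)$ both descriptions coincide with the operator product in $\Mv$, and the extensions are unique.

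The only point requiring any care (the ``main obstacle'', if there is one) is justifying that the $L_r$-limit of $x_ny_n$ for approximating sequences $x_n\to x$, $y_n\to y$ does not depend on the choice of sequences and equals the operator product $xy$ in $\Mv$. This follows from the fact that convergence in any $L_s$-norm implies convergence in the topology of $\tau$-measurable operators~\cite{FK:PJM86,Ne:JFA74}, in which multiplication is jointly continuous; hence the bilinear extension really is $(x,y)\mapsto xy$ computed in $\Mv$, and the particular-case statement with target $L_2(\T^n_\theta)$ is just the specialisation $r=2$.
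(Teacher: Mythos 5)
Your proof is correct and takes essentially the same route the paper indicates: the statement is treated as an immediate consequence of H\"older's inequality (Proposition~\ref{prop:Holder}) together with density of $L_\infty(\T^n_\theta)$ in the $L_p$-spaces, with full details deferred to Kunze~\cite{Ku:TAMS58}. Your final appeal to joint continuity of multiplication in the measure topology can be bypassed, since Proposition~\ref{prop:Holder} is already stated at the level of $L_p\times L_q$ and therefore gives $\|x_ny_n-xy\|_{L_r}\leq\|x_n-x\|_{L_p}\|y_n\|_{L_q}+\|x\|_{L_p}\|y_n-y\|_{L_q}\to 0$ directly, identifying the extended map with the product in $\Mv$.
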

 
\begin{remark}
 The above linear maps are isometries (see~\cite{Ku:TAMS58}). 
\end{remark}
   
If $x\in L_p(\T^n_\theta)$ with $p\geq 2$, the above corollary asserts that $\lambda(x)$ a continuous linear operator from $L_q(\T_\theta^n)$ to $L_2(\T^n_\theta)$ with $p^{-1}+q^{-1}=1/2$. In particular, we may regard $\lambda(x)$ as an unbounded operator on $L_2(\T^n_\theta)$ with domain $L_q(\T_\theta^n)$. 

Given any $q\geq 1$, we denote by $L_q(\T^n_\theta)^*$ the anti-linear dual of $L_q(\T^n_\theta)$, i.e., the space of continuous anti-linear forms on 
$L_q(\T^n_\theta)$. We observe that the left-regular regular representation of $L_\infty(\T^n_\theta)$ can be regarded as a continuous linear map $\lambda:L_\infty(\T^n_\theta)\rightarrow \sL(L_2(\T^n_\theta), L_2(\T^n_\theta)^*)$ such that
\begin{equation*}
 \acou{\lambda(x)u}{v}=\scal{\lambda(x)u}{v}=\tau\big[v^*xu\big], \qquad x\in L_\infty(\T^n_\theta), \quad u,v\in L_2(\T^n_\theta),
\end{equation*}
where $\acou{\cdot}{\cdot}:L_2(\T^n_\theta)^*\times L_2(\T^n_\theta)\rightarrow \C$ is the duality pairing. 

\begin{proposition}\label{prop:left-reg-LpLqLq*}
Suppose that $p^{-1}+2q^{-1}=1$.  Then the left-regular representation uniquely extends to a continuous linear map $\lambda: L_p(\T^n_\theta) \rightarrow \sL(L_q(\T^n_\theta), L_q(\T^n_\theta)^*)$ such that
\begin{equation}\label{eq:left-reg-LpLqLq*}
 \acou{\lambda(x)u}{v}=\tau\big[v^*xu\big] \qquad \forall x\in L_p(\T^n_\theta)\quad  \forall u,v\in L_q(\T^n_\theta). 
\end{equation}
\end{proposition}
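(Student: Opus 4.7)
The plan is to simply define the extension by the prescribed formula and verify everything using Hölder's inequality (Proposition~\ref{prop:Holder}) twice.

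First, I would check that the right-hand side of~\eqref{eq:left-reg-LpLqLq*} makes sense and is bounded. The constraint $p^{-1}+2q^{-1}=1$ forces $q\geq 2$. Given $x\in L_p(\T^n_\theta)$ and $u,v\in L_q(\T^n_\theta)$, a first application of Hölder gives $xu\in L_r(\T^n_\theta)$ with $r^{-1}=p^{-1}+q^{-1}=1-q^{-1}$, and since $q\geq 2$ we have $r\geq 1$. A second application with exponents $q^{-1}+r^{-1}=1$ gives $v^*xu\in L_1(\T^n_\theta)$ together with the bound
\begin{equation*}
\bigl|\tau[v^*xu]\bigr|\leq \|v^*xu\|_{L_1}\leq \|v\|_{L_q}\|x\|_{L_p}\|u\|_{L_q}.
\end{equation*}

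Next, I would observe that, for fixed $x$ and $u$, the map $v\mapsto \tau[v^*xu]$ is anti-linear in $v$ (since $(\alpha v)^*=\bar\alpha v^*$) and continuous on $L_q(\T^n_\theta)$, so it defines an element $\lambda(x)u\in L_q(\T^n_\theta)^*$ in the anti-linear dual. The map $u\mapsto \lambda(x)u$ is linear, and the above inequality gives the operator norm bound $\|\lambda(x)\|_{\sL(L_q,L_q^*)}\leq \|x\|_{L_p}$. In turn, $x\mapsto \lambda(x)$ is a continuous linear map from $L_p(\T^n_\theta)$ to $\sL(L_q(\T^n_\theta),L_q(\T^n_\theta)^*)$ of norm at most~$1$.

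To check that this genuinely extends the original left-regular representation, I would use the canonical continuous embedding $\iota:L_q(\T^n_\theta)\hookrightarrow L_q(\T^n_\theta)^*$ available because $q\geq 2$, defined by $\acou{\iota(w)}{v}=\tau[v^*w]$; this is well defined since $v^*w\in L_{q/2}(\T^n_\theta)\subset L_1(\T^n_\theta)$ (the inclusion uses $\tau(1)=1$), and reduces to the Riesz identification in the case $q=2$. When $x\in L_\infty(\T^n_\theta)$ we have $xu\in L_q(\T^n_\theta)$ and the formula~\eqref{eq:left-reg-LpLqLq*} gives $\lambda(x)u=\iota(xu)$, i.e., the original left-regular action composed with $\iota$. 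Finally, uniqueness of the extension follows from the density of $L_\infty(\T^n_\theta)$ in $L_p(\T^n_\theta)$ recalled in the previous subsection, together with the continuity already established.

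There is no serious obstacle here; the only point worth flagging is the convention for $L_q(\T^n_\theta)^*$ and the identification of the original left-regular representation with the formula $\tau[v^*xu]$ via the embedding $\iota$, which is why the statement is phrased in terms of anti-linear duals rather than, say, $\sL(L_q,L_q)$.
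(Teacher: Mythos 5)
Your proof is correct and follows essentially the same route as the paper: the double application of H\"older's inequality to bound $|\tau[v^*xu]|\leq \|v\|_{L_q}\|x\|_{L_p}\|u\|_{L_q}$ is exactly the paper's argument. You simply make explicit the book-keeping the paper leaves implicit (anti-linearity of $v\mapsto\tau[v^*xu]$, the embedding $\iota:L_q\hookrightarrow L_q^*$ identifying the extension with the original representation, and density of $L_\infty$ in $L_p$ for uniqueness), all of which is fine.
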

\begin{proof}
 Let $x\in L_p(\T^n_\theta)$ and $u,v\in L_q(\T^n_\theta)$. By H\"older's inequality $xu\in L_r(\T^n_\theta)$ with $r^{-1}=p^{-1}+q^{-1}=1-q^{-1}$, and so $v^*(xu)\in L_1(\T^n_\theta)$. Moreover, we have 
\begin{equation*}
 \left|\tau(v^*xu)\right|\leq \|v^*(xu)\|_{L_1} \leq \|v^*\|_{L_q}\|xu\|_{L_r} \leq \|x\|_{L_p} \|u\|_{L_q} \|v\|_{L_q}. 
\end{equation*}
This gives the result. 
\end{proof}

In particular, if $x\in L_p(\T^n_\theta)$ with $1\leq p<2$, then the above proposition shows that $\lambda(x)$ makes sense as a bounded operator from $L_q(\T^n_\theta)$ to $L_q(\T^n_\theta)^*$, with $p^{-1}+2q^{-1}=1$, i.e., $q^{-1}=\frac{1}{2}(1-p^{-1})$. 

\subsection{The operators $(1+\Delta)^{-s/2}\lambda(x) (1+\Delta)^{-s/2}$} 
Given any $s>0$, we denote by $W_2^{-s}(\T^2_\theta)$ the anti-linear dual of the Sobolev space $W_2^s(\T^n_\theta)$. Set $\Lambda=(1+\Delta)^{1/2}$. As mentioned above $\Lambda^s: W^s_2(\T^n_\theta)\rightarrow L_2(\T^n_\theta)$ is an isometric isomorphism. By duality we get a continuous isomorphism $\Lambda^s:L_2(\T^n_\theta)\rightarrow W_2^{-s}(\T^n_\theta)$ such that
\begin{equation*}
 \acou{\Lambda^su}{v}=\scal{u}{\Lambda^sv}, \qquad u\in L_2(\T^n_\theta), \quad v\in W_2^{s}(\T^n_\theta),
\end{equation*}
where $\acou{\cdot}{\cdot}: W_2^{-s}(\T^n_\theta)\times W_2^{s}(\T^n_\theta)\rightarrow \C$ is the duality pairing. Its inverse $\Lambda^{-s}: W_2^{-s}(\T^n_\theta)\rightarrow L_2(\T^n_\theta)$ is given by
\begin{equation}
 \scal{\Lambda^{-s}u}{v}=\acou{u}{\Lambda^{-s}v},  \qquad u\in W_2^s(\T^n_\theta), \quad v\in L_2(\T^n_\theta). 
 \label{eq:Boundedness.Lambdas-dual}
\end{equation}

\begin{lemma}\label{lem:Boundedness.sandwich1}
 Let $x\in L_p(\T^n_\theta)$, $p\geq 1$, and assume that, either $s>n/2p$, or $s=n/2p$ and $p>1$. Then $\lambda(x)$ uniquely extends to a bounded operator $\lambda(x):W_2^s(\T^n_\theta)\rightarrow W_2^{-s}(\T^n_\theta)$. 
\end{lemma}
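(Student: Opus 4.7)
The plan is to exhibit $\lambda(x):W_2^s(\T^n_\theta)\to W_2^{-s}(\T^n_\theta)$ as the composition
\[
W_2^s(\T^n_\theta)\,\hookrightarrow\,L_q(\T^n_\theta)\,\xrightarrow{\lambda(x)}\,L_q(\T^n_\theta)^*\,\hookrightarrow\,W_2^{-s}(\T^n_\theta),
\]
where the exponent $q$ is chosen so that the middle arrow is exactly the map produced by Proposition~\ref{prop:left-reg-LpLqLq*} and the Sobolev threshold for the first inclusion matches the hypothesis on $s$. Proposition~\ref{prop:left-reg-LpLqLq*} forces $p^{-1}+2q^{-1}=1$, equivalently $q^{-1}=\tfrac12(1-p^{-1})$, and then the Sobolev threshold $n(\tfrac12-q^{-1})$ simplifies to exactly $n/(2p)$. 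This coincidence is what aligns the constraints and explains the hypothesis on $s$ in the statement.

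\textbf{Generic case $1<p\leq\infty$.} Here I would set $q=2p/(p-1)\in[2,\infty)$ when $p<\infty$ (so that $p^{-1}+2q^{-1}=1$), and $q=2$ when $p=\infty$. Proposition~\ref{prop:Sobolev-embeddingLp} yields a continuous embedding $W_2^s(\T^n_\theta)\hookrightarrow L_q(\T^n_\theta)$ as soon as $s\geq n(\tfrac12-q^{-1})=n/(2p)$, including the equality case. Dualizing gives a continuous map $L_q(\T^n_\theta)^*\to W_2^s(\T^n_\theta)^*=W_2^{-s}(\T^n_\theta)$, and inserting Proposition~\ref{prop:left-reg-LpLqLq*} in the middle produces a bounded operator $W_2^s(\T^n_\theta)\to W_2^{-s}(\T^n_\theta)$ whose norm is bounded by a multiple of $\|x\|_{L_p}$. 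The case $p=\infty$ is just the standard boundedness $\lambda(x):L_2\to L_2$ sandwiched between the trivial inclusions $W_2^s\hookrightarrow L_2$ and $L_2\hookrightarrow W_2^{-s}$.

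\textbf{Endpoint case $p=1$.} For $p=1$ the formula forces $q=\infty$, and the Sobolev embedding into $L_\infty$ is not available at $s=n/2$; this is exactly the reason the statement excludes the borderline case when $p=1$. Under the strict inequality $s>n/2=n/(2p)$, I would instead invoke Proposition~\ref{prop:Sobolev-embeddingC0} to get $W_2^s(\T^n_\theta)\hookrightarrow C(\T^n_\theta)\hookrightarrow L_\infty(\T^n_\theta)$, after which the rest of the composition is identical, the middle map being the pairing $\langle\lambda(x)u,v\rangle=\tau(v^*xu)$ from Proposition~\ref{prop:left-reg-LpLqLq*} specialized to $q=\infty$ (an instance of H\"older's inequality with $x\in L_1$, $u,v\in L_\infty$).

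\textbf{Uniqueness and the main difficulty.} Uniqueness of the extension is immediate once boundedness is established, since $C^\infty(\T^n_\theta)$ (equivalently, the linear span of the $U^k$) is dense in $W_2^s(\T^n_\theta)$ and on this dense subspace all candidate continuous extensions agree by construction. I do not anticipate a substantive obstacle: the argument is essentially Sobolev embedding plus duality plus the $L_p$-action already built in Proposition~\ref{prop:left-reg-LpLqLq*}. The only point requiring care is verifying that the Sobolev and H\"older indices match at the boundary $s=n/(2p)$, which works precisely because the threshold exponent $n/(2p)$ is saturated by the $L_q$ Sobolev embedding whenever $p>1$, and fails only at the corner $p=1$, where the strict inequality $s>n/2$ is forced.
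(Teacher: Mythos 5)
Your proposal is correct and follows essentially the same route as the paper: factor $\lambda(x)$ through $W_2^s\hookrightarrow L_q\xrightarrow{\lambda(x)}L_q^*\hookrightarrow W_2^{-s}$ with $p^{-1}+2q^{-1}=1$, invoke Proposition~\ref{prop:Sobolev-embeddingLp} (and its dual) for $p>1$, and fall back on Proposition~\ref{prop:Sobolev-embeddingC0} for $p=1$ where the $L_\infty$-embedding only holds under the strict inequality $s>n/2$. The only differences are cosmetic: you spell out why the threshold $n(\tfrac12-q^{-1})$ equals $n/(2p)$ and you append a short uniqueness remark, neither of which changes the argument.
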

\begin{proof}
 Suppose that $p^{-1}+2q^{-1}=1$, i.e., $q^{-1}=\frac{1}{2}(1-p^{-1})$. We know  by Proposition~\ref{prop:left-reg-LpLqLq*} that $\lambda(x)$ is a bounded operator from $L_q(\T^n_\theta)$ to $L_q(\T^n_\theta)^*$. 
 
 Suppose that $p>1$ and $s\geq n/2p$. Then $q\in [2,\infty)$, and so by Proposition~\ref{prop:Sobolev-embeddingLp} we have a continuous embedding of $W_2^s(\T^n_\theta)$ into $L_q(\T^n_\theta)$ since $s\geq n/{2p}=\frac{n}{2}(1-q^{-1})$. By duality we get a continuous embedding of $L_q(\T^n_\theta)^*$ into $W_2^{-s}(\T^n_\theta)$. It then follows that $\lambda(x)$ induces a bonded operator $\lambda(x):W_2^s(\T^n_\theta)\rightarrow W_2^{-s}(\T^n_\theta)$.

Assume now that $p=1$ and $s>n/2$. In this case $q=\infty$, and so $\lambda(x)$ is a bounded operator from $L_\infty(\T^n_\theta)$. As $s>n/2$, by Proposition~\ref{prop:Sobolev-embeddingC0} we have a continuous embedding of $W_2^s(\T^n_\theta)$ into $C(\T^n_\theta)$, and hence we get a continuous embedding into $L_\infty(\T^n_\theta)$. By duality we get a continuous embedding of $L_\infty(\T^n_\theta)^*$ into $W_2^{-s}(\T^n_\theta)$. Thus, as above $p>1$, $\lambda(x)$ induces a bounded operator $\lambda(x):W_2^s(\T^n_\theta)\rightarrow W_2^{-s}(\T^n_\theta)$. The proof is complete. 
\end{proof}

Combining the above lemma with the boundedness of the operators $\Lambda^{-s}: L_2(\T^n_\theta)\rightarrow W_2^{s}(\T^n_\theta)$ and $\Lambda^{-s}: W_2^{-s}(\T^n_\theta)\rightarrow L_2(\T^n_\theta)$ we arrive at the following result. 

\begin{proposition}\label{prop:Boundedness.sandwich2}
  Let $x\in L_p(\T^n_\theta)$, $p\geq 1$, and assume that either $s>n/2p$, or $s=n/2p$ and $p>1$. Then the composition $\Lambda^{-s}\lambda(x)\Lambda^{-s}$ makes sense as a bounded operator on $L_2(\T^n_\theta)$. 
\end{proposition}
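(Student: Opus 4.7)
The plan is to read the statement as the composition of three bounded linear maps, each of which has already been produced in the preceding discussion, and to check that this composition indeed extends the formal operator $\Lambda^{-s}\lambda(x)\Lambda^{-s}$ on, say, smooth elements.

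First I would recall that $\Lambda^s\colon W_2^s(\T^n_\theta)\to L_2(\T^n_\theta)$ is an isometric isomorphism by definition of the Sobolev norm, so its inverse $\Lambda^{-s}\colon L_2(\T^n_\theta)\to W_2^s(\T^n_\theta)$ is bounded. Next, by Lemma~\ref{lem:Boundedness.sandwich1}, under the stated hypotheses on $p$ and $s$ the map $\lambda(x)$ extends uniquely to a bounded operator $W_2^s(\T^n_\theta)\to W_2^{-s}(\T^n_\theta)$. Finally, the dual isomorphism $\Lambda^s\colon L_2(\T^n_\theta)\to W_2^{-s}(\T^n_\theta)$ introduced just above~\eqref{eq:Boundedness.Lambdas-dual} is a continuous isomorphism, and hence its inverse $\Lambda^{-s}\colon W_2^{-s}(\T^n_\theta)\to L_2(\T^n_\theta)$, characterized by~\eqref{eq:Boundedness.Lambdas-dual}, is bounded. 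Composing these three bounded maps yields a bounded operator on $L_2(\T^n_\theta)$.

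The only bookkeeping step is to verify that this composition genuinely extends the formal operator $\Lambda^{-s}\lambda(x)\Lambda^{-s}$ considered on smooth elements. For $u\in C^\infty(\T^n_\theta)$, $\Lambda^{-s}u$ lies in $W_2^s(\T^n_\theta)$; the action of $\lambda(x)$ on it is interpreted in $W_2^{-s}(\T^n_\theta)$ consistently with Proposition~\ref{prop:left-reg-LpLqLq*} via~\eqref{eq:left-reg-LpLqLq*}; and the final $\Lambda^{-s}$ brings us back to $L_2(\T^n_\theta)$ through~\eqref{eq:Boundedness.Lambdas-dual}. Since each of the three factors is the unique continuous extension of its densely-defined counterpart, uniqueness of the extension guarantees that the composed bounded operator agrees with the natural composition on the dense subspace $C^\infty(\T^n_\theta)$.

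In short, there is no genuine obstacle at this stage: all the work has been done in Lemma~\ref{lem:Boundedness.sandwich1}, where the Sobolev embeddings $W_2^s(\T^n_\theta)\hookrightarrow L_q(\T^n_\theta)$ for $p>1$ and $W_2^s(\T^n_\theta)\hookrightarrow C(\T^n_\theta)$ for $p=1$ deliver exactly the endpoint behavior that distinguishes the two cases $s=n/2p$, $p>1$ and $s>n/2$, $p=1$. Proposition~\ref{prop:Boundedness.sandwich2} is then a formal three-line composition.
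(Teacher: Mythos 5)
Your proof is correct and follows exactly the route the paper takes: the paper states Proposition~\ref{prop:Boundedness.sandwich2} as an immediate consequence of Lemma~\ref{lem:Boundedness.sandwich1} together with the boundedness of $\Lambda^{-s}\colon L_2(\T^n_\theta)\to W_2^s(\T^n_\theta)$ and $\Lambda^{-s}\colon W_2^{-s}(\T^n_\theta)\to L_2(\T^n_\theta)$, which is precisely the three-factor composition you spell out. Your extra paragraph verifying consistency with the formal operator on $C^\infty(\T^n_\theta)$ is a reasonable elaboration of what the paper leaves implicit.
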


\begin{remark}
 The above result holds \emph{verbatim} if we replace $\Lambda$ by $\sqrt{\Delta}$. 
\end{remark}

\subsection{The operators $\lambda(x)g(-i\nabla)$} 
Recall that, given any $p \in (0,\infty)$, the quasi-Banach space $\ell_p(\mathbb Z^n)$ consists of $p$-summable sequences $a = (a_k)_{k\in \mathbb Z^n} \subset \mathbb C$ with quasi-norm, 
\begin{equation*}
    \|a\|_{\ell_p} := \bigg(\sum_{k\in \mathbb Z^n} |a_k|^p\bigg)^{1/p}, \qquad a=(a_k)\in \ell_p(\Z^n). 
\end{equation*}
For $p\geq 1$ this is a norm, and so in this case $\ell_p(\mathbb Z^n)$ is a Banach space. In addition, we denote by $\ell_{\infty}(\mathbb Z^n)$ the Banach space of bounded sequences with norm, 
\begin{equation*}
    \|a\|_{\ell_\infty} := \sup_{k\in \mathbb Z^n} |a_k|, \qquad a=(a_k)\in \ell_\infty(\Z^n). 
\end{equation*}

For $p\in (0,\infty)$, the weak $\ell_p$-space $\ell_{p,\infty}(\mathbb Z^n)$ is defined as follows. Given any sequence $a=(a_k)_{k\in \Z^n}\in \ell_\infty(\Z^n)$, let $\mu(a)=(\mu_j(a))_{j\geq 0}$ be its symmetric decreasing re-arrangement, i.e., 
\begin{equation*}
 \mu_j(a):= \sup_{k^0, \ldots, k^j\in \Z^n}\min \{|a_{k^0}|,|a_{k^1}|,\ldots,|a_{k^j}|\}. 
\end{equation*}
In other words, $\mu(a)=(\mu_j(a))_{j\geq 0}$ is the non-increasing rearrangement of the sequence $(|a_k|)_{k\in\mathbb Z^n}$. The space $\ell_{p,\infty}(\mathbb Z^n)$ then consists of sequences $a=(a_k)_{k\in \Z^n}\in \ell_\infty(\Z^n)$ such that 
\begin{equation*}
 \mu_j(a) =\op{O}\big(j^{-\frac1{p}}\big)\qquad \text{as $j\rightarrow \infty$}.
\end{equation*}
We equip it with the quasi-norm, 
\begin{equation*}
 \|a\|_{\ell_{p,\infty}}: = \sup_{j\geq 0}\, (j+1)^{1/p} \mu_j(a), \qquad a \in \ell_{p,\infty}(\Z^n). 
\end{equation*}
With this quasi-norm $\ell_{p,\infty}(\Z^n)$ is a quasi-Banach space. In fact, for $p>1$ the above quasi-norm is equivalent to the norm,
\begin{equation*}
  \|a\|_{\ell_{p,\infty}}': = \sup_{N\geq 1} N^{-1+\frac1{p}}\sum_{j<N} \mu_j(a), \qquad a \in \ell_{p,\infty}(\Z^n). 
\end{equation*}
Therefore, for $p>1$ we actually obtain a Banach space. 

 The canonical derivations $\partial_1, \ldots, \partial_n$ pairwise commute with each other. Their joint spectrum is $i\Z^n$. 
 Given any $g \in \ell_\infty(\Itgr^n)$, the operator $g(-i\nabla)$ is given by
    \begin{equation*}
        g(-i\nabla)U^k = g(k)U^k,\quad k \in \Itgr^n.
    \end{equation*}
This is a bounded operator on $L_2(\T^n_\theta)$ with norm equal to $\|g\|_{\ell_\infty}$. In fact, if we also denote by $g$ the operator of multiplication by $g$ on $\ell_2(\Z^n)$, then $\mu(g(-i\nabla))=\mu(g)$. In particular, this implies that 
\begin{itemize}
 \item If $g\in \ell_p$, then $g(-i\nabla)\in \sL_p$ and $\|g(-i\nabla)\|_{\sL_p}=\|g\|_{\ell_p}$. 
 
 \item If $g\in \ell_{p,\infty}$, then $g(-i\nabla)\in \sL_{p,\infty}$ and $\|g(-i\nabla)\|_{\sL_{p,\infty}}=\|g\|_{\ell_{p,\infty}}$. 
\end{itemize}

Let us now look at the mapping properties of the operators $g(-i\nabla)$. To this end, given any $p\in [1,2]$, it is convenient to introduce the space $\hell_p(\T^n_\theta)$ that consists of all $x= \sum \widehat{x}_kU^k$ in $L_2(\T^n_\theta)$ such that $\sum |\widehat{x}_k|^p<\infty$. We equip it with the norm,  
         \begin{equation*}
            \|x\|_{\hell_p}: =\bigg(\sum_{k\in \Z^n} |\widehat{x}_k|^p\bigg)^{\frac1p}, \qquad x\in \hell_p(\T^n_\theta)
        \end{equation*}
 In other words, $\hell_p(\T^n_\theta)$ is the inverse image of $\ell_p(\Z^n)$ under the Fourier transform $x\rightarrow (\hat{x}_k)$. Thus, under the Fourier transform the spaces $\hell_p(\T^n_\theta)$ and $\ell_p(\Z^n)$ are isometrically isomorphic. In particular, $\hell_p(\T^n_\theta)$ is a Banach space. 
 
 For $p=2$ the spaces $\hell_p(\T^n_\theta)$ and $L_2(\T^n)$ and their norms agree as well. Moreover, we have a continuous inclusion 
$\hell_1(\T^n_\theta) \subset C(\T_\theta^n)$, since, for every $x=\sum \hat{x}_kU^k$ in $\hell_1(\T^n_\theta)$, the Fourier series $\sum \hat{x}_kU^k$ converges normally in $C(\T_\theta^n)$, and we have
\begin{equation*}
 \|x\|\leq \sum_{k\in \Z^n}|\hat{x}_k|\|U^k\| =  \sum_{k\in \Z^n}|\hat{x}_k| = \|x\|_{\hell_1}. 
\end{equation*}

\begin{lemma}\label{lem:Holder-gnabla}
 Suppose that $p^{-1}+q^{-1}=r^{-1}$ and $g\in \ell_p(\Z^n)$ with $p\geq 2$ and $1\leq q,r\leq 2.$ Then $g(-i\nabla)$ induces a continuous linear operator 
 $g(-i\nabla): \hell_q(\T^n_\theta)\rightarrow \hell_{r}(\T^n_\theta)$ with norm inequality, 
\begin{equation*}
 \big\| g(-i\nabla)x\big\|_{\hell_r} \leq \|g\|_{\ell_p}\|x\|_{\hell_q} \qquad \forall x\in  \hell_q(\T^n_\theta). 
\end{equation*}
  \end{lemma}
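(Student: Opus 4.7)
The plan is to reduce the claim to the classical Hölder inequality for sequences by exploiting the Fourier transform isomorphism $\hell_s(\T^n_\theta) \simeq \ell_s(\Z^n)$, which is stated just before the lemma for $s\in [1,2]$. Since $\{U^k\}_{k\in\Z^n}$ is an orthonormal basis of $L_2(\T^n_\theta)$ and $g(-i\nabla)$ acts diagonally via $g(-i\nabla)U^k=g(k)U^k$, for $x=\sum \hat{x}_k U^k\in\hell_q(\T^n_\theta)$ one computes formally
\[
  g(-i\nabla)x \;=\; \sum_{k\in\Z^n} g(k)\hat{x}_k\, U^k,
\]
so that on the Fourier side $g(-i\nabla)$ is simply pointwise multiplication by the sequence $(g(k))_{k\in\Z^n}$.

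The key step is then the classical Hölder inequality for sequence spaces: if $a\in\ell_p(\Z^n)$ and $b\in\ell_q(\Z^n)$ with $p^{-1}+q^{-1}=r^{-1}$, then the pointwise product $ab$ lies in $\ell_r(\Z^n)$ and $\|ab\|_{\ell_r}\leq \|a\|_{\ell_p}\|b\|_{\ell_q}$. Applying this with $a=g$ and $b=(\hat{x}_k)$ immediately gives
\[
  \bigg(\sum_{k\in\Z^n} |g(k)\hat{x}_k|^r\bigg)^{1/r} \;\leq\; \|g\|_{\ell_p}\,\|x\|_{\hell_q}.
\]

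It remains to verify that $g(-i\nabla)x$ really lies in $\hell_r(\T^n_\theta)\subset L_2(\T^n_\theta)$ so that the displayed estimate is genuinely the $\hell_r$-norm. Since $r\leq 2$, one has the inclusion $\ell_r(\Z^n)\subset\ell_2(\Z^n)$, and so the sequence $(g(k)\hat{x}_k)_{k\in\Z^n}$ is in $\ell_2(\Z^n)$; its Fourier series therefore defines an element of $L_2(\T^n_\theta)$ which, by construction of $g(-i\nabla)$, coincides with $g(-i\nabla)x$. Its Fourier coefficients lie in $\ell_r$, so $g(-i\nabla)x\in\hell_r(\T^n_\theta)$, and the estimate above becomes the desired norm inequality.

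I do not anticipate any real obstacle: the lemma is Hölder on $\Z^n$ transported through the Fourier isomorphism. The hypotheses $p\geq 2$ and $1\leq q,r\leq 2$ are exactly what makes the spaces $\hell_q(\T^n_\theta)$ and $\hell_r(\T^n_\theta)$ sit inside $L_2(\T^n_\theta)$ as isometric images of $\ell_q(\Z^n)$ and $\ell_r(\Z^n)$, which is the only place where the range of exponents matters.
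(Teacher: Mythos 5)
Your proof is correct and follows essentially the same route as the paper: transport the operator to the Fourier side, where it becomes pointwise multiplication, and apply the classical H\"older inequality on $\ell_p(\Z^n)$. The extra check that $g(-i\nabla)x$ lands in $L_2(\T^n_\theta)$ via $\ell_r\subset\ell_2$ is a sensible bit of care that the paper leaves implicit.
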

\begin{proof}
 Let $x \in  \hell_q(\T^n_\theta)$. We have $g(-i\nabla)x=\sum g(k)\hat{x}(k)U^k$. By assumption $(g(k))_{k\in \Z^n}\in \ell_p(\Z^n)$ and $\hat{x}:=(\hat{x}(k))_{k\in \Z^n}\in \ell_q(\Z^n)$, and so by H\"older's inequality $(g(k)\hat{x}(k))_{k\in \Z^n}\in \ell_r(\Z^n)$, since $p^{-1}+q^{-1}=r^{-1}$. This means that $g(-i\nabla)x\in  \hell_r(\T^n_\theta)$. Moreover, we have the inequalities, 
\begin{equation*}
 \big\| g(-i\nabla)x\big\|_{\hell_r} = \|(g(k)\hat{x}(k))\|_{\ell_r}\leq \|g\|_{\ell_p}\|\hat{x}\|_{\ell_q}=\|g\|_{\ell_p}\|x\|_{\hell_q}. 
\end{equation*}
This proves the result. 
\end{proof}

The following two lemmas give the relationships of the $\hell_p$-spaces with the $L_p$-spaces and Sobolev spaces. 

    \begin{lemma}[Hausdorff-Young Inequality]\label{HausdorffYoung}
        Suppose that $q\geq 2$ and $q^{-1}+r^{-1}=1$. Then we have a continuous inclusion $\hell_r(\T^n_\theta)\subseteq L_{q}(\T^n_\theta)$ 
        with norm inequality,
       \begin{equation}
               \|x\|_{L_{q}} \leq \|x\|_{\hell_r} \qquad \forall x\in \hell_r(\T^n_\theta).
               \label{eq:HausdorffYoung} 
          \end{equation}
    \end{lemma}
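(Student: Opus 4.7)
The plan is to deduce the inequality by complex interpolation between two easy endpoint cases. Consider the linear map $T: (a_k)_{k\in\Z^n}\mapsto \sum_{k\in \Z^n} a_k U^k$, initially defined on finitely supported sequences. Viewed in this way, the claim is exactly that $T$ extends to a bounded operator $\ell_r(\Z^n)\to L_q(\T^n_\theta)$ with operator norm at most $1$ when $q\geq 2$ and $q^{-1}+r^{-1}=1$.

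First, I would verify the two endpoint bounds. For $(q,r)=(2,2)$, the defining property that $(U^k)_{k\in \Z^n}$ is an orthonormal basis of $L_2(\T^n_\theta)$ gives $\|Ta\|_{L_2}=\|a\|_{\ell_2}$ by Parseval's identity; in particular this is just the tautology $\|x\|_{L_2}=\|x\|_{\hell_2}$ built into the definition of $\hell_2$. For $(q,r)=(\infty,1)$, since each $U^k$ is unitary and the Fourier series of $x\in \hell_1(\T^n_\theta)$ converges normally in $C(\T^n_\theta)$, we get $\|Ta\|_{L_\infty}\leq \sum_k |a_k|\,\|U^k\|=\|a\|_{\ell_1}$. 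This is the inclusion $\hell_1(\T^n_\theta)\subset C(\T^n_\theta)$ with constant $1$ that was already recorded just before the lemma.

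Next, I would invoke the Riesz--Thorin interpolation theorem in the noncommutative setting. The scale $L_q(\T^n_\theta)$, $q\in[2,\infty]$, forms an interpolation scale of noncommutative $L_p$-spaces (this is standard for a von Neumann algebra equipped with a normal faithful tracial state; see e.g.\ the references to \cite{Ku:TAMS58, FK:PJM86}), and the scale $\ell_r(\Z^n)$, $r\in[1,2]$, is of course the classical complex interpolation scale. Choosing $\theta\in[0,1]$ so that $\tfrac{1-\theta}{2}+\tfrac{\theta}{\infty}=\tfrac{1}{q}$ (equivalently $\theta=1-2/q$, which gives the dual $r=q/(q-1)\in[1,2]$), interpolation between the two endpoint bounds produces an extension $T:\ell_r(\Z^n)\to L_q(\T^n_\theta)$ with norm at most $1^{1-\theta}\cdot 1^\theta=1$. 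Translated back through the Fourier isomorphism $\hell_r(\T^n_\theta)\simeq \ell_r(\Z^n)$, this yields $\|x\|_{L_q}\leq \|x\|_{\hell_r}$ for every $x$ in the dense subspace of trigonometric polynomials, and the inequality extends to all of $\hell_r(\T^n_\theta)$ by a standard density argument.

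The only genuinely delicate point is justifying the use of Riesz--Thorin interpolation on the noncommutative scale $L_q(\T^n_\theta)$; this is however by now classical, so no technical obstacle is expected. An alternative route would avoid interpolation entirely for integer $q$: if $q=2m$ is an even integer, one can expand $\tau(|Tx|^{2m})=\tau((Tx)^*(Tx)\cdots)$ and use the relations \eqref{eq:NCtori.unitaries-relations} to reduce the estimate to a combinatorial $\ell^r$ bound; but this is less uniform and the interpolation argument above is both cleaner and covers the full range of $q$.
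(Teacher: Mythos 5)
Your proof is correct and takes essentially the same route as the paper: both establish the endpoint bounds $\|x\|_{L_2}=\|x\|_{\hell_2}$ and $\|x\|_{L_\infty}\leq\|x\|_{\hell_1}$ and then apply complex interpolation (Riesz--Thorin/Calder\'on) between $(L_2,L_\infty)$ and $(\ell_2,\ell_1)$ with $\theta=1-2/q$. The only cosmetic difference is that you phrase it via the map $T:\ell_r(\Z^n)\to L_q(\T^n_\theta)$ while the paper works directly with the $\hell_r$-spaces; these are related by the isometric Fourier isomorphism.
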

    \begin{proof}
The proof uses complex interpolation theory essentially in the same way as in the proof of the classical Hausdorff-Young inequality (see, e.g., \cite{BS:PAM88}). For background on interpolation theory we refer to the short survey of Connes~\cite[Appendix IV.B]{Co:NCG} and the references therein. The main reference for complex interpolation theory there is the article of Calder\'on~\cite{Ca:SM64} (see also~\cite{BS:PAM88, KPS:AMS82}).  
  
As mentioned above $L_2(\T^n_\theta)$ and $\hell_2(\T_\theta^n)$ agree as Banach spaces and the inclusion of $\ell_1(\T^n_\theta)$ into $C(\T^n_\theta)$ gives rise to a contraction into $L_\infty(\T^n_\theta)$. In particular, we have the inequalities, 
\begin{gather}
 \|x\|_{L_2}=\|x\|_{\hell_2} \qquad \forall x\in \hell_2(\T^n_\theta) \label{eq:hell.Hausdorff-Young-L2},\\
 \|x\|_{L_\infty}\leq \|x\|_{\hell_1} \qquad \forall x\in \hell_1(\T^n_\theta). \label{eq:hell.Hausdorff-Young-Linfty}
\end{gather}

Suppose that $q\in (2,\infty)$. Then $L_{q}(\T_\theta^n)$ is a complex interpolation space for the pair of Banach spaces $(L_2(\T^n_\theta), L_\infty(\T^n_\theta))$. Namely, in the notation of~\cite[Appendix IV.B]{Co:NCG} we have $L_q(\T_\theta^n)= [L_2(\T^n_\theta), L_\infty(\T^n_\theta)]_\theta$, where $\theta$ is such that $q=(1-\theta)\frac{1}{2}$, i.e., $\theta=1-2q^{-1}$ (see~\cite[Section~4]{DDP:IEOT92} and~\cite{Ku:TAMS58}). Note also that, as $r\in (1,2)$, the space $\ell_r(\T^n_\theta)$ is an exact interpolation space for the pair $(\ell_2(\T^n_\theta),\ell_1(\T^n_\theta))$. Namely, $\ell_r(\T^n_\theta)=[\ell_2(\T^n_\theta),\ell_1(\T^n_\theta)]_\theta$, since we have
\[(1-\theta)\frac{1}{2}+\theta=\frac12(1+\theta)=\frac12(2-2q^{-1})=1-q^{-1}=r^{-1}.\] 
As the Fourier transform induces isometric isomorphisms between the spaces $\ell_p(\Z^n)$ and $\hell_p(\T^n_\theta)$, we see that 
$\hell_r(\T^n_\theta)=[\hell_2(\T^n_\theta),\hell_1(\T^n_\theta)]_\theta$. Combining all this with the inequalities~(\ref{eq:hell.Hausdorff-Young-L2})--(\ref{eq:hell.Hausdorff-Young-Linfty}) and using complex interpolation theory (see~\cite[Theorem IV.B.1]{Co:NCG}) then shows we have a continuous inclusion $\hell_r(\T^n_\theta)\subseteq L_{q}(\T^n_\theta)$ with the norm inequality~(\ref{eq:HausdorffYoung}). The proof is complete. 
\end{proof}

\begin{lemma}\label{lem:embedding-Ws-hell}
 Suppose that $1\leq p< 2$ and $s>\frac{n}{2}(2p^{-1}-1)$. Then we have a continuous inclusion $W_2^s(\T^n_\theta)\subset \hell_p(\T^n_\theta)$. 
\end{lemma}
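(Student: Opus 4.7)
My plan is to prove the inclusion by a direct Hölder estimate on Fourier coefficients, using the definition of $W_2^s(\T^n_\theta)$ from \eqref{sobolev_space_definition}.

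Let $u = \sum_{k\in \Z^n} u_k U^k \in W_2^s(\T^n_\theta)$. Since $p<2$, the exponents $2/p$ and $2/(2-p)$ are conjugate. Writing
\begin{equation*}
 |u_k|^p = \bigl[(1+|k|^2)^{s/2}|u_k|\bigr]^p \cdot (1+|k|^2)^{-sp/2},
\end{equation*}
I would apply Hölder's inequality to obtain
\begin{equation*}
 \sum_{k\in \Z^n} |u_k|^p \leq \bigg(\sum_{k\in \Z^n} (1+|k|^2)^s |u_k|^2\bigg)^{p/2} \bigg(\sum_{k\in \Z^n} (1+|k|^2)^{-\frac{sp}{2-p}}\bigg)^{(2-p)/2}.
\end{equation*}
The first factor on the right is $\|u\|_{W_2^s}^p$. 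The second factor is a constant depending only on $n$, $s$, $p$, and is finite precisely when $sp/(2-p) > n/2$, i.e., $s > (n/2)(2p^{-1}-1)$, which is exactly our hypothesis.

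Taking $p$-th roots yields the estimate
\begin{equation*}
 \|u\|_{\hell_p} \leq C_{n,s,p} \|u\|_{W_2^s},
\end{equation*}
with $C_{n,s,p} = \bigl(\sum_{k\in \Z^n} (1+|k|^2)^{-sp/(2-p)}\bigr)^{(2-p)/(2p)}$. This proves both that $u\in \hell_p(\T^n_\theta)$ and that the inclusion is continuous.

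There is no real obstacle here: the argument is a standard Hölder duality trick, and the threshold on $s$ is determined by convergence of the lattice sum $\sum_k (1+|k|^2)^{-\alpha}$, which converges iff $2\alpha>n$. The hypothesis $s > (n/2)(2p^{-1}-1)$ is precisely this convergence condition with $\alpha = sp/(2-p)$.
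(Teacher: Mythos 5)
Your proof is correct and is essentially identical to the paper's: the paper also writes $|u_k|^p = (1+|k|^2)^{-ps/2}\big[(1+|k|^2)^s|u_k|^2\big]^{p/2}$ and applies H\"older with the conjugate pair $2/p$ and $r = 2/(2-p)$, arriving at the same lattice sum $\sum_k (1+|k|^2)^{-ps/(2-p)}$ whose convergence is equivalent to the stated threshold $s > \tfrac{n}{2}(2p^{-1}-1)$.
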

\begin{proof}
 Let $u\in \sum u_k U^k$ be in $W_2^s(\T^n_\theta)$. By H\"older's inequality we have
\begin{align}
\sum_{k\in \Z^n} |u_k|^p & = \sum_{k\in \Z^n} \left(1+|k|^2\right)^{-\frac{ps}{2}} \big[ \left(1+|k|^2\right)^{s}|u|_k^2\big]^{\frac{p}{2}}  \nonumber \\ 
& \leq  \bigg[\sum_{k\in \Z^n} \left(1+|k|^2\right)^{-\frac{rps}{2}} \bigg]^{\frac1r}\bigg[ 
 \sum_{k\in \Z^n} \left(1+|k|^2\right)^{s}|u|_k^2\bigg]^{\frac{p}{2}}
 \label{eq:W2shellp.Holder-ineq} \\
& \leq  \bigg[\sum_{k\in \Z^n} \left(1+|k|^2\right)^{- \frac{1}{2}psr} \bigg]^{\frac1r}\|u\|_{W_2^s}^p. \nonumber
\end{align}
Here $r$ is such that $r^{-1}+(2p^{-1})^{-1}=1$, i.e., $r^{-1}=1-\frac1{2}p$. By assumption $s>\frac{n}{2}(2p^{-1}-1)=n(1-\frac{1}{2}p)p^{-1}=n(rp)^{-1}$, and hence $psr>n$ and $\sum (1+|k|^2)^{- \frac{1}{2}psr}<\infty$. Combining this with~(\ref{eq:W2shellp.Holder-ineq}) shows that $\sum_{k\in \Z^n} |u_k|^p<\infty$, i.e., $u\in \hell_p(\Z^n)$, and there is a constant $C_{nps}>0$ independent of $u$ such that
\begin{equation*}
 \|u\|_{\hell_p}= \bigg[\sum_{k\in \Z^n} |u_k|^p\bigg]^{\frac1p}\leq C_{nps} \|u\|_{W_2^s}. 
\end{equation*}
This proves the result. 
\end{proof}

\begin{remark}\label{rmk:embedding-Ws-hell-C}
 For $p=1$ and $s>n/2$, we get a continuous inclusion of $W_2^s(\T^n_\theta)$ into $\hell_1(\T^n_\theta)$. Combining it with the continuity of the inclusion of  $\hell_1(\T^n_\theta)$ into $C(\T^n_\theta)$ mentioned above, we get a continuous inclusion $W_2^s(\T^n_\theta)$ into $C(\T^n_\theta)$. 
\end{remark}

We are now in a position to prove the following result. 
\begin{proposition}\label{prop:mapping-gnabla-Lq}
 Suppose that $p^{-1}+q^{-1}=\frac{1}{2}$. 
\begin{enumerate}
 \item If $g\in \ell_p(\Z^n)$, then $g(-i\nabla)$ maps continuously $L_2(\T^n_\theta)$ to $L_q(\T^n)$. 
 
 \item  If $g\in \ell_{p,\infty}(\Z^n)$, then $g(-i\nabla)$ maps continuously $W^s_2(\T^n_\theta)$ to $L_q(\T^n)$ for every $s>0$. 
\end{enumerate}
\end{proposition}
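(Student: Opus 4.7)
The plan is to prove both parts by chaining Lemma~\ref{lem:Holder-gnabla} (a H\"older-type inequality for $g(-i\nabla)$ between $\hell_p$-spaces) with the Hausdorff--Young inclusion of Lemma~\ref{HausdorffYoung}; for~(2) one additionally invokes the Sobolev embedding $W_2^s\subset \hell_{p_0}$ provided by Lemma~\ref{lem:embedding-Ws-hell}.

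For part~(1), I would identify $L_2(\T^n_\theta)=\hell_2(\T^n_\theta)$ and apply Lemma~\ref{lem:Holder-gnabla} with exponents $p,\,2,\,r$ where $r^{-1}=p^{-1}+1/2$; this yields a bounded map $g(-i\nabla):\hell_2(\T^n_\theta)\to\hell_r(\T^n_\theta)$, and the required constraints $p\geq 2$ and $r\in[1,2]$ follow immediately from $p^{-1}+q^{-1}=1/2$. A short computation shows that the H\"older conjugate $r'$ of $r$ equals $q$, and since $q\geq 2$ Lemma~\ref{HausdorffYoung} then provides the continuous inclusion $\hell_r(\T^n_\theta)\hookrightarrow L_q(\T^n_\theta)$. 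Composing the two maps yields the claim; the endpoint $p=2$, $q=\infty$ is already covered since $\hell_1\subset L_\infty$ is explicitly noted.

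For part~(2), the continuous inclusion $W_2^s\subset W_2^{s'}$ for $s\geq s'$ reduces the problem to arbitrarily small $s>0$. The endpoint case $p=\infty$, $q=2$ is trivial: $g(-i\nabla)$ is already bounded on $L_2(\T^n_\theta)$ with norm $\|g\|_{\ell_\infty}$ and $W_2^s\subset L_2$. So I may assume $p<\infty$. I would choose an auxiliary exponent $p_0\in(1,2)$ satisfying the two lower bounds $p_0>2n/(n+2s)$ and $p_0>2p/(p+2)$; both bounds are strictly less than $2$, so such $p_0$ exists. Lemma~\ref{lem:embedding-Ws-hell} then yields $W_2^s\subset\hell_{p_0}(\T^n_\theta)$. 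Defining $p'$ by $(p')^{-1}=1/2+p^{-1}-p_0^{-1}$, the choice of $p_0$ guarantees $p'\in(p,\infty)$ and $p'\geq 2$. Because $\ell_{p,\infty}(\Z^n)\subset\ell_{p'}(\Z^n)$ for every $p'>p$ (immediate from the definition of the weak-$\ell_p$ quasinorm), one has $g\in\ell_{p'}(\Z^n)$. Lemma~\ref{lem:Holder-gnabla} applied with exponents $p',\,p_0,\,r$, where $r^{-1}=(p')^{-1}+p_0^{-1}=1/2+p^{-1}=1-q^{-1}$, then gives a bounded $g(-i\nabla):\hell_{p_0}(\T^n_\theta)\to\hell_r(\T^n_\theta)$, and Lemma~\ref{HausdorffYoung} supplies the final step $\hell_r(\T^n_\theta)\hookrightarrow L_q(\T^n_\theta)$. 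Composition yields the desired continuous operator $W_2^s\to L_q$.

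The only \emph{delicate} point is the book-keeping: I must check that at each stage the auxiliary exponents $p_0,\,p',\,r$ lie in the admissible ranges of the lemma being invoked, uniformly in $p\in[2,\infty)$ and in small $s>0$. This reduces to showing that the intervals $(2n/(n+2s),2)$ and $(2p/(p+2),2)$ have non-empty intersection and that the resulting $r$ stays in $[1,2]$, both of which are elementary one-variable inequalities.
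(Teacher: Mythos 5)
Your proof is correct and follows the same route as the paper's: Lemma~\ref{lem:Holder-gnabla} composed with the Hausdorff--Young inclusion (Lemma~\ref{HausdorffYoung}) for part~(1), preceded by the Sobolev embedding of Lemma~\ref{lem:embedding-Ws-hell} for part~(2), with your $p_0$ and $p'$ playing the role of the paper's $t$ and $p_1$. Your additional book-keeping (the constraint $p_0 > 2p/(p+2)$ guaranteeing $p'<\infty$, and the separate handling of the endpoint $p=\infty$) is sound and slightly more explicit than the paper's presentation, but not a substantive departure.
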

\begin{proof}
Let $g\in \ell_p(\Z^n)$. Lemma~\ref{lem:Holder-gnabla} ensures us that $g(-i\nabla)$ maps continuously $L_2(\T^n_\theta)$ to $\hell_r(\T^n_\theta)$, where $r^{-1}=\frac{1}2+p^{-1}$. Note that $1-r^{-1}=\frac{1}2-p^{-1}=q^{-1}$, and so by Lemma~\ref{HausdorffYoung} we have a continuous inclusion of $\hell_r(\T^n_\theta)$ into $L_q(\T^n_\theta)$. It then follows that $g(-i\nabla)$ maps continuously $L_2(\T^n_\theta)$ to $L_q(\T_\theta^n)$. This proves the first part. 

To prove the 2nd part let $g\in \ell_{p,\infty}(\Z^n)$ and $s>0$. In addition, let $t\in[1,2)$ be such that $s>\frac{n}{2}(2t^{-1}-1)$. Then by Lemma~\ref{lem:embedding-Ws-hell} we have continuous embedding of $W_2^s(\T^n_\theta)$ into $\hell_t(\T^n_\theta)$. Set $p_1=[p^{-1}-(t^{-1}-\frac{1}2)]^{-1}$, i.e., $p_1^{-1}=p^{-1}-(t^{-1}-\frac{1}2)$. Note that $p_1>p$ since $t<2$. We also observe that $p_1^{-1}+t^{-1}=p^{-1}+\frac{1}{2}=1-(\frac{1}{2}-p^{-1})=1-q^{-1}$. The fact $p_1>p$ implies that $g\in \ell_{p_1}(\Z^n)$, and so  Lemma~\ref{lem:Holder-gnabla} ensures us that $g(-i\nabla)$ maps continuously $\hell_t(\T^n_\theta)$ into $\hell_{r}(\T^n_\theta)$ with $r^{-1}=p_1^{-1}+t^{-1}=1-q^{-1}$. Furthermore, by Lemma~\ref{HausdorffYoung} we have a continuous inclusion of 
$\hell_{r}(\T^n_\theta)$ into $L_{q_1}(\T^n_\theta)$. It then follows that $g(-i\nabla)$ maps  continuously $W_2^s(\T^n_\theta)$ to $L_q(\T^n_\theta)$. This proves the 2nd part and completes the proof. 
\end{proof}

By combining Proposition~\ref{prop:left-reg-Lp} and Proposition~\ref{prop:mapping-gnabla-Lq} we arrive at the following result. 

\begin{proposition}\label{eq:boundedness.Lp-ellp}
 Let $p\in [2,\infty)$. 
\begin{enumerate}
\item If $x\in L_p(\T^n_\theta)$ and $g\in \ell_{p}(\Z^n)$, then the operator $\lambda(x)g(-i\nabla)$ is bounded on $L_2(\T^n_\theta)$. 

\item If $x\in L_p(\T^n_\theta)$ and $g\in \ell_{p,\infty}(\Z^n)$, then the domain of   $\lambda(x)g(-i\nabla)$ contains $\cup_{s>0}W_2^s(\T^n_\theta)$. In particular, $\lambda(x)g(-i\nabla)$ is densely defined.
\end{enumerate}
\end{proposition}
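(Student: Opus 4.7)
The plan is to factor the operator $\lambda(x)g(-i\nabla)$ through an intermediate space $L_q(\T^n_\theta)$, where the exponent $q\in [2,\infty]$ is fixed by the relation $p^{-1}+q^{-1}=\tfrac12$. Since $p\in [2,\infty)$, this value of $q$ is always admissible in the two preceding results, and Proposition~\ref{prop:left-reg-Lp} supplies a bounded operator $\lambda(x):L_q(\T^n_\theta)\to L_2(\T^n_\theta)$ for every $x\in L_p(\T^n_\theta)$. The rest of the argument amounts to reading off the mapping properties of $g(-i\nabla)$ from Proposition~\ref{prop:mapping-gnabla-Lq} in each of the two cases and composing.

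For part~(1), given $g\in \ell_p(\Z^n)$, Proposition~\ref{prop:mapping-gnabla-Lq}(1) ensures that $g(-i\nabla)$ is bounded from $L_2(\T^n_\theta)$ into $L_q(\T^n_\theta)$. Composing with $\lambda(x):L_q\to L_2$ immediately yields boundedness of $\lambda(x)g(-i\nabla)$ on $L_2(\T^n_\theta)$.

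For part~(2), given $g\in \ell_{p,\infty}(\Z^n)$, I would fix $s>0$ and invoke Proposition~\ref{prop:mapping-gnabla-Lq}(2) to obtain continuity of $g(-i\nabla):W_2^s(\T^n_\theta)\to L_q(\T^n_\theta)$. Composing once more with $\lambda(x):L_q\to L_2$ shows that the resulting operator is bounded from $W_2^s(\T^n_\theta)$ into $L_2(\T^n_\theta)$; in particular $W_2^s(\T^n_\theta)\subset \op{dom}(\lambda(x)g(-i\nabla))$. Taking the union over $s>0$ yields the first assertion, and density of the domain is then immediate because $\bigcup_{s>0}W_2^s(\T^n_\theta)$ contains $C^\infty(\T^n_\theta)$, which is dense in $L_2(\T^n_\theta)$. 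There is no real obstacle here; the whole argument is a bookkeeping exercise in composing already-proved bounded maps. The only mildly delicate point is the endpoint $p=2$ (hence $q=\infty$), but this is already absorbed into the earlier propositions via the embeddings $\hell_1(\T^n_\theta)\hookrightarrow C(\T^n_\theta)\hookrightarrow L_\infty(\T^n_\theta)$ used in the proofs of Lemma~\ref{HausdorffYoung} and Proposition~\ref{prop:mapping-gnabla-Lq}.
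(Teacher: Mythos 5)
Your proposal is correct and takes the same route as the paper, which presents the proposition as an immediate consequence of combining Proposition~\ref{prop:left-reg-Lp} and Proposition~\ref{prop:mapping-gnabla-Lq} via the factorization through $L_q(\T^n_\theta)$ with $p^{-1}+q^{-1}=\tfrac12$. Your writeup merely makes explicit what the paper leaves as a one-line deduction, and your observation about the endpoint $p=2$ (i.e.\ $q=\infty$) being handled through the chain $\hell_1(\T^n_\theta)\hookrightarrow C(\T^n_\theta)\hookrightarrow L_\infty(\T^n_\theta)$ is accurate.
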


\begin{remark}
 The inclusion $\cup_{s>0}W_2^s(\T^n_\theta)\subset L_2(\T^n_\theta)$ is strict. For instance, if $u=\sum u_k U^k$ with $u_k=(1+|k|)^{-\frac{n}{2}}[\log(1+|k|)]^{-1}$, then $u\in L_2(\T^n_\theta)$, but $u\not \in W_2^s(\T^n_\theta)$ for any $s>0$. 
\end{remark}

\section{Cwikel Estimates on NC Tori} \label{sec:Cwikel}
In this section, we establish Cwikel type estimates on NC tori for Schatten classes and their weak versions. 

\subsection{Schatten classes and weak Schatten classes} \label{subsec:schatten}
We briefly review the main definitions and properties of Schatten classes and weak Schatten classes on Hilbert space. We refer to~\cite{GK:AMS69, Si:AMS05} for further details. 

In what follows we let $\sH$ be a (separable) Hilbert space with inner product $\scal{\cdot}{\cdot}$. We also denote by $\sK$ the (closed) ideal of compact operators on $\sH$. Given any operator $T\in \sK$ we let $\mu=(\mu_j(T))_{j\geq 0}$ be its sequence of singular values, i.e., $\mu_j(T)$ is the $(j+1)$-the eigenvalue counted with multiplicity of the absolute value $|T|=\sqrt{T^*T}$. By the min-max principle~\cite{GK:AMS69, RS4:1978} we have
\begin{align}
 \mu_j(T)&=\min \left\{\|T_{|E^\perp}\|;\ \dim E=j\right\},
 \label{eq:min-max} \\
 & = \min\left\{\|T-R\|;\ \op{rk}(R)\leq j\right\}, \qquad j\geq 0.
 \label{eq:min-max2} 
\end{align}
This implies the following properties of singular values (see, e.g., \cite{GK:AMS69, Si:AMS05}), 
\begin{gather}
 \mu_j(T)=\mu_j(T^*)=\mu_j(|T|), 
  \label{eq:Quantized.properties-mun1}\\
 \mu_{j+k}(S+T)\leq \mu_j(S) + \mu_k(T),
 \label{eq:Quantized.properties-mun2}\\
 \mu_j(ATB)\leq \|A\| \mu_j(T) \|B\|,\qquad A, B\in \sL(\sH).  
\end{gather}
In addition, we have the monotonicity principle, 
\begin{equation}
 0\leq T\leq S \ \Longrightarrow \ \mu_j(T)\leq \mu_j(S) \quad \forall j\geq 0.
 \label{eq:monotonicity-principle} 
\end{equation}

In what follows, we denote by $\sL_1$ the trace-class with norm, 
\begin{equation*}
  \|T\|_{\sL_1}:=\Tr |T|=\sum_{j\geq 0} \mu_j(T), \qquad T\in \sL_1. 
\end{equation*}
Recall that for $p\in (0,\infty)$ the Schatten class $\sL_p$ consist of operators $T\in \sK$ such that $|T|^p$ is trace-class. It is equipped with the quasi-norm, 
\begin{equation*}
 \|T\|_{\sL_p}:=\big(\Tr|T|^p\big)^{\frac1p}= \bigg( \sum_{j\geq 0}\mu_j(T)^p\bigg)^{\frac1p}, \qquad T\in \sL_p. 
\end{equation*}
We obtain a quasi-Banach ideal. For $p\geq 1$ the $\sL_p$-quasi-norm is actually a norm, and so in this case $\sL_p$ is a Banach ideal. 

For $p\in (0,\infty)$, the weak Schatten class  $\sL_{p,\infty}$ is defined by
\begin{equation*}
 \sL_{p,\infty}:=\left\{T\in \sK; \ \mu_j(T)=\op{O}\big(j^{-\frac1p}\big)\right\}. 
\end{equation*}
 This is a two-sided ideal. We equip it with the quasi-norm,
\begin{equation}\label{def:lp_infty_quasinorm}
 \|T\|_{\sL_{p,\infty}}:=\sup_{j\geq 0}\;(j+1)^{\frac{1}{p}}\mu_j(T), \qquad T\in \sL_{p,\infty}. 
\end{equation}
Note that 
\begin{equation}
 T\in \sL_{p,\infty} \Longleftrightarrow |T|^p\in \sL_{1,\infty} \quad \text{and} \quad \|T\|_{\sL_{p,\infty}} = \left(\| |T|^p \|_{\sL_{1,\infty}}\right)^{\frac1{p}}.
 \label{eq:quantized.lpinf-lunf}  
\end{equation}
In addition, for $p> 1$, the quasi-norm $\|\cdot \|_{p,\infty}$ is equivalent to the norm, 
\begin{equation*}
 \|T\|_{\sL_{p,\infty}}':=  \sup_{N\geq 1} N^{-1+\frac1{p}}\sum_{j<N} \mu_j(T), \qquad T\in \sL_{p,\infty}.
\end{equation*}
More precisely (see, e.g., \cite[\S1.7]{Si:AMS05}), we have 
\begin{equation*}
  \|T\|_{\sL_{p,\infty}} \leq \|T\|_{\sL_{p,\infty}}' \leq \frac{p}{p-1}\|T\|_{\sL_{p,\infty}}. 
\end{equation*}
Therefore, in this case $\sL_{p,\infty}$ is a Banach ideal with respect to the equivalent norm $\|\cdot \|_{p,\infty}'$. 

We also have the following version of H\"older's inequality for weak Schatten classes. 

\begin{proposition}[\cite{GK:AMS69, Si:AMS05, SZ:2021}] \label{prop:Cwikel.Holder-weak-Schatten}
 Suppose that $p^{-1}+q^{-1}=r^{-1}$. If $S\in \sL_{p,\infty}$ and $T\in \sL_{q,\infty}$, then $ST\in\sL_{r,\infty}$ with norm inequality,
\begin{equation*}
 \|ST\|_{\sL_{r,\infty}} \leq \gamma(p,q)\|S\|_{\sL_{p,\infty}}  \|T\|_{\sL_{q,\infty}}, \qquad \gamma(p,q):=p^{-\frac1{q}}q^{-\frac1{p}} (p+q)^{\frac1{p}+\frac1{q}}.
\end{equation*}
Moreover, the constant $\gamma(p,q)$ above is optimal.
\end{proposition}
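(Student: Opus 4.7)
The plan is to deduce the inequality from the multiplicative submultiplicativity of singular values followed by a one-parameter optimisation. I would begin by recalling the standard bound $\mu_{j+k}(ST) \leq \mu_j(S)\mu_k(T)$ for all $j,k \geq 0$. This follows directly from \eqref{eq:min-max2}: writing $S = R_S + (S-R_S)$ and $T = R_T + (T-R_T)$ with $\mathrm{rank}\,R_S \leq j$, $\mathrm{rank}\,R_T \leq k$, and $\|S-R_S\|=\mu_j(S)$, $\|T-R_T\|=\mu_k(T)$, the operator $R := R_S T + (S-R_S)R_T$ has rank at most $j+k$ and $\|ST-R\| = \|(S-R_S)(T-R_T)\| \leq \mu_j(S)\mu_k(T)$.

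To avoid integer rounding in the optimisation, I would pass to the generalised singular value $\mu_u(T) := \mu_{\lfloor u\rfloor}(T)$, $u > 0$. This right-continuous non-increasing step function satisfies $\mu_{s+t}(ST) \leq \mu_s(S)\mu_t(T)$ for all $s,t > 0$ (by a case split according to whether $\lfloor s+t\rfloor$ equals $\lfloor s \rfloor + \lfloor t \rfloor$ or $\lfloor s \rfloor + \lfloor t \rfloor + 1$), obeys $\mu_u(T) \leq \|T\|_{\sL_{p,\infty}}\,u^{-1/p}$ since $\lfloor u \rfloor + 1 \geq u$, and recovers $\|T\|_{\sL_{p,\infty}} = \sup_{u > 0}u^{1/p}\mu_u(T)$. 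For any $u > 0$ and any splitting $u = s+t$ with $s,t > 0$, we then obtain
\[
\mu_u(ST) \leq \mu_s(S)\mu_t(T) \leq \|S\|_{\sL_{p,\infty}}\|T\|_{\sL_{q,\infty}}\, s^{-1/p}t^{-1/q}.
\]
A Lagrange-multiplier computation on $(1/p)\log s + (1/q)\log t$ subject to $s+t = u$ locates the minimum of $s^{-1/p}t^{-1/q}$ at $s = qu/(p+q)$, $t = pu/(p+q)$, where a direct substitution gives $\gamma(p,q)\,u^{-1/r}$ (using $1/p + 1/q = 1/r$). Multiplying by $u^{1/r}$ and taking the supremum over $u > 0$ yields $\|ST\|_{\sL_{r,\infty}} \leq \gamma(p,q)\|S\|_{\sL_{p,\infty}}\|T\|_{\sL_{q,\infty}}$.

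For the sharpness of $\gamma(p,q)$, I would exhibit a family of operator pairs whose product asymptotically saturates the bound. Note that diagonal operators sharing a common eigenbasis do not suffice: if $\mu_j(S) = (j+1)^{-1/p}$ and $\mu_j(T) = (j+1)^{-1/q}$ commute, then $\mu_j(ST) = (j+1)^{-1/r}$ and one obtains $\|ST\|_{\sL_{r,\infty}} = 1 < \gamma(p,q)$, so the extremising configuration must genuinely mix the eigenbases of $S$ and $T$. I expect this sharpness analysis to be the main obstacle of the proof: while the upper bound is essentially a two-line computation once the submultiplicativity and the Lagrange optimisation are in place, constructing an explicit saturating family requires a delicate asymptotic argument, and I would follow the treatment in~\cite{SZ:2021} for the full details.
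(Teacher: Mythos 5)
Your argument for the upper bound is correct. The paper does not include a proof of this proposition—it is quoted from the references \cite{GK:AMS69, Si:AMS05, SZ:2021}, with the optimality of $\gamma(p,q)$ explicitly attributed to \cite{SZ:2021}—so there is no proof in the paper to compare against. Your route (Ky Fan's submultiplicativity $\mu_{j+k}(ST)\le\mu_j(S)\mu_k(T)$, passage to the generalized singular value $\mu_u:=\mu_{\lfloor u\rfloor}$, and optimization over the split $u=s+t$) is precisely the computation that produces the sharp constant, and you correctly defer the optimality to \cite{SZ:2021}, just as the paper does. One small simplification: the case split for $\mu_{s+t}(ST)\le\mu_s(S)\mu_t(T)$ is unnecessary, since $\lfloor s+t\rfloor\ge\lfloor s\rfloor+\lfloor t\rfloor$ always holds and $j\mapsto\mu_j(ST)$ is non-increasing, so $\mu_{\lfloor s+t\rfloor}(ST)\le\mu_{\lfloor s\rfloor+\lfloor t\rfloor}(ST)\le\mu_{\lfloor s\rfloor}(S)\mu_{\lfloor t\rfloor}(T)$ in one step.
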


\begin{remark}
 The fact that the constant $\gamma(p,q)$ is optimal is established in~\cite{SZ:2021}. 
\end{remark}

\begin{remark}\label{eq:Cwikel.Holder-constant.pr}
 In terms of $p$ and $r$, we have $\gamma(p,q)=p^{1/r}r^{-1/p}(p-r)^{1/p-1/r}$. 
\end{remark}

Recall that, given non-increasing sequences of non-negative numbers $a =(a_j)_{j\geq 0} $ and $b =(b_j)_{j\geq 0} $, the  Hardy-Littlewood-P\'olya
submajorization  $a\prec\prec b$ means that
\begin{equation*}
 \sum_{j=0}^Na_j \leq \sum_{j=0}^Nb_j \quad \forall N\geq 0  \quad \text{and} \quad  \sum_{j=0}^\infty a_j = \sum_{j=0}^\infty b_j < \infty.
\end{equation*}
The  Hardy-Littlewood-P\'olya majorization  $a\prec b$ means that
\begin{equation*}
 a\prec \prec b \qquad \text{and} \qquad  \sum_{j=0}^\infty a_j = \sum_{j=0}^\infty b_j < \infty.
\end{equation*}
Given compact operators $S$ and $T$ on Hilbert space we shall write $S\prec \prec T$ (resp., $S\prec  T$) if $\mu(S) \prec \prec \mu(T)$ (resp., $\mu(S)\prec \mu(T)$). 

The Schatten $\sL_p$-norms are monotone under submajorization
\begin{equation}\label{schatten_is_fully_symmetric}
    T\prec\prec S \Longrightarrow \|T\|_{\sL_p}\leq \|S\|_{\sL_p}.
\end{equation}
Note that if $T\prec\prec S,$ then $\|T\|_{\sL_{p,\infty}}'\prec\prec \|S\|_{\sL_{p,\infty}}'.$ 
Therefore, for $p>1$, we have
\begin{align}\label{L_p_infty_is_fully_symmetric}
    T\prec\prec S \Longrightarrow \|T\|_{\sL_{p,\infty}} \leq \frac{p}{p-1}\|S\|_{\sL_{p,\infty}}.
\end{align}

\subsection{Cwikel type estimates} 
We shall distinguish between the following cases:
\begin{itemize}
 \item \underline{Case I:}  $\lambda(x)g(-i\nabla)\in \sL_p$ with $p\geq 2$ and $\lambda(x)g(-i\nabla)\in \sL_{p,\infty}$ with $p>2$.\smallskip 
 
 \item   \underline{Case II:}  $\lambda(x)g(-i\nabla)\in \sL_p$ and $\lambda(x)g(-i\nabla)\in \sL_{p,\infty}$ with $0<p<2$.\smallskip
 
 \item \underline{Case III:} $\lambda(x)g(-i\nabla)\in \sL_{2,\infty}$. 
\end{itemize}

The first case is dealt with by the following result. 

\begin{theorem}[see also~\cite{LeSZ:2020, MSX:CMP19}]\label{Cwikel2+}
        The following holds. 
        \begin{enumerate}
        \item If $x \in L_p(\T^n_\theta)$ and $g \in \ell_{p}(\Itgr^n)$ with $p\geq 2$, then $\lambda(x)g(-i\nabla)\in \sL_p$, and we have 
                   \begin{equation}
                \|\lambda(x)g(-i\nabla)\|_{\sL_p} \leq \|x\|_{L_p}\|g\|_{\ell_p}.
                \label{eq:Cwikel2+Lp}
            \end{equation}
    
       \item Assume $p>2$. There is $c_{+}(p)>0$ such that, if $x \in L_p(\T^n_\theta)$ and $g \in \ell_{p,\infty}(\Itgr^n)$ with $p>2$, then $\lambda(x)g(-i\nabla)\in \sL_{p,\infty}$, and we have 
                   \begin{equation}
                \|\lambda(x)g(-i\nabla)\|_{\sL_{p,\infty}} \leq c_{+}(p)
               \|x\|_{L_p}\|g\|_{\ell_{p,\infty}}.
                \label{eq:Cwikel2+Lpinf}
            \end{equation}
            Moreover, the best constant $c_+(p)$ satisfies  
\begin{equation*}
 c_+(p)\leq \left(\frac{130p}{p-2}\right)^{\frac12}. 
\end{equation*}
\end{enumerate}
\end{theorem}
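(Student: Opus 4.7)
The plan is to treat Part~(1) by standard interpolation between endpoint Schatten bounds, and Part~(2) by a dyadic decomposition adapted from the approach of Levitina--Sukochev--Zanin, with constants tracked explicitly.

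For Part~(1), at $p = 2$ I would compute the Hilbert--Schmidt norm directly on the orthonormal basis $(U^k)_{k \in \Z^n}$ of $L_2(\T^n_\theta)$. Since $g(-i\nabla) U^k = g(k) U^k$ and right multiplication by the unitary $U^k$ is an isometry of $L_2(\T^n_\theta)$, one has $\|\lambda(x) U^k\|_{L_2} = \|x U^k\|_{L_2} = \|x\|_{L_2}$, whence
\begin{equation*}
\|\lambda(x) g(-i\nabla)\|_{\sL_2}^2 = \sum_{k \in \Z^n} |g(k)|^2 \|x\|_{L_2}^2 = \|g\|_{\ell_2}^2 \|x\|_{L_2}^2,
\end{equation*}
which is in fact an equality in~\eqref{eq:Cwikel2+Lp}. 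At $p = \infty$ the bound is immediate from $\|\lambda(x)\| = \|x\|_{L_\infty}$ and $\|g(-i\nabla)\| = \|g\|_{\ell_\infty}$. For $p \in (2, \infty)$ I would then apply Stein's complex interpolation theorem to the analytic family
\begin{equation*}
T_z := \lambda\bigl(u_x |x|^{pz/2}\bigr) \bigl(\sigma_g |g|^{pz/2}\bigr)(-i\nabla), \qquad 0 \leq \op{Re} z \leq 1,
\end{equation*}
where $x = u_x|x|$ and $g = \sigma_g|g|$ are polar decompositions, normalized so that $\|x\|_{L_p} = \|g\|_{\ell_p} = 1$. The endpoints give $\|T_z\| \leq 1$ on $\op{Re} z = 0$ and $\|T_z\|_{\sL_2} \leq 1$ on $\op{Re} z = 1$, and $z = 2/p$ recovers $T = \lambda(x)g(-i\nabla)$, yielding the desired $\sL_p$-bound.

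For Part~(2), $\sL_{p,\infty}$ is not a complex interpolation space between $\sL_2$ and $\sL_\infty$, so I would instead follow the dyadic decomposition strategy of Levitina--Sukochev--Zanin. Normalize $\|g\|_{\ell_{p,\infty}} = \|x\|_{L_p} = 1$ and assume $g \geq 0$, so $g \leq \mu_0(g) \leq 1$. For $\ell \geq 1$ set $S_\ell := \{2^{-\ell} < g \leq 2^{-\ell+1}\}$, so that $|S_\ell| \leq 2^{\ell p}$ by the definition of $\ell_{p,\infty}$, and put $g_\ell := g \car_{S_\ell}$, $T_\ell := \lambda(x) g_\ell(-i\nabla)$. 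Each $T_\ell$ has rank at most $|S_\ell|$, and Part~(1) at $p = 2$ together with the embedding $L_p \subset L_2$ (valid since $\tau(1) = 1$) yields
\begin{equation*}
\|T_\ell\|_{\sL_2} \leq \|x\|_{L_2} \|g_\ell\|_{\ell_2} \leq 2^{-\ell+1} |S_\ell|^{1/2} \leq 2 \cdot 2^{\ell(p-2)/2}.
\end{equation*}
Combining this with the Chebyshev bound $\mu_j(T_\ell) \leq \|T_\ell\|_{\sL_2}(j+1)^{-1/2}$ and the subadditivity $\mu_{j_1 + \cdots + j_N}(T_1 + \cdots + T_N) \leq \sum_i \mu_{j_i}(T_i)$ of singular values, one allocates the index $j$ dyadically across the blocks $T_\ell$ and sums a geometric series of the form $\sum_\ell 2^{-\ell(p-2)}$ to conclude $\mu_j(T) \leq C(p)(j+1)^{-1/p}$.

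The hard part is not the scheme itself but extracting the \emph{explicit} constant $c_+(p)^2 \leq 130\,p/(p-2)$. The $(p-2)^{-1}$ blow-up as $p \downarrow 2$ is unavoidable, since the series $\sum_\ell 2^{-\ell(p-2)}$ diverges at $p = 2$; this is precisely why the $\sL_{2,\infty}$ case must be handled by a separate interpolation argument in the paper. Pinning down the numerator $130$ requires a careful optimisation of the index splitting $j = \sum_\ell j_\ell$ across dyadic blocks, a clean passage between the quasinorm $\|\cdot\|_{\sL_{p,\infty}}$ and its equivalent norm $\|\cdot\|'_{\sL_{p,\infty}}$ (whose ratio is $p/(p-1)$), and sharp choices of the dyadic thresholds defining the $S_\ell$.
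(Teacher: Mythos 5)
Your Part~(1) is correct but takes a genuinely different route. The paper proves~\eqref{eq:Cwikel2+Lp} by combining the submajorization $\mu(\lambda(x)g(-i\nabla))^2 \prec\prec 130\,\mu(x\otimes g)^2$ from~\cite[Lemma~3.3]{LeSZ:2020} with a tensor-power (``$N$-th root'') trick on $\T^{Nn}_{\theta^{\oplus N}}$ to eliminate the constant $\sqrt{130}$; you instead use Stein complex interpolation of the analytic family $T_z$ between the $\sL_\infty$ endpoint at $\Re z=0$ and the Hilbert--Schmidt endpoint at $\Re z=1$, which is the classical Kato--Seiler--Simon argument. Both give the sharp constant $1$; your route is shorter and more self-contained (modulo the usual reduction to simple $x$ so that $|x|^{z}$ is a bona fide bounded analytic family), whereas the paper's route piggybacks on the same lemma it needs anyway for Part~(2).

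Part~(2), however, has a genuine gap. A single dyadic decomposition of $g$ cannot close the argument when $p>2$. Indeed, with your blocks $S_\ell=\{2^{-\ell}<g\le 2^{-\ell+1}\}$ one has $|S_\ell|\lesssim 2^{\ell p}$ and hence $\|T_\ell\|_{\sL_2}\le \|x\|_{L_2}\|g_\ell\|_{\ell_2}\lesssim 2^{\ell(p-2)/2}$, which is \emph{unbounded} in $\ell$. Thus for the tail blocks (where the rank bound is useless because $\operatorname{rank} T_\ell\le|S_\ell|$ is enormous) the Chebyshev estimate $\mu_{j_\ell}(T_\ell)\le\|T_\ell\|_{\sL_2}(j_\ell+1)^{-1/2}$ supplies contributions that blow up; no admissible choice of indices $j_\ell$ with $\sum j_\ell\le j$ can then produce a convergent series, let alone the $\sum_\ell 2^{-\ell(p-2)}$ you quote. (To see the trouble concretely, note the argument you describe uses only $\|x\|_{L_2}$, and if it worked it would yield $\lambda(x)g(-i\nabla)\in\sL_{p,\infty}$ for every $x\in L_2(\T^n_\theta)$ and $g\in\ell_{p,\infty}$ with $p>2$, which is false.) The classical Cwikel scheme for $p>2$ requires a \emph{simultaneous} dyadic decomposition of $x$ in addition to $g$, so that the tail blocks where $g$ is small can be paired with a bounded truncation of $x$ and controlled in operator norm. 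The paper sidesteps this double decomposition by invoking \cite[Lemma~3.3]{LeSZ:2020}, whose submajorization by $\mu(x\otimes g)^2$ already encodes the joint dyadic structure; the explicit factor $130$ comes directly from that lemma, and the remaining factor $p/(p-2)$ comes from the norm-equivalence constant for $\sL_{p/2,\infty}$ via~\eqref{L_p_infty_is_fully_symmetric} applied at exponent $p/2$.
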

\begin{proof}
The proof of the Cwikel estimate~(\ref{eq:Cwikel2+Lp}) for $p=2$ is elementary and is presented in the proof of~\cite[Theorem~3.1]{MSX:CMP19}. In fact, in this case we actually have an equality, 
\[
    \|\lambda(x)g(-i\nabla)\|_{\mathcal{L}_2} = \|x\|_{L_2}\|g\|_{\ell_2}, \qquad x\in L_2(\T^n_\theta), \ g\in \ell_2(\Z^n). 
\]
In particular, Hypothesis~3.1 of~\cite{LeSZ:2020} is satisfied with a constant equal to 1. 

It follows from \cite[Lemma 3.3]{LeSZ:2020} that if $x \in L_2(\T^n_\theta)$ and $g\in \ell_\infty(\Z^n)$,  then
\begin{equation}\label{eq:Cwikel.sub-majorization}
     \mu(\lambda(x)g(-i\nabla))^2 \prec\prec 130\mu(x\otimes g)^2.
\end{equation}
Thus, if $x\in L_\infty(\T^n_\theta)$ and $g\in \ell_p(\Z^n)$ are positive, then by using~\eqref{schatten_is_fully_symmetric} we get
\[
    \big\||\lambda(x)g(-i\nabla)|^2\big\|_{L_{\frac{p}2}} \leq 130\big\||x|^2\otimes |g|^2\big\|_{L_{\frac{p}2}\big(L_{\infty}(\T^n_\theta)\overline{\otimes} \ell_{\infty}(\Z^n)\big)} 
     \leq 130 \|x\|_{L_p}^2\|g\|_{\ell_p}^2. 
\]
As $   \||\lambda(x)g(-i\nabla)|^2\|_{L_{\frac{p}{2}}} = \|\lambda(x)g(-i\nabla)\|_{L_{p}}^2$ we get
\begin{equation*}\label{MSX_cwikel_with_sqrt_532}
   \big\|\lambda(x)g(-i\nabla)\big\|_{L_p} \leq \sqrt{130} \|x\|_{L_p}\|g\|_{\ell_p}.
\end{equation*}
Let $N\geq 1,$ and let $\theta^{\oplus N}$ be the $(Nn)\times (Nn)$-matrix given by the direct sum of $N$ copies of $\theta$.  Note that
\begin{equation}\label{eq:Cwikel.powersN1}
  \|x\|_{L_p(\T^n_\theta)} = \|x^{\otimes N}\|_{L_p\big(\T^{Nn}_{\theta^{\oplus N}}\big)}^{\frac1N}, \qquad  \|g\|_{\ell_p(\Itgr^n)} = \|g^{\oplus N}\|_{\ell_p(\Itgr^{Nn})}^{\frac1N}.
\end{equation}
Moreover, as $ L_2(\T^{Nn}_{\theta^{\oplus N}}) = L_2(\T^n_\theta)^{\otimes N}$, where $\otimes$ is the Hilbert space tensor product, we also have
\begin{equation}\label{eq:Cwikel.powersN2}
 \|T\|_{\sL_p(\T^n_\theta)} = \|T\|_{\sL_p\big(L_2\big(\T^{Nn}_{\theta^{\oplus N}}\big)\big)}^{\frac1N} \qquad \forall T\in \sL_p(L_2(\T^n_\theta)). 
\end{equation}

The inequality~(\ref{MSX_cwikel_with_sqrt_532}) holds on $\T^n_{\theta^{\oplus N}}$. Thus, if $x\in L_p(\T^n_\theta)$ and $g\in \ell_p(\Z^n)$, then 
\begin{align*}
  \big\|\ (\lambda(x)g(-i\nabla))^{\otimes N}\big\|_{\sL_{p}\big(L_2\big(\T^{nN}_\theta\big)\big)}&  = 
    \big \|\lambda(x^{\otimes N})g^{\otimes N}(-i\nabla)\big\|_{\mathcal{L}_{p}(L_2(\T^{nN}_\theta))}\\
     & \leq \sqrt{130}\big\|x^{\otimes N}\big\|_{L_p\big(\T^{Nn}_{\theta^{\oplus N}}\big)}\big\|g^{\oplus N}\big\|_{\ell_p(\Z^{Nn})}.
\end{align*}
Combining this with~(\ref{eq:Cwikel.powersN1})--(\ref{eq:Cwikel.powersN2}) gives 
\[
    \big\|\lambda(x)g(-i\nabla)\big\|_{\sL_p} \leq 130^{\frac{1}{2N}}\|x\|_{L_p}\|g\|_{\ell_{p}}.
\]
Letting $N\to\infty$ then yields 
\[
    \big\|\lambda(x)g(-i\nabla)\big\|_{\sL_p} \leq \|x\|_{L_p}\|g\|_{\ell_{p}}.
\]
This is the inequality~(\ref{eq:Cwikel2+Lp}). 

To estimate the weak Schatten norms we can do a similar argument using \eqref{L_p_infty_is_fully_symmetric} in place of \eqref{schatten_is_fully_symmetric}. Let $p>2$. By using~(\ref{L_p_infty_is_fully_symmetric}) and~(\ref{eq:Cwikel.sub-majorization}) and arguing as above shows that if $x\in L_\infty(\T^n_\theta)$ and $g\in \ell_{p,\infty}(\Z^n)$ are positive, then 
\begin{equation}\label{eq:Cwikel2+Lpinf-532}
    \big\|\lambda(x)g(-i\nabla)\big\|_{\sL_{p,\infty}} \leq \left(\frac{130p}{p-2}\right)^{\frac12}\|x\|_{L_p}\|g\|_{\ell_{p,\infty}}.
\end{equation}
As above the inequality continues to hold if $x$ and $g$ are not positive. Thanks to the density of $L_\infty(\T^n_\theta)$ in $L_p(\T^n_\theta)$, it follows that, if $x\in L_p(\T^n_\theta)$ and $g\in \ell_{p,\infty}(\Z^n)$, then $\lambda(x)g(-i\nabla)\in \sL_{p,\infty}$ and the inequality~(\ref{eq:Cwikel2+Lpinf-532}) holds. This gives the 2nd part of Theorem~\ref{Cwikel2+}. The proof is complete.  
\end{proof}

\begin{remark}
 The estimates~(\ref{eq:Cwikel2+Lp})--(\ref{eq:Cwikel2+Lpinf}) are established in~\cite[Theorem~3.1]{MSX:CMP19} up to unspecified constants depending on $p$. This is a consequence of the results of~\cite{LeSZ:2020}. More generally, the results of~\cite{LeSZ:2020} allows us to get Cwikel-type estimates for any interpolation space between $\sL_2$ and $\sK$ (see~\cite{MSX:CMP19}). 
\end{remark}

\begin{remark}
 The analogue of~(\ref{eq:Cwikel2+Lp}) on $\R^n$ is known as the Kato-Seiler-Simon inequality~\cite{SeSi:CMP75}. 
\end{remark}

Combining Theorem~\ref{Cwikel2+} with Proposition~\ref{prop:Sobolev-embeddingLp} we immediately obtain the following statement. 

\begin{corollary}\label{Cwikel2+Sobolev}
  Let $s\in (2,\infty)$ and set $s=n(1/2-p^{-1})$. The following holds. 
        \begin{enumerate}
        \item If $x \in W_2^s(\T^n_\theta)$ and $g \in \ell_{p}(\Itgr^n)$, then $\lambda(x)g(-i\nabla)\in \sL_p$, and we have 
                   \begin{equation*}
                \|\lambda(x)g(-i\nabla)\|_{\sL_p} \leq c_p\|x\|_{W_2^s}\|g\|_{\ell_p}.
            \end{equation*}
    
       \item If $x \in W_2^s(\T^n_\theta)$ and $g \in \ell_{p,\infty}(\Itgr^n)$, then $\lambda(x)g(-i\nabla)\in \sL_{p,\infty}$, and we have 
                   \begin{equation*}
                \|\lambda(x)g(-i\nabla)\|_{\sL_{p,\infty}} \leq c_p\|x\|_{W_2^s}\|g\|_{\ell_{p,\infty}}.
            \end{equation*}
\end{enumerate}
    \end{corollary}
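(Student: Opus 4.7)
The plan is to simply combine the Sobolev embedding stated in Proposition~\ref{prop:Sobolev-embeddingLp} with the two parts of Theorem~\ref{Cwikel2+}; no additional ideas seem to be required. First I would observe that the choice $s=n(1/2-p^{-1})$ corresponds precisely to the borderline (continuous) embedding case of Proposition~\ref{prop:Sobolev-embeddingLp}, so there is a constant $C_{n,p}>0$ (depending only on $n$ and $p$) such that
\begin{equation*}
\|x\|_{L_p(\T^n_\theta)}\leq C_{n,p}\,\|x\|_{W_2^s(\T^n_\theta)}\qquad \forall x\in W_2^s(\T^n_\theta).
\end{equation*}

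Next, given $x\in W_2^s(\T^n_\theta)$, this inclusion guarantees $x\in L_p(\T^n_\theta)$, so for part~(1) I would apply Theorem~\ref{Cwikel2+}(1): for $g\in \ell_p(\Z^n)$ we obtain $\lambda(x)g(-i\nabla)\in\sL_p$ together with the bound
\begin{equation*}
\|\lambda(x)g(-i\nabla)\|_{\sL_p}\leq \|x\|_{L_p}\|g\|_{\ell_p}\leq C_{n,p}\|x\|_{W_2^s}\|g\|_{\ell_p}.
\end{equation*}
For part~(2) I would apply Theorem~\ref{Cwikel2+}(2) in exactly the same way (using $p>2$ to access $\sL_{p,\infty}$): for $g\in\ell_{p,\infty}(\Z^n)$,
\begin{equation*}
\|\lambda(x)g(-i\nabla)\|_{\sL_{p,\infty}}\leq c_{+}(p)\|x\|_{L_p}\|g\|_{\ell_{p,\infty}}\leq c_{+}(p)C_{n,p}\|x\|_{W_2^s}\|g\|_{\ell_{p,\infty}}.
\end{equation*}
In both cases the final constant depends only on $n$ and $p$ and can be absorbed into the symbol $c_p$ used in the statement.

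There is essentially no obstacle here: the corollary is a direct consequence of combining two previously established tools. The only minor subtlety is checking that one is allowed to invoke the \emph{borderline} (equality) case of the Sobolev embedding, which is exactly what Proposition~\ref{prop:Sobolev-embeddingLp} provides (it is only the compactness that requires the strict inequality, and compactness is not needed here). One should also note that the hypothesis $p>2$ (implicitly built into the assumption that $s=n(1/2-p^{-1})>0$, and required explicitly in part~(2)) is consistent with the ranges in Theorem~\ref{Cwikel2+}.
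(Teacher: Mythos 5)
Your proposal is correct and is exactly the route the paper intends: the paper states that the corollary follows ``immediately'' by combining Theorem~\ref{Cwikel2+} with the Sobolev embedding of Proposition~\ref{prop:Sobolev-embeddingLp}. Your observation that only the borderline continuous embedding (not compactness) is needed, and that $p>2$ follows from the hypotheses, is a sensible clarification but changes nothing substantive.
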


Case~II  is dealt with by the following result, the proof of which is postponed to next section.  

\begin{theorem}\label{Cwikel2-}
        Suppose that $0<p<2$. The following holds.
        \begin{enumerate}
            \item Let $x\in L_2(\T^n_\theta)$ and $g\in \ell_p(\Z^n)$. Then $\lambda(x)g(-i\nabla)\in \sL_p$, and we have
            \begin{equation*}
                \|\lambda(x)g(-i\nabla)\|_{\sL_p} \leq \|x\|_{L_2}\|g\|_{\ell_p}.
            \end{equation*}
            \item There is $c_{-}(p)>0$ such that, if $x\in L_2(\T^n_\theta)$ and $g\in \ell_{p,\infty}(\Z^n)$, then $\lambda(x)g(-i\nabla)\in \sL_{p,\infty}$, and we have
            \begin{equation}\label{eq:Cwikel2-Lpinf} 
                \|\lambda(x)g(-i\nabla)\|_{\sL_{p,\infty}} \leq c_{-}(p)\|x\|_{L_2}\|g\|_{\ell_{p,\infty}}.
            \end{equation}
            Moreover, the best constant $c_{-}(p)$ satisfies 
                 \begin{equation*}
                          c_{-}(p)\leq 2^{\frac1p}(2-p)^{-\frac1p}. 
                 \end{equation*}
        \end{enumerate}
    \end{theorem}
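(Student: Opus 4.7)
The plan is to establish part~(1) by combining the Hilbert--Schmidt identity at $p=2$ with a direct rank-one bound at $p=1$ via complex interpolation, and then derive part~(2) from part~(1) by a real-interpolation/dyadic argument.

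For part~(1), observe that at $p=2$ we have an exact identity: since $\{U^k\}_{k\in\Z^n}$ is an orthonormal basis of $L_2(\T^n_\theta)$ diagonalizing $g(-i\nabla)$ and right-multiplication by $U^k$ is an $L_2$-isometry,
\[
\|\lambda(x)g(-i\nabla)\|_{\sL_2}^2 = \sum_{k\in\Z^n}\|g(k)\,xU^k\|_{L_2}^2 = \|g\|_{\ell_2}^2\,\|x\|_{L_2}^2.
\]
For the other endpoint, let $P_k$ denote the rank-one projection onto $\C\cdot U^k$, so that $g(-i\nabla)=\sum_k g(k)P_k$. Each $\lambda(x)P_k$ is a rank-one operator whose unique singular value equals $\|xU^k\|_{L_2}=\|x\|_{L_2}$. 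For $0<p\leq 1$ the $p$-triangle inequality in $\sL_p$ yields
\[
\|\lambda(x)g(-i\nabla)\|_{\sL_p}^{p}\leq \sum_{k\in\Z^n}|g(k)|^p\,\|\lambda(x)P_k\|_{\sL_p}^p=\|g\|_{\ell_p}^p\,\|x\|_{L_2}^p,
\]
which proves part~(1) for $p\leq 1$ and, in particular, shows that $g\mapsto\lambda(x)g(-i\nabla)$ maps $\ell_1$ into $\sL_1$ with norm $\leq \|x\|_{L_2}$. For $p\in(1,2)$, complex interpolation between the endpoints $(\ell_1,\sL_1)$ and $(\ell_2,\sL_2)$---both with endpoint norm $\|x\|_{L_2}$---preserves the constant and gives part~(1) with constant $1$ on the whole range $0<p<2$.

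For part~(2), by part~(1) the linear map $g\mapsto\lambda(x)g(-i\nabla)$ is bounded $\ell_q\to\sL_q$ with norm $\|x\|_{L_2}$ for every $q\in(0,2]$. The abstract $\ell_{p,\infty}\to\sL_{p,\infty}$ boundedness follows by real interpolation (e.g.\ the $K$-method with endpoints $q_0<p$ and $q_1=2$). To obtain the explicit constant I would proceed directly: for each $N\geq 1$ split $g=g_N+g^N$ where $g_N$ retains the $N$ largest values of $|g|$. Then $\mathrm{rk}(\lambda(x)g_N(-i\nabla))\leq N$, while on the tail the Hilbert--Schmidt identity combined with the weak-$\ell_p$ estimate
\[
\|g^N\|_{\ell_2}^{2}\leq \|g\|_{\ell_{p,\infty}}^{2}\sum_{j\geq N}(j+1)^{-2/p}\leq \frac{p}{2-p}\,\|g\|_{\ell_{p,\infty}}^{2}\,N^{1-2/p}
\]
gives, via the Ky Fan inequality $\mu_{N+i}(\lambda(x)g(-i\nabla))\leq \mu_i(\lambda(x)g^N(-i\nabla))$ and $\mu_i(\cdot)\leq (i+1)^{-1/2}\|\cdot\|_{\sL_2}$, a decay of the singular values of the right order $N^{-1/p}\|x\|_{L_2}\|g\|_{\ell_{p,\infty}}$. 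Optimizing the shift $(N,i)$ and summing the contributions at the split scale yields the claimed $\sL_{p,\infty}$ bound.

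The main obstacle is extracting the explicit constant $c_{-}(p)\leq 2^{1/p}(2-p)^{-1/p}$: the factor $2^{1/p}$ is the standard $\sL_{p,\infty}$ quasi-triangle/doubling constant arising from the $N\mapsto 2N$ shift in the Ky Fan inequality, while $(2-p)^{-1/p}$ reflects the sharp $\ell_2$-tail bound displayed above; careful bookkeeping of these two ingredients (rather than a crude layer-cake estimate, which produces only a weaker constant) is what is needed to reach the stated bound.
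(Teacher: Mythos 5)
Your route to part (1) is correct but genuinely different from the paper's: the paper deduces both parts from a majorization $\|x\|_{L_2}^2\,\mu^2(g(-i\nabla))\prec\mu^2(\lambda(x)g(-i\nabla))$ (its Proposition 5.4, proved via the right-disjoint rank-one decomposition $T_k=\lambda(x)g(-i\nabla)\Pi_k$ and the disjoint-sum lemma of Levitina--Sukochev--Zanin) together with an abstract anti-monotonicity lemma for $\sL_p$, $p<2$; you instead get part (1) from the Rotfel'd/$p$-triangle inequality at $p\le1$ plus complex interpolation with the exact Hilbert--Schmidt endpoint. That is a cleaner and more elementary argument for part (1), and it is sound: Schatten classes interpolate exactly, so the constant $1$ is preserved on $(1,2)$.

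The gap is in part (2), specifically in the derivation of the explicit constant. Your split $g=g_N+g^N$ with $g_N$ the $N$ largest entries indeed gives, via the rank-$N$ subtraction and the $\sL_2$ identity, the inequalities
\[
\mu_{N+i}\big(\lambda(x)g(-i\nabla)\big)\;\le\;(i+1)^{-1/2}\,\|x\|_{L_2}\,\|g^N\|_{\ell_2},
\qquad
\|g^N\|_{\ell_2}^2\le \tfrac{p}{2-p}\,\|g\|_{\ell_{p,\infty}}^2\,N^{1-2/p}\ \ (N\ge1).
\]
Writing $N=\alpha(j+1)$ and $i=j-N$, the resulting bound on $(j+1)^{1/p}\mu_j$ is $(1-\alpha)^{-1/2}\alpha^{1/2-1/p}\sqrt{p/(2-p)}$, which is minimised over \emph{real} $\alpha\in(0,1)$ at $\alpha^\ast=(2-p)/2$ and then equals exactly $2^{1/p}(2-p)^{-1/p}$. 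But $N$ must be an integer, so unless $(2-p)(j+1)/2\in\mathbb Z$ you land strictly above the claimed constant; already for $j=1$ and, say, $p=3/2$ the choice $N=1$ gives $\approx2.75>2^{2/3}2^{2/3}\approx2.52$, and $N=0$ requires an ad hoc bound on $\zeta(2/p)$. The paper sidesteps this entirely in Lemma~\ref{anti_monotone}: it encodes the tail information in the \emph{continuous} step function $f(t)=\mu_{[t]}(S)$ and optimises $t(a-1)g(at)^2\le\int_t^\infty f^2$ over a real parameter $a>1$, with minimum at $a=2/(2-p)$. Your tail bounds at integer $N$ do extend to real $t$ by linear interpolation of $\sum_{j\ge N}\mu_j^2$ on each interval $[N,N+1]$, but this is exactly the missing step, and it is what Lemma~\ref{anti_monotone} is packaging. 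Finally, your accounting of the two factors is not accurate: the $2^{1/p}$ does \emph{not} come from a doubling shift $N\mapsto 2N$ (that choice gives the strictly worse constant $2^{1/p}\sqrt{p/(2-p)}$, which blows up as $p\to0$); both factors emerge jointly from the minimisation of $a^{1/p}(a-1)^{-1/2}\sqrt{p/(2-p)}$ at $a=2/(2-p)$.
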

      
Finally, for Case~III we shall prove the following result. 
  
\begin{theorem}\label{L_2_infty_cwikel}
    For any $x \in L_p(\T_\theta)$ with $p>2$ and $g \in \ell_{2,\infty}(\Itgr^n)$, the operator $\lambda(x)g(-i\nabla)$ is in $\sL_{2,\infty}$, and we have 
    \begin{equation}\label{eq:Cwikel-L2inf}
        \|\lambda(x)g(-i\nabla)\|_{\sL_{2,\infty}} \leq c_p\|x\|_{L_p}\|g\|_{\ell_{2,\infty}}.
    \end{equation}
    Moreover, the best constant in the inequality is less than or equal to 
    \begin{equation}\label{eq:Cwikel-L2inf-c2q}
%         c(2,p) \leq 
         \inf_{1< q<2}\left\{ \gamma_1(p,q)\gamma_2(p,q) c_+(p)^{\frac{p(2-q)}{2(p-q)}} c_{-}(q)^{\frac{q(p-2)}{2(p-q)}}\right\} , 
   \end{equation}
   where $c_+(p)$ and $c_+(q)$ are the best constants in the inequalities~(\ref{eq:Cwikel2+Lpinf}) and~(\ref{eq:Cwikel2-Lpinf}), respectively, and the constants $\gamma_1(p,q)$ and $\gamma_2(p,q)$ arise from real interpolation (see~Eqs.~(\ref{eq:Cwikel.interpolation-ell})--(\ref{eq:Cwikel.interpolation-sL}) below). 
\end{theorem}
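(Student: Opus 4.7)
The plan is to interpolate between the two endpoint Cwikel estimates proved above, using the fact that on a finite-trace algebra the $L_p$-norms are monotone in $p$.  Fix $x\in L_p(\T^n_\theta)$ with $p>2$.  Because $\tau$ is a finite trace, H\"older's inequality (Proposition~\ref{prop:Holder}) gives $\|x\|_{L_2}\leq \|x\|_{L_p}$.  Pick any $q\in (1,2)$ and consider the linear map
\begin{equation*}
 T_x: g\longmapsto \lambda(x)g(-i\nabla),
\end{equation*}
regarded as acting on sequences in $\ell_\infty(\Z^n)$.  By Theorem~\ref{Cwikel2+}(2), $T_x$ sends $\ell_{p,\infty}(\Z^n)$ into $\sL_{p,\infty}$ with quasi-norm bounded by $c_+(p)\|x\|_{L_p}$.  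By Theorem~\ref{Cwikel2-}(2) (applied with $q$ in place of $p$), $T_x$ sends $\ell_{q,\infty}(\Z^n)$ into $\sL_{q,\infty}$ with quasi-norm bounded by $c_{-}(q)\|x\|_{L_2}\leq c_{-}(q)\|x\|_{L_p}$.

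Now I would apply real interpolation on the couple of weak-type spaces.  Choose $\theta\in(0,1)$ so that $\frac12=\frac{1-\theta}{p}+\frac{\theta}{q}$; a direct computation yields
\begin{equation*}
 1-\theta=\frac{p(2-q)}{2(p-q)},\qquad \theta=\frac{q(p-2)}{2(p-q)}.
\end{equation*}
The classical real interpolation theorem (see, e.g., Bergh--L\"ofstr\"om) then yields a bounded extension
\begin{equation*}
 T_x:\bigl(\ell_{p,\infty},\ell_{q,\infty}\bigr)_{\theta,\infty}\longrightarrow \bigl(\sL_{p,\infty},\sL_{q,\infty}\bigr)_{\theta,\infty}
\end{equation*}
with operator quasi-norm at most $\bigl[c_+(p)\|x\|_{L_p}\bigr]^{1-\theta}\bigl[c_{-}(q)\|x\|_{L_p}\bigr]^{\theta}$.

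The remaining ingredient, which I expect to be the main technical point, is the reiteration-style identification of the two real interpolation spaces with $\ell_{2,\infty}$ and $\sL_{2,\infty}$ respectively.  Since the scales $\{\ell_{r,\infty}\}$ and $\{\sL_{r,\infty}\}$ are obtained from the Lorentz functor applied to a finite von Neumann algebra (in the second case the semi-finite factor $\mathcal{B}(\mathcal{H})$ with its canonical trace), one has
\begin{equation}
 \bigl(\ell_{p,\infty}(\Z^n),\ell_{q,\infty}(\Z^n)\bigr)_{\theta,\infty}=\ell_{2,\infty}(\Z^n)
 \label{eq:Cwikel.interpolation-ell}
\end{equation}
and
\begin{equation}
 \bigl(\sL_{p,\infty},\sL_{q,\infty}\bigr)_{\theta,\infty}=\sL_{2,\infty}
 \label{eq:Cwikel.interpolation-sL}
\end{equation}
with equivalence of quasi-norms.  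Let $\gamma_1(p,q)$ and $\gamma_2(p,q)$ denote the two norm-equivalence constants arising in \eqref{eq:Cwikel.interpolation-ell} and \eqref{eq:Cwikel.interpolation-sL} (they can be computed explicitly from the $K$-functionals of Lorentz spaces, for instance via Holmstedt's formula).  Combining the interpolation estimate with these identifications gives
\begin{equation*}
 \bigl\|\lambda(x)g(-i\nabla)\bigr\|_{\sL_{2,\infty}}\leq \gamma_1(p,q)\gamma_2(p,q)\,c_+(p)^{\frac{p(2-q)}{2(p-q)}}\,c_{-}(q)^{\frac{q(p-2)}{2(p-q)}}\,\|x\|_{L_p}\|g\|_{\ell_{2,\infty}}.
\end{equation*}
Taking the infimum over $q\in(1,2)$ yields~\eqref{eq:Cwikel-L2inf} with the constant~\eqref{eq:Cwikel-L2inf-c2q}.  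The restriction $q>1$ ensures that $\ell_{q,\infty}$ and $\sL_{q,\infty}$ are Banach spaces (up to an equivalent norm), which is what makes the interpolation constants $\gamma_i(p,q)$ behave controllably; the explicit form of those constants is where all the remaining bookkeeping lives.
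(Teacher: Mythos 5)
Your proposal is correct and follows essentially the same route as the paper's proof: both fix $x$, view $g\mapsto\lambda(x)g(-i\nabla)$ as a linear map, apply the $\sL_{p,\infty}$ estimate from Theorem~\ref{Cwikel2+} and the $\sL_{q,\infty}$ estimate from Theorem~\ref{Cwikel2-} (together with $\|x\|_{L_2}\leq\|x\|_{L_p}$), and then interpolate via the real method between the couples $(\ell_{q,\infty},\ell_{p,\infty})$ and $(\sL_{q,\infty},\sL_{p,\infty})$, optimizing over $q\in(1,2)$. Your $\theta$ is the paper's $1-\theta$ because you index the couple in the opposite order, but the exponents on $c_+(p)$ and $c_-(q)$ come out identically, so there is no substantive difference.
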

 \begin{proof}
The proof uses real interpolation theory. Once again, for background on interpolation theory we refer to the survey of Connes~\cite[Appendix IV.B]{Co:NCG} and the references therein. The main reference for real interpolation theory there is the article of Lions-Peetre~\cite{LP:IHES64} (see also~\cite{BS:PAM88, KPS:AMS82}).  

Let $x\in L_{p}(\T_\theta^n)$, $p>2$. By Theorem~\ref{Cwikel2+}, for every $g\in \ell_{p,\infty}(\Z^n)$, the operator $\lambda(x)g(-i\nabla)$ is in $\sL_{p,\infty}$. Thus, we have a linear operator $\Phi_x: \ell_{p,\infty}(\Z^n)\rightarrow \sL_{p,\infty}$ given by
\begin{equation*}
 \Phi_x(g)=\lambda(x)g(-i\nabla), \qquad g\in \ell_{p,\infty}. 
\end{equation*}
Furthermore, the Cwikel-type estimate~(\ref{eq:Cwikel2+Lpinf}) gives
\begin{equation}
 \| \Phi_x(g)\|_{\sL_{p,\infty}} \leq c_+(p)\|x\|_{L_p} \|g\|_{\ell_{p,\infty}} \qquad \forall g \in \ell_{p,\infty}.
 \label{eq:Cwikel2p} 
\end{equation}
In addition, as $L_p(\T^n_\theta)\subset L_2(\T^n_\theta)$, it follows from Theorem~\ref{Cwikel2-} that, given any $q\in (1,2)$, if $g\in \ell_{q,\infty}(\Z^n)$, then $\Phi_x(g)\in \sL_{q,\infty}$, and we have
\begin{equation}
  \big\| \Phi_x(g)\big\|_{\sL_{q,\infty}} \leq c_{-}(q) \|x\|_{L_2} \|g\|_{\ell_{q,\infty}}\leq c_{-}(q) \|x\|_{L_p} \|g\|_{\ell_{q,\infty}}.
   \label{eq:Cwikel2q}  
\end{equation}

As $2\in (q,p)$, the spaces  $\ell_{2,\infty}(\Z^n)$ and $\sL_{2,\infty}$ are real interpolation spaces for the pairs of Banach spaces $(\ell_{q,\infty}(\Z^n),\ell_{p,\infty}(\Z^n))$ and $(\sL_{q,\infty},\sL_{p,\infty})$ with the exact same exponents. Namely, in the notation of~\cite[Appendix~IV.B]{Co:NCG}, and with the same norm equivalences (see e.g.~\cite[Theorem~2.10]{Si:AMS05}), we have
$\ell_{2,\infty}(\Z^n)=[\ell_{q,\infty}(\Z^n),\ell_{p,\infty}(\Z^n)]_{\theta,\infty}$ and $\sL_{2,\infty}=[\sL_{q,\infty},\sL_{p,\infty}]_{\theta,\infty}$, with $\theta\in(0,1)$ such that $1/2=(1-\theta)q^{-1}+\theta p^{-1}$ (see~\cite[{\S}IV.2.$\alpha$ \& Appendix~IV.B]{Co:NCG} and \cite[Section~4]{DDP:IEOT92}). That is, there are constants $\gamma_1(p,q)>0$ and $\gamma_2(p,q)>0$ such that 
\begin{gather}
 \gamma_2(p,q)^{-1}\|g\|_{\ell_{2,\infty}} \leq \|g\|_{[\ell_{q,\infty},\ell_{p,\infty}]_{\theta,\infty}} \leq  \gamma_1(p,q) \|g\|_{\ell_{2,\infty}} 
 \qquad \forall g\in \ell_{2,\infty}(\Z^n), 
 \label{eq:Cwikel.interpolation-ell}\\
  \gamma_2(p,q)^{-1}\|T\|_{\sL_{2,\infty}} \leq \|T\|_{[\sL_{q,\infty},\sL_{p,\infty}]_{\theta,\infty}} \leq  \gamma_1(p,q) \|T\|_{\sL_{2,\infty}} \qquad \forall T\in \sL_{2,\infty}.
  \label{eq:Cwikel.interpolation-sL}
\end{gather}
We assume that $\gamma_1(p,q)$ and $\gamma_2(p,q)$ are the best constants in the above inequalities. 

By combining the estimates~(\ref{eq:Cwikel2p})--(\ref{eq:Cwikel2q}) with real interpolation theory (see~\cite[Theorem~IV.B.2]{Co:NCG}) and by using the 
inequalities~(\ref{eq:Cwikel.interpolation-ell})--(\ref{eq:Cwikel.interpolation-sL}) shows that $\Phi_x$ induces a continuous linear map from $\ell_{2,\infty}(\Z^n)$ to $\sL_{2,\infty}$, and, for all $g\in \ell_{2,\infty}(\Z^n)$, we have
\begin{align*}
  \big\| \lambda(x)g(-i\nabla)\big\|_{\sL_{2,\infty}} =  \big\| \Phi_x(g)\big\|_{\sL_{2,\infty}} & \leq  \gamma_1(p,q)\gamma_2(p,q) 
  (c_{-}(q)\|x\|_{L_p})^{1-\theta}  (c_+(p)\|x\|_{L_p})^\theta \|g\|_{\ell_{2,\infty}}
  \\  & \leq  
   \gamma_1(p,q)\gamma_2(p,q) c_{+}(p)^\theta c_{-}(q)^{1-\theta} \|x\|_{L_p} \|g\|_{\ell_{2,\infty}}.  
\end{align*}
This gives the inequality~(\ref{eq:Cwikel-L2inf}). 

Here $\theta$ is such that $1/2=(1-\theta)q^{-1}+\theta p^{-1}$. That is, 
\begin{equation*}
 \theta = \left(\frac12-\frac1q\right) \left(\frac1p-\frac1q\right)^{-1}= \frac{p(2-q)}{2(p-q)}, \qquad 1-\theta =  \frac{q(p-2)}{2(p-q)}. 
\end{equation*}
Thus, if we denote by $c(p)$ the best constant in~(\ref{eq:Cwikel-L2inf}), then
\begin{equation*}
 c(p)\leq \gamma_1(p,q)\gamma_2(p,q) c_{+}(p)^{\frac{p(2-q)}{2(p-q)}} c_{-}(q)^{\frac{q(p-2)}{2(p-q)}} \qquad \forall q\in (1,2). 
\end{equation*}
Taking the infimum over $q\in (1,2)$ shows that~(\ref{eq:Cwikel-L2inf-c2q}) is an upper bound for $c(p)$. The proof is complete. 
\end{proof} 

\begin{remark}
 Theorem~\ref{L_2_infty_cwikel} does not hold for $x\in L_2(\T^n_\theta)$ (see Remark~\ref{rmk:L1infty-does-not-hold}). 
\end{remark}

\begin{corollary}\label{L_2_infty_cwikel-Sobolev}
        For any $x \in W_2^s(\T_\theta)$ with $s>0$ and $g \in \ell_{2,\infty}(\Itgr^n)$, the operator $\lambda(x)g(-i\nabla)$ is in $\sL_{2,\infty}$, and we have 
        \begin{equation*}
            \|\lambda(x)g(-i\nabla)\|_{\sL_{2,\infty}} \leq c_s\|x\|_{W_2^s}\|g\|_{\ell_{2,\infty}}.
        \end{equation*}
    \end{corollary}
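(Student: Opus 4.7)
The plan is to deduce the corollary from Theorem~\ref{L_2_infty_cwikel} by Sobolev embedding. Given $s>0$, I would first select an auxiliary exponent $p>2$ with $s\geq n(\tfrac{1}{2}-p^{-1})$. Such a $p$ always exists: if $s\geq n/2$ any $p>2$ works, while if $0<s<n/2$ the condition rearranges to $p\leq 2n/(n-2s)$, and the right-hand side strictly exceeds $2$, leaving room to pick $p>2$.

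With that choice made, Proposition~\ref{prop:Sobolev-embeddingLp} supplies a continuous embedding $W_2^s(\T^n_\theta)\subset L_p(\T^n_\theta)$, i.e., $\|x\|_{L_p}\leq C_{n,s,p}\|x\|_{W_2^s}$ for some constant depending only on $n$, $s$, $p$. Combining this with Theorem~\ref{L_2_infty_cwikel} applied to the same $p$ then yields
\begin{equation*}
 \|\lambda(x)g(-i\nabla)\|_{\sL_{2,\infty}} \leq c_p\|x\|_{L_p}\|g\|_{\ell_{2,\infty}} \leq c_pC_{n,s,p}\|x\|_{W_2^s}\|g\|_{\ell_{2,\infty}},
\end{equation*}
and the result follows upon setting $c_s:=c_pC_{n,s,p}$.

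There is no genuine obstacle here: the corollary is a routine combination of the two previously established results, and the only substantive step is checking that the set of admissible $p$ is nonempty for every $s>0$, which is an elementary inequality. If one wished to extract a sharper constant one could optimize $c_pC_{n,s,p}$ over the admissible range $p\in(2,2n/(n-2s)]$ in the regime $0<s<n/2$, but this optimization is not needed for the qualitative statement above.
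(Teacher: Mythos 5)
Your proof is correct and follows essentially the same route as the paper: pick $p>2$ with $s\geq n(1/2-p^{-1})$, invoke Proposition~\ref{prop:Sobolev-embeddingLp} to embed $W_2^s(\T^n_\theta)$ into $L_p(\T^n_\theta)$, and then apply Theorem~\ref{L_2_infty_cwikel}. The only difference is that you spell out explicitly why an admissible $p$ exists for every $s>0$, a detail the paper leaves implicit.
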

\begin{proof}
Let  $x \in W_2^s(\T_\theta)$ with $s>0$ and  $g \in \ell_{2,\infty}(\Itgr^n)$. Let $p>2$ be such that $s\geq n(1/2-p^{-1})$. By Proposition~\ref{prop:Sobolev-embeddingLp} this ensures us that $W_2^s(\T^n_\theta)$ embeds continuously into $L_p(\T^n_\theta)$. Combining this with Theorem~\ref{L_2_infty_cwikel} then gives the result.  
\end{proof}

\section{Proof of Theorem~\ref{Cwikel2-}}\label{sec:Cwikel2-}
In this section, we prove Theorem~\ref{Cwikel2-}. The proof attempts to follow the approach to the proof of Cwikel-type estimates for noncommutative Euclidean spaces in~\cite{LeSZ:2020}. However, some significant simplification occurs thanks to Proposition~\ref{majorization2-} below. 
  
\begin{lemma}[\cite{LeSZ:2020}]\label{anti_monotone}
    Suppose that $0 < p < 2$, and let $S,T\in \sL_2$ be such that $|S|^2 \prec |T|^2$. 
    \begin{enumerate}
        \item[(i)] If $S\in \sL_p$, then $T\in \sL_p$, and $\|T\|_{\sL_p}\leq \|S\|_{\sL_p}$. 
        \item[(ii)] If $S \in \sL_{p,\infty}$, then $T\in \sL_{p,\infty}$, and $\|T\|_{\sL_{p,\infty}} \leq 2^{\frac1p}(2-p)^{-\frac1p}\|S\|_{\sL_{p,\infty}} $. 
    \end{enumerate}
\end{lemma}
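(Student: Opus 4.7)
The two claims both reduce to classical techniques based on majorization and monotonicity of singular value sequences; I treat them in turn.

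\textbf{Part (i).} My key input will be the Hardy--Littlewood--P\'olya theorem in its concave form: if $a\prec b$ as non-increasing non-negative $\ell_1$-sequences and $\phi:[0,\infty)\to\R$ is concave with $\phi(0)=0$, then $\sum_j\phi(a_j)\geq\sum_j\phi(b_j)$. Since $0<p<2$, the function $\phi(x)=x^{p/2}$ is concave with $\phi(0)=0$. Applying this to the majorization $\mu(|S|^2)\prec\mu(|T|^2)$ will directly yield
\[
\|T\|_{\sL_p}^p = \sum_j\mu_j(|T|^2)^{p/2} \leq \sum_j\mu_j(|S|^2)^{p/2} = \|S\|_{\sL_p}^p.
\]

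\textbf{Part (ii).} The plan is a tail-sum estimate followed by a pointwise bound on $\mu_M(T)$, with an optimized split. Since $|S|^2\prec|T|^2$ includes equality of total sums, subtracting partial sums from totals gives the reverse tail inequality
\[
\sum_{j\geq N}\mu_j(T)^2 \leq \sum_{j\geq N}\mu_j(S)^2\qquad\forall N\geq 0.
\]
Setting $K:=\|S\|_{\sL_{p,\infty}}$ so that $\mu_j(S)\leq K(j+1)^{-1/p}$, and using that $-2/p<-1$ so $\int_N^{\infty} x^{-2/p}\,dx$ converges, an integral comparison gives
\[
\sum_{j\geq N}\mu_j(S)^2 \leq \frac{p\,K^2}{2-p}\,N^{\,1-2/p}\qquad(N\geq 1).
\]
The monotonicity of $\mu(T)$ yields, for every $0\leq N\leq M$,
\[
(M-N+1)\mu_M(T)^2 \leq \sum_{j=N}^M\mu_j(T)^2 \leq \sum_{j\geq N}\mu_j(T)^2,
\]
and combining the last three displays gives
\[
\mu_M(T)^2 \leq \frac{p\,K^2}{(2-p)(M-N+1)}\,N^{\,1-2/p}.
\]
I will then optimize over $N\in\{1,\ldots,M\}$: a short calculus exercise shows that the minimizer is $N\approx(1-p/2)M$, and substituting back produces $(M+1)^{1/p}\mu_M(T)\leq 2^{1/p}(2-p)^{-1/p}\,K$ for $M\geq 1$. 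The case $M=0$ is handled by the crude bound $\mu_0(T)^2\leq\sum_j\mu_j(S)^2\leq 2K^2/(2-p)$, which is dominated by $(2^{1/p}(2-p)^{-1/p}K)^2$ since $1/p\geq 1/2$ and $2\geq 2-p$.

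\textbf{Main obstacle.} Part (i) is essentially immediate once one notices the role of concavity. In part (ii), the tail estimate on $\mu(S)^2$ and the monotonicity bound on $\mu_M(T)$ are routine and already suffice for \emph{some} finite constant. The sharp bound, however, hinges on the choice of split point: the natural symmetric split $N=\lfloor M/2\rfloor$ produces only the inferior constant $2^{1/p}\sqrt{p/(2-p)}$, whereas the advertised constant $2^{1/p}(2-p)^{-1/p}$ requires the precise non-obvious split $N\approx(1-p/2)M$.
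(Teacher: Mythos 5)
Part (i) is correct: applying the concave Hardy--Littlewood--P\'olya inequality to $\phi(x)=x^{p/2}$ gives a valid independent argument; the paper simply cites \cite{LeSZ:2020} for this part.

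Part (ii), however, has a genuine gap at the final optimization. Your bound
\[
\mu_M(T)^2 \leq \frac{pK^2}{(2-p)(M-N+1)}\,N^{1-2/p}
\]
is only derived for \emph{integer} $N\in\{1,\dots,M\}$, yet you then ``substitute back'' the continuous minimizer $N^*\approx(1-p/2)(M+1)$, which is generally not an integer. For small $M$ the best admissible integer gives strictly less than the continuous optimum: with $M=1$ the only choice is $N=1$, giving $\mu_1(T)^2\leq\frac{p}{2-p}K^2$, whence $2^{1/p}\mu_1(T)\leq 2^{1/p}\sqrt{p/(2-p)}\,K$; for $p=1/2$, $\sqrt{p/(2-p)}=1/\sqrt{3}\approx 0.577 > (2-p)^{-1/p}=4/9\approx 0.444$, so the discrete split cannot establish the advertised constant. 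The paper sidesteps this by passing to the step functions $f(t)=\mu_{[t]}(S)$, $g(t)=\mu_{[t]}(T)$, for which the tail inequality $\int_t^\infty g(s)^2\,ds \leq \int_t^\infty f(s)^2\,ds$ holds for all real $t>0$ (this follows from the integer tail-sum case by the piecewise linearity of both sides in $t$), and then optimizing over a \emph{continuous} scaling parameter $a>1$ in the chain $t(a-1)g(at)^2 \leq \int_t^\infty g(s)^2\,ds$, together with the identity $\sup_{t\geq 0}t^{1/p}g(t)=\|T\|_{\sL_{p,\infty}}$. Your scheme becomes a correct proof once you replace the discrete sums $\sum_{j\geq N}$ by $\int_t^\infty$ of these step functions and optimize over real $t\in(0,M+1)$, so that the non-integer split $t^*=(1-p/2)(M+1)$ is admissible.
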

\begin{proof}
 The first part is the contents of the first part of~\cite[Proposition 2.7]{LeSZ:2020}. The 2nd part of \cite[Proposition 2.7]{LeSZ:2020} gives the 2nd part of the lemma with an upper bound $\|T\|_{\sL_{p,\infty}} \leq c_p\|S\|_{\sL_{p,\infty}}$ for an unspecified
    constant $c_p$. We can get the required constant by a slight modification of the proof in~\cite{LeSZ:2020}.
    
Suppose that $S\in \sL_{p,\infty}$. For $t\geq 0$, set $f(t)=\mu_{[t]}(S)$ and $g(t)=\mu_{[t]}(T)$, where $[\cdot]$ is the floor function. Note 
that $\sup_{t\geq 0}t^{1/p} f(t)= \sup_{j\geq 0}(j+1)^{\frac1p} \mu_j(S)= \|S\|_{\sL_{p,\infty}}$. Likewise, $\|T\|_{\sL_{p,\infty}}=\sup_{t\geq 0}t^{1/p} g(t)$. Moreover, as pointed out in the proof of~\cite[Proposition 2.7]{LeSZ:2020}, the fact that $T^2\prec S^2$ implies that
\begin{equation}\label{eq:Cwikel2inf.submajorization}
 \int_t^2g(s)^2ds \leq  \int_t^2f(s)^2ds, \qquad t\geq 0. 
\end{equation}
 We also observe that in~Eq.~(14) of~\cite{LeSZ:2020}  the constant is equal to $p(2-p)^{-1}$. Thus, we have
\begin{equation}\label{eq:Cwikel2inf.integral-ineq}
 \int_t^\infty f(s)^2 ds \leq \bigg(\frac{p}{2-p}\bigg)\|S\|_{\sL_{p,\infty}}^2t^{1-\frac2p}, \qquad t>0. 
\end{equation}
 
 Given any $a>1$ the fact that $g(t)$ is non-increasing ensures that
\begin{equation*}
 t(a-1)g(at)^2 \leq \int_t^{at}g(s)^2ds \leq  \int_t^{\infty}g(s)^2ds, \qquad t>0. 
\end{equation*}
Combining this with~(\ref{eq:Cwikel2inf.submajorization})--(\ref{eq:Cwikel2inf.integral-ineq}) gives 
\begin{equation*}
 tg(at)^2 \leq (a-1)^{-1} \bigg(\frac{p}{2-p}\bigg)\|S\|_{\sL_{p,\infty}}^2t^{1-\frac2p}, \qquad t>0. 
\end{equation*}
Thus, 
\begin{equation*}
 t^{\frac1p} g(at)  \leq (a-1)^{-\frac12} \bigg(\frac{p}{2-p}\bigg)^{\frac12}\|S\|_{\sL_{p,\infty}}, \qquad t\geq 0
\end{equation*}
 It follows that 
\begin{equation*}
 \sup_{t\geq 0} t^{\frac1p} g(t)=a^{\frac1p}  \sup_{t\geq0} t^{\frac1p} g(t) \leq a^{\frac1p}(a-1)^{-\frac12} \bigg(\frac{p}{2-p}\bigg)^{\frac12}\|S\|_{\sL_{p,\infty}}. 
\end{equation*}
 This shows that $T\in \sL_{p,\infty}$, and we have
\begin{equation*}
 \|T\|_{\sL_{p,\infty}} \leq a^{\frac1p}(a-1)^{-\frac12} \bigg(\frac{p}{2-p}\bigg)^{\frac12}\|S\|_{\sL_{p,\infty}} \qquad \forall a>1. 
\end{equation*}
The function $a\rightarrow  a^{\frac1p}(a-1)^{-\frac12}$ reaches its minimum on $(1,\infty)$ at $a=2(2-p)^{-1}$. The minimum value is  
$2^{1/p}p^{-1/2}(2-p)^{1/2-1/p}.$
 This yields the inequality $\|T\|_{\sL_{p,\infty}} \leq 2^{1/p}(2-p)^{-1/p}\|S\|_{\sL_{p,\infty}} $. The proof is complete. 
 \end{proof}
  
Recall that a sequence of bounded operators $(T_j)_{j\geq 0}$ on Hilbert space is called \emph{right-disjoint} (resp., \emph{left-disjoint}) when $T_jT_l^*=0$ (resp., $T_j^*T_l=0$) when $j\neq l$. When all the operators $T_j$ are selfadjoint notions of left-disjointness and right-disjointness are both equivalent to the condition $T_jT_l=0$ when $j\neq 0$. In that case we simply that say that we have a \emph{disjoint} sequence.  
    
  \begin{lemma}[{\cite[Lemma 2.9]{LeSZ:2020}}]\label{disjoint_lemma}
        Let $(T_j)_{j\geq 0}$ be a sequence in $\sL_2$ such that $\sum \|T_j\|_{\sL_2}^2 < \infty$. Assume further that the sequence $(T_j)_{j\geq 0}$ is left-disjoint or right-disjoint. Then, we have 
        \begin{equation*}
            \mu^2\bigg(\bigoplus_{j\geq0} T_j\bigg)\prec \mu^2\bigg(\sum_{j\geq 0} T_j\bigg).
        \end{equation*}
    \end{lemma}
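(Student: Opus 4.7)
First I would reduce to the left-disjoint case. If the sequence $(T_j)$ is right-disjoint, then the adjoint sequence $(T_j^*)$ is left-disjoint, and all three quantities $\mu(\sum_j T_j)$, $\mu(\bigoplus_j T_j)$, and $\sum_j \|T_j\|_{\sL_2}^2$ are invariant under $T_j \mapsto T_j^*$. So assume henceforth that $T_j^* T_l = 0$ for $j \neq l$. Put $A_j := T_j^* T_j$ and note that left-disjointness gives
\begin{equation*}
 \bigg\| \sum_{j=0}^N T_j \bigg\|_{\sL_2}^2 = \sum_{j,k=0}^N \tr(T_j^* T_k) = \sum_{j=0}^N \|T_j\|_{\sL_2}^2,
\end{equation*}
so the partial sums are Cauchy in $\sL_2$ and converge to some $T \in \sL_2$. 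Moreover $|T|^2 = T^*T = \sum_j A_j$ (convergence in $\sL_1$), while clearly $|\bigoplus_j T_j|^2 = \bigoplus_j A_j$. In particular both have the same trace $\sum_j \|T_j\|_{\sL_2}^2$, so majorization will coincide with submajorization.

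The key step is to exhibit $\bigoplus_j A_j$ as a \emph{pinching} of an operator with the same singular values as $\sum_j A_j$. To this end, define the row-operator $V : \bigoplus_j \sH \to \sH$ by
\begin{equation*}
 V(x_0, x_1, \ldots) = \sum_j A_j^{1/2} x_j.
\end{equation*}
The hypothesis $\sum_j \|T_j\|_{\sL_2}^2 < \infty$ gives $\sum_j \|A_j^{1/2}\|_{\sL_2}^2 = \sum_j \tr(A_j) < \infty$, so $V$ is Hilbert--Schmidt. A direct computation yields
\begin{equation*}
 VV^* = \sum_j A_j = |T|^2, \qquad V^*V = \big(A_i^{1/2} A_j^{1/2}\big)_{i,j\geq 0},
\end{equation*}
as an operator on $\bigoplus_j \sH$. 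Let $P_j$ be the orthogonal projection onto the $j$-th summand of $\bigoplus_j \sH$; then the $P_j$ are mutually orthogonal with $\sum_j P_j = I$, and the diagonal blocks $P_j (V^*V) P_j = A_j$ reassemble as
\begin{equation*}
 \sum_j P_j (V^*V) P_j = \bigoplus_j A_j = \Big|\bigoplus_j T_j\Big|^2.
\end{equation*}

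The last step is the pinching majorization: for any positive compact operator $X$ and any family $(P_j)$ of mutually orthogonal projections with $\sum_j P_j = I$, one has $\sum_j P_j X P_j \prec X$. Applying this with $X = V^*V$ and using $\mu(V^*V) = \mu(VV^*) = \mu(|T|^2)$ gives
\begin{equation*}
 \mu^2\Big(\bigoplus_j T_j\Big) = \mu\big(\textstyle\sum_j P_j (V^*V) P_j\big) \prec \mu(V^*V) = \mu(|T|^2) = \mu^2\Big(\sum_j T_j\Big),
\end{equation*}
which is the desired conclusion.

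\textbf{Main obstacle.} The nontrivial ingredient is justifying the pinching majorization when the projection family $(P_j)$ is infinite. For finitely many projections $P_0,\ldots,P_N$ (with $I - P_0 - \cdots - P_N$ appended if nontrivial), the inequality follows classically by writing the pinching as an average of unitary conjugations via roots of unity, and then by Ky Fan's theorem. To pass to the infinite case, I would truncate: write $\mathcal{E}_N(X) = \sum_{j=0}^N P_j X P_j + Q_N X Q_N$ with $Q_N = I - \sum_{j\leq N} P_j$, apply the finite case to get $\mathcal{E}_N(V^*V) \prec V^*V$ uniformly in $N$, and pass to the limit $N \to \infty$ using trace-norm continuity of $\mathcal{E}_N(V^*V) \to \bigoplus_j A_j \oplus 0$ (guaranteed because $V^*V$ is trace-class). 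Equivalently, one may simply quote the statement that any normal unital trace-preserving completely positive map $\mathcal{E}$ on $\sL(\bigoplus_j \sH)$ satisfies $\mathcal{E}(X) \prec X$ for every positive trace-class $X$, which applies directly since the pinching by $(P_j)_{j\geq 0}$ is the conditional expectation onto the commutant of $\{P_j\}$.
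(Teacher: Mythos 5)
The paper itself gives no proof of this lemma — it cites~\cite[Lemma 2.9]{LeSZ:2020} — so the only question is whether your argument is sound, and it is. The reduction from right-disjoint to left-disjoint by passing to adjoints, the identities $|T|^2 = \sum_j A_j$ (with $\sL_1$-convergence deduced from $\sL_2$-Cauchyness of the partial sums of $T$) and $|\bigoplus_j T_j|^2 = \bigoplus_j A_j$, the row operator $V$ with $VV^* = |T|^2$ and $P_j(V^*V)P_j = A_j$, and the equalities $\mu(V^*V) = \mu(VV^*)$ and $\mu(|S|^2) = \mu^2(S)$ all check out. You correctly identified the one nontrivial ingredient, the pinching majorization $\sum_j P_j X P_j \prec X$ for positive trace-class $X$ with an infinite family of projections. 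Both of your proposed ways to handle it work: the truncation argument goes through because $\Ec_N(V^*V)\to\sum_j P_j(V^*V)P_j$ in trace norm (using $Q_N\to 0$ strongly and $V^*V\in\sL_1$), each $\mu_k$ is continuous in operator norm, and the trace is continuous in $\sL_1$-norm, so both the partial-sum inequalities and the equality of totals survive the limit; alternatively, the abstract statement that a normal unital trace-preserving positive map on $\sL(\sH)$ satisfies $\Ec(X)\prec X$ on the positive trace class applies directly, since the pinching is exactly the conditional expectation onto the block-diagonal subalgebra. This is the standard device for block-diagonal majorization, and it yields full majorization $\prec$ (not merely submajorization) precisely because both operators have the common trace $\sum_j \|T_j\|_{\sL_2}^2$, as you note.
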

 
 The operator-theoretic underpinning of Theorem~\ref{Cwikel2-} is the following majorization estimate.

\begin{proposition}\label{majorization2-} 
Let  $x\in L_2(\T^n_\theta)$ and $g\in \ell_{2}(\Z^n)$. Then, we have
\begin{equation}
 \|x\|_{L_2}^2 \mu^2\big(g(-i\nabla)\big) \prec  \mu^2\big(\lambda(x)g(-i\nabla)\big).
 \label{eq:majorization} 
\end{equation}
\end{proposition}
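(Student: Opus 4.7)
My plan is to reduce the majorization to the Schur--Horn inequality applied to $A^*A$ in the orthonormal basis $\{U^m\}_{m\in\Z^n}$ of $L_2(\T^n_\theta)$. Set $A=\lambda(x)g(-i\nabla)$ and write $x=\sum_{k\in\Z^n} x_k U^k$, so that $\|x\|_{L_2}^2=\sum_k|x_k|^2$. Using the relation $U^kU^m=c_{k,m}U^{k+m}$ with a phase $c_{k,m}$ of modulus one (read off from~\eqref{eq:NCtori.unitaries-relations}), a direct computation yields the matrix entries
\begin{equation*}
 A_{nm}:=\scal{AU^m}{U^n}=g(m)\,x_{n-m}\,c_{n-m,m},\qquad m,n\in\Z^n.
\end{equation*}

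The decisive step is the diagonal of $A^*A$ in this basis:
\begin{equation*}
 (A^*A)_{mm}=\sum_{n\in\Z^n}|A_{nm}|^2=|g(m)|^2\sum_{k\in\Z^n}|x_k|^2=\|x\|_{L_2}^2\,|g(m)|^2.
\end{equation*}
If $D(T)$ denotes the diagonal part of an operator $T$ in the basis $\{U^m\}$, this identity may be rewritten as the operator equality
\begin{equation*}
 D(A^*A)=\|x\|_{L_2}^2\,g(-i\nabla)^*g(-i\nabla).
\end{equation*}
In particular, the eigenvalues of $D(A^*A)$ in non-increasing order form the sequence $\|x\|_{L_2}^2\,\mu^2(g(-i\nabla))$. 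Summing $(A^*A)_{mm}$ over $m$ at the same time gives $\Tr(A^*A)=\|x\|_{L_2}^2\|g\|_{\ell_2}^2<\infty$, so $A\in\sL_2$ and the two eigenvalue sequences share the same (finite) total sum.

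I would then invoke the infinite-dimensional Schur--Horn inequality: for any compact positive operator $T$ on Hilbert space, the non-increasing rearrangement of its diagonal entries in any orthonormal basis is submajorized by the eigenvalue sequence $\mu(T)$. This is standard and follows from Ky Fan's maximum principle $\sum_{j<N}\mu_j(T)=\sup_{\op{rk}P=N}\Tr(PTP)$ applied to the coordinate projection onto the span of the $N$ basis vectors supporting the $N$ largest diagonal entries. Applied to $T=A^*A$, this yields
\begin{equation*}
 \|x\|_{L_2}^2\mu^2(g(-i\nabla))\prec\prec\mu^2(\lambda(x)g(-i\nabla)),
\end{equation*}
and combined with the equality of total sums from the previous paragraph, submajorization is upgraded to full majorization $\prec$, as required. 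The only nontrivial ingredient is the diagonal computation, so I do not anticipate any real obstacle; this is evidently simpler than the disjoint-sum strategy used for NC Euclidean spaces in~\cite{LeSZ:2020}, matching the simplification announced at the start of the section.
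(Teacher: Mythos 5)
Your proof is correct. The diagonal computation $(A^*A)_{mm}=\|x\|_{L_2}^2|g(m)|^2$ is right: it follows from $A_{nm}=g(m)x_{n-m}c_{n-m,m}$ with $|c_{n-m,m}|=1$, and the infinite-dimensional Schur--Horn/Ky Fan step is correctly applied (the trace identity $\Tr(A^*A)=\sum_m(A^*A)_{mm}=\|x\|_{L_2}^2\|g\|_{\ell_2}^2<\infty$ both certifies $A\in\sL_2$ and upgrades submajorization to majorization).

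Your route is genuinely different in presentation from the paper's, though the underlying insight is the same. The paper decomposes $A=\sum_k T_k$ into rank-one pieces $T_k=A\Pi_k$ (with $\Pi_k$ the projection onto $\C U^k$), verifies that this family is right-disjoint, computes $|T_k|=\|x\|_{L_2}|g(k)|\Pi_k$, and then invokes the abstract disjoint-sum lemma of Levitina--Sukochev--Zanin (\cite[Lemma~2.9]{LeSZ:2020}) to conclude $\mu^2(\bigoplus T_k)\prec\mu^2(\sum T_k)$. You instead observe that, because each $T_k$ is rank one, the singular value of $T_k$ is exactly the square root of the $k$-th diagonal entry of $A^*A$, so the abstract disjoint lemma collapses to the classical Schur--Horn submajorization for the diagonal of a positive compact operator, which you establish directly from Ky Fan's maximum principle. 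What your version buys is self-containment: you avoid any reliance on the external disjoint-operator machinery and replace it with a one-line invocation of a textbook fact. What the paper's version buys is that it is visibly a specialization of the NC Euclidean argument of~\cite{LeSZ:2020}, making clear exactly where the simplification on tori comes from (the rank-one structure of the blocks $T_k$). Both are sound; yours is arguably cleaner for this specific statement.
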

 \begin{proof} 
The fact that $g\in \ell_{2}(\Z^n)$ ensures us that $g(-i\nabla)\in \sL_2$. Furthermore, as $x\in L_2(\T^n_\theta)$ we know by Theorem~\ref{Cwikel2+} that $\lambda(x)g(-i\nabla)$ is in $\sL_2$ as well. 

Bearing this in mind, given any $k \in \Itgr^n$, we denote by $\Pi_k$ the orthogonal projection onto $\C U^k$. Thus, for all $u=\sum \hat{u}_kU^k$ in $L_2(\T^n_\theta)$, we have 
        \begin{equation*}
            \Pi_ku = \scal{U^k}{u}U^k= \hat{u}_k U^k.  
        \end{equation*}
Note that $(\Pi_k)_{k\in Z^n}$ is a disjoint family of rank~1 projections.  

For $k\in \Z^n$, we also set
\begin{equation*}
 T_k:= \lambda(x)g(-i\nabla)\Pi_k. 
\end{equation*}
Each operator $T_k$ has rank~$\leq 1$, and so this is a Hilbert-Schmidt operator. Moreover, if $k\neq l$, then $T_{k}T_{l}^*=(\lambda(x)g(-i\nabla))\Pi_k\Pi_l 
(\lambda(x)g(-i\nabla))^*=0$. Thus, the sequence $(T_k)_{k\in \Z^n}$ is right-disjoint.  We also note that, as $U^k$ is in the domain of $\lambda(x)$, for all $u=\sum \hat{u}_kU^k$ in $L_2(\T^n_\theta)$, we have 
\begin{equation*}
 T_ku= \lambda(x)g(-i\nabla)\Pi_ku=\hat{u}_k\lambda(x)g(-\nabla)U^k=\hat{u}_kg(k)\lambda(x)U^k. 
\end{equation*}
\begin{claim*}
For all $k\in \Z^n$, we have
 \begin{equation*}
  |T_k|=\|x\|_{L_2} |g(k)|\Pi_k. 
\end{equation*}
\end{claim*}
\begin{proof}[Proof of the claim]
 Let $k\in \Z^n$, and set $A=\lambda(x)g(-i\nabla)$. Note that $A$ is a bounded operator by Proposition~\ref{eq:boundedness.Lp-ellp}. We have 
\begin{equation*}
 T_k^*T_k=(A\Pi_k)^*(A\Pi_k)=\Pi_k A^*A\Pi_k. 
\end{equation*}
We observe that
 \begin{equation*}
 \Pi_k A^*AU^k=\scal{U^k}{A^*AU^k}U^k= \scal{AU^k}{AU^k}U^k. 
\end{equation*}
As $AU^k=\lambda(x)g(-\nabla)U^k=g(k)\lambda(x)U^k$, we get
\begin{equation*}
 \scal{AU^k}{AU^k}=|g(k)|^2\scal{\lambda(x)U^k}{\lambda(x)U^k}=|g(k)|^2\tau\big[xU^{k}(xU^k)^*\big]=|g(k)|^2\tau[xx^*]=|g(k)|^2\|x\|_{L_2}^2.   
\end{equation*}
Thus, 
\begin{equation*}
 \Pi_k A^*AU^k=\scal{AU^k}{AU^k}U^k=|g(k)|^2\|x\|_{L_2}^2U^k. 
 \end{equation*}
It then follows that, for all $u=\sum \hat{u}_k U^k$, we have
\begin{equation*}
 T_k^*T_k= \Pi_k A^*A\Pi_ku=\hat{u}_k\Pi_k A^*AU^k=\hat{u}_k|g(k)|^2\|x\|_{L_2}^2 U^k=|g(k)|^2\|x\|_{L_2}^2\Pi_ku. 
\end{equation*}
That is, $T_k^*T_k=|g(k)|^2\|x\|_{L_2}^2\Pi_k$. Thus,
\begin{equation*}
 |T_k|=\sqrt{T_k^*T_k}=|g(k)|\|x\|_{L_2}\sqrt{\Pi_k}=  |g(k)|\|x\|_{L_2}\Pi_k. 
\end{equation*}
This proves the claim. 
\end{proof}

Combining the claim above with the fact that $g\in\ell_{2}(\Z^n)$ gives
\begin{equation*}
 \sum_{k\in \Z^n} \|T_k\|^2_{\sL_2}=    \sum_{k\in \Z^n} \||T_k|\|^2_{\sL_2}= \sum_{k\in \Z^n} |g(k)|^2\|x\|_{L_2}^2 \|\Pi_k\|_{\sL_2} 
 = \|x\|_{L_2}^2 \|g\|^2_{\ell_2}<\infty.  
\end{equation*}
All this allows us to apply Lemma~\ref{disjoint_lemma} to get
\begin{equation}
            \mu^2\bigg(\bigoplus_{k\in \Itgr^n} T_k\bigg) \prec \mu^2\bigg(\sum_{k\in \Itgr^n} T_k\bigg)=  \mu^2\big(\lambda(x)g(-i\nabla)\big).
            \label{eq:majorization-oplus}  
\end{equation}

Set $S=\bigoplus_{k\in \Itgr^n} T_k$. The claim above implies that
\(
|S|= \bigoplus_{k\in \Itgr^n} |T_k| =   \bigoplus_{k\in \Itgr^n} \|x\|_{L_2}|g(k)|\Pi_k. 
\)
Therefore, we have
\begin{equation*}
  \mu(S) = \mu\bigg( \bigoplus_{k\in \Itgr^n} \|x\|_{L_2}|g(k)|\Pi_k\bigg)=\|x\|_{L_2} \mu\big(|g(-i\nabla)|\big)=\|x\|_{L_2} \mu\big(g(-i\nabla)\big). 
\end{equation*}
Combining this with~(\ref{eq:majorization-oplus}) then gives the majorization $\|x\|_{L_2}^2 \mu^2(g(-i\nabla)) \prec  \mu^2(\lambda(x)g(-i\nabla))$. The proof is complete.  
\end{proof}
 
 We are now in a position to prove Theorem~\ref{Cwikel2-}. 
  
\begin{proof}[Proof of Theorem~\ref{Cwikel2-}] 
Let $x\in L_2(\T^n_\theta)$ and $g\in \ell_{p,\infty}$, $p<2$. Recall that $g(-i\nabla)\in \sL_{p,\infty}$ and $\|g(-i\nabla)\|_{\sL_{p,\infty}}=\|g\|_{\ell_{p,\infty}}$. Note also that $g\in \ell_2(\Z^n)$, since $p<2$. Thus, combining the majorization~(\ref{eq:majorization}) with Lemma~\ref{anti_monotone} shows that $\lambda(x)g(-i\nabla) \in \sL_{p,\infty}$, and we have 
\begin{equation*}
 \big\| \lambda(x)g(-i\nabla)\big\|_{\sL_{p,\infty}}\leq2^{\frac1p}(2-p)^{-\frac1p}\|x\|_{L_2}\|g(-i\nabla)\|_{\sL_{p,\infty}} \leq 2^{\frac1p}(2-p)^{-\frac1p}\|x\|_{L_2}\|g\|_{\ell_{p,\infty}}. 
\end{equation*}

If $g\in \ell_p(\Z^n)$, then $g(-i\nabla)\in \sL_p$ and $\|g(-i\nabla)\|_{\sL_p}= \|g\|_{\ell_p}$. Therefore, in the same way as above, we deduce that $\lambda(x)g(-i\nabla) \in \sL_{p}$, and we have
\begin{equation*}
 \big\| \lambda(x)g(-i\nabla)\big\|_{\sL_{p}}\leq \|x\|_{L_2} \|g(-i\nabla)\|_{\sL_{p}} \leq \|x\|_{L_2}\|g\|_{\ell_{p}}. 
\end{equation*}
This completes the proof of Theorem~\ref{Cwikel2-}. 
   \end{proof} 
  
%\clearpage  
   
\section{Specific Cwikel Estimates}\label{sec:specific-Cwikel} 
 It is worth specializing the Cwikel estimates of Section~\ref{sec:Cwikel} to operators of the forms $\lambda(x)\Delta^{-n/2p}$ and $\Delta^{-n/4p}\lambda(x)\Delta^{-n/4p}$, since these estimates are used in the derivation of the CLR inequalities on NC tori in Section~\ref{CLR_section}. In the terminology of~\cite{SZ:arXiv20} these Cwikel-type estimates are called \emph{specific} Cwikel estimates. 
  
 In what follows we set
\begin{equation*}
 \nu_0(n):= \sup_{\lambda \geq 1} \lambda^{-\frac{n}{2}}\#\left\{k\in \Z^n\setminus 0; \ |k|\leq \sqrt{\lambda} \right\}. 
\end{equation*}

 \begin{theorem}\label{thm.specific-Cwikel}
 The following holds. 
\begin{enumerate}
 \item If $p>2$ and $x\in L_p(\T^n_\theta)$, then $\lambda(x) \Delta^{-n/2p}\in \sL_{p,\infty}$, and we have 
 \begin{equation}
 \big\| \lambda(x) \Delta^{-\frac{n}{2p}}\big\|_{\sL_{p,\infty}} \leq c_{+}(p) \nu_o(n)^{\frac1{p}}\|x\|_{L_p}.
 \label{eq:specific.cpq-nu2+}  
 \end{equation}
 
 \item If $0<p<2$ and $x\in L_2(\T^n_\theta)$, then $\lambda(x) \Delta^{-n/2p}\in \sL_{p,\infty}$, and we have 
 \begin{equation*}
 \big\| \lambda(x) \Delta^{-\frac{n}{2p}}\big\|_{\sL_{p,\infty}} \leq c_{-}(p) \nu_o(n)^{\frac1{p}}\|x\|_{L_2}. 
 \end{equation*}
 
  \item If $x\in L_p(\T^n_\theta)$, $p>2$, then $\lambda(x) \Delta^{-n/4}\in \sL_{2,\infty}$, and we have 
 \begin{equation*}
 \big\| \lambda(x) \Delta^{-\frac{n}{4}}\big\|_{\sL_{2,\infty}} \leq c_2(p) \nu_o(n)^{\frac1{2}}\|x\|_{L_p}\|x\|_{L_p}, 
 \end{equation*}
 where we have set
 \begin{equation}\label{eq:Cwikel-speficied-c2p}
  c_{2}(p)=2^{-\frac1{p}}p^{-\frac12}(p-2)^{\frac1p-\frac12}c_{+}(p). 
\end{equation}
\end{enumerate}
Here $c_{+}(p)$ and $c_{-}(p)$ are the best constants in the $\sL_{p,\infty}$-Cwikel estimates~(\ref{eq:Cwikel2+Lpinf}) and~(\ref{eq:Cwikel2-Lpinf}), respectively.
\end{theorem}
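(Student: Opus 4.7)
The plan is to reduce all three inequalities to the abstract Cwikel estimates of Section~\ref{sec:Cwikel} by identifying $\Delta^{-n/2p}$ as a Fourier multiplier. Since $\Delta U^k = |k|^2 U^k$, the operator $\Delta^{-n/2p}$, understood as zero on $\ker \Delta = \C\cdot 1$, coincides with $g(-i\nabla)$ for $g(k) = |k|^{-n/p}$ (with $g(0)=0$). The main elementary step is to verify that $g \in \ell_{p,\infty}(\Z^n)$ with the clean bound $\|g\|_{\ell_{p,\infty}} \leq \nu_0(n)^{1/p}$, and likewise for the symbol of $\Delta^{-s}$ that will be needed in part (3).

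To verify this bound, I will exploit that $|g|$ is constant on each lattice sphere $\{|k|^2 = m\}$, $m \geq 1$. Setting $N(m) := \#\{k \in \Z^n\setminus 0 : |k|^2 \leq m\}$, the non-increasing rearrangement satisfies $\mu_j(g) = m^{-n/(2p)}$ for $j \in [N(m-1), N(m)-1]$, so that
$$
(j+1)^{1/p} \mu_j(g) \;\leq\; N(m)^{1/p}\, m^{-n/(2p)} \;\leq\; \bigl(\nu_0(n)\, m^{n/2}\bigr)^{1/p} m^{-n/(2p)} \;=\; \nu_0(n)^{1/p}
$$
directly from the definition of $\nu_0(n)$. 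This is the most delicate point, since the naive estimate via the distribution function $n_g(t) \leq \nu_0(n) t^{-p}$ would lose a factor $2^{1/p}$; matching the supremum in the definition of $\nu_0(n)$ exactly requires working shell-by-shell.

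With the symbol estimate in hand, parts (1) and (2) follow by direct application of the main Cwikel theorems. For $p>2$ and $x \in L_p(\T^n_\theta)$, Theorem~\ref{Cwikel2+}(2) gives
$$\|\lambda(x)\Delta^{-n/2p}\|_{\sL_{p,\infty}} \leq c_+(p)\|x\|_{L_p}\|g\|_{\ell_{p,\infty}} \leq c_+(p)\nu_0(n)^{1/p}\|x\|_{L_p},$$
and for $0<p<2$ and $x \in L_2(\T^n_\theta)$, Theorem~\ref{Cwikel2-}(2) yields the analogous estimate with $c_-(p)$ in place of $c_+(p)$ and $\|x\|_{L_2}$ in place of $\|x\|_{L_p}$.

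For part (3), the value $p=2$ lies outside the range of both previous theorems, so the idea is to manufacture it by combining part (1) with H\"older's inequality for weak Schatten classes. Set $q = 2p/(p-2)$ so that $p^{-1}+q^{-1}=1/2$, and factor
$$
\lambda(x)\Delta^{-n/4} \;=\; \bigl(\lambda(x)\Delta^{-n/2p}\bigr)\cdot \Delta^{-s}, \qquad s := \tfrac{n}{4}-\tfrac{n}{2p}.
$$
The first factor lies in $\sL_{p,\infty}$ by part (1), while $\Delta^{-s} = h(-i\nabla)$ with $h(k)=|k|^{-n/q}$ lies in $\sL_{q,\infty}$ with norm at most $\nu_0(n)^{1/q}$ by the same shell-by-shell argument. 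Proposition~\ref{prop:Cwikel.Holder-weak-Schatten} then gives
$$
\bigl\|\lambda(x)\Delta^{-n/4}\bigr\|_{\sL_{2,\infty}} \;\leq\; \gamma(p,q)\, c_+(p)\, \nu_0(n)^{1/p+1/q}\,\|x\|_{L_p} \;=\; \gamma(p,q)\, c_+(p)\, \nu_0(n)^{1/2}\,\|x\|_{L_p}.
$$
It then remains to simplify $\gamma(p,q)$ from the explicit expression in Remark~\ref{eq:Cwikel.Holder-constant.pr} applied with $r=2$ (so that $p-r=p-2$), which is a routine algebraic manipulation that produces the constant $c_2(p)$ in~(\ref{eq:Cwikel-speficied-c2p}). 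Apart from the sharp symbol estimate above, the rest of the argument is pure bookkeeping.
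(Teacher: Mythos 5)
Your proposal is correct and follows essentially the same route as the paper: identify $\Delta^{-n/2p}$ (vanishing on $\ker\Delta$) with $g_p(-i\nabla)$ for $g_p(k)=|k|^{-n/p}$, bound $\|g_p\|_{\ell_{p,\infty}}\leq\nu_0(n)^{1/p}$ from the lattice-point count, feed this into Theorems~\ref{Cwikel2+} and~\ref{Cwikel2-} for parts (1)--(2), and obtain part (3) by factoring $\Delta^{-n/4}=\Delta^{-n/2p}\Delta^{-n/2q}$ and applying the weak-Schatten H\"older inequality with the sharp constant $\gamma(p,q)$. The only difference is cosmetic: the paper computes $\|g_p\|_{\ell_{p,\infty}}$ by enumerating the eigenvalues $\lambda_1\leq\lambda_2\leq\cdots$ and using $j\leq N_0(\lambda_j)\leq\nu_0(n)\lambda_j^{n/2}$, whereas you group indices into lattice shells $[N(m-1),N(m)-1]$; these are the same inequality organized differently, and your remark that this avoids a spurious $2^{1/p}$ from a cruder distribution-function bound is accurate.
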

\begin{proof}
Suppose that $p>2$, and let $x\in L_p(\T^n_\theta)$. 
Observe that $\Delta^{-n/2p}=\lambda(x)g_p(-i\nabla)$, where $g_p(k)=|k|^{-n/p}$ for $k\neq 0$ and $g_p(0)=0$. In particular, $g_p\in \ell_{p,\infty}(\Z^n)$. Thus, by specializing Theorem~\ref{Cwikel2+} to $g=g_p$ we see that $\lambda(x) \Delta^{-n/2p}\in \sL_{p,\infty}$, and we have 
\begin{equation}\label{eq:specific.cpq-gp} 
   \big\| \lambda(x) \Delta^{-\frac{n}{2p}}\big\|_{\sL_{p,\infty}} \leq c_{+}(p) \|g_p\|_{\ell_{p,\infty}}\|x\|_{L_p}.
\end{equation}
 
Let us arrange the family $\{|k|^2;\ k\in \Z^n\setminus 0\}$ as a non-decreasing sequence 
$1=\lambda_1\leq \lambda_2\leq \cdots \leq \lambda_j \leq \cdots $. We have $\mu_j(g_p)=\lambda_{j+1}^{-n/2p}$ for all $j \geq 0$, and hence
\begin{equation}\label{eq:specific.norm-gp}
 \|g_p\|_{\ell_{p,\infty}} = \sup_{j\geq 0}(j+1)^{\frac1{p}} \mu_j(g_p)= \sup_{j\geq 0} (j+1)^{\frac1{p}}\lambda_{j+1}^{-\frac{n}{2p}} 
 = \big( \sup_{j\geq 1} j \lambda_j^{-n/2}\big)^{\frac1p}. 
\end{equation}
Set $N_0(\lambda)=\#\{j\geq 1; \lambda_j\leq \lambda\}$, $\lambda \geq 1$. In view of the definition of  $\nu_0(n)$ we have
\begin{equation}\label{eq:specific.N0lambda} 
 N_0(\lambda)=\#\left\{k\in \Z^n\setminus 0; \ |k|^2\leq \lambda \right\} =\#\left\{k\in \Z^n\setminus 0; \ |k|\leq \sqrt{\lambda} \right\} \leq \nu_0(n)\lambda^{\frac{n}2}. 
\end{equation}
It follows that $j\leq N_0(\lambda_j)\leq \nu_0(n)\lambda_j^{-n/2}$ for all $j\geq 1$. Together with~(\ref{eq:specific.norm-gp}) this implies that
$\|g_p\|_{\ell_{p,\infty}}\leq \nu_0(n)^{1/p}$. Combining this with~(\ref{eq:specific.cpq-gp}) then gives~(\ref{eq:specific.cpq-nu2+}). This gives the 1st part. 

If $p<2$, then we can similarly prove the 2nd part by using Theorem~\ref{Cwikel2-} instead of Theorem~\ref{Cwikel2+}. 

It remains to prove the 3rd part. Let $x\in L_p(\T^n_\theta)$, $p>2$, and let $q$ be such that $2^{-1}=p^{-1}+q^{-1}$. In particular, 
$\lambda(x)\Delta^{-n/4}=\lambda(x)\Delta^{-n/2p}\cdot \Delta^{-n/2q}$. As $p>2$ the operator $\lambda(x)\Delta^{-n/2p}$ is in $\sL_{p,\infty}$ by the 1st~part. Note also that $\Delta^{-n/2q}=g_q(-\nabla)\in \sL_{q,\infty}$. Therefore, by the version of H\"older's inequality  provided by Proposition~\ref{prop:Cwikel.Holder-weak-Schatten} the operator $\lambda(x)\Delta^{-n/4}$ is in $\sL_{2,\infty}$, and we have
\begin{equation*}
 \big\|\lambda(x) \Delta^{-\frac{n}4} \big\|_{\sL_{2,\infty}} \leq  \gamma(p,q)\big\|\lambda(x) \Delta^{-\frac{n}{2p}} \big\|_{\sL_{p,\infty}}  \big\|\Delta^{-\frac{n}{2q}} \big\|_{\sL_{q,\infty}}, 
\end{equation*}
where $\gamma(p,q)=p^{-\frac1{q}}q^{-\frac1{p}} (p+q)^{\frac1{p}+\frac1{q}}$. 
Here $\|\Delta^{-\frac{n}{2q}} \|_{\sL_{q,\infty}}=\|g_q(-\nabla) \|_{\sL_{q,\infty}} = \|g_q \|_{\ell_{q,\infty}}\leq \nu_0(n)^{1/q}$. Thus, by using~(\ref{eq:specific.cpq-nu2+}) we get
\begin{align*}
 \big\|\lambda(x) \Delta^{-\frac{n}4} \big\|_{\sL_{2,\infty}} & \leq \gamma(p,q) \big(c_+(p) \nu_0(n)^{\frac1p}(n) \|x\|_{L_p} \big) \nu_0(n)^{1/q}\\
 & \leq c_2(p) \nu_0(n)^{\frac12}\|x\|_{L_p},
\end{align*}
 where we have set $c_2(p)=\gamma(p,q)c_+(p)$. In fact, as $p^{-1}+q^{-1}=2^{-1}$, by Remark~\ref{eq:Cwikel.Holder-constant.pr} we have $\gamma(p,q)=2^{-1/p}p^{-1/2}(p-2)^{1/p-1/2}$, and so $c_2(p)$ is given by~(\ref{eq:Cwikel-speficied-c2p}). This proves the 3rd part. The proof is complete. 
\end{proof}

\begin{remark}
 As~(\ref{eq:specific.N0lambda}) shows, $N_0(\lambda)$ is the number of non-zero integer points in the ball centered at the origin of radius $\sqrt{\lambda}$. This ball is contained in the cube $[-\sqrt{\lambda},\sqrt{\lambda}]^n$. There are at most $2\sqrt{\lambda}+1$ integers in the interval $[-\sqrt{\lambda},\sqrt{\lambda}]$. Thus, 
\begin{equation*}
 N_0(\lambda)\leq \big(2\sqrt{\lambda}+1\big)^n-1 =\lambda^{\frac{n}2}\left(\big(2+1/\sqrt{\lambda}\big)^n -\lambda^{-\frac{n}2}\right)\leq \lambda^{\frac{n}2}(3^n-1). 
\end{equation*}
It then follows that 
\begin{equation*}
 \nu_0(n)=\sup_{\lambda\geq 1} \lambda^{-\frac{n}2}N_0(\lambda) \leq 3^n-1. 
\end{equation*}
For $n=2$ it can be shown that $N_0(\lambda) \leq 4 \lambda$ for $\lambda\geq 1$ (see~\cite[\S2]{IL:StPMJ20}). Thus, in this case we get 
$\nu_0(2)\leq 4$. 
\end{remark}

Let us now turn to the Cwikel operators $\Delta^{-n/4p}\lambda(x)\Delta^{-n/4p}$. Suppose that $p^{-1}+2q^{-1}=1$. As mentioned in Section~\ref{sec:NCtori}, if $x\in L_{p}(\T^n_\theta)$, then $\lambda(x)$ makes sense as a bounded operator from $L_q(\T^n_\theta)$ to its anti-linear dual $L_q(\T^n_\theta)^*$. Moreover, by Lemma~\ref{lem:Boundedness.sandwich1}, if $s>n/2p$, or if $s=n/2p$ and $p>1$, then $\lambda(x)$ induces a bounded operator $\lambda(x):W_2^{s}(\T^n_\theta)\rightarrow W_2^{-s}(\T^n_\theta)$. This allows us to makes sense of the composition $\Delta^{-s/2} \lambda(x) \Delta^{-s/2}$  as a bounded operator on $L_2(\T^n_\theta)$ (\emph{cf}.~Proposition~\ref{prop:Boundedness.sandwich2}). 

Let $y\in L_{2p}(\T^n_\theta)$. As $(2p)^{-1}+q^{-1}=\frac{1}{2}(p^{-1}+2q^{-1})=\frac12$, we know from Proposition~\ref{prop:left-reg-Lp} that $\lambda(y)$ makes sense as a bounded operator from $L_q(\T^n_\theta)$ to $L_2(\T^n_\theta)$. By duality we get a bounded operator $\lambda(y)^*:L_2(\T^n_\theta) \rightarrow L_q(\T^n_\theta)^*$ such that
\begin{equation}
 \acou{\lambda(y)^*u}{v}=\scal{u}{\lambda(y)v}, \qquad u\in L_2(\T^n_\theta), \ v\in L_q(\T^n_\theta).
 \label{eq:specific.lambday*}
\end{equation}
In addition, if $s>n/2p$, or if $s=n/2p$ and $p>1$, then we have a continuous embedding of $W_2^s(\T^n_\theta)$ into $L_q(\T^n_\theta)$,  and so the operator $\lambda(x) \Delta^{-s/2}$ is bounded on $L_2(\T^n_\theta)$. 

\begin{lemma}\label{lem:spefici.decomposition-lambdax} 
 Let $x\in L_p(\T^n_\theta)$, $p\geq 1$, be of the form $x=yz$ with $y,z\in L_{2p}(\T^n_\theta)$. In addition, assume that either $s>n/2p$, or $s=n/2p$ and $p>1$. Then 
\begin{equation*}
 \lambda(x)=\lambda(y^*)^*\lambda(z), \qquad \Delta^{-\frac{s}{2}} \lambda(x) \Delta^{-\frac{s}{2}}= \big[\lambda(y^*)\Delta^{-\frac{s}{2}}\big]^*\lambda(z)\Delta^{-\frac{s}{2}}. 
\end{equation*}
\end{lemma}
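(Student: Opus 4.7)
The plan is to prove the first identity by a direct computation with the duality pairing, then derive the second by sandwiching with $\Delta^{-s/2}$ and exploiting self-adjointness.

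First, I would verify that both sides of $\lambda(x) = \lambda(y^*)^{*}\lambda(z)$ are well-defined bounded operators from $L_q(\T^n_\theta)$ to $L_q(\T^n_\theta)^{*}$, where $p^{-1}+2q^{-1}=1$. The left-hand side is Proposition~\ref{prop:left-reg-LpLqLq*} applied to $x\in L_p$. For the right-hand side, H\"older gives $y,z\in L_{2p}$, so $y^{*}\in L_{2p}$; since $(2p)^{-1}+q^{-1}=\frac12$, Proposition~\ref{prop:left-reg-Lp} makes $\lambda(y^{*})$ and $\lambda(z)$ bounded from $L_q(\T^n_\theta)$ to $L_2(\T^n_\theta)$, and then (\ref{eq:specific.lambday*}) defines $\lambda(y^{*})^{*}:L_2(\T^n_\theta)\to L_q(\T^n_\theta)^{*}$. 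Now for $u,v\in L_q(\T^n_\theta)$, I compute
\begin{equation*}
 \acou{\lambda(y^{*})^{*}\lambda(z)u}{v} = \scal{\lambda(z)u}{\lambda(y^{*})v} = \tau\big[(zu)(y^{*}v)^{*}\big] = \tau\big[zuv^{*}y\big] = \tau\big[v^{*}yzu\big] = \tau\big[v^{*}xu\big],
\end{equation*}
using the trace property and $x=yz$. By (\ref{eq:left-reg-LpLqLq*}) the right-hand side equals $\acou{\lambda(x)u}{v}$, which yields the first identity.

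For the second identity, the hypothesis on $s$ ensures that $\Delta^{-s/2}$ maps $L_2(\T^n_\theta)$ continuously into $W_2^s(\T^n_\theta)$, and Proposition~\ref{prop:Boundedness.sandwich2} (with $\sqrt{\Delta}$ in place of $\Lambda$) makes $\Delta^{-s/2}\lambda(x)\Delta^{-s/2}$ a bounded operator on $L_2(\T^n_\theta)$. Since $W_2^s(\T^n_\theta)\subset L_q(\T^n_\theta)$ continuously, $\lambda(y^{*})\Delta^{-s/2}$ and $\lambda(z)\Delta^{-s/2}$ are bounded on $L_2(\T^n_\theta)$. Using the duality analogue of~(\ref{eq:Boundedness.Lambdas-dual}) for $\Delta^{-s/2}$, followed by the first identity and the definition of $\lambda(y^{*})^{*}$, I compute for $u,v\in L_2(\T^n_\theta)$:
\begin{align*}
 \scal{\Delta^{-s/2}\lambda(x)\Delta^{-s/2}u}{v} &= \acou{\lambda(x)\Delta^{-s/2}u}{\Delta^{-s/2}v}\\
 &= \acou{\lambda(y^{*})^{*}\lambda(z)\Delta^{-s/2}u}{\Delta^{-s/2}v}\\
 &= \scal{\lambda(z)\Delta^{-s/2}u}{\lambda(y^{*})\Delta^{-s/2}v}\\
 &= \scal{\big[\lambda(y^{*})\Delta^{-s/2}\big]^{*}\lambda(z)\Delta^{-s/2}u}{v},
\end{align*}
and since $u,v$ are arbitrary the operator identity follows.

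The proof is essentially a bookkeeping exercise; the only step requiring care is the domain matching in the middle computation, since $\lambda(x)$ lives as a map between $W_2^{\pm s}$ (or equivalently between $L_q$ and $L_q^{*}$) rather than on $L_2$. I expect no real obstacle beyond correctly threading the three different pairings (the $L_2$ inner product $\scal{\cdot}{\cdot}$, the $W_2^s/W_2^{-s}$ duality, and the $L_q/L_q^{*}$ duality) and using consistently that $\Delta^{-s/2}$ intertwines them via~(\ref{eq:Boundedness.Lambdas-dual}).
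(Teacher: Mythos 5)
Your proof is correct and follows essentially the same route as the paper's: the first identity via unwinding the three pairings ($L_q/L_q^*$ duality, $L_2$ inner product, and the trace) together with the trace property, and the second via sandwiching with $\Delta^{-s/2}$ and invoking~(\ref{eq:Boundedness.Lambdas-dual}). The only cosmetic difference is that you run the chain of equalities for $\lambda(x)=\lambda(y^*)^*\lambda(z)$ in the opposite direction from the paper, which is immaterial.
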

\begin{proof}
 Let $u,v\in L_q(\T^n_\theta)$,  $p+2q^{-1}=1$. In view of~(\ref{eq:left-reg-LpLqLq*}) we have
 \begin{equation*}
 \acou{\lambda(x)u}{v}=\tau\big[v^*xu\big]=\tau\big[v^*yzu\big]. 
\end{equation*}
As $zu=\lambda(z)u\in L_2(\T^n_\theta)$ and $v^*y=(\lambda(y^*)v)^*\in L_2(\T^n_\theta)$, we get 
\begin{equation}
 \acou{\lambda(x)u}{v}=\tau\big[\left(\lambda(y^*)v\right)^*\lambda(z)u\big]=\bigscal{\lambda(z)u}{\lambda(y^*)v}. 
 \label{eq:specific.lambdax-scallambdayz}
\end{equation}
Combining this with~(\ref{eq:specific.lambday*}) gives
\begin{equation*}
  \acou{\lambda(x)u}{v}=\bigacou{\lambda(y^*)^*\lambda(z)u}{v} \qquad \forall u,v\in L_q(\T^n_\theta).
\end{equation*}
That is, $ \lambda(x)=\lambda(y^*)^*\lambda(z)$. 

Suppose that, either $s>n/2p$, or $s=n/2p$ and $p>1$. Using~(\ref{eq:Boundedness.Lambdas-dual}) and~(\ref{eq:specific.lambdax-scallambdayz}) shows that, for all $u,v\in L_2(\T^n_\theta)$, we have
\begin{align*}
 \bigscal{\Delta^{-\frac{s}{2}} \lambda(x) \Delta^{-\frac{s}{2}}u}{v}&= \bigacou{\lambda(x) \Delta^{-\frac{s}{2}}u}{\Delta^{-\frac{s}{2}} v}\\
 & = \bigscal{\lambda(z) \Delta^{-\frac{s}{2}}u}{\lambda(y^*)\Delta^{-\frac{s}{2}} v}\\ &=  \bigscal{\big[\lambda(y^*)\Delta^{-\frac{s}{2}}\big]^*\lambda(z)\Delta^{-\frac{s}{2}}u}{v}. 
\end{align*}
 It then follows that $\Delta^{-s/2} \lambda(x) \Delta^{-s/2}=[\lambda(y^*)\Delta^{-s/2}]^*\lambda(z)\Delta^{-s/2}$. The proof is complete.
\end{proof}

\begin{lemma}\label{lem:specific.productLp}
 Any $x\in L_p(\T^n_\theta)$, $p\geq 1$, can be written in the form $x=yz$ with $y,z\in L_{2p}(\T^n_\theta)$ such that $\|y\|_{L_{2p}}=\|z\|_{L_{2p}}=(\|x\|_{L_p})^{1/2}$.  
\end{lemma}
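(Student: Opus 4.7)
The plan is to use the polar decomposition of $x$ together with the fact that the positive operator $|x|$ is affiliated with $L_\infty(\T^n_\theta)$, and hence so is any nonnegative power of $|x|$ obtained from the functional calculus.

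Concretely, I would first invoke the polar decomposition $x = u|x|$, where $u \in L_\infty(\T^n_\theta)$ is a partial isometry and $|x| = (x^*x)^{1/2}$ is a non-negative $L_\infty(\T^n_\theta)$-affiliated operator, as recalled in Section~\ref{sec:NCtori}. Since $x \in L_p(\T^n_\theta)$, we have $\tau(|x|^p) = \|x\|_{L_p}^p < \infty$. Set
\begin{equation*}
    z := |x|^{1/2}, \qquad y := u\,|x|^{1/2}.
\end{equation*}
Then $yz = u|x|^{1/2}|x|^{1/2} = u|x| = x$ (the products being interpreted via the $*$-algebra structure on $\tau$-measurable operators).

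Next I would compute the $L_{2p}$-norms. For $z$, observe that $|z|^{2p} = |x|^p$, so by the very definition of the $L_{2p}$-norm,
\begin{equation*}
    \|z\|_{L_{2p}} = \tau\big(|z|^{2p}\big)^{\frac{1}{2p}} = \tau\big(|x|^p\big)^{\frac{1}{2p}} = \|x\|_{L_p}^{1/2}.
\end{equation*}
For $y = u|x|^{1/2}$, I would show that $|y| = |x|^{1/2}$. Indeed, the partial isometry $u$ arising from the polar decomposition satisfies $u^*u = \Pi$, where $\Pi$ is the support projection of $|x|$, so $u^*u\,|x|^{1/2} = |x|^{1/2}$ by functional calculus. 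Consequently,
\begin{equation*}
    |y|^2 = y^*y = |x|^{1/2} u^*u\,|x|^{1/2} = |x|,
\end{equation*}
hence $|y| = |x|^{1/2}$ and therefore $\|y\|_{L_{2p}} = \|z\|_{L_{2p}} = \|x\|_{L_p}^{1/2}$, as required.

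The only genuinely delicate point is the identity $u^*u\,|x|^{1/2} = |x|^{1/2}$ for the (possibly unbounded) operator $|x|^{1/2}$. This is handled by the functional calculus for the positive affiliated operator $|x|$: $u^*u$ is the projection onto the closure of the range of $|x|$, which coincides with the closure of the range of $|x|^{1/2}$, so $u^*u$ commutes with $|x|^{1/2}$ and acts as the identity on its range. The remaining algebraic manipulations, and the identifications $yz = x$ and $|y|^2 = |x|$, then hold in the $*$-algebra of $\tau$-measurable operators in the usual sense.
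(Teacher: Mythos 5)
Your proof is correct and uses the same decomposition as the paper: $z=|x|^{1/2}$, $y=u|x|^{1/2}$ from the polar decomposition $x=u|x|$. The only difference is in how you establish $\|y\|_{L_{2p}}=(\|x\|_{L_p})^{1/2}$. You compute $|y|$ directly, using that $u^*u$ is the support projection of $|x|$ and hence fixes $|x|^{1/2}$, giving $y^*y=|x|^{1/2}u^*u|x|^{1/2}=|x|$ exactly. The paper instead avoids this structural fact about the partial isometry and argues by a two-sided H\"older sandwich: $\|y\|_{L_{2p}}\leq\|u\|_{L_\infty}\||x|^{1/2}\|_{L_{2p}}\leq(\|x\|_{L_p})^{1/2}$ from Proposition~\ref{prop:Holder}, and conversely $\|x\|_{L_p}\leq\|y\|_{L_{2p}}\|z\|_{L_{2p}}$ forces equality. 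Your route is slightly more direct and yields the stronger statement $|y|=|z|=|x|^{1/2}$, not just the equality of norms; the paper's route needs nothing about the support projection beyond $\|u\|_\infty\leq 1$, at the cost of invoking H\"older twice. Both are valid and short.
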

\begin{proof}
 Let $x=u|x|$ be the polar decomposition of $x$. As mentioned in Section~\ref{sec:NCtori} the phase $u$ is a partial isometry in $L_\infty(\T^n_\theta)$, and hence $\|u\|_{L_\infty}\leq 1$. Thus, if we set $y=u|x|^{1/2}$ and $z=|x|^{1/2}$, then $x=yz$ and $z\in L_{2p}(\T^n_\theta)$ with $\|z\|_{L_{2p}}=(\|x\|_{L_p})^{1/2}$. In addition, H\"older's inequality (see Proposition~\ref{prop:Holder}) ensures that $y=u|x|^{1/2}\in L_{2p}(\T^n_\theta)$, and we have
\begin{equation*}
 \|y\|_{L_{2p}} \leq \|u\|_{L_\infty}\||x|^{1/2}\|_{L_{2p}} \leq (\|x\|_{L_p})^{1/2}. 
\end{equation*}
As H\"older's inequality also gives
\begin{equation*}
 \|x\|_{L_p}=\|yz\|_{L_p} \leq \|y\|_{L_{2p}}\|z\|_{L_{2p}} \leq \|y\|_{L_{2p}}\big(\|x\|_{L_p}\big)^{1/2}, 
\end{equation*}
 we deduce that $\|y\|_{L_{2p}}=(\|x\|_{L_p})^{1/2}$. The proof is complete. 
\end{proof}
  
 We are now in a position to establish Cwikel estimates  for the operators $\Delta^{-n/4p}\lambda(x)\Delta^{-n/4p}$. 
 
 \begin{theorem}\label{thm:Specific-Cwikel.sandwiched} 
 The following hold. 
\begin{enumerate}
 \item If $x\in L_p(\T^n_\theta)$, $p>1$, then $\Delta^{-{n}/{4p}} \lambda(x) \Delta^{-{n}/{4p}} \in \sL_{p,\infty}$, and we have
\begin{equation}\label{eq:specific.sando-Cwikel1+}
 \big\| \Delta^{-\frac{n}{4p}} \lambda(x) \Delta^{-\frac{n}{4p}}\big \|_{\sL_{p,\infty}}\leq 2^{\frac1p}c_+(2p)^2 \nu_0(n)^{\frac{1}{p}} \|x\|_{L_p}. 
\end{equation}
  
 \item If $x\in L_p(\T^n_\theta)$, $p>1$, then $\Delta^{-n/4} \lambda(x) \Delta^{-n/4}  \in \sL_{1,\infty}$,  and we have
\begin{equation}
 \big\| \Delta^{-\frac{n}{4}} \lambda(x) \Delta^{-\frac{n}{4}} \big \|_{\sL_{1,\infty}}\leq 2c_2(2p)^2 \nu_0(n) \|x\|_{L_p}.
 \label{eq:specific.sando-Cwikel-1infty}  
\end{equation}

 \item If $x\in L_1(\T^n_\theta)$ and $p<1$, then $\Delta^{-{n}/{4p}}  \lambda(x) \Delta^{-{n}/{4p}} \in \sL_{p,\infty}$, and we have
\begin{equation}\label{eq:specific.sando-Cwikel1-}
 \big\| \Delta^{-\frac{n}{4p}} \lambda(x) \Delta^{-\frac{n}{4p}}\big \|_{\sL_{p,\infty}}\leq  2^{\frac1p}c_{-}(2p)^2 \nu_0(n)^{\frac{1}{p}} \|x\|_{L_1}. 
\end{equation}
\end{enumerate}
Here $c_{+}(2p)$ and $c_{-}(2p)$ are the best constants in the $\sL_{2p,\infty}$-Cwikel estimates~(\ref{eq:Cwikel2+Lpinf}) and~(\ref{eq:Cwikel2-Lpinf}), respectively,
 and $c_2(2p)$ is given by~(\ref{eq:Cwikel-speficied-c2p}). 
\end{theorem}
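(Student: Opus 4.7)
\medskip

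\noindent\textbf{Proof plan.} My plan is to reduce each of the three sandwiched estimates to the one-sided estimates of Theorem~\ref{thm.specific-Cwikel} by factoring $x$ as $yz$ with $y,z\in L_{2p}(\T^n_\theta)$ and writing the symmetrized operator as a product of two Cwikel operators of the form $\lambda(\cdot)\Delta^{-s/2}$. The two key ingredients are Lemma~\ref{lem:spefici.decomposition-lambdax}, which says that
\begin{equation*}
  \Delta^{-\frac{s}{2}} \lambda(x) \Delta^{-\frac{s}{2}} = \bigl[\lambda(y^*)\Delta^{-\frac{s}{2}}\bigr]^{*}\bigl[\lambda(z)\Delta^{-\frac{s}{2}}\bigr],
\end{equation*}
and Lemma~\ref{lem:specific.productLp}, which produces such a splitting with $\|y\|_{L_{2p}}=\|z\|_{L_{2p}}=\|x\|_{L_p}^{1/2}$ (and hence $\|y^*\|_{L_{2p}}=\|y\|_{L_{2p}}$). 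Once the factorization is in place, H\"older's inequality for weak Schatten classes (Proposition~\ref{prop:Cwikel.Holder-weak-Schatten}) combines the two factors with an explicit constant. The one extra verification needed is that the hypotheses of Lemma~\ref{lem:spefici.decomposition-lambdax} are met in each case: for part (1) we use $s=n/(2p)$ with $p>1$ (borderline case), for part (2) we use $s=n/2$ with $p>1$ (so $s>n/(2p)$), and for part (3) we use $s=n/(2p)$ with $p<1$ and $x\in L_1$, which gives $s=n/(2p)>n/2$ and is the strict case.

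\smallskip

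\noindent\textbf{Part (1).} With $2p>2$ and $y^*,z\in L_{2p}(\T^n_\theta)$, Theorem~\ref{thm.specific-Cwikel}(1) yields
\begin{equation*}
  \bigl\|\lambda(z)\Delta^{-\frac{n}{4p}}\bigr\|_{\sL_{2p,\infty}}\leq c_{+}(2p)\nu_0(n)^{\frac{1}{2p}}\|z\|_{L_{2p}},
\end{equation*}
and likewise for $\lambda(y^*)\Delta^{-n/(4p)}$. Applying Proposition~\ref{prop:Cwikel.Holder-weak-Schatten} with $(2p)^{-1}+(2p)^{-1}=p^{-1}$ gives a H\"older constant
\begin{equation*}
  \gamma(2p,2p)=(2p)^{-\frac{1}{2p}}(2p)^{-\frac{1}{2p}}(4p)^{\frac{1}{p}}=2^{\frac{1}{p}}.
\end{equation*}
Multiplying the three factors and using $\|y^*\|_{L_{2p}}\|z\|_{L_{2p}}=\|x\|_{L_p}$ yields the bound~(\ref{eq:specific.sando-Cwikel1+}).

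\smallskip

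\noindent\textbf{Part (2).} This is the borderline $\sL_{1,\infty}$ case where we instead split as a product of two $\sL_{2,\infty}$ factors. With $y^*,z\in L_{2p}(\T^n_\theta)$ and $2p>2$, Theorem~\ref{thm.specific-Cwikel}(3) gives
\begin{equation*}
  \bigl\|\lambda(z)\Delta^{-\frac{n}{4}}\bigr\|_{\sL_{2,\infty}}\leq c_{2}(2p)\nu_0(n)^{\frac{1}{2}}\|z\|_{L_{2p}},
\end{equation*}
and similarly for the $\lambda(y^*)$ factor. Proposition~\ref{prop:Cwikel.Holder-weak-Schatten} with $\gamma(2,2)=2^{-1/2}\cdot 2^{-1/2}\cdot 4=2$ assembles these into the $\sL_{1,\infty}$-estimate~(\ref{eq:specific.sando-Cwikel-1infty}).

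\smallskip

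\noindent\textbf{Part (3).} Here $p<1$, $x\in L_1$, so the factors $y^*,z$ lie in $L_2(\T^n_\theta)$ with $\|y\|_{L_2}^2=\|z\|_{L_2}^2=\|x\|_{L_1}$. Since $2p<2$, Theorem~\ref{thm.specific-Cwikel}(2) applies and gives
\begin{equation*}
  \bigl\|\lambda(z)\Delta^{-\frac{n}{4p}}\bigr\|_{\sL_{2p,\infty}}\leq c_{-}(2p)\nu_0(n)^{\frac{1}{2p}}\|z\|_{L_{2}},
\end{equation*}
and likewise for the other factor. Using once more the H\"older constant $\gamma(2p,2p)=2^{1/p}$ and collecting terms yields~(\ref{eq:specific.sando-Cwikel1-}).

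\smallskip

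\noindent\textbf{Main obstacle.} There is no genuine obstacle: once Lemmas~\ref{lem:spefici.decomposition-lambdax}--\ref{lem:specific.productLp} and Theorem~\ref{thm.specific-Cwikel} are in hand, the proof is essentially bookkeeping. The only delicate points are (i) verifying in part (1) that the critical Sobolev exponent $s=n/(2p)$ is allowed thanks to $p>1$, and (ii) correctly computing the H\"older exponents $\gamma(2p,2p)=2^{1/p}$ and $\gamma(2,2)=2$ so that the stated constants come out exactly as in~(\ref{eq:specific.sando-Cwikel1+}),~(\ref{eq:specific.sando-Cwikel-1infty}), and~(\ref{eq:specific.sando-Cwikel1-}).
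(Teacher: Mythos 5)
Your argument is correct and follows essentially the same route as the paper: factor $x=yz$ via Lemma~\ref{lem:specific.productLp}, split the sandwiched operator as $\big[\lambda(y^*)\Delta^{-s/2}\big]^*\lambda(z)\Delta^{-s/2}$ via Lemma~\ref{lem:spefici.decomposition-lambdax}, bound each factor by the appropriate part of Theorem~\ref{thm.specific-Cwikel}, and combine with Proposition~\ref{prop:Cwikel.Holder-weak-Schatten}. The paper merely packages the three cases into one parametrized computation (using $q=\max(p,1)$ or $q>1$ when $p=1$), whereas you treat them individually; the H\"older constants $\gamma(2p,2p)=2^{1/p}$ and $\gamma(2,2)=2$ and the Sobolev-exponent checks in Lemma~\ref{lem:spefici.decomposition-lambdax} are handled correctly in all three parts.
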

\begin{proof}
 Let $p>0$, and suppose that $q=\max(p,1)$ if $p \neq 1$ or $q>1$ if $p=1$. Note that $q\geq 1$. Set $c(p,q)$ to be equal to $c_{+}(2p)$ (resp., $c_{-}(2p)$, $c_2(2q)$) if $p>1$ (resp., $p<1$, $p=1$).   We need to show that if $x\in L_q(\T^n_\theta)$, then  $\Delta^{-{n}/{4p}} \lambda(x) \Delta^{-{n}/{4p}} \in \sL_{p,\infty}$, and we have 
\begin{equation}\label{eq:specific.sando-Cwikel}
 \big\| \Delta^{-\frac{n}{4p}} \lambda(x) \Delta^{-\frac{n}{4p}}\big \|_{\sL_{p,\infty}}\leq 2^{\frac1p} c(p,q)^2 \nu_0(n)^{\frac{1}{p}} \|x\|_{L_q}. 
\end{equation}
 
Let $x\in L_q(\T^n_\theta)$. By Lemma~\ref{lem:specific.productLp} we may write $x=yz$, with  $y,z\in L_{2q}(\T^n_\theta)$ such that $\|y\|_{L_{2q}}=\|z\|_{L_{2q}}=(\|x\|_{L_q})^{1/2}$. 
 By Lemma~\ref{lem:spefici.decomposition-lambdax} we have
\begin{equation*}
 \Delta^{-\frac{n}{4p}} \lambda(x) \Delta^{-\frac{n}{4p}}= \big[\lambda(y^*) \Delta^{-\frac{n}{4p}}\big]^* \lambda(z) \Delta^{-\frac{n}{4p}}. 
\end{equation*}
 It follows from Theorem~\ref{thm.specific-Cwikel} that $[\lambda(y^*) \Delta^{-\frac{n}{4p}}]^*$ and $ \lambda(z) \Delta^{-\frac{n}{4p}}$ are both in the weak Schatten class $\sL_{2p,\infty}$, and we have 
\begin{gather}\label{eq:Specific-Cwikel.sandwiched-yLq}
\big\| \lambda(z) \Delta^{-\frac{n}{24}}\big\|_{\sL_{2p,\infty}} \leq c(p,q)\nu_0(n)^{\frac1{2p}}\|z\|_{L_{2q}}=c(p,q)\nu_0(n)^{\frac1{2p}}\|x\|^{\frac12}_{L_{2q}},
\\
\big\| \big[\lambda(y^*) \Delta^{-\frac{n}{4p}}\big]^*\big\|_{\sL_{2p,\infty}}=\big\| \lambda(y^*) \Delta^{-\frac{n}{2p}}\big\|_{\sL_{2p,\infty}} 
\leq c(p,q)\nu_0(n)^{\frac1{2p}}\|y^*\|_{L_{2p}}= c(p,q)\nu_0(n)^{\frac1{2p}}\|x\|^{\frac12}_{L_{2q}}. 
\end{gather}
The H\"older's inequality for weak Schatten classes provided by Proposition~\ref{prop:Cwikel.Holder-weak-Schatten} then implies that $ \Delta^{-\frac{n}{4p}} \lambda(x) \Delta^{-\frac{n}{4p}}\in \sL_{p,\infty}$, and we have
\begin{align*}
 \big\| \Delta^{-\frac{n}{4p}} \lambda(x) \Delta^{-\frac{n}{4p}} \|_{\sL_{p,\infty}} & \leq   2^{\frac1p}\big\|\big[ \lambda(y^*) \Delta^{-\frac{n}{4p}}\big]^*\big\|_{\sL_{2p,\infty}} 
 \big\| \lambda(z) \Delta^{-\frac{n}{4p}}\big\|_{\sL_{2p,\infty}}\\ & \leq 2^{\frac1p}c(p,q)^2 \nu_0(n)^{\frac{1}{p}} \|x\|_{L_q}.  
\end{align*}
This proves~(\ref{eq:specific.sando-Cwikel}). The proof is complete. 
\end{proof}

 \begin{remark}\label{rmk:Specific.positive-case}
 If $x\geq 0$, then the inequalities~(\ref{eq:specific.sando-Cwikel1+})--(\ref{eq:specific.sando-Cwikel1-}) hold without the extra $2^{\frac1p}$-factor. Indeed, if $p$ and $q$ are as in the proof of Theorem~\ref{thm:Specific-Cwikel.sandwiched} and 
 $0\leq x\in L_q(\T^n_\theta)$, then we may take $y=z=\sqrt{x}$. In this case 
 $ \Delta^{-{n}/{4p}} \lambda(x) \Delta^{-{n}/{4p}}= \big[\lambda(\sqrt{x}) \Delta^{-{n}/{4p}}\big]^* \lambda(\sqrt{x}) \Delta^{-{n}/{4p}}= 
 | \lambda(\sqrt{x}) \Delta^{-{n}/{4p}}|^2$. Thus, 
\begin{equation*}
  \big\| \Delta^{-\frac{n}{4p}} \lambda(x) \Delta^{-\frac{n}{4p}} \big\|_{\sL_{p,\infty}} = 
  \left\|  \big| \lambda(\sqrt{x}) \Delta^{-{n}/{4p}}\big|^2 \right\|_{\sL_{p,\infty}}= 
   \big\|  \lambda(\sqrt{x}) \Delta^{-\frac{n}{4p}} \big\|^2_{\sL_{2p,\infty}} 
 \end{equation*}
 Combining this with the inequality~(\ref{eq:Specific-Cwikel.sandwiched-yLq}) for $y=\sqrt{x}$ then gives
\begin{equation*}
  \big\| \Delta^{-\frac{n}{4p}} \lambda(x) \Delta^{-\frac{n}{4p}} \big\|_{\sL_{p,\infty}}  \leq c(p,q)^2\nu_0(n)^{\frac{1}{p}} \|x\|_{L_q},
\end{equation*}
which proves our claim. 
 \end{remark}

\begin{remark}\label{rmk:L1infty-does-not-hold} 
The estimate~(\ref{eq:specific.sando-Cwikel-1infty}) does not hold for $x\in L_1(\T^n_\theta)$ (see~\cite[Lemma~5.7]{LPS:JFA10}).   
\end{remark}

\begin{remark}
For the ordinary torus $\T^n$, i.e., $\theta=0$, by a recent result of Sukochev-Zanin~\cite{SZ:arXiv20} the estimate~(\ref{eq:specific.sando-Cwikel-1infty}) 
still holds if $x$ is in the Orlicz space $\LLogL(\T^n)$ (see also~\cite{So:PLMS95}). It would be interesting to have an analogue of this result for $\theta \neq 0$.  
\end{remark}

\section{Borderline Birman-Schwinger Principle} \label{sec:Birman-Schwinger}
In this section, we establish a ``borderline'' version of the abstract Birman-Schwinger principle for the number of negative values of relatively form-compact perturbations of non-negative semi-bounded operators on Hilbert space. 

To  a large extent we follow the original approach of Birman-Solomyak~\cite{BS:1989}, which we recast in the framework of~\cite{Si:AMS15}. However, our ultimate result (Theorem~\ref{thm:Borderline-BSP}) seems to be new, at least at the level of generality it is stated. In particular, it can be applied to 
Schr\"odinger operators $\Delta_g+V$, and more generally fractional Schr\"odinger operators $\Delta_g^\alpha+V$, in the following setups: closed Riemannian manifolds, compact manifolds with boundary with suitable boundary condition, or even hyperbolic manifolds with infinite volume.

Throughout this section we let $\sH$ be a (separable) Hilbert  space.

\subsection{Glazman's Lemma} 
The proof of the abstract Birman-Schwinger principle by Birman-Solomyak~\cite{BS:1989} relies on Glazman's lemma. We shall now briefly recall this result and set some notation along the way. 

Let $A$ be a selfadjoint operator on $\sH$ which is bounded or semi-bounded. We denote by $Q_A$ its quadratic form. If $A$ is bounded, then $Q_A$ is the quadratic form on $\sH$ defined by
\begin{equation*}
 Q_A(\xi,\eta):=\scal{A\xi}{\eta} \qquad \forall \xi,\eta \in \sH. 
\end{equation*}
If $A$ is semi-bounded, then  $Q_A$ has domain $\dom (A-\lambda)^{1/2}$ with $\mu\in \R\setminus \Sp(A)$.
We also denote by  $\Sp_{\text{ess}}(A)$ the essential spectrum of $A$, i.e., the complement of the discrete spectrum (which consists of isolated eigenvalues with finite multiplicity). 

Given any $\lambda\in \R$ we set 
\begin{equation}
 N^+(A;\lambda) = \dim \big(\ran \car_{(\lambda,\infty)}(A)\big), \qquad  N^-(A;\lambda) = \dim\big(\ran \car_{(-\infty,\lambda)}(A)\big). 
 \label{eq:BSP.counting-functions}
\end{equation}
Thus, if $[\lambda,\infty)$ (resp., $(-\infty,\lambda]$) does not meet the essential spectrum of $A$, then $N^+(A;\lambda)$ (resp., $N^-(A;\lambda)$) is the number of eigenvalues of $A$  counted with multiplicity that are~$>\lambda$ (resp., $<\lambda$). 

In what follows, we denote by $\sF^\pm(A;\lambda)$ (resp., $\sF_0^\pm(A;\lambda)$) the collection of all subspaces $F\subset \dom(Q_A)$ such that $\pm Q_A(\xi,\xi)>\pm \lambda \|\xi\|^2$ (resp., $\pm Q_A(\xi,\xi)\geq \pm \lambda \|\xi\|^2$) on $F\setminus 0$. 
We have the following variational principles. 

\begin{lemma}[Glazman's Lemma~\cite{BS:Book, Gl:GIFML}] \label{lem:BSP.Glazman} 
 For all $\lambda \in \R$, we have
\begin{align}
 N^\pm(A;\lambda) = & \max\left\{ \dim F; \ F\in \sF^\pm(A;\lambda)\right\},
  \label{eq:BSP.Glazman}\\
 = & \min\left\{ \dim F^\perp; \ F\in \sF^\mp_0(A;\lambda)\right\}. 
 \label{eq:BSP.Glazman0}
\end{align}
\end{lemma}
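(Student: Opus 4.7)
The plan is to leverage the spectral theorem for $A$ and reduce everything to bookkeeping with the spectral measure $(E_t)$ associated with $A$. I will focus on $N^+(A;\lambda)$; the $N^-$ statements follow by applying the $N^+$ statements to $-A$ or by running mirror-image arguments. Throughout I use that $A$ is bounded below by some $-c$, so that $\dom(Q_A)$ contains $\ran \car_\Omega(A)$ whenever $\Omega$ is bounded above.

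For the max-characterization~\eqref{eq:BSP.Glazman}, the natural extremal subspace is $F_+:=\ran \car_{(\lambda,\infty)}(A)$. When $N^+(A;\lambda)=\dim F_+$ is finite, $F_+$ is spanned by finitely many eigenvectors for eigenvalues in $(\lambda,\infty)$, so it lies in $\dom(A)\subseteq \dom(Q_A)$, and the spectral calculus gives
\[
Q_A(\xi,\xi) = \int_\lambda^\infty t\, d\|E_t\xi\|^2 > \lambda \|\xi\|^2 \qquad \forall \xi\in F_+\setminus 0,
\]
showing $F_+\in \sF^+(A;\lambda)$ and yielding the $\geq$ inequality. When $N^+(A;\lambda)=\infty$ I will instead exhaust $F_+$ by the spectrally bounded subspaces $\ran \car_{(\lambda,\lambda+N]}(A)\subseteq \dom(Q_A)$, whose dimensions are unbounded in $N$. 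For the reverse inequality, given any $F\in \sF^+(A;\lambda)$ with $\dim F>N^+(A;\lambda)=\dim F_+$, a dimension count forces $F\cap \ran \car_{(-\infty,\lambda]}(A)$ to contain a nonzero vector $\xi$; but then the spectral computation $Q_A(\xi,\xi)=\int_{-\infty}^\lambda t\, d\|E_t\xi\|^2\leq \lambda\|\xi\|^2$ contradicts the strict inequality in the definition of $\sF^+(A;\lambda)$.

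For the min-characterization~\eqref{eq:BSP.Glazman0} one tests over $\sF^-_0(A;\lambda)$. The same spectral obstruction shows that any $F\in \sF^-_0(A;\lambda)$ must satisfy $F\cap \ran \car_{(\lambda,\infty)}(A)=\{0\}$ (otherwise a nonzero vector in the intersection would strictly violate $Q_A(\xi,\xi)\leq \lambda\|\xi\|^2$). Consequently, the orthogonal projection $F\to \ran \car_{(-\infty,\lambda]}(A)$ is injective, so $\dim F\leq \dim \ran \car_{(-\infty,\lambda]}(A)$, which rearranges to $\dim F^\perp\geq N^+(A;\lambda)$. The minimum is attained by $F_-:=\ran \car_{(-\infty,\lambda]}(A)$, since this subspace lies in $\dom(Q_A)$ by semi-boundedness, belongs to $\sF^-_0(A;\lambda)$ by the same spectral-measure computation, and $F_-^\perp = \ran \car_{(\lambda,\infty)}(A)$ has dimension exactly $N^+(A;\lambda)$.

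The main technical obstacle is keeping track of the domain $\dom(Q_A)$ when $A$ is only semi-bounded rather than bounded: $\ran \car_\Omega(A)\subseteq \dom(Q_A)$ requires $\Omega$ to be bounded on the side away from the semi-boundedness. For the extremal subspace $F_-$ this is fine because $A$ is bounded below and $\lambda<\infty$; for $F_+$ it is automatic in the finite-dimensional case and handled by the exhaustion argument above otherwise. Once this bookkeeping is in place, the rest is purely spectral-theoretic and dimension-counting.
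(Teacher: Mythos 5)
The paper does not give its own proof of Glazman's Lemma; it simply cites Glazman and Birman--Solomyak (see Theorems~9.2.6 and~10.2.3 of~\cite{BS:Book}). So your proof is evaluated on its own merits rather than against an in-paper argument. The spectral-theorem approach you take is the standard one, and your treatment of the max-characterization~\eqref{eq:BSP.Glazman} is essentially correct: the extremal subspace $F_+=\ran\car_{(\lambda,\infty)}(A)$ (exhausted by the spectrally truncated pieces when $N^+=\infty$) gives the lower bound, and the dimension count against $\ran\car_{(-\infty,\lambda]}(A)$ gives the upper bound, with no closure issues because the contradicting vector you produce lies in $F\subset\dom(Q_A)$.

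There is, however, a genuine gap in your proof of the min-characterization~\eqref{eq:BSP.Glazman0}. Writing $P^{+}=\car_{(\lambda,\infty)}(A)$ and $P^{-}=1-P^{+}$, you correctly show that $F\cap\ran P^{+}=\{0\}$ and that $P^{-}|_F$ is injective, hence $\dim F\le\dim\ran P^{-}$. But the step ``which rearranges to $\dim F^\perp\ge N^+(A;\lambda)$'' is not valid. First, in the typical situation $\dim\ran P^{-}=\infty$ (e.g.\ $A$ with essential spectrum below $\lambda$), the inequality $\dim F\le\infty$ carries no information. Second, and more fundamentally, $\dim F+\dim F^\perp=\dim\sH$ fails when $F$ is not closed in $\sH$: one only has $\dim\overline{F}+\dim F^\perp=\dim\sH$, and since $F\subset\dom(Q_A)$ need not be closed, your argument does not rule out, say, a dense $F\in\sF_0^{-}(A;\lambda)$ with $F^\perp=\{0\}$ while $N^+(A;\lambda)\ge 1$. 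What is actually needed is $\ran P^{+}\cap\overline{F}=\{0\}$, not merely $\ran P^{+}\cap F=\{0\}$; one then injects $\ran P^{+}$ into $F^\perp$ via the orthogonal projection onto $F^\perp$, whose kernel is precisely $\ran P^{+}\cap(F^\perp)^\perp=\ran P^{+}\cap\overline{F}$. To upgrade $F$ to $\overline{F}$ you must use that $Q_A$ is a \emph{closed} form, i.e.\ lower semicontinuous: if $\xi\in\ran P^{+}\cap\overline{F}$, $\xi\neq 0$, approximate $\xi$ by $\xi_n\in F$ and use $Q_A(\xi_n,\xi_n)\le\lambda\|\xi_n\|^2$ together with lower semicontinuity and the uniform lower bound $Q_A\ge -c\|\cdot\|^2$ to deduce $\xi\in\dom(Q_A)$ and $Q_A(\xi,\xi)\le\lambda\|\xi\|^2$, contradicting the strict spectral inequality on $\ran P^{+}\setminus 0$. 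Without this closedness/LSC input your argument does not close the min-characterization.
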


\begin{remark}
The variational principle~(\ref{eq:BSP.Glazman0}) is due to Glazman~\cite{Gl:GIFML}. A proof of~(\ref{eq:BSP.Glazman}) is given in~\cite{BS:Book} (see~Theorem~10.2.3; see also Theorem~9.2.6 for the compact case). 
\end{remark}

In what follows given selfadjoint operators $A$ and $B$ we shall write $A\leq B$ if $\dom(Q_A)=\dom(Q_B)$ and $Q_A(\xi,\xi)\leq Q_B(\xi,\xi)$ for all $\xi$ in $\dom(Q_A)=\dom(Q_B)$. Recall that Glazman's Lemma implies the following monotonicity principle. 

\begin{corollary}\label{cor:BSP.monotonicity}
 Let $A$ and $B$ be selfadjoint operators on $\sH$ such that $A\leq B$. Then, for all $\lambda \in \R$, we have
\begin{equation*}
 N^+(A;\lambda)\leq N^+(B;\lambda), \qquad   N^-(A;\lambda)\geq N^-(B;\lambda). 
\end{equation*}
\end{corollary}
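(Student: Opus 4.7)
The plan is to apply Glazman's variational principle~(\ref{eq:BSP.Glazman}) to each of the two operators $A$ and $B$ and directly compare the collections of test subspaces $\sF^{\pm}(A;\lambda)$ and $\sF^{\pm}(B;\lambda)$. Since the hypothesis $A\leq B$ asserts both the equality of form domains $\dom(Q_A)=\dom(Q_B)$ and the pointwise inequality $Q_A(\xi,\xi)\leq Q_B(\xi,\xi)$ on that common domain, the containments among the admissible subspaces should be immediate.

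First I would handle $N^+$. Let $F\in \sF^+(A;\lambda)$, so $F\subset \dom(Q_A)=\dom(Q_B)$ and $Q_A(\xi,\xi)>\lambda\|\xi\|^2$ for every $\xi\in F\setminus 0$. Then for such $\xi$,
\begin{equation*}
 Q_B(\xi,\xi)\geq Q_A(\xi,\xi)>\lambda\|\xi\|^2,
\end{equation*}
so $F\in \sF^+(B;\lambda)$. This yields $\sF^+(A;\lambda)\subset \sF^+(B;\lambda)$, and taking maxima of $\dim F$ via~(\ref{eq:BSP.Glazman}) gives $N^+(A;\lambda)\leq N^+(B;\lambda)$.

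For $N^-$ the comparison is reversed. If $F\in \sF^-(B;\lambda)$, then $Q_B(\xi,\xi)<\lambda\|\xi\|^2$ on $F\setminus 0$, and the hypothesis gives
\begin{equation*}
 Q_A(\xi,\xi)\leq Q_B(\xi,\xi)<\lambda\|\xi\|^2,
\end{equation*}
so $F\in \sF^-(A;\lambda)$. Hence $\sF^-(B;\lambda)\subset \sF^-(A;\lambda)$, and applying~(\ref{eq:BSP.Glazman}) again yields $N^-(B;\lambda)\leq N^-(A;\lambda)$, as required.

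There is no real obstacle here; the only subtlety is purely notational, namely remembering that $A\leq B$ is defined at the level of \emph{quadratic forms} on a common form domain, which is exactly what is needed to substitute directly into the variational characterization. No further semi-boundedness or compactness considerations enter: Glazman's Lemma is already stated in the required generality for selfadjoint operators that are bounded or semi-bounded, and our counting functions in~(\ref{eq:BSP.counting-functions}) are defined via the spectral projections without any finiteness assumption, so both sides of each inequality make sense (possibly as $+\infty$) and the inequalities hold in $[0,\infty]$.
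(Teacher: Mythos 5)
Your proof is correct and is precisely the argument the paper has in mind: the paper only says ``Recall that Glazman's Lemma implies the following monotonicity principle'' without writing out details, and the subspace-inclusion argument $\sF^+(A;\lambda)\subset\sF^+(B;\lambda)$, $\sF^-(B;\lambda)\subset\sF^-(A;\lambda)$ followed by taking suprema of dimensions via~(\ref{eq:BSP.Glazman}) is exactly how that implication goes. Your closing remark about the common form domain and about the counting functions possibly taking the value $+\infty$ is a sensible clarification of the scope of the statement.
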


\subsection{The Abstract Birman-Schwinger principle} 
From now on we let $H$ be a (densely defined) selfadjoint operator on $\sH$ with non-negative spectrum containing $0$. Its quadratic form $Q_H$ has domain $\dom(Q_H)=\dom (H+1)^{\frac12}$. We denote by $\sH_{+}$ the Hilbert space obtained by endowing $\dom(Q_H)$ with the Hilbert space norm, 
\begin{equation*}
 \|\xi\|_{+}=\big(Q_H(\xi,\xi)+\|\xi\|^2\big)^{\frac12}=\big\|(1+H)^{1/2}\xi\big\|, \qquad \xi \in \dom(Q_H). 
\end{equation*}
We also let $\sH_{-}$ be the Hilbert space of continuous \emph{anti-linear} functionals on $\sH_{+}$. Note that we have a continuous embedding $\iota:\sH\hookrightarrow \sH_{-}$ with dense range given by 
\begin{equation*}
 \acou{\iota(\xi)}{\eta}=\scal{\xi}{\eta}, \qquad \xi\in \sH, \ \eta \in \sH_+. 
\end{equation*}

The operator $(H+1)^{1/2}:\sH_+\rightarrow \sH$ is a unitary isomorphism. By selfadjointness it extends to a unitary isomorphism $(H+1)^{1/2}:\sH\rightarrow \sH_{-}$ such that
\begin{equation}
\big\langle (H+1)^{1/2}\xi , \eta \big\rangle
 =\big\langle{\xi}\big|{(H+1)^{1/2}\eta} \big\rangle, \qquad \xi\in \sH, \ \eta \in \sH_+,
  \label{eq:BSP.sqrt}
\end{equation}
where $\acou{\cdot}{\cdot}:\sH_{-}\times \sH_{+}\rightarrow \C$ is the natural duality pairing. In particular, we have bounded inverses $(H+1)^{-1/2}:\sH\rightarrow \sH_{+}$ and $(H+1)^{-1/2}:\sH_{-}\rightarrow \sH$ such that
\begin{equation}
 \bigscal{(H+1)^{-1/2}\xi}{\eta}=\bigacou{\xi}{(H+1)^{-1/2}\eta}, \qquad \xi\in \sH_{-}, \ \eta \in \sH.
 \label{eq:CLR.H1sqrt-inverses}  
\end{equation}
More generally, for any $\lambda <0$, we have bounded operators $(H-\lambda)^{1/2}:\sH_{+}\rightarrow \sH$ and $(H-\lambda)^{1/2}:\sH\rightarrow \sH_{-}$ with bounded inverses 
$(H-\lambda)^{-1/2}:\sH\rightarrow \sH_{+}$ and $(H-\lambda)^{-1/2}:\sH_{-}\rightarrow \sH$.

Similarly, the operator $H$ extends to a bounded operator $H:\sH_{+}\rightarrow \sH_{-}$ such that
\begin{equation}
 \acou{H\xi}{\eta}=Q_H(\xi,\eta) \qquad \xi,\eta\in \sH_{+}.
 \label{eq:BSP.H-extension}  
\end{equation}
More generally, for any $\lambda<0$, the operator $H-\lambda=(H-\lambda)^{1/2}(H-\lambda)^{1/2}$ extends to a bounded operator from  $\sH_{+}$ to $\sH_{-}$ with bounded inverse $(H-\lambda)^{-1}:\sH_{-}\longrightarrow \sH_+$. 
%\end{equation*}

In what follows, we let $V:\sH_{+}\rightarrow \sH_{-}$ be a bounded operator. We denote by $Q_V$ the corresponding quadratic form with domain $\sH_+$ and given by
\begin{equation*}
 Q_V(\xi,\eta):=\acou{V\xi}{\eta}, \qquad \xi,\eta\in \sH_+. 
\end{equation*}
We assume that $Q_V$ is \emph{symmetric} and \emph{$H$-form compact}. The latter condition means that the operator $V:\sH_{+}\rightarrow \sH_{-}$ is compact, or equivalently, $(H+1)^{-1/2}V(H+1)^{-1/2}$ is a compact operator on $\sH$. 

Our main focus is the operator $H_V:=H+V$. It makes sense as a bounded operator $H_V:\sH_+\rightarrow \sH_-$. Furthermore, as the symmetric quadratic form $Q_V$ is $H$-form compact, it is $H$-form bounded with zero $H$-bound (see~\cite[\S7.8]{Si:AMS15}). Therefore, by the KLMN theorem (see, e.g., \cite{RS2:1975, Sc:Springer12}) the restriction of $H_V$ to $\dom(H_V):=H_V^{-1}(\sH)$ is a bounded from below selfadjoint operator on $\sH$ whose quadratic form is precisely $Q_H+Q_V$. 

\begin{lemma}[see {\cite[Theorem 7.8.4]{Si:AMS15}}]\label{lem:CLR.essential-spectrum-HV} 
The following holds. 
\begin{enumerate}
 \item[(i)] For all $\lambda \not\in \Sp(H)\cup \Sp(H_V)$, the operator $(H_V-\lambda)^{-1}-(H-\lambda)^{-1}$  is compact. 
 
 \item[(ii)] The operators $H$ and $H_V$ have the same essential spectrum. 
 
 \item[(iii)] If $H$ has compact resolvent, then so does $H_V$. 
\end{enumerate}
\end{lemma}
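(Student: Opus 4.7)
The plan is to establish (i) via a resolvent identity interpreted in the Gelfand triple $\sH_+\subset\sH\subset\sH_-$, then deduce (ii) from Weyl's stability theorem and (iii) directly from (i).

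First I would fix $\lambda<0$ small enough to lie in the resolvent set of both $H$ and $H_V$. Such a $\lambda$ exists since $H\geq 0$ and since the KLMN theorem guarantees $H_V$ is semi-bounded from below. For such a $\lambda$, the discussion preceding the lemma gives bounded inverses $(H-\lambda)^{-1}\colon\sH\to\sH_+$ and $(H-\lambda)^{-1}\colon\sH_-\to\sH$, and similarly for $H_V$, viewed as extensions of the operators defined by the respective self-adjoint realizations. Writing $H_V-\lambda=(H-\lambda)+V$ as bounded operators $\sH_+\to\sH_-$ and composing on both sides with the respective inverses yields the resolvent identity
\begin{equation*}
 (H_V-\lambda)^{-1}-(H-\lambda)^{-1}=-(H_V-\lambda)^{-1}\,V\,(H-\lambda)^{-1}.
\end{equation*}
The factor on the right is a composition $\sH\xrightarrow{(H-\lambda)^{-1}}\sH_+\xrightarrow{V}\sH_-\xrightarrow{(H_V-\lambda)^{-1}}\sH$, in which the outer maps are bounded and, by hypothesis, the middle map $V$ is compact. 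Hence the composition is compact as an operator on $\sH$, which proves (i) for this particular~$\lambda$.

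To extend (i) to an arbitrary $\mu\notin\Sp(H)\cup\Sp(H_V)$, I would apply the standard first resolvent identity twice:
\begin{equation*}
 (H-\mu)^{-1}-(H-\lambda)^{-1}=(\mu-\lambda)(H-\mu)^{-1}(H-\lambda)^{-1},
\end{equation*}
and similarly for $H_V$. Subtracting and using the compactness already proved at $\lambda$ together with boundedness of the other resolvents shows that $(H_V-\mu)^{-1}-(H-\mu)^{-1}$ differs from $(H_V-\lambda)^{-1}-(H-\lambda)^{-1}$ by operators whose compactness is inherited from the latter, completing (i).

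Part (ii) is then a direct application of Weyl's theorem on the invariance of the essential spectrum under compact perturbations of the resolvent: since both $H$ and $H_V$ are self-adjoint on $\sH$ and their resolvents at a common regular point differ by a compact operator, we have $\Sp_{\mathrm{ess}}(H)=\Sp_{\mathrm{ess}}(H_V)$. Part (iii) is immediate from (i): if $(H-\lambda)^{-1}$ is compact for some (equivalently, every) $\lambda$ in its resolvent set, then $(H_V-\lambda)^{-1}=(H-\lambda)^{-1}+K$ with $K$ compact, so $(H_V-\lambda)^{-1}$ is compact as well, which in turn implies $H_V$ has compact resolvent.

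The only delicate step is the justification of the resolvent identity in the Gelfand triple: one must know that the extensions of $H$ and $H_V$ to bounded maps $\sH_+\to\sH_-$ are genuinely invertible at $\lambda$, and that the inverses restrict to the $\sH$-resolvents on $\sH\subset\sH_-$. This is precisely the content of the paragraphs preceding the lemma, together with the KLMN construction of $H_V$, so the argument reduces to careful bookkeeping rather than any new analytic input.
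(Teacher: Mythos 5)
The paper does not give its own proof of this lemma; it is stated with a citation to Simon's \emph{Operator Theory} (Theorem~7.8.4) and nothing more. Your reconstruction is the standard argument that reference uses — the second resolvent formula read in the scale $\sH_+\subset\sH\subset\sH_-$, compactness of $V$ in the middle slot, propagation to all regular points via the first resolvent identity, then Weyl's essential-spectrum theorem for (ii) and direct ideal stability for (iii) — and it is correct. You rightly flag the one genuine technical point: one must check that $H_V-\lambda\colon\sH_+\to\sH_-$ is boundedly invertible for $\lambda$ sufficiently negative, which follows because the infinitesimal form-boundedness of $Q_V$ makes the $Q_H$- and $Q_{H_V}$-form norms equivalent on $\sH_+$. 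Your extension of (i) from one negative $\lambda$ to all $\mu\notin\Sp(H)\cup\Sp(H_V)$ is a bit telegraphic, since it a priori produces an equation in which the unknown difference at $\mu$ appears on both sides; it does close, because the operator $I-(\mu-\lambda)(H-\lambda)^{-1}$ one needs to invert is precisely $(H-\lambda)^{-1}(H-\mu)$, whose bounded inverse $I+(\mu-\lambda)(H-\mu)^{-1}$ exists since $\mu$ is a regular point of $H$. Spelling that out would make the step airtight, but there is no gap in substance.
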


As $H_{V}$ is bounded from below, we define the counting function $N^{-}(H_V;\lambda)$ as in~(\ref{eq:BSP.counting-functions}). If $\lambda <\inf \Sp_{\textup{ess}}(H)$, then $N^{-}(H_V;\lambda)$ this is the number of eigenvalues of $H_V$ which are~$<\lambda$. We also set  $N^{-}(H_V):=N(H_V;0)$; this is the number of ``bound states'' of $H_V$.

Given operators $V_j:\sH_{+}\rightarrow \sH_{-}$, $j=1,2$, we shall write $V_1\leq V_2$ when $Q_{V_1}\leq Q_{V_2}$, i.e., $Q_{V_1}$ and $Q_{V_2}$ are both symmetric and $Q_{V_1}(\xi,\xi)\leq Q_{V_2}(\xi,\xi)$ for all $\xi \in \sH_{+}$. In this case $Q_{H_{V_1}} (\xi,\xi)\leq Q_{H_{V_2}}(\xi,\xi)$ for all $\xi \in \sH_{+}$, and so by using Corollary~\ref{cor:BSP.monotonicity} we get
\begin{equation}
 V_1\leq V_2\ \Longrightarrow \ N(H_{V_2};\lambda)\leq N(H_{V_1};\lambda) \qquad \forall \lambda\in \R.  
 \label{eq:CLR.mono-principle-N-} 
\end{equation}

The Birman-Schwinger principle was established by Birman~\cite{Bi:AMST66} and Schwinger~\cite{Sc:PNAS61} for Schr\"odinger operators $\Delta+V$ on $\R^n$, $n\geq 3$. It is the main impetus for using Cwikel estimates to establish the Cwikel-Lieb-Rozenblum inequality (see~\cite{Cw:AM77, Si:TAMS76}). In our setting it relates the counting function 
$N(H_{V};\lambda)$ to the eigenvalues of the Birman-Schwinger operators,
\begin{equation*}
 K_V(\lambda)=-
(H-\lambda)^{-\frac12}V(H-\lambda)^{-\frac12},
 \qquad \lambda<0.  
\end{equation*}
The compactness of $V$ ensures us that $K_V(\lambda)$ is a compact operator on $\sH$. 

Note also  that $K_V(\lambda)$ is related to the quadratic form $Q_V$ by
\begin{align}
 \scal{K_V(\lambda)\xi}{\eta}&=-\bigacou{V(H-\lambda)^{-1/2}\xi}{(H-\lambda)^{-1/2}\eta} \nonumber\\
 & =-Q_V\big((H-\lambda)^{-1/2}\xi,(H-\lambda)^{-1/2}\eta\big), \qquad \xi,\eta\in \sH.
 \label{eq:BSP.KV-QV} 
\end{align}
Thus, the fact that $Q_V$ is symmetric ensures us that $K_V(\lambda)$ is a selfadjoint compact operator. We then define the counting function $N^+(K_V(\lambda);\mu)$, $\mu>0$, as above. If in addition $V\leq 0$, then $K_V(\lambda)\geq 0$, 
and so in this case the eigenvalues of $K_V(\lambda)$ agree with its singular values.  

\begin{proposition}[Abstract Birman-Schwinger Principle~{\cite[Lemma 1.4]{BS:1989}}]
\label{prop:BSP.abstractBSP} 
For all $\lambda<0$, we have
\begin{equation}
 N(H_{V};\lambda)= N^+\left(K_V(\lambda); 1\right).
 \label{eq:CLR.BKP} 
\end{equation}
\end{proposition}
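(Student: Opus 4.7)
The plan is to match both counting functions with dimensions of subspaces via Glazman's Lemma (Lemma~\ref{lem:BSP.Glazman}), using the topological isomorphism $(H-\lambda)^{1/2}:\sH_+\to\sH$, valid since $\lambda<0$ and $H\geq 0$, to transport subspaces between the form domain $\sH_+$ and the Hilbert space $\sH$. The algebraic bridge is the identity~(\ref{eq:BSP.KV-QV}), which together with $\|(H-\lambda)^{1/2}\xi\|^2=Q_{H-\lambda}(\xi,\xi)$ yields, for $\xi\in\sH_+$ and $\eta=(H-\lambda)^{1/2}\xi\in\sH$,
\begin{equation*}
    \scal{K_V(\lambda)\eta}{\eta}-\|\eta\|^2 = -Q_V(\xi,\xi)-Q_{H-\lambda}(\xi,\xi) = -\bigl(Q_{H_V}(\xi,\xi)-\lambda\|\xi\|^2\bigr).
\end{equation*}
Thus the strict inequalities $Q_{H_V}(\xi,\xi)<\lambda\|\xi\|^2$ and $\scal{K_V(\lambda)\eta}{\eta}>\|\eta\|^2$ correspond exactly to each other under the isomorphism $\xi\leftrightarrow \eta$.

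First I would prove $N^-(H_V;\lambda)\leq N^+(K_V(\lambda);1)$. By Glazman's Lemma applied to $H_V$, choose a subspace $F\subset\sH_+$ with $\dim F=N^-(H_V;\lambda)$ on which $Q_{H_V}(\xi,\xi)<\lambda\|\xi\|^2$. Setting $G:=(H-\lambda)^{1/2}F\subset\sH$, the bijectivity of $(H-\lambda)^{1/2}:\sH_+\to\sH$ gives $\dim G=\dim F$, and the displayed identity gives $\scal{K_V(\lambda)\eta}{\eta}>\|\eta\|^2$ for all $\eta\in G\setminus 0$. Since $K_V(\lambda)$ is a selfadjoint compact operator on $\sH$ (thanks to the $H$-form compactness and symmetry of $V$), Glazman's Lemma applied to $K_V(\lambda)$ then yields $N^+(K_V(\lambda);1)\geq \dim G$.

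The reverse inequality is symmetric: pick $G\subset\sH$ with $\dim G = N^+(K_V(\lambda);1)$ on which $\scal{K_V(\lambda)\eta}{\eta}>\|\eta\|^2$, set $F:=(H-\lambda)^{-1/2}G\subset\sH_+$, and read the identity in reverse to obtain $Q_{H_V}(\xi,\xi)<\lambda\|\xi\|^2$ on $F\setminus 0$. Glazman's Lemma for $H_V$ then gives $N^-(H_V;\lambda)\geq \dim F = N^+(K_V(\lambda);1)$.

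The main obstacle is purely bookkeeping: one must make sense of $Q_V(\xi,\xi)$ for $\xi\in\sH_+$ that lie outside $\sH$ and check that the operator $K_V(\lambda)$ is genuinely selfadjoint and compact on $\sH$. Both are handled by the extension~(\ref{eq:BSP.H-extension}) and the duality pairing between $\sH_-$ and $\sH_+$: $Q_V$ and $Q_{H-\lambda}$ extend continuously to all of $\sH_+$, the symmetry of $Q_V$ transfers to $K_V(\lambda)$ via~(\ref{eq:BSP.KV-QV}), and compactness of $V:\sH_+\to\sH_-$ combined with the boundedness of $(H-\lambda)^{\pm 1/2}$ yields compactness of $K_V(\lambda)$. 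With these details in place the matching argument above is essentially a one-line computation.
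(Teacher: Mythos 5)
Your proof is correct and is essentially the same as the paper's: both rest on the equivalence $Q_{H_V}(\xi,\xi)<\lambda\|\xi\|^2 \Leftrightarrow \scal{K_V(\lambda)\eta}{\eta}>\|\eta\|^2$ under $\eta=(H-\lambda)^{1/2}\xi$ (derived from~(\ref{eq:BSP.KV-QV})), followed by Glazman's Lemma. The paper phrases this in one step as a dimension-preserving bijection between $\sF^-(H_V;\lambda)$ and $\sF^+(K_V(\lambda);1)$, whereas you split it into the two inequalities; this is a presentational difference only.
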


\begin{remark}
 For reader's convenience a proof of Proposition~\ref{prop:BSP.abstractBSP} is given in Appendix~\ref{app:BSP}.
\end{remark}
 
 The following is a well known consequence of the Birman-Schwinger principle (see~\cite{Cw:AM77, Si:TAMS76}). For reader's convenience, a proof has been included in Appendix~\ref{app:BSP}. 

\begin{corollary}\label{cor:CLR.BKP-qinf} 
 Assume  $V\leq 0$ and $(H+1)^{-1/2}V(H+1)^{-1/2}\in \sL_{p,\infty}$ for some $q\in (0,\infty)$. Then, for all $\lambda<0$, the operator $K_V(\lambda)$ is in the weak Schatten class $\sL_{p,\infty}$, and we have 
\begin{equation}
  N(H_{V};\lambda)\leq \left\| K_V(\lambda) \right\|_{\sL_{p,\infty}}^p. 
   \label{eq:CLR.BKP-qinf-lambda}  
\end{equation}
\end{corollary}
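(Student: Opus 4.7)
The plan is to combine the abstract Birman--Schwinger principle (Proposition~\ref{prop:BSP.abstractBSP}) with a Chebyshev-type estimate for the weak Schatten quasi-norm. Since $V\leq 0$, formula~\eqref{eq:BSP.KV-QV} shows that $K_V(\lambda)$ is a non-negative selfadjoint compact operator, so its eigenvalues coincide with its singular values $\mu_j(K_V(\lambda))$, and $N^+(K_V(\lambda);1)$ is exactly the number of indices $j\geq 0$ with $\mu_j(K_V(\lambda))>1$.

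The first step is to verify that $K_V(\lambda)\in\sL_{p,\infty}$. Setting $A:=(H+1)^{1/2}(H-\lambda)^{-1/2}$, the functional calculus for the non-negative operator $H$ together with the boundedness of the function $t\mapsto (t+1)^{1/2}(t-\lambda)^{-1/2}$ on $[0,\infty)$ (recall $\lambda<0$) shows that $A$ is a bounded operator on $\sH$. A pairing computation using~\eqref{eq:BSP.KV-QV} and $(H+1)^{-1/2}A=(H-\lambda)^{-1/2}$ then yields the factorisation
\[
 K_V(\lambda) = A^*\bigl[(H+1)^{-1/2}(-V)(H+1)^{-1/2}\bigr]A.
\]
The middle factor lies in $\sL_{p,\infty}$ by hypothesis, and since $\sL_{p,\infty}$ is a two-sided ideal (Section~\ref{subsec:schatten}), the membership $K_V(\lambda)\in \sL_{p,\infty}$ follows at once.

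The second step is a Chebyshev-type inequality built into the definition~\eqref{def:lp_infty_quasinorm} of the weak Schatten quasi-norm. Let $N:=N^+(K_V(\lambda);1)$ and assume $N\geq 1$ (otherwise the inequality is trivial). Then $\mu_{N-1}(K_V(\lambda))>1$, and evaluating the quasi-norm at the index $j=N-1$ gives
\[
 N^{1/p} < N^{1/p}\mu_{N-1}(K_V(\lambda)) \leq \|K_V(\lambda)\|_{\sL_{p,\infty}},
\]
whence $N\leq \|K_V(\lambda)\|_{\sL_{p,\infty}}^p$. Combined with the Birman--Schwinger identity $N(H_V;\lambda)=N^+(K_V(\lambda);1)$, this is the claimed inequality. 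There is no genuine obstacle here: the entire content is that Proposition~\ref{prop:BSP.abstractBSP} has already been established, and the remainder is bookkeeping about weak Schatten classes.
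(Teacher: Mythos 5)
Your proof is correct and follows essentially the same route as the paper: the same bounded operator $A=(H+1)^{1/2}(H-\lambda)^{-1/2}$ and factorisation of $K_V(\lambda)$ to get $\sL_{p,\infty}$-membership via the ideal property, followed by the same Chebyshev-type bound at index $j=N-1$ combined with Proposition~\ref{prop:BSP.abstractBSP}. The only cosmetic difference is that you use the strict inequality $\mu_{N-1}(K_V(\lambda))>1$ where the paper writes $\geq 1$; both suffice.
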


\subsection{Borderline Birman-Schwinger principle} 
The proofs of the standard CLR inequality for Schr\"odinger operators on $\R^n$, $n\geq 3$, uses the fact that for the Laplacian on $\R^n$ with $n\geq 3$ the essential  spectrum contains $0$ and the resolvent $(\Delta -\lambda)^{-1}$ has a weak limit as  $\lambda \rightarrow 0^{-}$. This allows us to take $\lambda=0$ in~(\ref{eq:CLR.BKP-qinf-lambda}) and get an upper bound for $N^{-}(H_{V})$ (see~\cite[Theorem~7.9.11]{Si:AMS05}). 

However, on  closed manifolds and NC tori, $0$ is in the discrete spectrum of the Laplacian, and so the resolvent has a pole singularity at $\lambda=0$. This prevents us from letting $\lambda \rightarrow  0^{-}$  in~(\ref{eq:CLR.BKP-qinf-lambda}). This stresses out the need for a ``borderline'' version of the Birman-Schwinger~(\ref{eq:CLR.BKP-qinf-lambda}) when $0$ is the discrete spectrum. 

Assume  that  $0$ lies in the discrete spectrum of $H$, i.e., $0$ is an isolated eigenvalue of $H$. Thus, the essential spectrum of $H$ is contained in some  interval $[a, \infty)$ with $a>0$. As $H$ and $H_V$ have same essential spectrum by Lemma~\ref{lem:CLR.essential-spectrum-HV}, it follows that $H_V$ has at most finitely many non-positive eigenvalues. 

We denote by $H^{-1}$ the partial inverse of $H$. That is, $H^{-1}$ vanishes on $\ker H$ and inverts $H$ on $(\ker H)^\perp =\ran H$. This is a selfadjoint bounded operator with non-negative spectrum. We then define $H^{-1/2}$ to be $(H^{-1})^{1/2}$. Equivalently, $H^{-1}=f(H)$ and $H^{-1/2}=g(H)$, where $f(t)=\mathbbm{1}_{[\epsilon,\infty)}(t)t^{-1}$ and $g(t)=\mathbbm{1}_{[\epsilon,\infty)}(t)t^{-1/2}$, with  $\epsilon>0$ small enough so that $\Sp(H)\cap(0,\epsilon]=\emptyset$. We then set

We shall need the following abstract version of Lemma~\ref{lem:spefici.decomposition-lambdax}. 

\begin{lemma}\label{lem:BSP.decomposition} 
 Suppose that $V\leq 0$. Then there is a compact operator $W:\sH_+\rightarrow\sH$ such that $V=-W^*W$, where $W^*:\sH\rightarrow \sH_{-}$ is the adjoint map. 
\end{lemma}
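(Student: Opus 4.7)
The plan is to construct $W$ by taking an operator square root at the level of the Hilbert space $\sH$, where classical spectral theory applies, and then conjugate back by the unitary $(H+1)^{1/2}\colon \sH_{+}\to\sH$.

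First I would set
\[
A := (H+1)^{-1/2}(-V)(H+1)^{-1/2}\colon \sH \longrightarrow \sH,
\]
where the factors on either side are the isomorphisms $(H+1)^{-1/2}\colon \sH\to\sH_{+}$ and $(H+1)^{-1/2}\colon \sH_{-}\to\sH$ introduced in~\eqref{eq:CLR.H1sqrt-inverses}. The $H$-form compactness of $V$ means exactly that this composition is a compact operator on $\sH$. The symmetry of $Q_V$ makes $A$ selfadjoint on $\sH$, and the hypothesis $V\leq 0$ forces $A\geq 0$ via
\[
\scal{A\xi}{\xi} = -Q_V\big((H+1)^{-1/2}\xi,(H+1)^{-1/2}\xi\big)\geq 0, \qquad \xi\in \sH.
\]
Thus $A$ is a non-negative compact selfadjoint operator on $\sH$, so by the continuous functional calculus it admits a unique non-negative square root $B := A^{1/2}\in \sK(\sH)$, which is itself selfadjoint, compact, and satisfies $B^{2}=A$.

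Next I would define
\[
W := B\,(H+1)^{1/2}\colon \sH_{+}\longrightarrow \sH.
\]
Since $(H+1)^{1/2}\colon \sH_{+}\to\sH$ is a unitary isomorphism and $B$ is compact on $\sH$, $W$ is compact as an operator from $\sH_{+}$ to $\sH$. Its adjoint $W^{*}\colon\sH\to\sH_{-}$ is determined by the duality pairing: for $\xi\in \sH$ and $\eta\in\sH_{+}$, using~\eqref{eq:BSP.sqrt} and the selfadjointness of $B$ on $\sH$,
\[
\acou{W^{*}\xi}{\eta}=\scal{\xi}{W\eta}=\bigscal{B\xi}{(H+1)^{1/2}\eta}=\bigacou{(H+1)^{1/2}B\xi}{\eta},
\]
so $W^{*}=(H+1)^{1/2}B$, viewed as the composition of $B\colon\sH\to\sH$ with the extension $(H+1)^{1/2}\colon\sH\to\sH_{-}$ from~\eqref{eq:BSP.sqrt}. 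Composing,
\[
W^{*}W=(H+1)^{1/2}B^{2}(H+1)^{1/2}=(H+1)^{1/2}A(H+1)^{1/2}=-V
\]
as bounded operators $\sH_{+}\to\sH_{-}$, which gives the desired factorization $V=-W^{*}W$.

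The only delicate point is bookkeeping: one must be careful that $(H+1)^{\pm1/2}$ appears in four incarnations (mapping between $\sH_{+}$, $\sH$ and $\sH_{-}$), and that the identity $W^{*}=(H+1)^{1/2}B$ requires the dual form of~\eqref{eq:BSP.sqrt}. Once the correct identifications are made, the argument is essentially a transposition of the standard fact that a non-negative compact operator on a Hilbert space factors as $B^{*}B$.
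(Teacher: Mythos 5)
Your proposal is correct and takes essentially the same approach as the paper: your operator $A=(H+1)^{-1/2}(-V)(H+1)^{-1/2}$ is precisely the Birman--Schwinger operator $K_V(-1)$ that the paper uses, so your $W=A^{1/2}(H+1)^{1/2}$ coincides with the paper's $W=K^{1/2}(H+1)^{1/2}$. The only cosmetic difference is that you extract the formula $W^*=(H+1)^{1/2}A^{1/2}$ explicitly before composing, whereas the paper computes $\acou{W^*W\xi}{\eta}$ directly.
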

\begin{proof}
Set $K=K_V(-1)$. As $V\leq 0$, we know that $K$ is a positive compact operator. Set $W=K^{1/2}(H+1)^{1/2}$. Then $W$ is a compact operator from $\sH_+$ to $\sH$. By duality we get a linear operator $W^*:\sH\rightarrow \sH_{-}$ such that
\begin{equation}
 \acou{W^*\xi}{\eta}=\scal{\xi}{W\eta}, \qquad \xi\in \sH, \ \eta \in \sH_{+}. 
 \label{eq:BSP.W*}
\end{equation}
In particular, for $\xi,\eta\in \sH$, we have
\begin{align*}
 \acou{W^*W\xi}{\eta}= \scal{W\xi}{W\eta} & = \bigscal{K^{\frac12}(H+1)^{\frac12}\xi}{K^{\frac12}(H+1)^{\frac12}\eta}\\ 
 &=  \bigscal{K(H+1)^{\frac12}\xi}{(H+1)^{\frac12}\eta}\\
 & =  \bigacou{(H+1)^{\frac12}K(H+1)^{\frac12}\xi}{\eta}\\
 &= -\acou{V\xi}{\eta}. 
\end{align*}
This shows that $V=-W^*W$. The proof is complete. 
\end{proof}

In what follows we denote by $\Pi_0$ the orthogonal projection on $\ker H$. This is a selfadjoint finite rank operator whose range is contained in $\sH_+$, and hence we get a bounded operator $\Pi_0:\sH \rightarrow \sH_+$. By duality we obtain a bounded operator $\Pi_0: \sH_{-}\rightarrow \sH$ such that
\begin{equation*}
 \scal{\Pi_0\xi}{\eta}=\acou{\xi}{\Pi_0\eta}, \qquad \xi\in \sH_{-}, \ \eta \in \sH. 
\end{equation*}
The composition $\Pi_0 V\Pi_0$ then is a selfadjoint finite-rank operator on $\sH$. As above, we denote by $N^{-}(\Pi_0 V \Pi_0)$ its number of negative eigenvalues counted with multiplicity. 

\begin{theorem}[Borderline Birman-Schwinger Principle]\label{thm:Borderline-BSP} 
 Suppose that $0$ is  in the discrete spectrum of $H$. Set $K_V=-H^{-1/2}VH^{-1/2}$. Then, the following hold. 
\begin{enumerate}
 \item We have
\begin{equation}
 N^{+}(K_V;1) \leq N^{-}(H_V) \leq N^{+}(K_V;1)+\dim \ker H. 
 \label{eq:BSP.borderline-NN}
\end{equation}

\item Assume further that $V\leq 0$ and $K_V\in \sL_{p,\infty}$ for some $p\in (0,\infty)$. Then
 \begin{equation}
  0\leq N^{-}(H_{V})- N^{-}(\Pi_0 V \Pi_0) \leq  \big\|K_V\big\|_{\sL_{p,\infty}}^p.
   \label{eq:CLR.BorderlineBKP1}   
\end{equation}
In particular, if $V(\ker H)\subset \ran H$, then
\begin{equation}
 N^{-}(H_{V}) \leq   \big\| K_V\big\|_{\sL_{p,\infty}}^p.
 \label{eq:CLR.BorderlineBKP2}  
\end{equation}
\end{enumerate}
\end{theorem}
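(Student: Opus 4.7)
For part~(1), both inequalities stem from Glazman's Lemma (Lemma~\ref{lem:BSP.Glazman}) applied through the change of variable $\eta = H^{-1/2}\xi$ between $(\ker H)^\perp \subset \sH$ and $\sH_+ \cap (\ker H)^\perp$; the key algebraic identity is
\[
 Q_{H_V}(\eta,\eta) = \|\xi\|^2 - \scal{K_V\xi}{\xi} \qquad \text{whenever } \xi\in(\ker H)^\perp,\ \eta = H^{-1/2}\xi,
\]
obtained from $H^{1/2}H^{-1/2}\xi = \xi$ together with $\scal{K_V\xi}{\xi} = -\acou{VH^{-1/2}\xi}{H^{-1/2}\xi} = -Q_V(\eta,\eta)$. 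For the lower bound, the subspace $F := \ran\car_{(1,\infty)}(K_V)$ sits inside $(\ker H)^\perp$ (since $\ker H \subset \ker K_V$ and $K_V$ is self-adjoint), so its image $G := H^{-1/2}(F) \subset \sH_+$ has the same dimension $N^+(K_V;1)$ and satisfies $Q_{H_V}<0$ on $G\setminus 0$, giving $N^-(H_V)\geq N^+(K_V;1)$ by Glazman. For the upper bound, start from $F\subset\sH_+$ with $\dim F = N^-(H_V)$ and $Q_{H_V}<0$ on $F\setminus 0$; the subspace $F' := F\cap(\ker H)^\perp$ has codimension at most $\dim\ker H$ in $F$, $H^{1/2}$ is injective on $F'$, and the identity gives $\scal{K_V\xi}{\xi} > \|\xi\|^2$ on $H^{1/2}(F')\setminus 0$, so Glazman yields $\dim F' \leq N^+(K_V;1)$ and hence $N^-(H_V) \leq N^+(K_V;1) + \dim\ker H$.

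The trivial half of part~(2), $N^-(H_V)\geq N^-(\Pi_0 V\Pi_0)$, follows from Glazman applied to subspaces of $\ker H\subset\sH_+$, on which $Q_H$ vanishes and $Q_{H_V}$ reduces to the quadratic form of $\Pi_0 V\Pi_0$. The main work is the upper bound. Write $V = -W^*W$ via Lemma~\ref{lem:BSP.decomposition} and introduce the regularization $H_\epsilon := H + \epsilon\Pi_0$, which has $\inf\Sp(H_\epsilon)\geq\epsilon>0$. The standard Birman--Schwinger principle (Proposition~\ref{prop:BSP.abstractBSP}) for $H_\epsilon$ at $\lambda<0$, followed by the monotone limit $\lambda\uparrow 0$ (using that $W(H_\epsilon-\lambda)^{-1}W^*$ is operator-monotone in $\lambda$ and shares its non-zero spectrum with $K^\epsilon_V(\lambda) := -(H_\epsilon-\lambda)^{-1/2}V(H_\epsilon-\lambda)^{-1/2}$), gives
\[
 N^-(H_\epsilon + V) = N^+\bigl(W H_\epsilon^{-1} W^*;1\bigr).
\]
The partial-inverse identity $H_\epsilon^{-1} = \epsilon^{-1}\Pi_0 + H^{-1}$ then yields the splitting
\[
 W H_\epsilon^{-1}W^* = \epsilon^{-1}A + B, \qquad A := W\Pi_0 W^*,\ B := W H^{-1}W^*,
\]
where $A\geq 0$ has rank $r := \rk(W|_{\ker H}) = N^-(\Pi_0 V\Pi_0)$, and $B = (WH^{-1/2})(WH^{-1/2})^*$ shares its non-zero spectrum (hence its $\sL_{p,\infty}$-norm) with $K_V = (WH^{-1/2})^*(WH^{-1/2})$. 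Weyl's inequality $\mu_{j+r}(\epsilon^{-1}A+B)\leq \mu_j(B) + \mu_r(\epsilon^{-1}A) = \mu_j(B)$ (since $\mu_r(A)=0$) then yields
\[
 N^-(H_\epsilon + V) \leq r + N^+(B;1) \leq N^-(\Pi_0 V\Pi_0) + \|K_V\|_{\sL_{p,\infty}}^p
\]
uniformly in $\epsilon>0$; the monotone limit $N^-(H_\epsilon + V) \uparrow N^-(H_V)$ as $\epsilon\downarrow 0$, justified by the min-max principle and the fact that $H_\epsilon + V$ decreases in form to $H_V$, then establishes~(\ref{eq:CLR.BorderlineBKP1}). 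The special case~(\ref{eq:CLR.BorderlineBKP2}) is immediate because $V(\ker H)\subset\ran H$ forces $\Pi_0 V\Pi_0 = 0$.

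The main obstacle is the upper bound of~(\ref{eq:CLR.BorderlineBKP1}): the standard BSP does not apply directly to $H$ since $0$ is in its spectrum, and the delicate step is the clean identification $N^-(H_\epsilon+V) = N^+(\epsilon^{-1}A + B;1)$ via two successive limits in $\lambda$ and $\epsilon$, together with the Weyl correction that converts the $\ker H$ part of the perturbation into precisely the extra term $N^-(\Pi_0 V\Pi_0)$.
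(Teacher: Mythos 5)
Both halves of your proof are correct. Part~(1) is essentially the paper's own argument in unpacked form: the paper restricts $H_V$ to $\sH_1 = \ran H$, applies Proposition~\ref{prop:BSP.abstractBSP} there to get $N^{-}(H_{V|\sH_1}) = N^{+}(K_V;1)$, and then uses the codimension-$\dim\ker H$ relation between $\sF^{-}(H_{V|\sH_1};0)$ and $\sF^{-}(H_V;0)$; your change of variable $\eta = H^{-1/2}\xi$ together with the identity $Q_{H_V}(\eta,\eta) = \|\xi\|^2 - \scal{K_V\xi}{\xi}$ reproduces exactly this, just without packaging it as an appeal to the restricted BSP.

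Part~(2) takes a genuinely different (though closely allied) route. The paper keeps $H$ fixed, applies the Birman--Schwinger principle at a fixed $\lambda < 0$ beyond the last negative eigenvalue of $H_V$, and after the $V=-W^*W$ and $T^*T \leftrightarrow TT^*$ reductions splits $\tilde K_V(\lambda) = W(H-\lambda)^{-1}W^*$ via Weyl's inequality~\eqref{eq:Quantized.properties-mun2} into a rank-$N_0$ piece and a piece it \emph{dominates} by $WH^{-1}W^*$ through the operator inequality $(1-\Pi_0)(H+1)(H-\lambda)^{-1} \leq (H+1)H^{-1}$ and the monotonicity principle~\eqref{eq:monotonicity-principle}; the estimate $1\leq \mu_{N-N_0-1}(K_V)$ then directly delivers the $\sL_{p,\infty}$ bound. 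You instead regularize to $H_\epsilon = H+\epsilon\Pi_0$, apply the BSP to $H_\epsilon$, and take two successive monotone limits ($\lambda\uparrow 0$, then $\epsilon\downarrow 0$). Your gain is that the partial-inverse identity $H_\epsilon^{-1} = \epsilon^{-1}\Pi_0 + H^{-1}$ makes the split $WH_\epsilon^{-1}W^* = \epsilon^{-1}W\Pi_0W^* + WH^{-1}W^*$ \emph{exact}, so no operator-monotone comparison is needed; the cost is the extra limit in $\epsilon$, which you justify correctly (via the form monotonicity $H_\epsilon+V \geq H_V$ and the elementary argument with a finite-dimensional negative subspace). The two arguments are morally dual—the paper uses $-\lambda$ where you use $\epsilon$—and both hinge on the same Weyl peeling of the rank-$N_0$ operator $W\Pi_0W^*$; the key identification $\rk(W\Pi_0W^*) = N^{-}(\Pi_0V\Pi_0)$ is made explicitly and correctly in your version (the paper's passing assertion that $\rk\Pi_0 = N_0$ is a slip, as $\rk\Pi_0 = \dim\ker H \geq N_0$, though the conclusion it draws is right). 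Your version is arguably cleaner conceptually; the paper's avoids the double limit and is slightly more direct.
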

\begin{proof} 
 Set $\sH_1=\ran H$ and $V_1=(1-\Pi_0)V(1-\Pi_0)$. We  also denote by $H_{V|\sH_1}$ and $K_{V|\sH_1}$ the operators on $\sH_1$ induced by $(1-\Pi_0)H_{V}(1-\Pi_0)=H_{V_1}$ and $K_V$, respectively. By applying the Birman-Schwinger principle~(\ref{eq:CLR.BKP}) to $H_{V|\sH_1}$  on $\sH_1$ we get
\begin{equation}
 N^-\big(H_{V|\sH_1}\big)=  N^{+}\big(K_{V|\sH_1};1\big)=N^{+}\big(K_{V};1\big).
 \label{eq:BSP.NN}
\end{equation}
Moreover, as $\sF^-(H_{V|\sH_1};0)\subset \sF^-(H_{V};0)$ by using the variational principles~(\ref{eq:BSP.Glazman})--(\ref{eq:BSP.Glazman0}) we get
\begin{equation}\label{eq:BSP.borderline-NNsH1}
 N^-\big(H_{V|\sH_1}\big)\leq N^-\big(H_V\big) \leq N^-\big(H_{V|\sH_1}\big) +\dim \ker H. 
\end{equation}
Combining this with~(\ref{eq:BSP.NN}) gives the inequalities~(\ref{eq:BSP.borderline-NN}). 

From now on we assume that $V\leq 0$ and $K_V\in \sL_{p,\infty}$, $p>0$. Set $A=H^{1/2}(H+1)^{-1/2}$. Then $(H+1)^{-1/2}=H^{-1/2}A+\Pi_0$, and so in the same way as in~(\ref{eq:BSP.KV-Hl-H1}) we have
\begin{equation*}
 (H+1)^{-1/2}V(H+1)^{-1/2}= A^*K_V A + R,
\end{equation*}
where $R$ is a finite rank operator. This implies that $(H+1)^{-1/2}V(H+1)^{-1/2}$ is in the weak Schatten class $\sL_{p,\infty}$. In particular, this is a compact operator. Likewise, the Birman-Schwinger operators $K_V(\lambda)$, $\lambda<0$, are  in $\sL_{p,\infty}$ as well. 

Bearing this in mind, set  $N=N^{-}(H_V)$ and $N_0=N^{-}(\Pi_0 V \Pi_0)$. As $\Pi_0V\Pi_0=\Pi_0H_V\Pi_0$, we see that $\sF^-(\Pi_0V\Pi_0;0)\subset \sF^-(H_V;0)$, and so it follows from~(\ref{eq:BSP.Glazman})  that $N_0\leq N$. 
 
It remains to show the 2nd inequality in~(\ref{eq:CLR.BorderlineBKP1}). We may assume $N-N_0\geq 1$, since otherwise the inequality is trivially satisfied. Note that $N=N(H_V;\lambda)$ as soon as $\lambda<0$ is small enough. The Birman-Schwinger principle~(\ref{eq:CLR.BKP}) then asserts that $N=N^+(K_V(\lambda);1)$. As $K_V(\lambda)\geq 0$, in the same way as in the proof of Corollary~\ref{cor:CLR.BKP-qinf} this ensures that $K_V(\lambda)$ has exactly $N$ singular values~$>1$, and hence $ \mu_{N-1}(K_V(\lambda))\geq 1$. 
  
By Lemma~\ref{lem:BSP.decomposition} we can find a compact operator $W:\sH_+\rightarrow \sH$ such that $V=-W^*W$. Set $T(\lambda)=W(H-\lambda)^{-\frac12}$. This is a compact operator on $\sH$. Moreover, by using (\ref{eq:CLR.H1sqrt-inverses}) and~(\ref{eq:BSP.W*}) we see that, for all $\xi,\eta \in \sH$, we have 
\begin{equation*}
 \bigscal{(H-\lambda)^{-\frac12} W^*\xi}{\eta}=\bigacou{ W^*\xi}{(H-\lambda)^{-\frac12}\eta} = \bigscal{\xi}{W(H-\lambda)^{-\frac12}\eta} =
 \scal{\xi}{T(\lambda)\eta}. 
\end{equation*}
This shows that $T(\lambda)^*= (H-\lambda)^{-1/2} W^*$. Thus, 
\begin{equation*}
 K_V(\lambda)= (H-\lambda)^{-\frac12} W^*W (H-\lambda)^{-\frac12}=T(\lambda)^*T(\lambda)=|T(\lambda)|^2. 
\end{equation*}
Set  $\tilde{K}_V(\lambda)= T(\lambda)T(\lambda)^*=W(H-\lambda)^{-1}W^*$. By using~(\ref{eq:Quantized.properties-mun1}) we get
\begin{equation}
\mu_{j}\left(K_V(\lambda)\right)=  \mu_{j}\left(T(\lambda)\right)^2=  \mu_{j}\left(T(\lambda)^*\right)^2= \mu_{j}\big(\tilde{K}_V(\lambda)\big), \quad j\geq 0. 
\label{eq:BSP.KV-tildeKV}
\end{equation}

Observe that $\tilde{K}_V(\lambda)=T(\lambda)(1-\Pi_0) T(\lambda)^*+T(\lambda)\Pi_0 T(\lambda)^*$.
%\end{equation*}
 By using~(\ref{eq:Quantized.properties-mun2}) we get
\begin{equation*}
\mu_{N-1}\big(\tilde{K}_V(\lambda)\big) \leq \mu_{N-N_0-1}\big(T(\lambda)(1-\Pi_0) T(\lambda)^*\big)+\mu_{N_0} \big( T(\lambda)\Pi_0 T(\lambda)^*\big). 
\end{equation*}
As $\rk T(\lambda)\Pi_0 T(\lambda)^*\leq \rk \Pi_0= N_0$,  it follows from~(\ref{eq:min-max2}) that $\mu_{N_0}( T(\lambda)\Pi_0 T(\lambda)^*)=0$. 
Thus, 
\begin{equation}
 \mu_{N-1}\big(\tilde{K}_V(\lambda)\big)\leq \mu_{N-N_0-1}\big(T(\lambda)(1-\Pi_0) T(\lambda)^*\big). 
 \label{eq:CLR.muNN_0TlambdaW} 
\end{equation}
We also have
\begin{align*}
 T(\lambda)(1-\Pi_0)T(\lambda)^*&=W(H-\lambda)^{-1/2}(1-\Pi_0)(H-\lambda)^{-1/2}W^*,\\
 &= W(H+1)^{-1/2}\cdot (1-\Pi_0)(H+1)(H-\lambda)^{-1}\cdot(H+1)^{-1/2}W^*,\\
 &= T(-1)(1-\Pi_0)(H+1)(H-\lambda)^{-1}T(-1)^*. 
\end{align*}
As $(1-\Pi_0)(H+1)(H-\lambda)^{-1}\leq (H+1)H^{-1}$, it follows that
\begin{equation*}
 T(\lambda)(1-\Pi_0)T(\lambda)^*\leq T(-1) (H+1)H^{-1}T(-1)^*= WH^{-1}W^*. 
\end{equation*}
Therefore, by using~(\ref{eq:CLR.muNN_0TlambdaW}) and the monotonicity principle~(\ref{eq:monotonicity-principle})  we get 
\begin{equation}
 \mu_{N-1}\big(\tilde{K}_V(\lambda)\big)\leq  \mu_{N-N_0-1}\big(T(\lambda)(1-\Pi_0) T(\lambda)^*\big) \leq \mu_{N-N_0-1}\big(WH^{-1}W^*\big). 
\label{eq:CLR.muNTPi0}
\end{equation}

As in~(\ref{eq:BSP.KV-tildeKV}) we have $\mu_j(WH^{-1}W^*)=\mu_j(H^{-1/2}VH^{-1/2})=\mu_j(K_V)$ for all $j\geq 0$. Combining this with~(\ref{eq:CLR.muNTPi0}) and the fact that $\mu_{N-1}(\tilde{K}_V(\lambda))\geq 1$ gives
\begin{equation}
 1\leq  \mu_{N-1}\big(\tilde{K}_V(\lambda)\big) \leq \mu_{N-N_0-1}\big(K_V\big). 
\end{equation}
In the same way as in~(\ref{eq:CLR.BKP-qinf2}) this implies that 
\begin{equation*}
 N-N_0 \leq (N-N_0)\mu_{N-N_0-1}\big(K_V\big)^{p} \leq  
 \big\|K_V\big\|_{\sL_{p,\infty}}^p.
\end{equation*}
This gives the 2nd inequality in~(\ref{eq:CLR.BorderlineBKP1}). The proof is complete.
\end{proof}

\begin{remark}
 By using~(\ref{eq:BSP.borderline-NN}) and arguing along similar lines as that of the proof of Corollary~\ref{cor:CLR.BKP-qinf} in Appendix~\ref{app:BSP} it is easier to get the inequality, 
  \begin{equation}
 N^{-}(H_{V})- \dim \ker H \leq  \big\|K_V\big\|_{\sL_{p,\infty}}^p.
 \label{eq:CLR.BBSP-Birman-Solomyak}
\end{equation}
We also recover this inequality from~(\ref{eq:CLR.BorderlineBKP1}). This inequality and the inequalities~(\ref{eq:BSP.borderline-NN}) are known already. They appear in various forms in the work of Birman and Solomyak, in particular for dealing with Schr\"odinger operators on bounded domains under Neuman's boundary condition (see, e.g., \cite{BS:JFAA70, BS:AMST80, So:PLMS95}). In most of the instances where it has been used by Birman and Solomyak, the inequality~(\ref{eq:CLR.BBSP-Birman-Solomyak}) is relevant, since in those situations $\ker H=1$ and $\Pi_0V\Pi_0$ has a negative eigenvalue is $V\leq 0$ and $V\neq 0$. However, in general we might have $N^{-}(H_{V})- \dim \ker H <0$ (e.g., if $V(\ker H)\subset \ran H$), and so in such a case~(\ref{eq:CLR.BBSP-Birman-Solomyak}) is not sharp (see Example~\ref{ex:BBSP.closed-Riemannian} below). 
\end{remark}

\begin{example}\label{ex:BBSP.closed-Riemannian} 
 Let $(M^n,g)$ be a closed Riemannian manifold, and take $H$ to be the (positive) Laplacian $\Delta_g$ on $\sH=L_2(M;g)$ with domain the Sobolev space $W_2^2(M)$. In this case, $\ker \Delta_g$ is spanned by the characteristic functions of the connected components of $M$, and so $\dim \ker \Delta_g=\dim H^0(M)$, where $H^0(M)$ is the zero-th degree de Rham cohomology space of $M$. Let $V\in L_\infty(M)$. Then, $\Delta_g^{-1/2}V\Delta_g^{-1/2}\in \sL_{n/2,\infty}$, and so from~(\ref{eq:CLR.BBSP-Birman-Solomyak}) we get
 \begin{equation}
 N^{-}(\Delta_g+V)- \dim H^0(M) \leq  \big\|\Delta_g^{-\frac{1}2}V\Delta_g^{-\frac{1}{2}}\big\|_{\sL_{\frac{n}{2},\infty}}^{\frac{n}2}.
 \label{eq:CLR.BorderlineBKP-Birman-Solomyak-Deltag}
\end{equation}

Suppose now that $M$ has at least two connected components, and let $M_1$ be such a component. If $V=-\alpha\car_{M_1}$ with $\alpha>0$,  then $\Pi_0V\Pi_0=-\alpha\Pi_1$, where $\Pi_1$ is the orthogonal projection onto $\car_{M_1}$, and hence $ N^{-}(\Pi_0 V \Pi_0)=1$. Furthermore, if $\alpha$ is small enough, then
\begin{equation*}
 N^{-}(\Delta_g-\alpha\car_{M_1})= N^{-}(\Delta_{g|M_1}-\alpha)+N^{-}(\Delta_{g|M\setminus M_1})=N^{-}(\Delta_{g|M_1};\alpha)=1<\dim H^0(M). 
\end{equation*}
Thus, in this case the l.h.s.~of~(\ref{eq:CLR.BorderlineBKP-Birman-Solomyak-Deltag}) is~$<0$. 
\end{example}

%\clearpage 
\section{Cwikel-Lieb-Rozenblum Inequalities and Lieb-Thirring Inequalities}\label{CLR_section}
In this section, we combine the result of the previous two sections to establish Cwikel-Lieb-Rozenblum (CLR) inequalities and Lieb-Thirring (LT) inequalities for (fractional) Schr\"odinger operators on NC tori. As we shall see, this will lead us to a Sobolev inequality for NC tori. 

\subsection{CLR inequalities} 
In the notation of the previous section we let $\sH=L_2(\T^n_\theta)$ and let $H$ be the fractional Laplacian $\Delta^{n/2p}$, $p>0$, with domain $W_2^{n/2p}(\T^n_\theta)$. Note that $\Delta^{n/2p}$ has compact resolvent and its nullspace is $\C\cdot1$. In particular, $0$ is an isolated eigenvalue. 
Up to equivalent norms, the Hilbert spaces $\sH_+$ and $\sH_{-}$ are the Sobolev spaces $W_2^{n/4p}(\T^n_\theta)$ and $W_2^{-n/4p}(\T^n_\theta)$, respectively 

Suppose that, either $s>n/2q$ and $q\geq 1$, or $s=n/2q$ and $q\geq1$. In this case, if $V\in L_q(\T^n_\theta)$, then we know by Lemma~\ref{lem:Boundedness.sandwich1} that $\lambda(V)$ induces a bounded operator $\lambda(V): W^s_2(\T^n_\theta)\rightarrow  W^{-s}_2(\T^n_\theta)$. Thus, it defines a quadratic form on $W_2^s(\T^n_\theta)$ by
\begin{equation*}
 Q_{\lambda(V)}(u,v)=\acou{\lambda(V)u}{v}=\tau\big[v^*Vu\big], \qquad u,v\in W_2^{s}(\T^n_\theta). 
\end{equation*}
In particular, if $V^*=V$, then 
\begin{equation*}
 Q_{\lambda(V)}(u,v) =\overline{\tau\big[u^*Vv\big]}=\overline{Q_{\lambda(V)}(v,u)}. 
\end{equation*}
That is, $Q_{\lambda(V)}$ is symmetric. If in addition $V\geq 0$, then
\begin{equation*}
 Q_{\lambda(V)}(u,u)= \tau\big[u^*Vu]=\tau\big[(V^{1/2}u)^*V^{1/2}u\big]\geq 0. 
\end{equation*}
Thus, $Q_{\lambda(V)}\geq 0$. In particular, in the notation of Section~\ref{sec:Birman-Schwinger} we have 
\begin{equation}
 V_1\leq V_2 \Longrightarrow \lambda(V_1)\leq \lambda(V_2). 
 \label{eq:CLR.monotonicity-lambda}
\end{equation}

Bearing this in mind, Theorem~\ref{thm:Specific-Cwikel.sandwiched} implies that the operator $\lambda(V):W_2^{n/2p}(\T^n_\theta) \rightarrow W_2^{-n/2p}(\T^n_\theta)$ is compact under any of the following conditions:
\begin{enumerate}
 \item[(i)] $p>1$ and $V\in L_p(\T^n_\theta)$. 
 
 \item[(ii)] $p=1$ and $V\in L_{q}(\T^n_\theta)$, $q >1$. 
 
 \item[(iii)] $p<1$ and $V\in L_1(\T^n_\theta)$.
\end{enumerate}
It follows that, if $V$ is selfadjoint and any of the above conditions (i)--(iii) holds, then the pair $(\Delta^{n/2p}, \lambda(V))$ fits into the framework of the previous section. Thus, we may define the fractional Schr\"odinger operator $\Delta^{\frac{n}{2p}} +\lambda(V)$ as a form sum. 
We obtain a bounded from below selfadjoint operator on $L_2(\T^n_\theta)$ with compact resolvent. In particular, it has no essential spectrum. As above we denote by $N^{-}(\Delta^{n/2p} +\lambda(V))$ the number of negative eigenvalues counted with multiplicity. 

In what follows, given any selfadjoint element $V\in L_p(\T^n_\theta)$, we let $V_{\pm}=\frac{1}{2}(|V|\pm V)$ be the positive and negative parts of $V$. Note that $V_+$ and $V_{-}$ are positive elements of $L_p(\T^n_\theta)$.

We are now in a position to establish CLR inequalities on NC tori in the following form. 

\begin{theorem}[CLR Inequalities on NC Tori]\label{thm:CLR.CLR-NCtori}
Let $n\geq 2$.  The following holds. 
\begin{enumerate}
 \item[(i)] If $p>1$ and $V=V^*\in L_p(\T^n_\theta)$, then 
\begin{equation}
\label{eq:CLR.sup-critical}
 N^{-}\big(\Delta^{\frac{n}{2p}}+\lambda(V)\big)-1 \leq c_{+}(2p)^{2p}\nu_0(n)s \tau\big[|V_{-}|^p\big]. 
\end{equation}

\item[(ii)] If $V=V^*\in L_p(\T^n_\theta)$, $p>1$, then 
\begin{equation}
 N^{-}\big(\Delta^{\frac{n}2}+\lambda(V)\big)-1 \leq c_2(2p)^{2p}\nu_0(n) \tau\big[|V_{-}|^p\big]^{\frac1{p}}. 
 \label{eq:CLR.critical} 
\end{equation}

\item[(iii)] If $p<1$ and $V=V^*\in L_1(\T^n_\theta)$, then
\begin{equation}
 N^{-}\big(\Delta^{\frac{n}{2p}}+\lambda(V)\big)-1 \leq c_{-}(2p)^{2p}\nu_0(n)\tau\big[|V_{-}|\big]^{p}.
 \label{eq:CLR.sub-critical} 
\end{equation}
\end{enumerate}
Here $c_{+}(2p)$ and $c_{-}(2p)$ are the best constants in the $\sL_{p,\infty}$-Cwikel estimates~(\ref{eq:Cwikel2+Lpinf}) and~(\ref{eq:Cwikel2-Lpinf}), respectively, 
 and $c_2(2p)$ is given by~(\ref{eq:Cwikel-speficied-c2p}). 
\end{theorem}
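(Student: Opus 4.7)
The strategy is to combine the borderline Birman-Schwinger principle (Theorem~\ref{thm:Borderline-BSP}) with the sandwiched specific Cwikel estimates of Theorem~\ref{thm:Specific-Cwikel.sandwiched}, following the classical pattern of Simon~\cite{Si:TAMS76}.

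First I would reduce to a non-positive potential. Since $V \ge -V_{-}$ as selfadjoint elements of $L_p(\T^n_\theta)$, the monotonicity~\eqref{eq:CLR.monotonicity-lambda} of $\lambda$ yields $\lambda(V) \ge -\lambda(V_{-})$ in the quadratic form sense. Combined with the monotonicity principle~\eqref{eq:CLR.mono-principle-N-}, this gives
\[
    N^{-}\bigl(\Delta^{n/2p} + \lambda(V)\bigr) \le N^{-}\bigl(\Delta^{n/2p} - \lambda(V_{-})\bigr),
\]
and similarly in case~(ii). Thus one may assume throughout that the potential equals $-\lambda(V_{-})$ with $V_{-} \ge 0$.

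Set $H = \Delta^{n/2p}$ in cases~(i) and~(iii), and $H = \Delta^{n/2}$ in case~(ii). In every case $\ker H = \ker \Delta = \C \cdot 1$, so $0$ is an isolated eigenvalue of $H$, and form-compactness of $-\lambda(V_{-})$ relative to $H$ follows from Theorem~\ref{thm:Specific-Cwikel.sandwiched}, which furthermore places the sandwiched operator $K := H^{-1/2}\lambda(V_{-})H^{-1/2}$ (with $H^{-1/2}$ the partial inverse) in a weak Schatten class. The borderline Birman-Schwinger principle (Theorem~\ref{thm:Borderline-BSP}(ii)) then gives
\[
    N^{-}\bigl(H - \lambda(V_{-})\bigr) - N^{-}\bigl(-\Pi_0 \lambda(V_{-}) \Pi_0\bigr) \le \|K\|_{\sL_{q,\infty}}^{q}
\]
for the appropriate Schatten index $q$ (namely $q=p$ for~(i), $q=1$ for~(ii), $q=p$ for~(iii)). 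Because $\ker \Delta$ is one-dimensional, $\Pi_0 \lambda(V_{-}) \Pi_0 = \tau[V_{-}]\,\Pi_0$ is a positive rank-one operator, hence $N^{-}(-\Pi_0 \lambda(V_{-}) \Pi_0) \le 1$; this accounts for the $-1$ on the left-hand side of the three stated inequalities.

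The final step is to invoke the appropriate part of Theorem~\ref{thm:Specific-Cwikel.sandwiched} to bound $\|K\|_{\sL_{q,\infty}}$, taking advantage of the sharper positive-case constants provided by Remark~\ref{rmk:Specific.positive-case} since $V_{-} \ge 0$: part~(1) for case~(i), part~(2) for case~(ii), and part~(3) for case~(iii). Raising to the power $q$ and rewriting $\|V_{-}\|_{L_r}^{r} = \tau[|V_{-}|^r]$ produces the three stated bounds. The main subtlety is the identification of the partial inverse $H^{-1/2}$ used in Theorem~\ref{thm:Borderline-BSP} with the operator $\Delta^{-n/4p}$ appearing in Theorem~\ref{thm:Specific-Cwikel.sandwiched}: both are Borel functions of $\Delta$ which vanish on $\ker \Delta$ and invert $\sqrt{\Delta}$ on $(\ker \Delta)^{\perp}$, so the equality $K = \Delta^{-n/4p}\lambda(V_{-})\Delta^{-n/4p}$ holds as bounded operators on $L_2(\T^n_\theta)$. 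Beyond this point the argument is essentially bookkeeping of constants.
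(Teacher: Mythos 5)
Your proposal is correct and follows essentially the same route as the paper: reduce to $-\lambda(V_{-})$ via monotonicity, apply the borderline Birman--Schwinger principle (Theorem~\ref{thm:Borderline-BSP}) with $H=\Delta^{n/2p}$, note that $\Pi_0\lambda(V_-)\Pi_0=\tau[V_-]\Pi_0$ has rank $\le 1$ so $N^-(-\Pi_0\lambda(V_-)\Pi_0)\le 1$, and bound $\|K\|_{\sL_{q,\infty}}^q$ using the positive-case specific Cwikel estimate from Theorem~\ref{thm:Specific-Cwikel.sandwiched} together with Remark~\ref{rmk:Specific.positive-case}. The only cosmetic difference is that you bound $N^-(-\Pi_0\lambda(V_-)\Pi_0)$ by $1$ directly, whereas the paper disposes of the case $V_-=0$ separately and shows the count equals exactly $1$ when $V_-\neq 0$; both routes give the same result.
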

\begin{proof}
Suppose that $q=\max(p,1)$ if $p\neq1$, or $q>1$ if $p=1$. Set $c(p,q)$ to be equal to $c_{+}(2p)$ (resp., $c_{-}(2p)$, $c_2(2q)$) if $p>1$ (resp., $p<1$, $p=1$).   The proof amounts to show that if $V=V^*\in L_q(\T^n_\theta)$, then 
\begin{equation}
  N^{-}\big(\Delta^{\frac{n}{2p}}+\lambda(V)\big)-1 \leq c(p,q)^{2p}\nu_0(n) \tau\big[|V_{-}|^q\big]^{\frac{p}{q}}.
  \label{eq:CLR.CLR-compressed} 
\end{equation}
Note also that if $V_{-}=0$, then $V=V_{+}\geq 0$ and $Q_{\lambda(V)}\geq 0$, and hence $\Delta^{n/2p}+\lambda(V)\geq 0$. Thus, in this case $N^{-}(\Delta^{n/2p}+\lambda(V))=0$, and so the inequality~(\ref{eq:CLR.CLR-compressed}) is trivially satisfied. Therefore, we may assume $V_{-}\neq 0$. 

As $V=V_+-V_{-}\geq -V_{-}$, by~(\ref{eq:CLR.monotonicity-lambda}) we have $\lambda(V)\geq -\lambda(V_{-})$, and so the monotonicity principle~(\ref{eq:CLR.mono-principle-N-}) implies that
\begin{equation}
 N^{-}\big(\Delta^{\frac{n}{2p}}+\lambda(V)\big) \leq N^{-}\big(\Delta^{\frac{n}{2p}}-\lambda(V_{-})\big).
  \label{eq:CLR.NHV-NHV-}  
\end{equation}
 Let $\Pi_0$ be the orthogonal projection onto $\ker \Delta^{n/2p}=\C\cdot 1$.  It is a rank 1 projection given by 
\begin{equation*}
 \Pi_0u=\tau(u)1, \qquad u\in L_2(\T^n_\theta). 
\end{equation*}
The formula continues to make sense for any $u\in L_1(\T^n_\theta)$. We have 
\begin{equation*}
 -\Pi_0\lambda(V_{-}) \Pi_0 1= -\tau(1)\Pi_0V_{-}= -\tau(V_{-})<0. 
\end{equation*}
Thus, $-\tau(V_{-})$ is a negative eigenvalue of $-\Pi_0\lambda(V_{-}) \Pi_0$. As $-\Pi_0\lambda(V_{-}) \Pi_0$ has rank~1 this is the unique negative eigenvalue, and hence $N^{-}(-\Pi_0\lambda(V_{-}) \Pi_0)=1$. Combining this with~(\ref{eq:CLR.NHV-NHV-}) gives
\begin{equation}
 N^{-}\big(\Delta^{\frac{n}{2p}}+\lambda(V)\big)-1 \leq N^{-}\big(\Delta^{\frac{n}{2p}}-\lambda(V_{-})\big)-N^{-}(-\Pi_0\lambda(V_{-}) \Pi_0).
  \label{eq:CLR.NHV-NHV-1}
\end{equation}

Moreover, as $0\leq V_{-}\in L_q(\T^n_\theta)$, Theorem~\ref{thm:Specific-Cwikel.sandwiched}  and Remark~\ref{rmk:Specific.positive-case} ensure us that $\Delta^{-n/4p}\lambda(V_{-})\Delta^{-n/4p}\in \sL_{p,\infty}$ with norm estimate, 
\begin{equation}
 \big\| \Delta^{-\frac{n}{4p}}\lambda(V)\Delta^{-\frac{n}{4p}}\big\|_{\sL_{p,\infty}} \leq 
 c(p,q)^2\nu_0(n)^{\frac1{p}} \|V_{-}\|_{L_{q}}= c(p,q)^2\nu_0(n)^{\frac1{p}} \tau\big[|V_{-}|^q\big]^{\frac{1}{q}}.
 \label{eq:CLR.Cwikel} 
\end{equation}
This allows us to apply the borderline Birman-Schwinger principle (Theorem~\ref{thm:Borderline-BSP}) to get
\begin{equation*}
 N^{-}\big(\Delta^{\frac{n}{2p}}-\lambda(V_{-})\big)-N^{-}(-\Pi_0\lambda(V_{-}) \Pi_0)\leq   \big\| \Delta^{-\frac{n}{4p}}\lambda(V_{-})\Delta^{-\frac{n}{4p}}\big\|_{\sL_{p,\infty}}^p. 
\end{equation*}
Combining this with~(\ref{eq:CLR.NHV-NHV-1}) and~(\ref{eq:CLR.Cwikel}) gives the inequality~(\ref{eq:CLR.CLR-compressed}). 
The proof is complete.  
\end{proof}

\begin{remark}
 For the ordinary torus $\T^n$, i.e., $\theta =0$, the critical CLR inequality~(\ref{eq:CLR.critical}) actually hold for potentials $V$ in the Orlicz class $\LLogL(\T^n)$ 
 (see~\cite{Po:Weyl-Orlicz, Ro:arXiv21, RS:EMS21}).
\end{remark}

\begin{remark}
 Suppose that either $q=\max(p,1)$ and $p\neq 1$,  or $q>1=p$. The CLR inequalities~(\ref{eq:CLR.sup-critical})--(\ref{eq:CLR.sub-critical}) are consistent with Lieb's version of the CLR inequality for Schr\"odinger operators on closed manifolds (see~\cite{Li:BAMS76, Li:1980}). Note that we cannot expect estimates of the form,
\begin{equation}
 N^{-}\big(\Delta^{\frac{n}{2p}}+\lambda(V)\big)  \leq c_{npq} \tau\big[|V_{-}|^q\big]^{\frac{p}{q}}.
 \label{eq:CLR.direct-CLR} 
\end{equation}
To see this take $V=-\epsilon$ with $\epsilon>0$. In that case the right-hand side of~(\ref{eq:CLR.direct-CLR}) is $c_{npq}\tau[\epsilon^q]^{p/q}=c_{npq}\epsilon^p$. In addition $(\Delta^{{n}/{2p}}-\epsilon)1=-\epsilon 1$,  and hence $-\epsilon$ is always a negative eigenvalue of $\Delta^{{n}/{2p}}-\epsilon$. Thus, $N^{-}(\Delta^{{n}/{2p}}-\epsilon)\geq 1$. Therefore, if we had the inequality~(\ref{eq:CLR.direct-CLR}), then we would have
\begin{equation*}
 1\leq N^{-}\big(\Delta^{\frac{n}{2p}}-\epsilon\big) \leq c_{npq} \epsilon^{p} \qquad \forall \epsilon>0,
\end{equation*}
which does not hold as soon as $\epsilon$ is small enough. 
\end{remark}

\begin{remark}
 The most interesting case of the above CLR inequalities is for the Schr\"odinger operator $\Delta+\lambda(V)$, i.e., $p=n/2$. In this case Theorem~\ref{thm:CLR.CLR-NCtori} specializes to the following:
\begin{enumerate}
 \item[(i)] If $n\geq 3$ and $V=V^*\in L_{n/2}(\T^n_\theta)$, then
 \begin{equation}
 N^{-}\big(\Delta+\lambda(V)\big)-1 \leq c_{+}(n)^n \nu_0(n) \tau\big[|V_{-}|^{\frac{n}{2}}\big].
 \label{eq:CLR.Schrodingern3} 
\end{equation}

\item[(ii)] If $n=2$ and $V=V^*\in L_{p}(\T^2_\theta)$, $p>1$, then 
 \begin{equation}
 N^{-}\big(\Delta+\lambda(V)\big)-1 \leq c_2(2p)^{2p} \nu_0(2) \tau\big[|V_{-}|^{p}\big]^{\frac1{p}}. 
 \label{eq:CLR.Schrodingern2} 
\end{equation}
%where $c_p>0$ is the same constant as in Theorem \ref{L_2_infty_cwikel}.
\end{enumerate}
\end{remark}

\begin{remark}
 The 2-dimensional case~(\ref{eq:CLR.Schrodingern2}) shows a stark contrast with the Euclidean space case, since on $\R^2$ the Birman-Schwinger principle and the CLR inequality for Schr\"odinger operators $\Delta+V$ do not hold. In particular, recent results of Hoang~\emph{et al.}~\cite[Theorem~3.1]{HHRV:arXiv16} show that if $V\in L_1(\R^2)\setminus 0$ is such that $\int V(x)dx\leq 0$, then $\Delta +V$ always has at least one negative eigenvalue (see also Simon~\cite{Si:AP76}).  
\end{remark}

\begin{remark}
 The cases $p\neq n/2$ are also of interest, since fractional Schr\"odinger operators naturally appear in the framework of fractional quantum mechanics~\cite{La:PL00}. For $p=n$ we get the hyper-relativistic Schr\"odinger operator $|\nabla|+V$ (see, e.g., \cite{Da:CMP83}). 
\end{remark}

\subsection{Semiclassical Weyl's laws} 
Although the inequality~(\ref{eq:CLR.direct-CLR}) does not hold, it holds \emph{semiclassically}. Namely, we have the following result.  

\begin{corollary}[Semiclassical CLR Inequality] \label{cor:CLR.CLR-NCtori-semiclassical}
 Assume that, either $q=\max(p,1)$ with $p\neq 1$, or $q>p=1$. Let $V =V^*\in L_{q}(\T^n_\theta)$. Then, as $h\rightarrow 0^+$ we have
 \begin{equation}
 N^{-}\big(h^{\frac{n}{p}}\Delta^{\frac{n}{2p}}+\lambda(V) \big) \leq c(p,q)^{2p}\nu_0(n) h^{-n} 
 \tau\big[|V_{-}|^q\big]^{\frac{p}{q}}+\op{O}(1).
  \label{eq:CLR.semi-classical-CLR} 
\end{equation}
Here, $c(p,q)$ is the same as in the proof of Theorem \ref{thm:CLR.CLR-NCtori}.
%Here, the constants $c_{npq}$ are the same as those in Theorem \ref{thm:CLR.CLR-NCtori}.
%where the constant $c_n$ depends only on $n$. 
\end{corollary}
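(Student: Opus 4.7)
The plan is to reduce the semiclassical inequality to the (non-semiclassical) CLR inequality~\eqref{eq:CLR.CLR-compressed} obtained in the proof of Theorem~\ref{thm:CLR.CLR-NCtori} by a one-line scaling argument. The key observation is that multiplying a selfadjoint operator by a strictly positive scalar does not alter the number of its negative eigenvalues. Dividing $h^{n/p}\Delta^{n/2p}+\lambda(V)$ through by the positive constant $h^{n/p}$ and using linearity of $\lambda$, I obtain the identity
\[
N^{-}\bigl(h^{n/p}\Delta^{n/2p}+\lambda(V)\bigr) = N^{-}\bigl(\Delta^{n/2p}+\lambda(V_h)\bigr), \qquad V_h := h^{-n/p}V.
\]

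Next, I would apply the CLR bound~\eqref{eq:CLR.CLR-compressed} from Theorem~\ref{thm:CLR.CLR-NCtori} to the rescaled potential $V_h$, which is still selfadjoint and lies in $L_q(\T^n_\theta)$ since $h>0$. This yields
\[
N^{-}\bigl(\Delta^{n/2p}+\lambda(V_h)\bigr) - 1 \leq c(p,q)^{2p}\nu_0(n)\,\tau\bigl[|(V_h)_{-}|^q\bigr]^{p/q}.
\]
Because $h^{-n/p}>0$, the negative part scales cleanly as $(V_h)_{-} = h^{-n/p}V_{-}$, hence
\[
\tau\bigl[|(V_h)_{-}|^q\bigr]^{p/q} = h^{-nq/p\cdot p/q}\,\tau\bigl[|V_{-}|^q\bigr]^{p/q} = h^{-n}\,\tau\bigl[|V_{-}|^q\bigr]^{p/q}.
\]
Combining these three displays produces~\eqref{eq:CLR.semi-classical-CLR} with the $\op{O}(1)$-term explicitly equal to $1$ (uniformly in $h$).

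There is no real obstacle here: the argument is a homogeneity rescaling in which the degree-of-homogeneity matching of $\Delta^{n/2p}$ (scaling order $n/p$) with the $L_q$-norm scaling factor $h^{-nq/p}$ produces precisely the Weyl-type exponent $h^{-n}$ on the right-hand side, which is exactly what absorbs the parameter $h$ into the potential. The only point to verify, and it is immediate from the computation, is that the hidden constant in $\op{O}(1)$ is independent of $h$.
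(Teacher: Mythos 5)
Your proposal is correct and follows exactly the paper's own proof: rescale by $h^{-n/p}$ to reduce to the CLR inequality~\eqref{eq:CLR.CLR-compressed} for the potential $h^{-n/p}V$, then use that the negative part scales by $h^{-n/p}$ to produce the factor $h^{-n}$. Your additional observation that the $\op{O}(1)$ term is in fact identically $1$ is a minor sharpening that the paper leaves implicit.
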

\begin{proof}
 As $h^{n/p}\Delta^{n/2p}+\lambda(V)=h^{n/p}(\Delta+\lambda(h^{-n/p}V))$, the operators $h^{n/p}\Delta^{n/2p}+\lambda(V)$ and $\Delta+\lambda(h^{-n/p}V)$ have the same number of negative eigenvalues counted with multiplicity. Therefore, the CLR inequality~(\ref{eq:CLR.CLR-compressed}) for $h^{-n/p}V$ gives
\begin{align*}
 N^{-}\big(h^{\frac{n}{p}}\Delta^{\frac{n}{2p}}+\lambda(V) \big)=N^{-}\big(\Delta^{\frac{n}{2p}}+\lambda\big(h^{-\frac{n}{p}}V\big) \big)& \leq 
 c(p,q)^{2p}\nu_0(n) \tau\big[\big|h^{-\frac{n}{p}}V_{-}\big|^{q}\big]^{\frac{p}{q}}+1\\
 & \leq  c(p,q)^{2p}\nu_0(n) h^{-n} 
 \tau\big[|V_{-}|^q\big]^{\frac{p}{q}}+\op{O}(1).
\end{align*}
The proof is complete. 
\end{proof}

The semiclassical CLR inequality on $\R^n$, $n\geq 3$, is the main tool to extend the semi-classical Weyl's law for Schr\"odinger operators $h^2\Delta+V$ with smooth potentials $V$ to Schr\"odinger operators with potentials in $L_{n/2}(\R^n)$ (see~\cite[Proposition 5.2]{Si:TAMS76}). This result can be traced back to the work of Birman and his collaborators in the late 60s and early 70s (see~\cite{BS:JFAA70, BS:AMST80}). 
Likewise, we can extend the semiclassical Weyl's law for fractional Schr\"odinger operators $h^{\frac{n}{p}}\Delta^{\frac{n}{2p}}+V$ for smooth potentials to potentials in a suitable $L_q$-class (see~\cite{So:PLMS95} and the references therein). 

Similarly, the semiclassical CLR inequality~(\ref{eq:CLR.semi-classical-CLR}) for NC tori would enable us to extend the semiclassical Weyl law on NC tori for smooth potentials to non-smooth potentials. However, to date the semiclassical Weyl law for smooth potentials has still not been established. We believe it can be established in a similar fashion as in the Euclidean case by setting up a semi-classical pseudodifferential calculus on NC tori. However, this falls out of the scope of this paper. Therefore, we only state the semiclassical Weyl's law on NC tori as a conjecture. 

\begin{conjecture}[Semiclassical Weyl Law on NC Tori]\label{CLR.Weyl-law} 
Let $n\geq 2$. Suppose that, either $p\neq 1$ and $q=\max(p,1)$, or $p=1<q$. If $V=V^*  \in L_{q}(\T^n_\theta)$, then as $h\rightarrow 0^+$ we have 
         \begin{equation}\label{semiclassical_weyl_law}
         N^{-}\big(h^{\frac{n}{p}}\Delta^{\frac{n}{2p}}+\lambda(V) \big)= c(n)h^{-n}
            \tau\big[|V_{-}|^{p}\big] +\op{o}\big(h^{-n}\big), 
            \quad \text{where}\ c(n) =  |\mathbb{B}^{n}|.
         \end{equation}   
In particular, if $n\geq 3$ and $V=V^*\in L_{n/2}(\T^n_\theta)$, or if $n=2$ and $V=V^*\in L_q(\T^n_\theta)$ with $q>1$, then 
         \begin{equation*}%\label{semiclassical_weyl_law}
         N^{-}\big(h^{2}\Delta+\lambda(V) \big)= c_{n}h^{-n}
            \tau\big[|V_{-}|^{\frac{n}{2}}\big] +\op{o}\big(h^{-n}\big). 
%            \quad \text{where}\ c(n) =  |\bS^{n-1}|.
         \end{equation*}   
\end{conjecture}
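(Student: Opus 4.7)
The plan is to follow the two-stage strategy of Birman--Solomyak and Simon from the Euclidean setting: first establish (\ref{semiclassical_weyl_law}) for smooth potentials via a semiclassical pseudodifferential calculus on $\T^n_\theta$, then extend to $V \in L_q(\T^n_\theta)$ by a density argument based on the semiclassical CLR inequality (Corollary~\ref{cor:CLR.CLR-NCtori-semiclassical}).

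\emph{Stage 1 (smooth potentials).} One constructs a semiclassical pseudodifferential calculus on $\T^n_\theta$ whose symbols are smooth maps $a:\R^n \to C^\infty(\T^n_\theta)$ with suitable $\xi$-decay, and whose quantization $\Op_h(a)$ acts on $L_2(\T^n_\theta)$ via the joint spectral calculus of the semiclassical momentum $-ih\nabla$ combined with the left-regular representation $\lambda$. The calculus should satisfy $\Op_h(|\xi|^{n/p}) = h^{n/p}\Delta^{n/2p}$ and $\Op_h(V) = \lambda(V)$ for $V \in C^\infty(\T^n_\theta)$, a composition rule $\Op_h(a)\Op_h(b) = \Op_h(a \# b)$ with $a\#b = ab + O(h)$ at leading order, and a trace asymptotics
\begin{equation*}
  \Tr\bigl(\Op_h(a)\bigr) = h^{-n}\int_{\R^n}\tau[a(\xi)]\,d\xi + \op{o}(h^{-n}),
\end{equation*}
with no $(2\pi)^{-n}$ prefactor owing to the normalization $\tau(1)=1$. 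Using such a calculus and a resolvent parametrix along a contour enclosing $(-\infty,0)$, one realizes $\car_{(-\infty,0)}(h^{n/p}\Delta^{n/2p} + \lambda(V))$ as a semiclassical pseudodifferential operator with principal symbol $\car_{|\xi|^{n/p} + V(x) < 0}$. The trace asymptotics and the identity $\int_{\R^n}\car_{|\xi|^{n/p} < V_-(x)}\,d\xi = |\mathbb{B}^n|V_-(x)^p$ (interpreted via the functional calculus of $V_-$) then yield (\ref{semiclassical_weyl_law}) for smooth $V$.

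\emph{Stage 2 (density argument).} Set $H_h^{(0)} := h^{n/p}\Delta^{n/2p}$. Given $V=V^* \in L_q(\T^n_\theta)$ and $\varepsilon>0$, pick a smooth selfadjoint $V_\varepsilon$ with $\|V-V_\varepsilon\|_{L_q}<\varepsilon$. A standard application of Glazman's Lemma (Lemma~\ref{lem:BSP.Glazman}) gives, for each $s\in(0,1)$,
\begin{equation*}
  N^-\bigl(H_h^{(0)} + \lambda(V)\bigr) \leq N^-\bigl(H_h^{(0)} + s^{-1}\lambda(V_\varepsilon)\bigr) + N^-\bigl(H_h^{(0)} + (1-s)^{-1}\lambda(V-V_\varepsilon)\bigr).
\end{equation*}
Stage~1 handles the first term (with smooth potential $s^{-1}V_\varepsilon$) and Corollary~\ref{cor:CLR.CLR-NCtori-semiclassical} bounds the second by $c(p,q)^{2p}\nu_0(n)(1-s)^{-p}h^{-n}\varepsilon^p + \op{O}(1)$. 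Letting $h\to 0$, then $\varepsilon\to 0$ (using continuity of $V\mapsto \tau[V_-^p]$ on $L_q$), and finally $s\to 1$, gives $\limsup h^n N^- \leq |\mathbb{B}^n|\tau[V_-^p]$. The matching $\liminf$ follows by reversing the roles of $V$ and $V_\varepsilon$ in the same inequality and using the scaling $h'=s^{p/n}h$, which converts $H_h^{(0)}+s^{-1}\lambda(V)$ into $s^{-1}\bigl((h')^{n/p}\Delta^{n/2p}+\lambda(V)\bigr)$ and hence relates the unknown large-$h$ behavior back to itself along a rescaled sequence.

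\emph{Main obstacle.} The decisive step is Stage~1: constructing a semiclassical pseudodifferential calculus on $\T^n_\theta$ strong enough to yield a sharp Weyl \emph{asymptotic} (rather than a two-sided bound). The algebraic framework for the non-semiclassical calculus is established (cf.~Connes--Tretkoff and Ha--Lee--Ponge), but incorporating the parameter $h$ while preserving ellipticity uniformly in $h$, localizing spectral projections sharply near the threshold $|\xi|^{n/p}+V(x)=0$, and obtaining an explicit $\op{o}(h^{-n})$ remainder in the trace formula all require substantive further work that goes beyond the scope of the present paper; this is precisely why the authors record the statement as a conjecture.
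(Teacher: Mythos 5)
The statement you are attempting to prove is labelled a \emph{conjecture} in the paper; the authors explicitly decline to prove it and explain why. Your proposal is therefore not a proof but a proof \emph{strategy}, and you correctly recognize this yourself in the final paragraph. Your strategy is, moreover, exactly the one the authors sketch in the introduction and in Corollary~\ref{cor:CLR.CLR-NCtori-semiclassical} and the surrounding remarks: first prove the Weyl asymptotic for smooth potentials via a (yet-to-be-constructed) semiclassical pseudodifferential calculus on $\T^n_\theta$, then upgrade to $L_q$-potentials via the semiclassical CLR inequality and a Birman--Solomyak-type density argument.

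Your Stage~2 is sound. The splitting
\begin{equation*}
  N^-\bigl(H_h^{(0)} + \lambda(V)\bigr) \leq N^-\bigl(H_h^{(0)} + s^{-1}\lambda(V_\varepsilon)\bigr) + N^-\bigl(H_h^{(0)} + (1-s)^{-1}\lambda(V-V_\varepsilon)\bigr)
\end{equation*}
follows from subadditivity of the negative index of quadratic forms (a consequence of Glazman's Lemma), the remainder term is controlled by Corollary~\ref{cor:CLR.CLR-NCtori-semiclassical} exactly as you write, and the $\liminf$ bound is recovered correctly by reversing the roles of $V$ and $V_\varepsilon$ together with the dilation $h' = s^{p/n}h$. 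One small point you glossed over but that does check out: the continuity of $V\mapsto \tau[V_-^p]$ on $L_q(\T^n_\theta)$ uses $q\geq p$ (true in every case of the hypothesis) together with the Lipschitz property of $V\mapsto V_-$ on $L_q$ and the embedding $L_q\hookrightarrow L_p$ on the finite trace space.

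Stage~1 is where the genuine gap lies, and you diagnose it correctly: there is at present no semiclassical pseudodifferential calculus on $\T^n_\theta$ with trace asymptotics sharp enough to give the leading-order Weyl law for smooth potentials, and constructing one is the substantive open problem. The paper's remarks after the conjecture go slightly further than you do, noting two things worth knowing. First, there is an \emph{alternative} route to Stage~1: apply the borderline Birman--Schwinger principle (Theorem~\ref{thm:Borderline-BSP}) to convert the eigenvalue counting problem into a Weyl-type spectral asymptotic for the negative-order Birman--Schwinger operator $\Delta^{-n/4p}\lambda(V_-)\Delta^{-n/4p}$, and then invoke a (conjectural) NC-torus analogue of Birman--Solomyak's spectral asymptotics for pseudodifferential operators of negative order; this avoids having to introduce the semiclassical parameter into the calculus at all. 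Second, the case $n\geq 3$, $p=n/2$ has since been resolved by McDonald, Sukochev, and Zanin~\cite{MSZ:arXiv21}; the cases $n=2$ and $p\neq n/2$ remain open.
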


\begin{remark}
 In the critical case $p=1$ we may hope that the semiclassical Weyl's law~(\ref{semiclassical_weyl_law}) further hold for potentials in the NC Orlicz class $\LLogL(\T^n_\theta)$. Establishing this would require extending the Cwikel estimate~(\ref{eq:specific.sando-Cwikel-1infty}) and the CLR inequality~(\ref{eq:CLR.critical}) to this class of potentials. However, at this point it is still unclear how to do this.  
\end{remark}

\begin{remark}
 An alternative route to establish the semiclassical Weyl's law~(\ref{semiclassical_weyl_law}) for $L_q$-potentials is outlined in the sequel article~\cite{MP:Part2}. It is based on the (borderline) Birman-Schwinger principle~(\ref{eq:BSP.borderline-NN}) and a (conjectural) version for NC tori of the celebrated Weyl's law for negative order pseudodifferential operators of Birman-Solomyak~\cite{BS:VLU77, BS:VLU79, BS:SMJ79} (see also~\cite{Po:NCIntegration}).  Note that the conjecture in~\cite{MP:Part2} extends Conjecture~\ref{CLR.Weyl-law} to \emph{curved} NC tori. 
\end{remark}

\begin{remark}
After earlier versions of this article and the sequel~\cite{MP:Part2} were posted on arXiv, the semiclassical Weyl's law~(\ref{semiclassical_weyl_law}) was established for $n\geq 3$ and $p=n/2$ by the first named author in a joint preprint with F.\ Sukochev and D.\ Zanin (see~\cite{MSZ:arXiv21}). The cases $n=2$ or $p\neq n/2$, as well as the curved version conjectured in~\cite{MP:Part2}, still remain open to date.  
\end{remark}

%\clearpage

\subsection{Lieb-Thirring Inequalities} 
We may also consider the action of the Schr\"odinger operators $\Delta^{n/2p}+\lambda(V)$ on the orthogonal complement of the nullspace $\ker \Delta=\C\cdot 1$. 
Set $\dot{L}_2(\T^n_\theta)=(\C \cdot 1)^\perp$ and $\dot{W}_2^s(\T^n_\theta)={W}_2^s(\T^n_\theta)\cap \dot{L}_2(\T^n_\theta)$. The orthogonal projection 
$\Pi_0:L_2(\T^n_\theta)\rightarrow L_2(\T^n_\theta)$ onto $\C\cdot 1$ is given by 
\begin{equation*}
 \Pi_0u = \tau(u)1, \qquad u\in L_2(\T^n_\theta). 
\end{equation*}
Thus, $\dot{L}_2(\T^n_\theta)$ consists of all $u\in L_2(\T^n_\theta)$ with zero mean value $\tau(u)=0$. If $s>0$,  the $\dot{W}_2^s(\T^n_\theta)$ is a closed subspace of ${W}_2^s(\T^n_\theta)$ and $\Pi_0$ induces a selfadjoint bounded projection on $W_2^s(\T^n_\theta)$. By duality it extends to a bounded projection on the anti-linear dual $W_2^{-s}(\T^n_\theta)$. 

Let $\dot{\Delta}$ be the restriction of $\Delta$ to $\dot{L}_2(\T^n_\theta)$. Its domain is $\dot{W}_2^s(\T^n_\theta)$. This is a selfadjoint operator with same spectrum as $\Delta$ at the exception of the $0$-eigenvalue. Moreover, for any $s\in \R$, the power $\dot{\Delta}^{s/2}$ agrees with the operator $\Delta^{s/2}$ on 
$\dot{W}_2^{s}(\T^n_\theta)$. 

Suppose that $p>0$ and either $q=\max(p,1)$ if $p\neq 1$ or $q>1$ if $p=1$. For $V\in L_q(\T^n_\theta)$, we let $\dot{\lambda}(V): 
\dot{W}_2^{n/2p}(\T^n_\theta) \rightarrow \dot{W}_2^{-n/2p}(\T^n_\theta)$ the operator defined by 
\begin{equation*}
 \bigacou{\dot{\lambda}(V)u}{v}=\acou{\lambda(V)u}{v}, \qquad u,v\in \dot{W}_2^{\frac{n}{2p}}(\T^n_\theta). 
\end{equation*}
In other words, the corresponding quadratic form $Q_{\dot{\lambda}(V) }$ is just the restriction of $Q_{\lambda(V)}$ to $\dot{W}_2^{n/2p}(\T^n_\theta)$. In particular, if $V^*=V$, then the  quadratic form $Q_{\dot{\lambda}(V) }$ is symmetric,s and if $V_1\leq V_2$, then $\dot{\lambda}(V_1)\leq \dot{\lambda}(V_2)$ in the sense used in Section~\ref{sec:Birman-Schwinger}.

With respect to the orthogonal splitting $L_2(\T^n_\theta)=\dot{L}(\T^n_\theta)\oplus (\C\cdot 1)$ we have 
\begin{equation}\label{eq:LT.dotDeltaV}
\Delta^{-\frac{n}{4p}}  \lambda(V)\Delta^{-\frac{n}{4p}} = 
\begin{pmatrix}
 \dot{\Delta}^{-\frac{n}{4p}}  \dot{\lambda}(V) \dot{\Delta}^{-\frac{n}{4p}} & 0\\ 0 &0
\end{pmatrix}. 
\end{equation}
It follows that the operator $ \dot{\Delta}^{-{n}/{4p}}\dot{\lambda}(V) \dot{\Delta}^{-{n}/{4p}}$ is in the weak Schatten class $\sL_{p,\infty}$ and has same $\sL_{p,\infty}$-norm as $\Delta^{-{n}/{4p}}  \lambda(V)\Delta^{-{n}/{4p}}$. Therefore, if $V^*=V$, then we may define the operator $\dot{\Delta}^{n/2p}+\dot{\lambda}(V)$ as a form sum on $\dot{L}_2(\T^n_\theta)$ with domain $\dot{W}_2^{n/p}(\T^n_\theta)$. Equivalently, this is the selfadjoint operator whose quadratic form is the restriction to $\dot{W}_2^{n/2p}(\T^n_\theta)$ of $Q_{\Delta^{n/2p}}+Q_{\lambda(V)}$. As above we obtain a bounded from below selfadjoint operator with compact resolvent. 

\begin{theorem}[CLR Inequalities on NC Tori; 2nd Version]\label{thm:CLR.CLR-NCtori2}
Let $n\geq 2$.  Keeping on  using the notation of Theorem~\ref{thm:CLR.CLR-NCtori}, the following hold. 
\begin{enumerate}
 \item If $p>1$ and $V=V^*\in L_p(\T^n_\theta)$, then 
\begin{equation}
\label{eq:CLR.sup-critical2}
 N^{-}\big(\dot{\Delta}^{\frac{n}{2p}}+\dot{\lambda}(V)\big) \leq c_{+}(2p)^{2p}\nu_0(n) \tau\big[|V_{-}|^p\big]. 
\end{equation}

\item If $V=V^*\in L_p(\T^n_\theta)$, $p>1$, then 
\begin{equation}
 N^{-}\big(\dot{\Delta}^{\frac{n}2}+\dot{\lambda}(V)\big) \leq c_2(2p)^{2p}\nu_0(n) \tau\big[|V_{-}|^p\big]^{\frac1{p}}. 
 \label{eq:CLR.critical2} 
\end{equation}

\item If $p<1$ and $V=V^*\in L_1(\T^n_\theta)$, then
\begin{equation}
 N^{-}\big(\dot{\Delta}^{\frac{n}{2p}}+\dot{\lambda}(V)\big) \leq c_{-}(2p)^{2p}\nu_0(n)\tau\big[|V_{-}|\big]^{p}.
 \label{eq:CLR.sub-critical2} 
\end{equation}
\end{enumerate}
%Here $c_{+}(2p)$ and $c_{-}(2p)$ are the best constants in the $\sL_{p,\infty}$-Cwikel estimates~(\ref{eq:Cwikel2+Lpinf}) and~(\ref{eq:Cwikel2-Lpinf}), respectively.
% and $c_2(2p)$ is given by~(\ref{eq:Cwikel-speficied-c2p}). 
\end{theorem}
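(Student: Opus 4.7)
The plan is to mirror the proof of Theorem~\ref{thm:CLR.CLR-NCtori} on the subspace $\dot{L}_2(\T^n_\theta)$. The decisive simplification is that $\ker\dot{\Delta} = 0$ and hence $\mathrm{Sp}(\dot{\Delta}^{n/2p}) \subset [1,\infty)$; in particular $0$ lies outside the spectrum, so there is no zero eigenvector whose contribution needs to be subtracted. This is precisely what eliminates the ``$-1$'' correction present in~(\ref{eq:CLR.sup-critical})--(\ref{eq:CLR.sub-critical}).

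Let $p$ and $q$ be as in Theorem~\ref{thm:CLR.CLR-NCtori} and write $V = V_+ - V_-$. By~(\ref{eq:CLR.monotonicity-lambda}) and the monotonicity principle~(\ref{eq:CLR.mono-principle-N-}), the inequality $V \geq -V_-$ yields $N^{-}(\dot{\Delta}^{n/2p}+\dot{\lambda}(V)) \leq N^{-}(\dot{\Delta}^{n/2p}-\dot{\lambda}(V_-))$, and the case $V_- = 0$ is trivial. Since $\dot{\Delta}^{n/2p}$ has a positive spectral gap, the Birman-Schwinger operator
\[
 K(\lambda) := (\dot{\Delta}^{n/2p}-\lambda)^{-1/2}\dot{\lambda}(V_-)(\dot{\Delta}^{n/2p}-\lambda)^{-1/2}
\]
is well defined and non-negative for every $\lambda \leq 0$. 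The abstract Birman-Schwinger principle (Corollary~\ref{cor:CLR.BKP-qinf}) applied at $\lambda < 0$ followed by the limit $\lambda \to 0^-$ gives
\[
 N^{-}\bigl(\dot{\Delta}^{n/2p}-\dot{\lambda}(V_-)\bigr) \leq \bigl\|\dot{\Delta}^{-n/4p}\dot{\lambda}(V_-)\dot{\Delta}^{-n/4p}\bigr\|_{\sL_{p,\infty}}^{p}.
\]
The limit is controlled by the left-continuity of the negative counting function together with the bound $\mu_j(K(\lambda)) \leq \mu_j(K(0))$, which follows from factoring $\dot{\lambda}(V_-) = T^*T$ and noting that $(\dot{\Delta}^{n/2p})^{1/2}(\dot{\Delta}^{n/2p}-\lambda)^{-1/2}$ has operator norm $\leq 1$ for $\lambda<0$. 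Equivalently, one can run the argument of Theorem~\ref{thm:Borderline-BSP} with the formal convention $\Pi_0 = 0$, which is legitimate since $\ran H = \sH$ in that situation.

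To conclude, the block-diagonal identity~(\ref{eq:LT.dotDeltaV}) identifies the preceding $\sL_{p,\infty}$-norm with $\|\Delta^{-n/4p}\lambda(V_-)\Delta^{-n/4p}\|_{\sL_{p,\infty}}$, and since $V_- \geq 0$ we apply the \emph{sharpened} form of the specific Cwikel estimate of Theorem~\ref{thm:Specific-Cwikel.sandwiched} provided by Remark~\ref{rmk:Specific.positive-case} (without the $2^{1/p}$ factor) to bound this norm by $c(p,q)^2\nu_0(n)^{1/p}\tau[|V_-|^q]^{1/q}$, where $c(p,q)$ denotes $c_+(2p)$, $c_2(2p)$, or $c_-(2p)$ according as $p > 1$, $p = 1$, or $p < 1$. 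Raising to the $p$-th power then gives the three inequalities~(\ref{eq:CLR.sup-critical2})--(\ref{eq:CLR.sub-critical2}) in turn. The sole technical point is the passage to the limit in the Birman-Schwinger inequality; this is handled cleanly thanks to the positivity of $V_-$, which produces the requisite operator-monotonicity bound on the singular values.
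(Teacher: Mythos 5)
Your proposal is correct and follows essentially the same route as the paper: reduce to $-V_-$ by the monotonicity principle, apply the abstract Birman-Schwinger inequality (Corollary~\ref{cor:CLR.BKP-qinf}) and pass to $\lambda=0$, identify the Birman-Schwinger operator norm with $\|\Delta^{-n/4p}\lambda(V_-)\Delta^{-n/4p}\|_{\sL_{p,\infty}}$ via~(\ref{eq:LT.dotDeltaV}), and finish with the sharpened Cwikel estimate of Remark~\ref{rmk:Specific.positive-case}. The one place where you add something worthwhile is the explicit justification of the passage to $\lambda=0$: the paper simply invokes that $\dot\Delta$ has positive spectrum and writes the inequality at $\lambda=0$, whereas you spell out either the limiting argument (using $N(\dot H_{-V_-};\lambda)\nearrow N^-(\dot H_{-V_-})$ together with the singular-value bound $\mu_j(K(\lambda))\le\mu_j(K(0))$ via the contraction $(\dot\Delta^{n/2p})^{1/2}(\dot\Delta^{n/2p}-\lambda)^{-1/2}$) or the observation that the proof of Theorem~\ref{thm:Borderline-BSP} goes through verbatim with $\Pi_0=0$. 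Both of your mechanisms are valid and amount to what the paper leaves implicit; neither changes the route, just tightens the exposition.
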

\begin{proof}
 The proof follows the same outline as that of the proof of Theorem~\ref{thm:CLR.CLR-NCtori}. Suppose that $q=\max(p,1)$ if $p\neq1$, or $q>1$ if $p=1$, and let $V=V^*\in L_q(\T^n_\theta)$. As $-\dot{\lambda}(V_{-})\leq \dot{\lambda}(V)$ in the same way as in~(\ref{eq:CLR.NHV-NHV-}) we have
\begin{equation*}
 N^{-}\big(\dot{\Delta}^{\frac{n}{2p}}+\dot\lambda(V)\big) \leq N^{-}\big(\dot\Delta^{\frac{n}{2p}}-\dot\lambda(V_{-})\big).
%  \label{eq:CLR.NHV-NHV-}  
\end{equation*}
Here $\dot{\Delta}$ is a selfadjoint operator with compact resolvent and positive spectrum. Therefore, we may apply Corollary~\ref{cor:CLR.BKP-qinf} and use~(\ref{eq:LT.dotDeltaV}) to get 
\begin{equation*}
 N^{-}\big(\dot\Delta^{\frac{n}{2p}}-\dot\lambda(V_{-})\big) \leq \big\|\dot{\Delta}^{-\frac{n}{4p}}  \dot{\lambda}(V) \dot{\Delta}^{-\frac{n}{4p}}\big\|_{\sL_{p,\infty}}^p 
 =  \big\|{\Delta}^{-\frac{n}{4p}} {\lambda}(V) {\Delta}^{-\frac{n}{4p}}\big\|_{\sL_{p,\infty}}^p.
\end{equation*}
In the same way as in the proof of Theorem~\ref{thm:CLR.CLR-NCtori}, combining the above inequalities with the Cwikel estimates provided by Theorem~\ref{thm:Specific-Cwikel.sandwiched} yields the result. 
\end{proof}

\begin{remark}
 We can recover the inequalities~(\ref{eq:CLR.sup-critical})--(\ref{eq:CLR.sub-critical}) from the inequalities~(\ref{eq:CLR.sup-critical2})--(\ref{eq:CLR.sub-critical2}), since in this setup~(\ref{eq:BSP.borderline-NNsH1}) gives 
\begin{equation*}
 N^{-}\big({\Delta}^{\frac{n}{2p}}+{\lambda}(V)\big) \leq N^{-}\big(\dot{\Delta}^{\frac{n}{2p}}+\dot{\lambda}(V)\big) + 1. 
\end{equation*}
\end{remark}

In what follows given any selfadjoint operator $A$ on a Hilbert space $\sH$  which is bounded from below operator and has discrete negative spectrum we arrange its negative eigenvalues as a non-decreasing sequence, 
\begin{equation*}
 \nlambda_0(A) \leq \nlambda_1(A)\leq \cdots \leq 0.  
\end{equation*}
Here each eigenvalue is repeated according to multiplicity and we make the convention that $\nlambda_j(A)=0$ for $j\geq N^{-}(A)$. In other words, $\nlambda_j(A)=-\mu_j(A_{-})$, where $A_{-}=\frac12(|A|-A)$ is the negative part of $A$. 

It is well known that the CLR inequality implies the Lieb-Thirring inequalities~\cite{LT:PRL75, LT:SMP76} for the Riesz means of negative eigenvalues of Schr\"odinger operators (see, e.g., \cite[Proposition~6.17]{Si:AMS15a}), although this does not lead to the best bounds for the best constants of these inequalities. Likewise, as a consequence of the CLR inequalities provided by Theorem~\ref{thm:CLR.CLR-NCtori2} we shall obtain the following version of the Lieb-Thirring inequalities for NC tori. 

\begin{theorem}[Lieb-Thirring Inequalities on NC Tori]\label{thm:LT-inequality}
Let $\gamma>0$ and $p>1$. There is a constant $L_{p,\gamma,n}>0$ such that, for all $V=V^*\in L_{p+\gamma}(\T^n_\theta)$, we have
 \begin{equation}\label{eq:LT-Inequality}
 \sum_{j\geq 0} \left|\nlambda_j\big(\dot{\Delta}^{\frac{n}{2p}}+\dot{\lambda}(V)\big)\right|^{\gamma} \leq L_{p,\gamma, n} \tau\big[|V_{-}|^{p+\gamma}\big]. 
\end{equation}
Moreover, the best constant $L_{p,\gamma,n}$ is such that
\begin{equation*}
 L_{p,\gamma,n} \leq \gamma \frac{\Gamma(p+1)\Gamma(\gamma)}{\Gamma(p+\gamma+1)}c_{+}(2p)^{2p}\nu_0(n). 
\end{equation*}
\end{theorem}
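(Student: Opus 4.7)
The plan is to combine the critical CLR inequality~(\ref{eq:CLR.sup-critical2}) with the standard ``layer-cake'' integration over the spectral parameter. Throughout, let $A:=\dot{\Delta}^{n/2p}+\dot{\lambda}(V)$ and note that, since $V=V^*\in L_{p+\gamma}(\T^n_\theta)$ and $\tau(1)=1<\infty$, H\"older's inequality gives $V\in L_p(\T^n_\theta)$, so the CLR inequality~(\ref{eq:CLR.sup-critical2}) may be applied to $V$ and to every translate $V+t$, $t>0$. Observe also that $\dot{\lambda}(V+t)=\dot{\lambda}(V)+t\cdot I$ on $\dot{L}_2(\T^n_\theta)$, so that $A+tI=\dot{\Delta}^{n/2p}+\dot{\lambda}(V+t)$.

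The first step is the layer-cake identity for $\gamma$-moments of negative eigenvalues: for any selfadjoint operator $A$ that is bounded below with purely discrete negative spectrum, one has
\begin{equation*}
 \sum_{j\geq 0}|\nlambda_j(A)|^\gamma \;=\; \gamma\int_0^\infty t^{\gamma-1}\,\#\big\{j:\ \nlambda_j(A)<-t\big\}\,dt \;=\; \gamma\int_0^\infty t^{\gamma-1}N^{-}(A+tI)\,dt.
\end{equation*}
This is immediate from $|\nlambda|^\gamma=\gamma\int_0^\infty t^{\gamma-1}\car_{\{t<|\nlambda|\}}\,dt$ and Tonelli.

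The second step is to control $N^{-}(A+tI)$ pointwise in $t$ via~(\ref{eq:CLR.sup-critical2}). A direct inspection of spectral calculus shows that $(V+t)_{-}=(V_{-}-t)_{+}$, the positive part of $V_{-}-t$. Hence
\begin{equation*}
 N^{-}\big(\dot{\Delta}^{\frac{n}{2p}}+\dot{\lambda}(V+t)\big) \;\leq\; c_{+}(2p)^{2p}\nu_0(n)\,\tau\!\big[(V_{-}-t)_{+}^{p}\big].
\end{equation*}

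The third step is to evaluate the resulting iterated integral. By Tonelli applied to the positive trace $\tau$,
\begin{equation*}
 \int_0^\infty t^{\gamma-1}\tau\!\big[(V_{-}-t)_{+}^{p}\big]\,dt \;=\; \tau\!\left[\int_0^\infty t^{\gamma-1}(V_{-}-t)_{+}^{p}\,dt\right].
\end{equation*}
On the spectrum of $V_{-}\geq 0$ the inner integral is computed via the Beta function: for each $v\geq 0$,
\begin{equation*}
 \int_0^\infty t^{\gamma-1}(v-t)_{+}^{p}\,dt \;=\; \int_0^v t^{\gamma-1}(v-t)^{p}\,dt \;=\; v^{p+\gamma}\,B(\gamma,p+1) \;=\; v^{p+\gamma}\,\frac{\Gamma(\gamma)\Gamma(p+1)}{\Gamma(p+\gamma+1)}.
\end{equation*}
Applying this to the spectral decomposition of $V_{-}$ yields
\begin{equation*}
 \int_0^\infty t^{\gamma-1}\tau\!\big[(V_{-}-t)_{+}^{p}\big]\,dt \;=\; \frac{\Gamma(p+1)\Gamma(\gamma)}{\Gamma(p+\gamma+1)}\,\tau\!\big[|V_{-}|^{p+\gamma}\big].
\end{equation*}
Combining the three steps gives~(\ref{eq:LT-Inequality}) with the stated upper bound on $L_{p,\gamma,n}$.

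The main (mild) obstacle is keeping track of the functional-analytic hypotheses along the way: one must justify that $V+t\in L_p(\T^n_\theta)$ so that $\dot{\lambda}(V+t)$ is a legitimate form perturbation of $\dot{\Delta}^{n/2p}$ for every $t>0$ (this uses the finiteness of $\tau$), and one must invoke Tonelli for the normal semifinite trace $\tau$ on the non-negative measurable function $(t,\cdot)\mapsto t^{\gamma-1}(V_{-}-t)_{+}^{p}$. Both are routine; the rest is a Beta-function computation.
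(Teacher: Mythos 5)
Your proof is correct and follows essentially the same route as the paper: the layer-cake identity for $\gamma$-moments, the pointwise application of the CLR inequality~(\ref{eq:CLR.sup-critical2}) to $V+t$ (using $(V+t)_{-}=(V_{-}-t)_{+}$, which the paper phrases equivalently via the spectral measure of $-V$), a Tonelli swap, and the Beta-function evaluation. The only cosmetic difference is that you push the $t$-integral through $\tau$ directly, whereas the paper first passes to the scalar spectral measure $\tau_*E_v$; these are the same computation.
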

\begin{proof}
Given any $V^*=V\in L_{p+\gamma}(\T^n_\theta)$, set $\dot{H}_V=\dot{\Delta}^{\frac{n}{2p}}+\dot{\lambda}(V)$. We have the usual formula (see, e.g., 
\cite{LS:CUP10, Si:AMS15a}), 
\begin{equation*}
 \sum_{j\geq 0} \left|\nlambda_j\big(\dot{H}_V\big)\right|^\gamma = \int_{-\infty}^0 (-t)^\gamma dN^{-}\big(\dot{H}_V;t\big)= \gamma \int_0^\infty t^{\gamma-1}N^{-}\big(\dot{H}_V;-t\big)dt.   
\end{equation*}
By the CLR inequality~\eqref{eq:CLR.sup-critical2} we have 
\begin{equation*}
 N^{-}\big(\dot{H}_V;-t\big) = N^{-}\big(\dot{H}_{V+t}\big) \leq c_+(2p)^{2p}\nu_0(n)\tau \big[(V+t)_{-}^p\big].  
\end{equation*}
Thus, 
\begin{equation}
 \sum_j \left|\nlambda_j\big(\dot{H}_V\big)\right|^\gamma \leq 
 \gamma c_+(2p)^{2p}\nu_0(n) \int_0^\infty t^{\gamma-1}\tau \big[|(V+t)_{-}|^p\big]dt.
 \label{eq:LT.Riesz-mean-tau} 
\end{equation}

Let $E_v=\car_{(-\infty,v]}(-V)$, $v\in \R$, be the spectral measure of $-V$. The spectral measure of $(V+t)_{-}=(-V-t)_+$, $t\geq 0$, is  
$\car_{[0,\infty)}(v)E_{v+t}$, and hence 
\begin{equation*}
 \tau \big[(V+t)_{-}^p\big]= \int_0^\infty v^p d(\tau_*E_{v+t}) = \int_{t}^\infty (v-t)^p d(\tau_*E_v).  
\end{equation*}
Therefore, we have 
\begin{align*}
  \int_0^\infty t^{\gamma-1}\tau \big[(V+t)_{-}^p\big] dt& = \int_0^\infty \bigg(\int_{t}^\infty t^{\gamma-1} (v-t)^p d(\tau_*E_v)\bigg) dt\\
  & = \int_0^\infty \bigg(\int_{0}^\lambda  t^{\gamma-1} (v-t)^p dt\bigg) d(\tau_*E_v). 
\end{align*}
Note that
\begin{equation*}
 \int_{0}^\lambda  t^{\gamma-1} (v-t)^p dt =v^{p+\gamma} \int_0^1t^\gamma(1-t)^{p}dt=v^{p+\gamma} B(\gamma,p+1), 
\end{equation*}
where $B(x,y)=\Gamma(x+y)^{-1}\Gamma(x)\Gamma(y)$ is the beta function. Thus, 
\begin{equation*}
  \int_0^\infty t^{\gamma-1}\tau \big[(V+t)_{-}^p\big] dt =  B(\gamma,p+1)\int_0^\infty v^{p+\gamma} d(\tau_*E_v)=B(\gamma,p+1)  
  \tau \big[|V_{-}|^{p+\gamma}\big]. 
\end{equation*}
Combining this with~(\ref{eq:LT.Riesz-mean-tau}) gives the result. 
\end{proof}

\begin{remark}
 For $\gamma=1$ and $p=n/2$ with $n\geq 3$ the LT inequality~(\ref{eq:LT-Inequality}) asserts that if $V=V^*\in L_{n/2}(\T^n_\theta)$, then 
 \begin{equation*}
 \sum_j \left|\nlambda_j\big(\dot{\Delta}+\dot{\lambda}(V)\big)\right| \leq L_{n} \tau\big[|V_{-}|^{\frac{n}2+1}\big], 
\end{equation*}
where the best constant $L_{n}=L_{\frac{n}{2},1,n}$ satisfies 
\begin{equation*}
 L_n\leq \frac{2}{n+2} \Gamma\big(\frac{n}2+1\big)c_+(n)^n  \nu_0(n)
\end{equation*}
\end{remark}

\begin{remark}
 For the ordinary torus $\T^n$, i.e., $\theta=0$, Lieb-Thirring inequalities were obtained by Ilyin~\cite{Il:JST12} for $\gamma=1$, $n=2$ and $p=1$ (see 
 also~\cite{IL:StPMJ20, ILZ:MN19, ILZ:JFA20}) and by Ilyin-Laptev~\cite{IL:SM16} for $\gamma\geq 1$, $2\leq n\leq 19$, and $p=n/2$. Even for the ordinary torus $\T^n$ the Lieb-Thirring inequalities for $0<\gamma<1$ or $p\neq n/2$ seem to be new. 
\end{remark}

\begin{remark}
 We can get LT inequalities in the critical case $p=1$ and get better bounds for the best LT constants $L_{n,\gamma,p}$. For $p\geq 1$, as $(U^k)_{k\in \Z^n}$ is an orthonormal basis of $L_2(\T^n_\theta)$ consisting of unitaries in $L_\infty(\T^n_\theta)$, we may proceed along the lines of the proof of the LT inequalities for $\T^2$ in~\cite{Il:JST12} to get LT inequalities~(\ref{eq:LT-Inequality}) with the bounds,
\begin{equation*}
 L_{p,\gamma,n} \leq \gamma 2^\gamma \frac{\Gamma\left(\frac\gamma{2}\right)\Gamma\left(p+\frac\gamma{2}+1\right)}{\Gamma(p+\gamma+1)}  Z\big(n,p,{\gamma}\big), 
\end{equation*}
 where we have set 
\begin{equation*}
 Z\big(n,p,{\gamma}\big):=  \sup_{\mu>0} \mu^{\frac\gamma2} \Tr\bigg[ \big(\dot{\Delta}^\frac{n}{2p}+\mu\big)^{p+\frac{\gamma}2}\bigg] =  \sup_{\mu>0} \mu^{\frac\gamma2} \sum_{k\in \Z^n\setminus 0} \big(|k|^{\frac{n}{p}}+\mu\big)^{p+\frac{\gamma}2}. 
\end{equation*}
For $n=2$ and $p=\gamma=1$ Ilyin~\cite{Il:JST12} found that $Z(2,1,1)<2\pi$. In general, by using Melin's transform arguments it can be shown that
\begin{equation*}
 Z\big(n,p,{\gamma}\big) \leq  \frac{\Gamma\left(\frac\gamma{2}\right) \Gamma(p+1)}{ \Gamma\left(p+\frac\gamma{2}\right)}\nu_0(n).  
\end{equation*}
This gives the upper bound, 
\begin{equation*}
 L_{p,\gamma,n} \leq \gamma 2^\gamma \left(p+\frac\gamma2\right)
  \frac{\Gamma\left(\frac\gamma{2}\right)^2  \Gamma(p+1)}{\Gamma(p+\gamma+1)} \nu_0(n). 
\end{equation*}
\end{remark}

\begin{remark}
 In~\cite{IL:SM16} Ilyin-Laptev used the dimension lifting approach of Laptev-Weidl~\cite{LW:AM00} to get a better bound for the LT constant 
 $L_{n}=L_{\frac{n}{2},1,n}$  on the ordinary torus $\T^n$ for 
 $2\leq n \leq 19$. It seems likely that similar approach can be used on NC tori, since the Laplacian $\Delta$ on $\T^n_\theta$ is isospectral to the Laplacian $\Delta$ on the ordinary torus $\T^n$. 
\end{remark}

As is well known the LT inequality for $\gamma=1$ and $p=n/2$ is equivalent to a Sobolev inequality (see~\cite[Theorem 4]{LT:SMP76}; see 
also~\cite{Fr:Survey20, Si:AMS15a}). Likewise, as a consequence of the above LT inequalities we shall obtain the following Sobolev's inequality.

\begin{theorem}[Sobolev Inequality on NC Tori]\label{thm:LT.Sobolev}  
Assume $n\geq 3$. There is a constant $K_n\geq 0$ such that, for every family $\{u_0, \ldots, u_N\}$ in $\dot{W}_2^{1}(\T^n_\theta)$ which is orthonormal in $L_2(\T^n_\theta)$, we have
\begin{equation}\label{eq:LT.Sobolev-ineq}
 \sum_{\ell=0}^N \tau\big[ |\nabla u_\ell|^2\big] \geq K_n \tau\left[ \bigg(\sum_{\ell=0}^N |u_\ell|^2\bigg)^{\frac{n+2}{n}}\right]. 
\end{equation}
%Here, $|\nabla u| := \left(\sum_{j=1}^n |\partial_j u|^2\right)^{\frac12}.$
 Moreover, the best constant $K_n$ is related to the best LT constant  $L_n=L_{\frac{n}{2},1,n}$ by
\begin{equation}\label{eq:LT.Ln-Kn}
 K_n= \frac{n}{n+2} \bigg( \frac{n+2}{2}L_n\bigg)^{-\frac{n}{2}}. 
\end{equation}
%Here $L_n=L_{\frac{n}{2},1,n}$ is the best constant in the LT inequality~(\ref{eq:LT-Inequality}) for $p=n/2$ and $\gamma=1$. 
 \end{theorem}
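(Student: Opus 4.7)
The plan is to run the classical Lieb--Thirring--to--Sobolev duality argument~\cite{LT:PRL75,LT:SMP76}, carefully handling the noncommutative twist that the natural quadratic form $\langle \dot{\lambda}(V)u,u\rangle$ involves the \emph{left} density $\tilde\rho:=\sum_\ell u_\ell u_\ell^*$ rather than $\rho=\sum_\ell |u_\ell|^2=\sum_\ell u_\ell^*u_\ell$. I will first establish~\eqref{eq:LT.Sobolev-ineq} with $\tilde\rho$ in place of $\rho$, then deduce the stated form by applying the result to the family $\{u_\ell^*\}_{\ell=0}^N$. This works because the canonical derivations $\partial_j$ of $\T^n_\theta$ are $*$-derivations (i.e.\ $\partial_j(u^*)=(\partial_ju)^*$, as one checks directly on the basis $U^k$); consequently $\{u_\ell^*\}$ is again orthonormal in $\dot{L}_2(\T^n_\theta)$ by traciality of $\tau$, it lies in $\dot{W}_2^1$ with $\tau[|\nabla u_\ell^*|^2]=\tau[|\nabla u_\ell|^2]$, and it swaps $\rho$ with $\tilde\rho$.

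Fix $c>0$ and set $V:=-c\tilde\rho^{2/n}$. Since Proposition~\ref{prop:Sobolev-embeddingLp} gives a continuous embedding $\dot{W}_2^1(\T^n_\theta)\hookrightarrow L_{2n/(n-2)}(\T^n_\theta)$ for $n\geq 3$, we have $\tilde\rho\in L_{n/(n-2)}(\T^n_\theta)\subset L_{(n+2)/n}(\T^n_\theta)$, whence $V=V^*\leq 0$ with $V\in L_{(n+2)/2}(\T^n_\theta)$. Direct expansion using the tracial property of $\tau$ gives
\begin{equation*}
\sum_{\ell=0}^N\bigscal{\bigl(\dot{\Delta}+\dot{\lambda}(V)\bigr)u_\ell}{u_\ell}=\sum_{\ell=0}^N\tau\bigl[|\nabla u_\ell|^2\bigr]-c\,\tau\bigl[\tilde\rho^{(n+2)/n}\bigr].
\end{equation*}
By the min--max principle (a consequence of Glazman's Lemma~\ref{lem:BSP.Glazman}), the left-hand side is bounded below by the sum of the $N+1$ smallest eigenvalues of $\dot{\Delta}+\dot{\lambda}(V)$, and a fortiori by $-\sum_j|\nlambda_j(\dot{\Delta}+\dot{\lambda}(V))|$. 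Applying the Lieb--Thirring inequality~\eqref{eq:LT-Inequality} with $p=n/2$ and $\gamma=1$ bounds the latter sum above by $L_n\,\tau[|V_-|^{(n+2)/2}]=L_n c^{(n+2)/2}\,\tau[\tilde\rho^{(n+2)/n}]$. Combining yields, for every $c>0$,
\begin{equation*}
\sum_{\ell=0}^N\tau\bigl[|\nabla u_\ell|^2\bigr]\geq\bigl(c-L_n c^{(n+2)/2}\bigr)\,\tau\bigl[\tilde\rho^{(n+2)/n}\bigr].
\end{equation*}

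To conclude, optimize the prefactor over $c>0$: the function $c\mapsto c-L_n c^{(n+2)/2}$ attains its maximum at $c_*=\bigl(\tfrac{2}{(n+2)L_n}\bigr)^{2/n}$ with value $\tfrac{n}{n+2}c_*$, producing the constant $K_n$ stated in the theorem. The symmetry $u_\ell\mapsto u_\ell^*$ then converts $\tilde\rho$ into $\rho$, yielding~\eqref{eq:LT.Sobolev-ineq}.

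The only genuinely noncommutative obstacle is the left/right density asymmetry, which the involution $u\mapsto u^*$ resolves once one has verified that the derivations are $*$-derivations. The rest is a standard one-variable optimization combined with the min--max spectral comparison, transported from the Euclidean setting.
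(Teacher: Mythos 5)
Your forward direction (Lieb--Thirring $\Rightarrow$ Sobolev) is correct and essentially the same as the paper's: one tests against $V$ proportional to a power of the density, invokes the variational estimate $\sum_\ell Q_{\dot H_V}(u_\ell,u_\ell)\geq -\sum_j|\nlambda_j(\dot H_V)|$, applies Theorem~\ref{thm:LT-inequality} with $p=n/2$, $\gamma=1$, and optimizes a scalar. Your one-parameter optimization $c\mapsto c-L_n c^{(n+2)/2}$ is just a reparametrization of the paper's pointwise optimization of $at-bt^{\alpha+1}$, and it gives the same constant $\frac{n}{n+2}\bigl(\frac{n+2}{2}L_n\bigr)^{-2/n}$. (Note, incidentally, that your computed exponent $-2/n$ is the one that actually appears in the paper's proof; the exponent $-n/2$ in the displayed formula~\eqref{eq:LT.Ln-Kn} is a misprint.) Your point about $\tilde\rho=\sum u_\ell u_\ell^*$ versus $\rho=\sum u_\ell^*u_\ell$ is a real noncommutative subtlety: since $\acou{\lambda(V)u}{u}=\tau[u^*Vu]=\tau[Vuu^*]$, the density that naturally pairs with $V$ is $\tilde\rho$, not $\rho$. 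Your resolution via the involution $u\mapsto u^*$ is sound, using that the $\partial_j$ are $*$-derivations, that $\tau$ is tracial (so orthonormality and kinetic energy are preserved under conjugation), and that conjugation swaps $\rho$ and $\tilde\rho$.

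The genuine gap is that you prove only one of the two inequalities required for~\eqref{eq:LT.Ln-Kn}. What you establish is that the Sobolev inequality holds with constant $\frac{n}{n+2}\bigl(\frac{n+2}{2}L_n\bigr)^{-2/n}$, i.e.\ that the best Sobolev constant $K_n$ is \emph{at least} this value. The theorem asserts \emph{equality}. To pin down $K_n$ one must also run the converse implication: starting from the Sobolev inequality with best constant $K_n$, reprove the LT inequality for $\dot\Delta+\dot\lambda(V)$ with a constant expressed in terms of $K_n$, and then invoke the extremality of $L_n$. Concretely, for $V=V^*\in L_{1+n/2}$ take the eigenfunctions $u_0,\dots,u_N$ of $\dot H_V$ below zero energy, use~\eqref{eq:LT.Sobolev-ineq} together with H\"older and Young's inequality $ab\leq q^{-1}a^q+r^{-1}b^r$ with $q=1+n/2$, $r=1+2/n$ to bound $\sum_j|\nlambda_j(\dot H_V)|$ by $\frac{2}{n+2}\bigl(\frac{n}{n+2}K_n\bigr)^{-n/2}\tau[|V_-|^{1+n/2}]$. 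This gives $L_n\leq\frac{2}{n+2}\bigl(\frac{n}{n+2}K_n\bigr)^{-n/2}$, which combined with your lower bound closes the loop and yields the exact relation. Without this step you have only an inequality, not the stated identity between the two best constants.
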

\begin{proof}
The proof essentially follows the outline of the proof in~\cite[\S4.2]{LS:CUP10} for Schr\"odinger operators on $\R^n$ (see also~\cite[\S3.1]{Fr:Survey20}). 
Let $0\leq V\in L_{n/2}(\T^n_\theta)$, and set $A=\dot{\Delta}-\dot{\lambda}(V)$. Let $\{u_0, \ldots, u_N\}\subseteq \dot{W}_2^{1}(\T^n_\theta)$ be an orthonormal family in $L_2(\T^n_\theta)$. Note that the $u_\ell$ are in the domain of the quadratic form $Q_A$, since $Q_A$ is the restriction to $\dot{W}_2^{1}(\T^n_\theta)$ of $Q_\Delta+Q_{-V}$. Let $\Pi= \sum_\ell  \langle  u_\ell |u_\ell  \rangle$ be the orthogonal projection onto $\op{Span}\{u_0, \ldots, u_N\}$. We have
\begin{equation*}
 \sum_{\ell =0}^N  Q_A(u_\ell,u_\ell)\geq  \sum_{\ell =0}^N -Q_{A_-}(u_\ell,u_\ell)= \Tr\big[\Pi A_{-}\Pi\big]=-\sum_{j\geq 0}\mu_j(\Pi A_{-}\Pi). 
\end{equation*}
 As $\mu_j(\Pi A_{-}\Pi)\leq \mu_j(A_{-})=|\nlambda_j(A)|$, we get 
\begin{equation}\label{eq:LT.Sobolev-QA-lambda}
  \sum_{\ell =0}^N  Q_A(u_\ell,u_\ell)\geq - \sum _{j\geq 0} |\nlambda_j(A)|. 
\end{equation}
This inequality is an instance of the variational principle for sums of eigenvalues (see, e.g., \cite{Fan:1949}).

We also have 
\begin{align*}
 Q_A(u_\ell,u_\ell) & = \acou{\Delta u_\ell}{u_\ell} - \acou{\lambda(V)u_\ell}{u_\ell}\\ 
 & = -\sum_{j=1}^n \acou{\partial_j^2u_\ell}{u_\ell}   -\tau\big[ V|u_\ell|^2\big]\\
 &= \tau\big[|\nabla u_\ell|^2\big] -\tau\big[ V|u_\ell|^2\big].  
\end{align*}
Here $|\nabla u_\ell|^2=\sum |\partial_j u_\ell|^2\in L_1(\T^n_\theta)$. Moreover, as $W_2^1(\T^n_\theta)\subseteq L_{\frac{2n}{n-2}}(\T^n_\theta)$ by Proposition~\ref{prop:Sobolev-embeddingLp}, we see that $|u_\ell|^2\in L_{\frac{n}{n-2}}(\T^n_\theta)$, and hence $V|u_\ell|^2\in L_1(\T^n_\theta)$. Thus, if we set $\rho=\sum |u_\ell|^2\in  L_{\frac{n}{n-2}}(\T^n_\theta)$, then we obtain
\begin{equation}\label{eq:LT.Sobolev-QA-rho}
  \sum_{\ell =0}^N  Q_A(u_\ell,u_\ell) =  \sum_{\ell =0}^N \tau\big[|\nabla u_\ell|^2\big] - \tau\big[ \rho V\big].
\end{equation}

If we further assume that $V\in L_{\frac{n}{2}+1}(\T^n_\theta)$, then the LT inequality~(\ref{eq:LT-Inequality}) for $p=n/2$ and $\gamma=1$ delivers
\begin{equation*}
  \sum_{j\geq 0} |\nlambda_j(A)| \leq L_n \tau\big[V^{\frac{n}{2}+1}\big]. 
\end{equation*}
Combining this with~(\ref{eq:LT.Sobolev-QA-lambda}) and~(\ref{eq:LT.Sobolev-QA-rho}) shows that, for all $0\leq V\in L_{1+n/2}(\T^n_\theta)$, we have
\begin{equation}\label{eq:LT.Sobolev-nabla-V-rho}
 \sum_{\ell =0}^N \tau\big[|\nabla u_\ell|^2\big] \geq \tau\big[\rho V - L_n V^{\frac{n}{2}+1}\big].  
\end{equation}

The original argument of Lieb-Thirring relies on the following observation. Given $a,b,\alpha>0$, the function $\varphi(t)=at-bt^{\alpha +1}$, $t>0$, reaches its maximum at $t_0=[(\alpha+1)b]^{-1/\alpha}a^{1/\alpha}$ and the maximum value then  is equal to 
\begin{equation*}
\varphi(t_0)= t_0(a-bt_0^\alpha)= at_0\big(1 -\frac1{\alpha+1}\big)= \frac{\alpha}{\alpha+1}\big[(\alpha+1)b\big]^{-\frac1{\alpha}}a^{1+\frac1\alpha}>0 . 
\end{equation*}
Likewise, for $\alpha =n/2$, $a=\rho$ and $b=L_n$, by taking $V=(\frac12(n+2)L_n)^{-2/n}\rho^{2/n}\in L_{\frac{n^2}{2(n-2)}}(\T^n_\theta)\subseteq  L_{\frac{n}{2}+1}(\T^n_\theta)$, we have $\rho V - L_n V^{\frac{n}{2}+1}=\frac{n}{n+2}(\frac12 (n+2)L_n)^{-2/n}\rho^{(n+2)/n}$. In this case~(\ref{eq:LT.Sobolev-nabla-V-rho}) gives
\begin{equation*}
  \sum_{\ell =0}^N \tau\big[|\nabla u_\ell|^2\big] \geq \frac{n}{n+2}\bigg(\frac{n+2}2L_n\bigg)^{-\frac{2}{n}}\tau\big[\rho^{\frac{n+2}{n}}\big]. 
\end{equation*}
This shows that the Sobolev inequality~(\ref{eq:LT.Sobolev-ineq}) holds with a best constant $K_n$ such that
\begin{equation}\label{eq:LT.ineq-Kn-Ln}
 K_n\geq \frac{n}{n+2}\bigg(\frac{n+2}2L_n\bigg)^{-\frac{2}{n}}. 
\end{equation}

Conversely, let $V=V^*\in L_{\frac{n}{2}+1}(\T^n_\theta)$, and set $\dot{H}_{V}=\dot{\Delta}+\dot{\lambda}(V)$. Set $N=N^{-}(H_V)-1$, and let  $\{u_0, \ldots, u_N\}\subseteq \dot{W}_2^{1}(\T^n_\theta)$ be an orthonormal family such that $\dot{H}_Vu_\ell = \nlambda_{\ell}(H_V)u_\ell$ for $\ell=0, \ldots, N$. We have 
\begin{equation*}
 \sum_{\ell=0}^N \left|\nlambda\big(\dot{H}_V\big)\right|= -\sum_{\ell=0}^N Q_{\dot{H}_V}(u_\ell,u_\ell) \leq -\sum_{\ell=0}^N Q_{\dot{H}_{-V_{-}}}(u_\ell,u_\ell). 
\end{equation*}
As above, set $\rho = \sum |u_\ell|^2\in L_{\frac{n}{n-2}}(\T^n_\theta)$.  By using~(\ref{eq:LT.Sobolev-QA-rho}) and the Sobolev inequality~(\ref{eq:LT.Sobolev-ineq}) we get 
\begin{equation}\label{eq:LT.sum-V-rho} 
 \sum_{\ell=0}^N \left|\nlambda\big(\dot{H}_V\big)\right| \leq -\sum_{\ell =0}^N \tau\big[|\nabla u_\ell|^2\big] + \tau\big[ \rho V_{-}\big] \leq 
 -K_n \tau\big[\rho^{\frac{n+2}{n}}\big] +  \tau\big[ \rho V\big]. 
\end{equation}

Set $q=1+n/2$ and $r=1+2/n$, so that $q^{-1}+r^{-1}=1$. Recall that $V\in L_{q}(\T^n_\theta)$ and $\rho\in L_{\frac{n}{n-2}}(\T^n_\theta)\subseteq L_r(\T^n_\theta)$. Thus, by H\"older's inequality, 
\begin{equation*}
 \tau\big[ \rho V_{-}\big] \leq \tau\big[|V_{-}|^q\big]^{\frac{1}{q}}  \tau\big[\rho^r\big]^{\frac{1}{r}}. 
\end{equation*}
Recall the inequality $ab\leq q^{-1}a^q + r^{-1}b^r$ for $a,b>0$ (this is a convexity inequality for the function $t\rightarrow a^{1-t}b^t$). Applying it to $a=\epsilon \tau\big[|V_{-}|^q\big]^{\frac{1}{q}} $ and $b=\epsilon^{-1}\tau\big[\rho^r\big]^{\frac{1}{r}}$ with $\epsilon>0$ gives
\begin{equation*}
 \tau\big[ \rho V_{-}\big] \leq  \frac{1}{p}\epsilon^q\tau\big[|V_{-}|^q\big] + \frac{1}{r} \epsilon^{-r} \tau\big[\rho^r\big]. 
 \end{equation*}
Combining this with~(\ref{eq:LT.sum-V-rho}) gives 
\begin{equation*}
  \sum_{\ell=0}^N \left|\nlambda\big(\dot{H}_V\big)\right|\leq 
 \bigg(\frac{1}{r} \epsilon^{-r}-K_n\bigg) \tau\big[\rho^{r}\big] +  \frac{1}{q}\epsilon^q\tau\big[|V_{-}|^q\big]. 
\end{equation*}
If we choose $\epsilon$ so that $r^{-1} \epsilon^{-r}=K_n$, i.e., $\epsilon = (rK_n)^{-1/r}$, then we get 
\begin{equation*}
  \sum_{\ell=0}^N \left|\nlambda\big(\dot{H}_V\big)\right|\leq  \frac{1}{q}\epsilon^q\tau\big[|V_{-}|^q\big] \qquad \text{for all}\ V=V^*\in L_{\frac{n}{2}+1}(\T^n_\theta). 
\end{equation*}
This reproves the LT inequality~(\ref{eq:LT-Inequality}) for $p=n/2$ and $\gamma=1$ and shows that the best LT constant $L_n$ satisfies
\begin{equation*}
 L_n \leq  \frac{1}{q}\epsilon^q = \frac{1}{q} (rK_n)^{-\frac{q}{r}}=\frac{2}{n+2}\bigg( \frac{n}{n+2}K_n\bigg)^{-\frac{n}{2}}. 
\end{equation*}
Combining this with~(\ref{eq:LT.ineq-Kn-Ln}) gives~(\ref{eq:LT.Ln-Kn}). The proof is complete. 
\end{proof}

%\begin{remark}
% For $N=1$ and $n\geq 2$, the Sobolev inequality~\eqref{eq:LT.Sobolev-ineq} may be obtained with an unspecified constant by combining the Poincar\'e-type inequality 
%of~\cite[Theorem~2.12]{XXY:MAMS18} and with the Sobolev embedding theorem~\cite[Theorem 6.6]{XXY:MAMS18} (which is recalled  in
%Proposition~\ref{prop:Sobolev-embeddingLp} here).
%\end{remark}

\begin{remark}
 For $n=2$ and $\theta=0$ Ilyin-Laptev-Zelik~\cite{ILZ:JFA20} got the lower bound $K_2\geq 32/(3\pi)$. It would be interesting to see if we could get a similar bound for NC 2-tori. 
\end{remark}

%\begin{remark}
%Unlike in the commutative case (see, e.g.,~\cite{Fr:Survey20, LT:PRL75, LT:SMP76}) we don't have an equality in~(\ref{eq:LT.Ln-Kn}), since the standard argument of~\cite{LT:PRL75, LT:SMP76} cannot be extended to NC tori. This prevents us from using the approaches of~\cite{IL:StPMJ20, ILZ:MN19, ILZ:JFA20} to get a better bound for the LT constant $L_n$. 
%\end{remark}

%\clearpage 
\appendix 

\section{Proof of the Birman-Schwinger Principle}\label{app:BSP}
In this appendix, for reader's convenience, we include proofs of  Proposition~\ref{prop:BSP.abstractBSP} and Corollary~\ref{cor:CLR.BKP-qinf}.

\begin{proof}[Proof of Proposition~\ref{prop:BSP.abstractBSP}]
Given $\lambda<0$, let $\xi\in \sH_+\setminus 0$ and set $\eta=(H-\lambda)^{1/2}\xi\in \sH$, i.e., $\xi=(H-\lambda)^{-1/2}\eta$. By using~(\ref{eq:BSP.H-extension}) we get
\begin{align*}
 Q_{H_V}(\xi,\xi)<\lambda\scal{\xi}{\xi} & \Longleftrightarrow  Q_{H}(\xi,\xi)-\lambda\scal{\xi}{\xi} <- Q_V(\xi,\xi),\\ 
 & \Longleftrightarrow  \acou{(H-\lambda)\xi}{\xi}<- \acou{V\xi}{\xi}. 
\end{align*}
In the same way as in~(\ref{eq:BSP.sqrt}) we have
\begin{equation*}
 \acou{(H-\lambda)\xi}{\xi}= \bigacou{(H-\lambda)^{\frac12}\eta}{(H-\lambda)^{-\frac12}\eta}=\scal{\eta}{\eta}. 
\end{equation*}
Moreover, by using~(\ref{eq:BSP.KV-QV}) we get 
\begin{equation*}
-\acou{V\xi}{\xi}= -\bigacou{V(H-\lambda)^{-\frac12}\eta}{(H-\lambda)^{-\frac12}\eta}=\scal{K_V(\lambda)\eta}{\eta}=Q_{K_V(\lambda)}(\eta,\eta). 
\end{equation*}
Thus,
\begin{equation}
 Q_{H_V}(\xi,\xi)<\lambda\scal{\xi}{\xi}  \Longleftrightarrow  Q_{K_V(\lambda)}(\eta,\eta)> \scal{\eta}{\eta}. 
 \label{eq:BSP.QHV-QKV}
\end{equation}

As $(H-\lambda)^{1/2}$ is a linear isomorphism from $\sH_+$ onto $\sH$, it follows from~(\ref{eq:BSP.QHV-QKV}) that it  induces a one-to-one correspondence between the subspaces in $\sF^-(H_V;\lambda)$ and those in $\sF^+(K_V(\lambda);1)$. This correspondence preserves the dimension. Thus, by using~(\ref{eq:BSP.Glazman})--(\ref{eq:BSP.Glazman0}) we get
\begin{align*}
 N^-(H_{V};\lambda)& = \max\{ \dim F; \ F\in \sF^-(H_V;\lambda)\},\\
 & = \max\{ \dim F^+; \ F^+\in \sF^+(K_V(\lambda);1)\}= N^+\left(K_V(\lambda); 1\right).
\end{align*}
This completes the proof of Proposition~\ref{prop:BSP.abstractBSP}. 
\end{proof}
 
\begin{remark}
 The above proof of the abstract Birman-Schwinger principle is due to Birman-Solomyak~\cite{BS:1989}. It is much simpler than various other known proofs of the Birman-Schwinger principle, even in the case of Schr\"odinger operators on $\R^n$.  In fact, we have merely recasted the proof of~\cite{BS:1989} into the framework of~\cite[\S7.9]{Si:AMS15}. 
\end{remark}

\begin{proof}[Proof of Corollary~\ref{cor:CLR.BKP-qinf}] 
Let $\lambda<0$ and set $A=(H+1)^{1/2}(H-\lambda)^{-1/2}$.  The fact that the operator  $(H+1)^{-1/2}V(H+1)^{-1/2}$ is in $\sL_{q,\infty}$ ensures us that 
$K_V(\lambda) \in \sL_{q,\infty}$, since 
  $A$ is a bounded operator on $\sH$, and we have 
\begin{equation}
 K_V(\lambda)=-A^*(H+1)^{-1/2}V(H+1)^{-1/2}A. 
 \label{eq:BSP.KV-Hl-H1} 
\end{equation}
 
Bearing this in  mind, set $N=N^-(H_V;\lambda)$. We may assume $N\geq 1$, since otherwise the inequality~(\ref{eq:CLR.BKP-qinf-lambda}) is trivially satisfied. The Birman-Schwinger principle~(\ref{eq:CLR.BKP}) asserts that $N^+(K_V(\lambda);1)=N$. As $V\leq 0$, and hence $K_V\geq 0$, we note that $N^+(K_V(\lambda);1)$ is the number of singular values of $K_V(\lambda)$ that are~$>1$. Thus, $K_V(\lambda)$ has exactly $N$ singular values~$> 1$. In particular, we have $\mu_{N-1} (K_V(\lambda))\geq 1$. Therefore, in view of the definition~(\ref{def:lp_infty_quasinorm}) of the quasi-norm of $\sL_{p,\infty}$, we get 
\begin{equation}\label{eq:CLR.BKP-qinf2}
 N \leq N\mu_{N-1}\big(K_V(\lambda)\big)^p \leq \biggl( \sup_{j\geq 0}  (j+1)^{\frac1p}\mu_{j}\big(K_V(\lambda)\big)\biggr)^p=\big\|K_V(\lambda)\big\|_{\sL_{p,\infty}}^p. 
\end{equation}
 This proves Corollary~\ref{cor:CLR.BKP-qinf}. 
\end{proof}

 \subsection*{Acknowledgements} 
The warmest thanks of the authors go to Fedor Sukochev for his constant support and encouragements throughout the preparation of this paper, and to Rupert Frank and Grigori Rozenblum for their careful reading of earlier versions of the manuscript and for sharing their insights on the CLR inequality. The authors also thank Galina Levitina and Dmitriy Zanin for various discussions related to the subject matter of the article. The 2nd named author further thanks University of New South Wales and University of Ottawa for their hospitality during the preparation of this manuscript. 

\subsection*{Data availability statement} Data sharing is not applicable to this article as no new data were created or analyzed in this study.


\begin{thebibliography}{99}

\bibitem{AFT:2015} Anoussis, M.; Felouzis, V.; Todorov, I. G.:
\emph{Tensor products of measurable operators.}
Illinois J.\ Math.\ \textbf{59} (2015), no.\ 3, 577--595. 

\bibitem{BES:JMP94} Bellissard, J.; van Elst, A.; Schulz-Baldes, H.: 
\emph{The noncommutative geometry of the quantum Hall effect}. 
J.\ Math.\ Phys.\ \textbf{35} (1994), 5373--5451.

\bibitem{BS:PAM88} Bennett, C; Sharpley, R.: \emph{Interpolation of operators}. Pure and Applied Mathematics 129, Academic Press, Inc., Boston, MA, 1988.

\bibitem{Bi:AMST66} Birman, M.Sh.:  \emph{The spectrum of singular boundary problems}. Amer.\ Math.\ Soc.\ Transl.\ (2) \textbf{53} (1966), 23--80.

\bibitem{BKS:1991} Birman, M.Sh., Karadzhov, G.E., Solomyak, M.Z.:
\emph{Boundedness conditions and spectrum estimates for the operators $b(X)a(D)$ and their analogs.}
Adv.\ Soviet Math.\, \textbf{7}, Amer.\ Math.\ Soc., Providence, RI, 1991. 

\bibitem{BS:JFAA70} Birman, M.Sh., Solomyak, M.Z.: \emph{The principal term of the spectral asymptotics for ``non-smooth'' elliptic problems}.  Funct.\ Anal.\ Appl.\ \textbf{4} (1970), 265--275.

\bibitem{BS:1977} Birman, M.Sh., Solomyak, M.Z.:
\emph{Estimates for the singular numbers of integral operators.}
Uspehi Mat.\ Nauk \textbf{32} (1977), no.\ 1(193), 17--84, 271. 

\bibitem{BS:VLU77} Birman, M.S.; Solomyak, M.Z.: \emph{Asymptotic behavior of the spectrum of pseudodifferential operators with anisotropically homogeneous symbols}.  Vestnik Leningrad.\ Univ.\ \textbf{13}, no.\ 3 (1977), 13--21 (Russian). Vestn.\ Leningr.\ Univ., Math.\ \textbf{10} (1982), 237--247 (English).

\bibitem{BS:VLU79} Birman, M.S.; Solomyak, M.Z.: \emph{Asymptotic behavior of the spectrum of pseudodifferential operators with anisotropically homogeneous symbols. II}.  Vestnik Leningrad.\ Univ.\ Mat.\ Mekh.\ Astronom.\ 13, no.\ 3 (1979), 5--10 (Russian). 
%MR 81b:47060 English translation in: Vestn. Leningr. Univ., Math. 12 (1980), 155-161.

\bibitem{BS:SMJ79} Birman, M.S., Solomyak, M.Z.:  \emph{Asymptotics of the spectrum of variational problems on solutions of elliptic equations}. 
Sib.\ Math J.\ \textbf{20} (1979), 1--15. 

\bibitem{BS:AMST80} Birman, M.Sh., Solomyak, M.Z.:
 \emph{Quantitative analysis in Sobolev imbedding theorems and applications to spectral theory}.   Izdanie Inst.\ Mat.\ Akad.\ Nauk Ukrain.\ SSR, Kiev, 1974 (Russian). American Mathematical Society Translations, Series 2, 114. American Mathematical Society, Providence, R.I., 1980.
 
\bibitem{BS:Book} Birman, M.Sh., Solomyak, M.Z.: \emph{Spectral theory of self-adjoint operators in Hilbert space}, D.\ Reidel, Boston, MA, 1987.

\bibitem{BS:1989} Birman M.Sh., Solomyak M.Z.: 
\emph{Schr\"odinger operator. Estimates for number of bound states as function-theoretical problem.} 
Spectral theory of operators (Novgorod, 1989), 1--54. Amer.\ Math.\ Soc.\ Transl.\ Ser.\ 2, \textbf{150}, Amer.\ Math.\ Soc., Providence, RI, 1992.

\bibitem{BCR:RMP16} Bourne, C.; Carey, A.L.; Rennie, A.: \emph{A non-commutative framework to topological insulators}. 
Rev.\ Math.\ Phys.\ \textbf{28} (2016), no.\ 2, 1650004, 51 pp.. 

\bibitem{Ca:SM64} Calder\'on, A.P.: \emph{Intermediate spaces and interpolation, the complex method}. 
Stud.\ Math.\ \textbf{24} (1964), 113--190. 

 \bibitem{Ci:BPASM83} Ciach, L.J.: \emph{Linear-topological spaces of operators affiliated with a von Neumann algebra}. Bull.\ Polish Acad.\ Sci.\ Math.\ \textbf{31} (1983), no.\ 3-4, 161166.
 
\bibitem{Co:RMJM85} Conlon, J.G.:  \emph{A new proof of the Cwikel-Lieb-Rosenbljum bound}. Rocky Mountain J.\ Math.\ \textbf{15} (1985), 117--122.

\bibitem{Co:AIM81} Connes, A.: \emph{An analogue of the Thom isomorphism for crossed products of a $C^*$-algebra by an action of $\R$}. Adv.\ Math.\ \textbf{39} (1981), 31--55.

\bibitem{Co:NCG} Connes, A.: \emph{Noncommutative geometry}. Academic Press, Inc.,\ San Diego, CA, 1994.

 \bibitem{CDS:JHEP98} Connes, A.; Douglas, M.R.; Schwarz, A.: \emph{Noncommutative geometry and matrix theory: compactification on tori}. J.\ High Energy Phys.\ (1998), no.\ 2, 
 Paper 3, 35 pp.. 
 
\bibitem{Lambert} Corless, R. M.; Gonnet, G. H.; Hare, D. E. G.; Jeffrey, D. J.; Knuth, D. E.: \emph{On the Lambert $W$ function.} Adv. Comput.\ Math.\ \textbf{5} (1996), no. 4, 329--359. 

\bibitem{Cw:AM77} Cwikel, M.: \emph{Weak type estimates for singular values and the number of bound states of Schr\"odinger operators}. Ann.\ of Math.\ (2), 106(1):93--100, 1977.

\bibitem{Da:CMP83} Daubechies, I.: \emph{An uncertainty principle for fermions with generalized kinetic energy}. Comm.\ Math.\ Phys.\ \textbf{90} (1983), 511--520. 

\bibitem{DDP:IEOT92}  Dodds, P., Dodds, T.K., de Pagter, B.: \emph{Fully symmetric operator spaces}. Integr.\ Equ.\ Oper.\ Theory \textbf{15} (1992), no.\ 6, 942--972. 

\bibitem{Fa:JOT82} Fack, T.: \emph{Sur la notion de valeur caract\'eristique}. J.\ Oper.\ Theory \textbf{7} (1982), 307--333. 

\bibitem{FK:PJM86} Fack, T.;  Kosaki, H.: \emph{Generalized $s$-numbers of $\tau$-measurable operators}. Pacific J.\ Math.\ \textbf{123} (1986) 269--300.

\bibitem{Fan:1949} Fan, K.: \emph{On a theorem of Weyl concerning eigenvalues of linear transformations. I.} Proc.\ Nat.\ Acad.\ Sci.\ U.S.A.\ 35 (1949),\ 652--655. 

\bibitem{Fe:BAMS83} Fefferman, C.L.: \emph{The uncertainty principle}. Bull.\ Amer.\ Math.\ Soc.\ (N.S.) \textbf{9} (1983), 129--206.

\bibitem{Fr:JST14}  Frank, R.: \emph{Cwikel's theorem and the CLR inequality}. J.\ Spectr.\ Theory \textbf{4} (2014) 1--21.

\bibitem{Fr:Survey20}  Frank, R.: \emph{The Lieb-Thirring inequalities: Recent results and open problems}. Preprint \texttt{arXiv:2007.09326}, 46 pp..
 In: Linde Hall Inaugural Math Symposium, A. Kechris, \emph{et al}.\ (eds.), Proceedings of Symposia in Pure Mathematics, Amer.\ Math.\ Soc., to appear. 

\bibitem{LFW:Book} Frank, R.; Laptev, A.; Weidl, T.: \emph{Lieb-Thirring inequalities}. In preparation. 

\bibitem{GGBISV:CMP04} Gayral, V.; Gracia-Bond\'{\i}a, J. M.; Iochum, B.; Sch\"ucker, T.; V\'arilly, J.C.:  \emph{Moyal planes are spectral triples}. 
 Comm.\ Math.\ Phys.\ \textbf{246} (2004), no.\ 3, 569--623.
 
\bibitem{Gl:GIFML} Glazman, I. M.: \emph{Direct methods of qualitative spectral analysis of singular differential operators} (in Russian), Gosudarstv.\ Izdat.\ Fiz.-Mat.\ Lit., Moscow, 1963. English translation: Daniel Davey and Co., New York, 1966.

\bibitem{GK:AMS69} Gohberg, I. C., Krein, M. G.: \emph{Introduction to the theory of linear nonselfadjoint operators}. Translations of Mathematical Monographs, Vol. 18 American Mathematical Society, Providence, R.I. 1969 xv+378 pp. 

\bibitem{HLP:IJM19a} Ha, H.; Lee, G.; Ponge, R.: \emph{Pseudodifferential calculus on noncommutative tori, I. Oscillating integrals}. Int.\ J.\ Math.\ \textbf{30} (2019), 1950033 (74 pages). 

\bibitem{HLP:IJM19b} Ha, H.; Lee, G.; Ponge, R.: \emph{Pseudodifferential calculus on noncommutative tori, II. Main properties}. Int.\ J.\ Math.\ \textbf{30} (2019), 1950034 (73 pages).  

\bibitem{HHRV:arXiv16} Hoang, V.;  Hundertmark, D.; Richter, J.;  Vugalter, S.: \emph{Quantitative bounds versus existence of weakly coupled bound states for Schr\"odinger type operators}. Preprint \texttt{arXiv:1610.09891}, 40 pages. 
%To appear in Ann.\ Henri Poincar\'e. 

\bibitem{HKRV:arXiv18} Hundertmark, D.; Kunstmann, P.; Ried, T.; Vugalter, S.: 
 \emph{Cwikel's bound reloaded}. Preprint,  \texttt{arXiv:1809.05069}, 30 pp..
 
\bibitem{Il:JST12} Ilyin, A.: \emph{Lieb-Thirring inequalities on some manifolds.} J.\ Spectr.\ Theory \textbf{2} (2012), no.\ 1, 57--78. 

\bibitem{IL:SM16} Ilyin, A.A.; Laptev, A.A.: \emph{Lieb-Thirring inequalities on the torus}. Sb.\ Math.\ \textbf{207} (2016), 1410--1434. 

\bibitem{IL:StPMJ20} Ilyin, A.; Laptev, A.: \emph{Lieb-Thirring inequalities on the sphere.} St.\ Petersburg Math.\ J.\ \textbf{31} (2020), no. 3, 479--493 

\bibitem{ILZ:MN19} Ilyin, A.A.; Laptev, A.A.; Zelik, S.V.:  \emph{On the Lieb-Thirring constant on the torus}. Math.\ Notes \textbf{106} (6) (2019), 1019--1023.

\bibitem{ILZ:JFA20} Ilyin, A.A.; Laptev, A.A.; Zelik, S.V.:  \emph{Lieb-Thirring constant on the sphere and on the torus}. J.\ Funct.\ Anal.\ \textbf{279} (2020), 108784. 

\bibitem{KR:1961} Krasnoselskii, M.; Rutitskii, Y.: \emph{Convex functions and Orlicz spaces}. P.\ Noordhoff Ltd., Groningen, 1961.

\bibitem{KPS:AMS82} Kre\u{\i}n, S.; Petunin, Y.; Semenov, E.: \emph{Interpolation of linear operators}. Translations of Mathematical Monographs, Vol.\ 54. American Mathematical Society, Providence, R.I., 1982.

\bibitem{Ku:TAMS58} Kunze, R.A.:  \emph{$L^p$-Fourier transforms on locally compact unimodular groups}. Trans.\ Amer.\ Math.\ Soc.\ \textbf{89} (1958), 519--540. 

\bibitem{LW:AM00} Laptev, A.;  Weidl, T.: \emph{Sharp Lieb-Thirring inequalities in high dimensions}. Acta Math.\ \textbf{184} (2000), 87--111.

\bibitem{La:PL00} Laskin, N.:  (2000). \emph{Fractional quantum mechanics and L\'evy path integrals}. Phys.\ Lett.\ A. \textbf{268} (2000), 298--305. 

\bibitem{Le:MRL97} Levin, D.: \emph{On an analogue of the Rozenblum-Lieb-Cwikel inequality for the biharmonic operator on a Riemannian manifold}. 
Math.\ Res.\ Lett.\ \textbf{4 }(1997), no.\ 6, 855--869.

\bibitem{LS:JAM97} Levin, D.; Solomyak, M.: \emph{The Rozenblum-Lieb-Cwikel inequality for Markov generators}. J.\ Anal.\ Math.\ \textbf{71} (1997), 173--193. 

\bibitem{LeSVZ:IEOT18}  Levitina, G.; Sukochev, F.; Vella, D.; Zanin, D.: \emph{Schatten class estimates for the Riesz map of massless Dirac operators}. Integral Equations Operator Theory \textbf{90} (2018), no.\ 2, Paper No.\ 19, 36 pp..

\bibitem{LeSZ:JGP19} Levitina, G.; Sukochev, F.; Zanin, D.: \emph{Sign of a perturbed massive Dirac operator and associated Fredholm module}. 
J.\ Geom.\ Phys.\ \textbf{136} (2019), 244--267.

\bibitem{LeSZ:2020} Levitina, G; Sukochev, F.; Zanin, D.: \emph{Cwikel estimates revisited}. Proc.\ London Math.\ Soc.\ \textbf{120} (2020), 265--304.

\bibitem{Li:BAMS76} Lieb, E.H.: \emph{Bounds on the eigenvalues of the Laplace and Schr\"odinger operators.} Bull.\ Amer.\ Math.\ Soc.\ \textbf{82} (1976), 751--752. 

\bibitem{Li:RMP76}  Lieb, E.H.: \emph{The stability of matter.} Rev.\ Modern Phys.\ \textbf{48} (1976), no. 4, 553--569. 

\bibitem{Li:1980} Lieb, E.H.: \emph{The number of bound states of one-body Schr\"odinger operators and the Weyl problem}. Proceedings of Symposia in Pure Mathematics XXXVI. American Mathematical Society, Providence, RI, 1980. 241--252.

\bibitem{Li:CMP84} Lieb, E.H.: \emph{On characteristic exponents in turbulence}.  Comm.\ Math.\ Phys.\ \textbf{92} (1984), 473--480.

\bibitem{LS:CUP10} Lieb, E.H.; Seiringer, R.: \emph{The stability of matter in quantum mechanics}. Cambridge University Press, Cambridge, 2010.

\bibitem{LT:PRL75} Lieb, E.H.; Thirring, W.E.: \emph{Bound on kinetic energy of fermions which proves stability of matter}. Phys.\ Rev.\ Lett.\ \textbf{35} (1975), 687--689.

\bibitem{LT:SMP76} Lieb, E.H.; Thirring, W.E.: \emph{Inequalities for the moments of the eigenvalues of the Schr\"odinger Hamiltonian and their relation to Sobolev inequalities}. In: Studies in Mathematical Physics, Princeton University Press, 1976, 269--303.

\bibitem{LY:CMP83} Li, P.;  Yau, S.T.: \emph{On the Schr\"odinger equation and the eigenvalue problem}. Comm.\ Math.\ Phys.\ \textbf{88} (1983), no.\ 3, 309--318.

\bibitem{LP:IHES64} Lions, J.-L.;  Peetre, J.: \emph{Sur une classe d'espaces d'interpolation}. Inst.\ Hautes Etudes Sci.\ Publ.\ Math.\ \textbf{19} (1964), 5--68.

\bibitem{LPS:JFA10} Lord, S.; Potapov, D.; Sukochev, F.: \emph{Measures from Dixmier traces and zeta functions.} J.\ Funct.\ Anal.\ \textbf{259} (2010), no.\ 8, 1915--1949. 

\bibitem{LSZ:JFA20} Lord, S.; Sukochev, F.; Zanin, D.: \emph{A last theorem of Kalton and finiteness of Connes's integral}. 
J.\  Funct.\ Anal.\  \textbf{279} (2020), 108664. 

\bibitem{MP:Part2}  McDonald, E.; Ponge: \emph{Dixmier trace formulas and negative eigenvalues of Schr\"odinger operators on curved noncommutative tor}. Preprint \texttt{arXiv:2103.16869},  32 pp.. 


\bibitem{MSX:CMP19} McDonald, E; Sukochev, F.; Xiong, X.: \emph{Quantum differentiability on quantum tori}. Comm.\  Math.\ Phys.\ \textbf{371} (2019), 1231--1260.

\bibitem{MSZ:MA19} McDonald, E.; Sukochev, F.; Zanin, D.: \emph{A $C^{\ast}$-algebraic approach to the principal symbol II}. Math.\ Ann.\ \textbf{374} (2019), 273--322.

\bibitem{MSZ:arXiv21} McDonald, E.; Sukochev, F.; Zanin, D.: \emph{Semiclassical Weyl law and exact spectral asymptotics in noncommutative geometry}. 
Preprint \texttt{arXiv:2106.02235},  31 pp.. 

\bibitem{Ne:JFA74} Nelson, E.: \emph{Notes on non-commutative integration}. J.\ Funct.\ Anal.\ \textbf{15} (1974), 103--116.

\bibitem{Po:JMP20} Ponge, R.: \emph{Connes's trace Theorem for curved noncommutative tori. Application to scalar curvature}. J.\ Math.\ Phys.\ \textbf{61} (2020), 042301, 27 pp..  

\bibitem{Po:NCIntegration} Ponge, R.: \emph{Connes' integration and Weyl's laws}. Preprint, \texttt{arXiv:2107.01242}, 29pp..

\bibitem{Po:Weyl-Orlicz} Ponge, R.: \emph{Weyl's laws and Connes' integration formulas for matrix-valued $L\!\log\!L$-Orlicz potentials}. Preprint \texttt{arXiv:2107.13605},  23 pp.. 
 
\bibitem{PS:Springer16} Prodan, E.;  Schulz-Baldes, H.: 
\emph{Bulk and boundary invariants for complex topological insulators: from K-theory to physics}. 
Mathematical Physics Studies. Springer, 2016. 

\bibitem{RS1:1980} Reed, M.; Simon, B.: \emph{Methods of modern mathematical physics. I. Functional analysis.} Second edition. Academic Press, Inc., New York, 1980. xv+400 pp..

\bibitem{RS2:1975} Reed, M.; Simon, B.: \emph{Methods of modern mathematical physics. II. Fourier Analysis, Selfadjointness.} Academic Press, Inc., New York, 1975. xv+361 pp..

\bibitem{RS4:1978} Reed, M.; Simon, B.: \emph{Methods of modern mathematical physics. IV. Analysis of operators.} Academic Press, Inc., New York-London, 1978. xv+396 pp.. 


\bibitem{Ri:CM90} Rieffel, M.A.: \emph{Noncommutative tori-A case study of non-commuative differentiable manifolds}. \emph{Geometric and topological invariants of elliptic operators} (Brunswick, ME, 1988), Contemp. Math.\ 105, Amer.\ Math.\ Soc.; Providence, RI, 1990, pp.\ 191--211.

\bibitem{Roz:1972} Rozenbljum, G.\ V.: \emph{Distribution of the discrete spectrum of singular differential operators.} 
Dokl. Akad. Nauk SSSR \textbf{202} (1972), 1012--1015. English transl.\ Soviet Math.\ Dokl.\ \textbf{13} (1972), 245--249.

\bibitem{Roz:1976} Rozenbljum, G.\ V.: \emph{Distribution of the discrete spectrum of singular differential operators.} 
Izv.\ Vys\u{s}.\ U\u{c}ebn.\ Zaved.\ Matematika 1976, no.\ 1(164), 75--86. English transl.\ Soviet Math.\ (Iz.\ VUZ) \textbf{20} (1976), 63--71. 

\bibitem{RS:SPMJ98} Rozenblyum, G.; Solomyak, M. \emph{The Cwikel-Lieb-Rozenblyum estimator for generators of positive semigroups and semigroups dominated by positive semigroups}. St.\ Petersburg Math.\ J.\ \textbf{9} (1998), no.\ 6 1195--1211. 

\bibitem{Ro:arXiv21} Rozenblum, G.: \emph{Eigenvalues of singular measures and Connes noncommutative integration}. Preprint \texttt{arXiv:2103.02067}, 33 pp.. To appear in J.\ Spectr.\ Theory. 

\bibitem{RS:EMS21} Rozenblum, G; Shargorodsky, E.: \emph{Eigenvalue estimates and asymptotics for weighted pseudodifferential operators with singular measures in the critical case}. {Partial differential equations, spectral theory, and mathematical Physics. The Ari Laptev anniversary volume}, EMS Series of Congress Reports, Vol.\ 18, EMS Publishing, 2021, pp.~331--353. 

\bibitem{Sc:Springer12} Schm\"udgen, K.: \emph{Unbounded self-adjoint operators on Hilbert space.} Graduate Texts in Mathematics, 265. Springer, Dordrecht, 2012. xx+432 pp.

\bibitem{Sc:PNAS61} Schwinger, J.: \emph{On the bound states of a given potential.} Proc.\ Nat.\ Acad.\ Sci.\ U.S.A.\ \textbf{47} (1961), 122--129. 

 \bibitem{Se:AM53} Segal, I.E.:  \emph{A noncommutative extension of abstract integration}. Ann.\ Math\. (2) \textbf{57} (1953), 401--457. 

\bibitem{SW:JHEP99} Seiberg, N.; Witten, E.: \emph{String theory and noncommutative geometry}. 
J.\ High Energy Phys.\ (1999), no.\ 9, Paper 32, 93 pp.. 

\bibitem{SeSi:CMP75} Seiler, E.;  Simon, B.: \emph{Bounds in the $\text{Yukawa}_2$ quantum field theory: upper bound on the pressure, Hamiltonian bound and linear lower bound}. Comm.\ Math.\ Phys.\ \textbf{45} (1975), 99--114.

\bibitem{Si:AP76} Simon, B.: \emph{The bound state of weakly coupled Schr\"odinger operators in one and two dimensions}. Ann.\ Phys.\ \textbf{97} (1976), 279--288.

\bibitem{Si:TAMS76} Simon, B.:\emph{Analysis with weak trace ideals and the number of bound states of Schr\"odinger operators}. Trans.\ Amer.\ Math.\ Soc.\ \textbf{224} (1976), no. 2, 367--380. 

\bibitem{Si:AMS05} Simon, B.:\emph{Trace ideals and their applications.} Second edition. Mathematical Surveys and Monographs, 120. American Mathematical Society, Providence, RI, 2005.

 \bibitem{Si:AMS15a} Simon, B: \emph{A Comprehensive Course in Analysis, Part 3: Harmonic Analysis}. American Mathematical Society, Providence, RI, 2015.

\bibitem{Si:AMS15} Simon, B.: \emph{ Comprehensive Course in Analysis, Part 4: Operator Theory}. American Mathematical Society, Providence, RI, 2015. 

\bibitem{So:PLMS95} Solomyak M.; \emph{Spectral  problems  related  to  the  critical  exponent  in  the  Sobolev  embedding theorem}. Proc.\ London\ Math.\ Soc. (3) \textbf{71} (1995), no.\ 1, 53--75.

\bibitem{Sp:Padova92} Spera, M.: \emph{Sobolev theory for noncommutative tori}. Rend.\ Sem.\ Mat.\ Univ.\ Padova \textbf{86} (1992), 143--156.

\bibitem{SZ:JOT18} Sukochev, F.;  Zanin, D.: \emph{A $C^*$-algebraic approach to the principal symbol}. J.\ Oper.\ Theory \textbf{80} (2018), 481--522.  

\bibitem{SZ:CMP18} Sukochev, F.; Zanin, D.: \emph{Connes integration formula for the noncommutative plane}. Comm.\ Math.\ Phys.\ \textbf{359} (2018), no.\ 2, 449--466. 

\bibitem{SZ:arXiv19}  Sukochev, F.; Zanin, D.: \emph{Local invariants of non-commutative tori}.  Preprint \texttt{arXiv:1910.00758}, 33 pp.. 

\bibitem{SZ:arXiv20} Sukochev, F.; Zanin, D.: \emph{Cwikel-Solomyak estimates on tori and Euclidean spaces}. Preprint \texttt{arXiv:2008.04494}, 31 pp.. 

\bibitem{SZ:2021} Sukochev, F.; Zanin, D.: \emph{Optimal constants in non-commutative H\"older inequality for quasi-norms}. Proc.\ Amer.\ Math.\ Soc.\ \textbf{149} (2021), no.\ 9, 3813--3817.

\bibitem{Te:Book97} Temam, R.: \emph{Infinite Dimensional Dynamical Systems in Mechanics and Physics}, 2nd ed., Springer-Verlag, New York, 1997.

\bibitem{XXY:MAMS18} Xiong, X.; Xu, Q;   Yin, Z.: \emph{Sobolev, {B}esov and {T}riebel-{L}izorkin spaces on quantum tori}. Mem.\ Amer.\ Math.\ Soc.\ \textbf{252} (2018), no.\ 1203, vi+118 pp..

\bibitem{We:CMP96} Weidl, T.: \emph{On the Lieb-Thirring constants $L_{\gamma,1}$ for $\gamma \geq1/2$}. Comm.\ Math.\ Phys.\ \textbf{178} (1996), no.\ 1, 135--146


\end{thebibliography}
\end{document}